\definecolor{mycol}{rgb}{0,0,1}
\definecolor{mcc}{rgb}{0,0.4,0.6}
\theoremstyle{change}%
\newtheorem{definition}{Definition:}[section]%
\newtheorem{proposition}[definition]{Proposition:}%
\newtheorem{theorem}[definition]{Theorem:}%
\newtheorem{lemma}[definition]{Lemma:}%
\newtheorem{corollary}[definition]{Corollary:}%
\newtheorem{fact}[definition]{Facts:}%
\newtheorem{remark}[definition]{Remark:}}%
\newenvironment{proof}
  {{\bf Proof:}}
  {\qquad \hspace*{\fill} $\Box$}%
\newcommand{\unit}{\mathds{1}}%
\newcommand{\N}{\mathbb{N}}%
\newcommand{\R}{\mathbb{R}}%
\newcommand{\Z}{\mathbb{Z}}%
\newcommand{\AC}{\mathcal{A}}%
\newcommand{\BC}{\mathcal{B}}%
\newcommand{\EC}{\mathcal{E}}%
\newcommand{\FC}{\mathcal{F}}%
\newcommand{\MC}{\mathcal{M}}%
\newcommand{\NC}{\mathcal{N}}%
\newcommand{\OC}{\mathcal{O}}%
\newcommand{\PC}{\mathcal{P}}%
\newcommand{\RC}{\mathcal{R}}%
\newcommand{\SC}{\mathcal{S}}%
\newcommand{\UC}{\mathcal{U}}%
\newcommand{\tm}{\times}%
\newcommand{\rmD}{\mathrm{D}}%
\newcommand{\inv}{\mathrm{inv}}%
\newcommand{\id}{\mathrm{id}}%
\newcommand{\diam}{\mathrm{diam}}%
\newcommand{\dist}{\mathrm{dist}}%
\newcommand{\Dist}{\mathrm{Dist}}%
\newcommand{\vol}{\mathrm{vol}}%
\newcommand{\rmd}{\mathrm{d}}%
\newcommand{\cl}{\mathrm{cl}}%
\newcommand{\inner}{\mathrm{int}}%
\newcommand{\supp}{\mathrm{supp}}%
\newcommand{\core}{\mathrm{core}}%
\newcommand{\tp}{\mathrm{top}}%
\newcommand{\Ad}{\mathrm{Ad}}%
\newcommand{\av}{\mathrm{av}}%
\newcommand{\rk}{\mathrm{rk}}%
\newcommand{\spec}{\mathrm{spec}}%
\newcommand{\per}{\mathrm{per}}%
\newcommand{\Per}{\mathrm{Per}}%
\newcommand{\Mo}{\mathrm{Mo}}%
\newcommand{\reg}{\mathrm{reg}}%
\newcommand{\ep}{\varepsilon}%
\begin{document}

\title{Control of chaos with minimal information transfer}%
\author{Christoph Kawan\footnote{Institute of Informatics, LMU Munich, Oettingenstra{\ss}e 67, 80538 M\"{u}nchen, Germany; e-mail: christoph.kawan@lmu.de}}%
\date{}%

\maketitle%

\begin{abstract}
This paper studies set-invariance and stabilization of hyperbolic sets over rate-limited channels for discrete-time control systems. We first investigate structural and control-theoretic properties of hyperbolic sets, in particular such that arise by adding small control terms to uncontrolled systems admitting (classical) hyperbolic sets. Then we derive a lower bound on the invariance entropy of a hyperbolic set in terms of the difference between the unstable volume growth rate and the measure-theoretic fiber entropy of associated random dynamical systems. We also prove that our lower bound is tight in two extreme cases. Furthermore, we apply our techniques to the problem of local uniform stabilization to a hyperbolic set. Finally, we discuss an example built on the H\'enon horseshoe.
\end{abstract}

{\small {\bf Keywords:} Control under data-rate constraints; stabilization; discrete-time nonlinear systems; uniform hyperbolicity; invariance entropy; escape rates; SRB measures}%


\section{Introduction}

\subsection{General introduction}

Hyperbolicity is one of the most important paradigms in the modern theory of dynamical systems as it provides a way of understanding the mechanisms leading to erratic behavior of trajectories in chaotic systems. The first traces of the hyperbolic theory are usually located in Poincar\'e's prize memoir on the three-body problem in celestial mechanics \cite{Poi}. It took, however, almost 80 years after Poincar\'e's work until a general axiomatic definition of hyperbolicity was presented by Stephen Smale \cite{Sma}. This definition arose from the desire to explain chaotic phenomena observed in the study of differential equations modeling real-world engineering systems \cite{CLi,Lev}, and it built on the concept of Anosov diffeomorphisms studied before by the Russian school. Smale's notion of a uniformly hyperbolic set was soon generalized in different directions to cover a great variety of systems. We do not attempt to give an account of all these research threads. The reader may consult Hasselblatt \cite{Has}, Katok \& Hasselblatt \cite{KH2} and Hasselblatt \& Pesin \cite{HPe} to obtain a comprehensive overview of the still ongoing research in hyperbolic dynamics.%

For control engineers, a particularly interesting research direction rooted in hyperbolic dynamics was initiated by Ott, Grebogi \& Yorke \cite{OGY} and is known under the term \emph{control of chaos}. While chaoticity in most cases is considered an unpleasant behavior in an engineering system, in the control of chaos its features are exploited to stabilize a system with low energy use. Here one uses the abundance of unstable periodic orbits on a hyperbolic set to pick one of these orbits and keep the system on a nearby orbit via small ``kicks'' (control actions), applied at the right time to drive the state closer to the stable manifold of the periodic orbit. One of various applications of this method can be found in space exploration, where it is used to stabilize space probes at unstable Lagrangian points of the solar system, see e.g.~\cite{SSK}.%

A relatively new and vibrant subfield of control, in which hyperbolic dynamics is likely to play a key role, is the \emph{control under communication constraints}. Motivated by real-world applications suffering from informational bottlenecks in the communication links between sensors and controllers or controllers and actuators, many researchers have studied the problem of characterizing the minimal requirements on a communication network necessary for achieving a desired control goal by a proper coder-controller design. Two of the most often cited examples, in which data-rate constraints constitute the bottleneck, are the control of large-scale networked systems, where the communication resources have to be distributed among many agents (see e.g.~\cite{HNX,Ne2,Lun}), and the coordinated control of unmanned underwater vehicles, where the medium water makes high-rate communication difficult.%

\begin{figure}[h]
\begin{center}
\includegraphics[height=4.0cm,width=8.0cm]{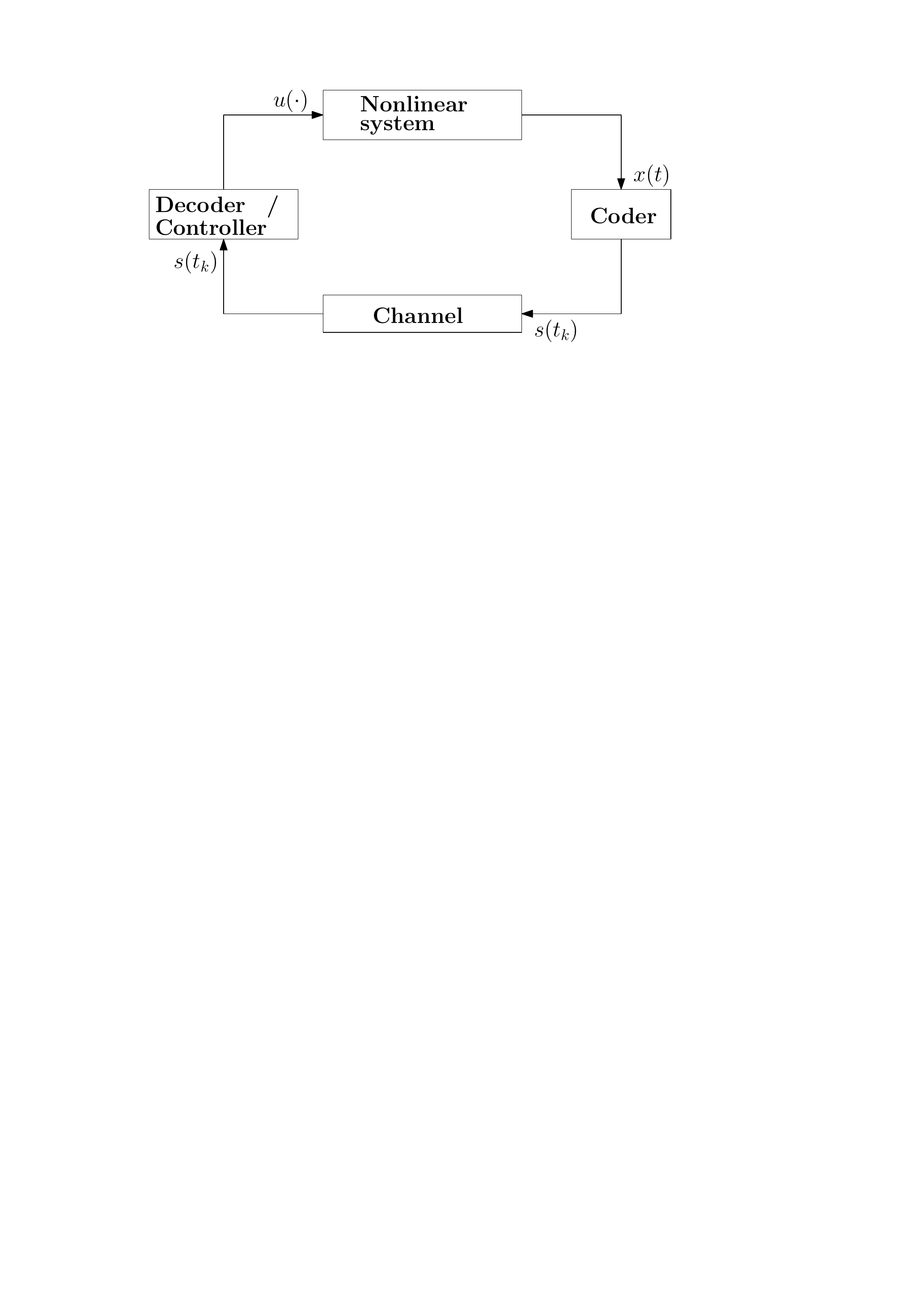}
\caption{Control of a system over a finite-capacity channel\label{fig1}}
\end{center}
\end{figure}

A conceptually simple though highly non-trivial scenario allowing to study some of the essential aspects of the problem is depicted in Fig.~\ref{fig1}. Here a controller receives state information, collected by sensors, through a finite-capacity communication channel and the goal is to stabilize the system. In the figure, $x(t)$ denotes the state of the system at time $t$, $s(t_k)$ is a symbol sent through the (noiseless) channel at the sampling time $t_k$, and $u(\cdot)$ is the control input generated by the controller, based on the knowledge of the transmitted symbols. A now classical result focusing on linear system models is known as the \emph{data-rate theorem}. Proven under a great variety of different assumptions on the system model, communication protocol and stabilization objective, it yields the unambiguous answer that there is a minimal channel capacity, given by the log-sum of the open-loop unstable eigenvalues, above which the stabilization objective can be achieved. The fact that this number appears in the theory of dynamical systems as the topological or measure-theoretic entropy of a linear system \cite{Bo1} has motivated researchers to look for further and deeper connections between the data-rate-constrained stabilization problem and ergodic theory, when the dynamics is nonlinear.%

These investigations led to the introduction of various notions of ``control entropy'' which are quantities defined in terms of the open-loop system, resembling topological or measure-theoretic entropy in dynamical systems. Such entropy notions are particularly successful for the description of the minimal channel capacity when the stabilization objective can be achieved in a repetitive way, i.e., via a coding and control protocol that repeats precisely the same tasks periodically in time. An example for such an objective is set-invariance. Indeed, if a coding and control scheme achieves invariance of a certain set in a time interval $[0,\tau]$, then the same scheme can be applied again after time $\tau$ to achieve invariance on $[\tau,2\tau]$, etc. The notion of \emph{topological feedback entropy}, introduced in Nair et al.~\cite{Nea}, captures the smallest average data rate above which a compact controlled invariant set can be made invariant by a coding and control scheme that operates over a noiseless discrete channel transmitting state information from the coder to the controller.%

A related, in fact equivalent \cite{CKN}, notion of entropy was introduced in Colonius \& Kawan \cite{CKa} under the name \emph{invariance entropy}. While topological feedback entropy is defined in an open-cover fashion similar to the definition of topological entropy by Adler, Konheim \& McAndrew \cite{AKM}, invariance entropy is defined via so-called spanning sets of control inputs. The idea is simple: if the controller receives $n$ bits of information, it can distinguish at most $2^n$ different states, hence generate at most $2^n$ different control inputs, and consequently, the number of necessary control inputs to achieve invariance (on a finite time interval) is a measure for the required information. Hence, the invariance entropy of a compact controlled invariant set $Q$ is defined as%
\begin{equation*}
  h_{\inv}(Q) := \lim_{\tau \rightarrow \infty}\frac{1}{\tau}\log_2 r_{\inv}(\tau,Q),%
\end{equation*}
where $r_{\inv}(\tau,Q)$ denotes the minimal number of control inputs necessary to achieve invariance of $Q$ on a time interval of length $\tau$.%

A theory aimed at the description of invariance entropy in terms of dynamical characteristics of the system (such as Lyapunov exponents), has been developed to a certain extent in \cite{DK1,DK4,Ka2,Ka3}, but mainly for continuous-time systems. In particular, the papers \cite{DK1,DK4} demonstrate that uniform hyperbolicity and controllability assumptions together allow for the derivation of a closed-form expression for $h_{\inv}(Q)$ in terms of instability characteristics on $Q$ such as the sum of the unstable Lyapunov exponents or relative/conditional entropy of the associated skew-product system relative to the left-shift on the space of admissible control inputs. This theory has been successfully applied to right-invariant systems on flag manifolds of semisimple Lie groups \cite{DK3,DK5}. Other aspects of invariance entropy and generalizations thereof have been studied in a number of papers, including \cite{Col,CSC,DS1,DS2,HZh,KDe,TZa,WHS}.%

In this paper, we study a discrete-time setting in which we introduce the notion of a uniformly hyperbolic set for a control system, with the ultimate goal to provide a closed-form expression for the invariance entropy of such sets. Although we are not able to achieve this goal in the fully general case, we provide a lower bound together with proofs for two special cases that this bound is tight under additional controllability assumptions. Moreover, via the introduced techniques we provide a necessity result for the local uniform stabilization of a control system to a hyperbolic set (of the autonomous system associated to a fixed control value). This result can be seen as an extension of the local stabilization result presented in \cite{Nea} for the asymptotic stabilization to an equilibrium point. At the same time, it closes a gap in the proof presented in \cite{Nea} and provides a new interpretation of a classical escape-rate formula in the theory of dynamical systems \cite{Bo2,You}. Technical details of our proof program are presented in the next subsection.%

Some general references for the theory of control under communication constraints are the books \cite{MSa,Fea,YBa} and the survey papers \cite{AMF,FMi,Ne2}.%

\subsection{Structure and contents of the paper}

In this paper, we study discrete-time, time-invertible control systems of the form%
\begin{equation}\label{eq_intro_cs}
  x_{t+1} = f(x_t,u_t),\quad t \in \Z%
\end{equation}
with states in a Riemannian manifold $M$ and controls in a compact and connected metric space $U$. Under appropriate regularity assumptions, the system \eqref{eq_intro_cs} induces a continuous skew-product system $\Phi = (\Phi_t)_{t\in\Z}$ (called \emph{control flow}) on the extended state space $\UC \tm M$ with $\UC := U^{\Z}$ (equipped with the product topology), with the left shift operator $\theta$ acting on $\UC$ as the driving system. The transition map of \eqref{eq_intro_cs} is denoted by $\varphi$ so that%
\begin{equation*}
  \Phi_t(u,x) = (\theta^t u,\varphi(t,x,u)),\quad \Phi_t:\UC \tm M \rightarrow \UC \tm M.%
\end{equation*}
Frequently, we also write $\varphi_{t,u} = \varphi(t,\cdot,u)$.%

A \emph{uniformly hyperbolic set} of \eqref{eq_intro_cs} is a compact all-time controlled invariant subset $Q \subset M$ that admits a splitting of its extended tangent bundle into a stable and an unstable subbundle which are continuous and allow for uniform estimates of contraction and expansion rates. The difference to the classical autonomous case is that the stable and unstable subspaces, in general, depend on $(u,x) \in \UC \tm M$ and not only on $x$. Similar notions of uniformly hyperbolic sets are studied in the theory of random dynamical systems (RDS), where the driving system models the influence of the noise on the dynamics \cite{GKi,KLi,Liu}. Several classical tools from the theory of hyperbolic dynamical systems are available to study uniformly hyperbolic sets of control systems, in particular the \emph{stable manifold theorem}, the \emph{shadowing lemma} and the \emph{Bowen-Ruelle volume lemma}, cf.~Subsection \ref{subsec_hyperbolic_tools}. A uniformly hyperbolic set $Q$ of \eqref{eq_intro_cs} can be lifted to the extended state space $\UC \tm M$ by putting%
\begin{equation*}
  L(Q) := \{ (u,x) \in \UC \tm M : \varphi(\Z,x,u) \subset Q \},%
\end{equation*}
which is a compact invariant set of the control flow $\Phi$. Then $E^-(u,x)$ and $E^+(u,x)$ denote the stable and unstable subspace at $(u,x) \in L(Q)$, respectively.%

Section \ref{sec_hyperbolic_sets} is devoted to the study of structural and control-theoretic properties of uniformly hyperbolic sets. While the first two subsections introduce the necessary definitions and tools, the third one contains the actual analysis.%

\paragraph{Subsection \ref{subsec_structure}:} Our analysis starts with the study of the \emph{$u$-fibers} $Q(u) = \{ x \in M : \varphi(\Z,x,u) \subset Q \}$, $u \in \UC$. Assuming that $L(Q)$ is isolated invariant, the shadowing lemma can be used to prove that all fibers $Q(u)$ are nonempty and homeomorphic to each other. The set-valued mapping $u \mapsto Q(u)$ from $\UC$ into the space of closed subsets of $Q$ is, in general, upper semicontinuous (even without the assumption of uniform hyperbolicity). To derive a lower bound on $h_{\inv}(Q)$, we require $u \mapsto Q(u)$ to be lower semicontinuous as well. This assumption can be verified in a ``small-perturbation'' setting, where we fix a constant control $u^0 \in U$, assume that the diffeomorphism $f(\cdot,u^0):M \rightarrow M$ admits an isolated invariant uniformly hyperbolic set $\Lambda$, and then restrict the control range to a small neighborhood of $u^0$ in $U$. By standard perturbation results (see e.g.~\cite{Liu}), one shows that the so-defined control system admits a uniformly hyperbolic set $Q$ whose $u^0$-fiber coincides with $\Lambda$. We also study the controllability properties on a set $Q$ that arises in this way. Assuming that $\Lambda$ is topologically transitive and combining classical results from discrete-time control with shadowing arguments, we obtain under analyticity and accessibility assumptions that $Q$ has nonempty interior and complete controllability holds on an open and dense subset of $Q$. The proof uses the theory of accessibility and universally regular controls developed in Albertini \& Sontag \cite{ASo} and Sontag \& Wirth \cite{SWi}. It remains an open question if the fiber map $u \mapsto Q(u)$ is lower semicontinuous for a more general class of uniformly hyperbolic sets.%

In Section \ref{sec_invariance_entropy}, we derive a lower bound on the invariance entropy of a uniformly hyperbolic set in terms of dynamical characteristics of associated random dynamical systems. We also discuss the tightness of the bound under additional controllability assumptions.%

\paragraph{Subsection \ref{subsec_first_lb}:} If the fiber map $u \mapsto Q(u)$ of a compact all-time controlled invariant set $Q$ is lower semicontinuous, we can derive a lower bound on $h_{\inv}(Q)$ in terms of a uniform rate of escape from the $\ep$-neighborhoods of the $u$-fibers. This lower bound is based on the observation that the sets%
\begin{equation*}
  Q^{\pm}(u,\tau) := \left\{ x \in M : \varphi(t,x,u) \in Q \mbox{\ for\ } - \tau < t < \tau \right\}%
\end{equation*}
shrink down to the $u$-fiber $Q(u)$ as $\tau$ tends to infinity, and this shrinking process is uniform with respect to $u$ if the fiber $Q(u)$ depends continuously on $u$ in the Hausdorff metric (which is equivalent to simultaneous upper and lower semicontinuity). If $\SC\subset\UC$ is a $(\tau,Q)$-spanning set, i.e., a set of control inputs guaranteeing invariance on the time interval between $0$ and $\tau-1$, then $Q$ is covered by the sets%
\begin{equation*}
  Q(u,\tau) := \left\{ x \in M : \varphi(t,x,u) \in Q \mbox{\ for\ } 0 \leq t < \tau \right\},\quad u \in \SC,%
\end{equation*}
which are related by a time shift to the sets $Q^{\pm}(u,\tau)$. Finally, introducing the sets%
\begin{equation*}
  Q(u,\tau,\ep) := \left\{ x \in M : \dist(\varphi(t,x,u),Q(\theta^tu)) \leq \ep,\ 0 \leq t < \tau \right\},%
\end{equation*}
a careful analysis of these relations leads to the estimate%
\begin{equation}\label{eq_intro_ie_firstlb}
  h_{\inv}(Q) \geq -\liminf_{\tau \rightarrow \infty} \sup_{u\in\UC} \frac{1}{\tau} \log \vol(Q(u,\tau,\ep))%
\end{equation}
which holds true for every $\ep>0$ provided that $Q$ has positive volume.%

\paragraph{Subsection \ref{subsec_central_est_lb}:} Using a classical idea from the study of escape rates \cite{Bo2,You}, one can estimate the volume in \eqref{eq_intro_ie_firstlb} in the following way:%
\begin{equation}\label{eq_intro_volume_est}
  \vol(Q(u,\tau,\ep)) \leq \mbox{const} \cdot \sum_{x \in F_{u,\tau,\delta}}J^+\varphi_{\tau,u}(x)^{-1}.%
\end{equation}
Here, $F_{u,\tau,\delta} \subset Q(u)$ is a $(u,\tau,\delta)$-separated set\footnote{That is, for any two $x,y \in F_{u,\tau,\delta}$ with $x \neq y$ one has $d(\varphi(t,x,u),\varphi(t,y,u)) > \delta$ for some $0 \leq t < \tau$.} for a small $\delta>0$, and%
\begin{equation*}
  J^+\varphi_{\tau,u}(x) = \bigl|\det\rmD\varphi_{\tau,u}(x)_{|E^+(u,x)}:E^+(u,x) \rightarrow E^+(\Phi_{\tau}(u,x))\bigr|%
\end{equation*}
denotes the \emph{unstable determinant} of the linearization. The main idea behind this estimate is to cover the set $Q(u,\tau,\ep)$ with Bowen-balls of order $\tau$ and radius $\delta$, and estimate the volumes of these balls via the Bowen-Ruelle volume lemma. A shadowing argument allows to move the centers of these balls to $Q(u)$. Here, the required uniform hyperbolicity on $Q$ is fully exploited via the use of shadowing and hyperbolic volume estimates.%

\paragraph{Subsections \ref{subsec_approx_cocycles} and \ref{subsec_interchange}:} To make use of the estimate \eqref{eq_intro_volume_est}, two intermediate steps are taken, the first of which consists in interchanging the order of limit inferior and supremum in \eqref{eq_intro_ie_firstlb}. This would be unproblematic if the functions $v^{\ep}_{\tau}(u) := \log \vol(Q(u,\tau,\ep))$, $\tau > 0$, would define a continuous subadditive cocycle over the shift $(\UC,\theta)$ for some $\ep>0$. Since we are not able to prove this, we introduce families of functions $w^{\delta}_{\tau}:\UC \rightarrow \R$ that are indeed subadditive cocycles over the shift and approximate $v^{\ep}_{\tau}$ in a certain sense (see Proposition \ref{prop_sc_props} for details). Together with the continuity of $u \mapsto v^{\ep}_{\tau}(u)$ (Lemma \ref{lem_vcont}) this allows to prove that the order of limit and supremum in \eqref{eq_intro_ie_firstlb} can be interchanged under the limit for $\ep\downarrow0$. As a consequence,%
\begin{equation}\label{eq_intro_ie_secondlb}
  h_{\inv}(Q) \geq -\lim_{\ep\downarrow0}\sup_{u\in\UC}\liminf_{\tau \rightarrow \infty}\frac{1}{\tau}\log\vol(Q(u,\tau,\ep)).%
\end{equation}

\paragraph{Subsection \ref{subsec_random_er}:} The second step consists in rewriting \eqref{eq_intro_ie_secondlb} via ergodic growth rates with respect to shift-invariant probability measures on $\UC$. This leads to%
\begin{equation}\label{eq_intro_ie_thirdlb}
  h_{\inv}(Q) \geq -\lim_{\ep\downarrow0}\sup_{P \in \MC(\theta)}\liminf_{\tau \rightarrow \infty}\frac{1}{\tau}\int \log\vol(Q(u,\tau,\ep))\, \rmd P(u),%
\end{equation}
where $\MC(\theta)$ denotes the set of all $\theta$-invariant Borel probability measures. In the proof of \eqref{eq_intro_ie_thirdlb}, we use again the approximate subadditivity of $(v^{\ep}_{\tau})_{\tau\in\Z_+}$ established in Proposition \ref{prop_sc_props} together with standard arguments used in the context of subadditive cocycles \cite{Mor}. Here, it is important to point out that each $P \in \MC(\theta)$ together with the transition map $\varphi$ formally induces an RDS over $(\UC,\BC(\UC),P,\theta)$\footnote{Here, $\BC(\UC)$ denotes the Borel $\sigma$-algebra on $\UC$.} that we denote by $(\varphi,P)$. In this context, growth rates of the form%
\begin{equation*}
  \lim_{\tau \rightarrow \infty}\frac{1}{\tau}\int \log\vol(Q(u,\tau,\ep))\, \rmd P(u)%
\end{equation*}
are known as \emph{random escape rates}, see \cite{Liu}.%

\paragraph{Subsection \ref{subsec_pressure_est}:} A further lower bound on $h_{\inv}(Q)$ is derived from \eqref{eq_intro_ie_thirdlb} and \eqref{eq_intro_volume_est} via arguments taken from the standard proof of the variational principle for pressure of RDS, cf.~\cite{Bog}. Essentially, the growth rates of $\# F_{u,\tau,\delta}$ and $J^+\varphi_{\tau,u}(x)$ are separated and we end up with the estimate%
\begin{equation}\label{eq_intro_ie_fourthlb}
  h_{\inv}(Q) \geq \inf_{\mu \in \MC(\Phi_{|L(Q)})}\Bigl[ \int \log J^+\varphi_{1,u}(x)\, \rmd\mu(u,x) - h_{\mu}(\varphi,(\pi_{\UC})_*\mu) \Bigr],%
\end{equation}
where the infimum is taken over all $\Phi$-invariant Borel probability measures $\mu$, supported on $L(Q)$, and $(\pi_{\UC})_*\mu$ denotes the marginal of $\mu$ on $\UC$. Moreover, $h_{\mu}(\varphi,(\pi_{\UC})_*\mu)$ is the measure-theoretic entropy of the RDS $(\varphi,(\pi_{\UC})_*\mu)$ with respect to its invariant measure $\mu$ (which is the term that captures the growth rate of $\# F_{u,\tau,\delta}$). The well-known Margulis-Ruelle inequality \cite{BBo} guarantees that%
\begin{equation*}
  h_{\mu}(\varphi,(\pi_{\UC})_*\mu) \leq \int \log J^+\varphi_{1,u}(x)\, \rmd\mu(u,x)%
\end{equation*}
so that the lower bound \eqref{eq_intro_ie_fourthlb} is always nonnegative. A natural interpretation of the involved terms is that $\int \log J^+\varphi_{1,u}(x)\, \rmd\mu(u,x)$ measures the total instability of the dynamics on $Q$ (seen by the measure $\mu$), while $h_{\mu}(\varphi,(\pi_{\UC})_*\mu)$ measures the part of the instability not leading to exit from $Q$. This makes perfect sense, since $h_{\inv}(Q)$ measures the control complexity necessary for preventing exit from $Q$. The fact that we are taking the infimum over all measures might be related to the characterization of invariance entropy as the minimal data rate amongst all coding and control strategies which lead to invariance of $Q$.%

\paragraph{Subsection \ref{subsec_optimal_measure}:} A natural question arising from \eqref{eq_intro_ie_fourthlb} is whether the infimum on the right-hand side is attained as a minimum. Using the property of expansivity which holds on every uniformly hyperbolic set, this can be verified, and hence%
\begin{equation}\label{eq_intro_ie_fifthlb}
  h_{\inv}(Q) \geq \int \log J^+\varphi_{1,u}(x)\, \rmd\hat{\mu}(u,x) - h_{\hat{\mu}}(\varphi,(\pi_{\UC})_*\hat{\mu})%
\end{equation}
for a (not necessarily unique) measure $\hat{\mu} \in \MC(\Phi_{|L(Q)})$. This inequality has interesting consequences, since it allows us to obtain a better understanding of the case when $h_{\inv}(Q) = 0$. Indeed, if $h_{\inv}(Q) = 0$, then%
\begin{equation*}
  h_{\hat{\mu}}(\varphi,(\pi_{\UC})_*\hat{\mu}) = \int \log J^+\varphi_{1,u}(x)\, \rmd\hat{\mu}(u,x)%
\end{equation*}
which exhibits $\hat{\mu}$ as an SRB measure of the RDS $(\varphi,(\pi_{\UC})_*\hat{\mu})$. It seems plausible that conversely the existence of an SRB measure implies the existence of some sort of attractor inside $Q$ which, under additional controllability assumptions, would force $h_{\inv}(Q)$ to be zero. For the case of a hyperbolic set as constructed in the small-perturbation setting, this is proved in Theorem \ref{thm_inv_achievability2} (in Subsection \ref{subsec_achievability}).%

\paragraph{Subsection \ref{subsec_lb_topological}:} The lower bound \eqref{eq_intro_ie_fourthlb} can also be expressed in purely topological terms. Via the variational principle for the pressure of RDS, we can first write it in the form%
\begin{equation*}
  h_{\inv}(Q) \geq -\sup_{P \in \MC(\theta)}\pi_{\mathrm{top}}(\varphi^Q,P;-\log J^+\varphi),%
\end{equation*}
where $\pi_{\mathrm{top}}(\varphi^Q,P;-\log J^+\varphi)$ is the topological pressure with respect to the potential $-\log J^+\varphi$ of the bundle RDS defined by fixing the measure $P$ on $\UC$ and restricting $\Phi$ to the invariant set $L(Q)$. Letting%
\begin{equation*}
  \pi_{\alpha}(u,\tau,\ep) := \sup\Bigl\{ \sum_{x \in F} 2^{\sum_{s=0}^{\tau-1}\alpha(\Phi_s(u,x))} : F \subset Q(u) \mbox{ is } (u,\tau,\ep)\mbox{-separated} \Bigr\},%
\end{equation*}
we can derive the identity%
\begin{equation*}
  \sup_{P \in \MC(\theta)}\pi_{\mathrm{top}}(\varphi^Q,P;-\log J^+\varphi) = \sup_{u \in \UC} \lim_{\ep\downarrow0}\limsup_{\tau \rightarrow \infty} \frac{1}{\tau}\log \pi_{-\log J^+\varphi}(u,\tau,\ep),%
\end{equation*}
where we use again that the involved quantities can be approximated by subadditive cocycles. This purely topological expression can possibly serve as a hint how to prove an achievability result, i.e., a sufficiency result for the required data rate to make $Q$ invariant.%

\paragraph{Subsection \ref{subsec_achievability}:} We discuss the tightness of the obtained lower bound for $h_{\inv}(Q)$, which in two extreme cases can be made very plausible. The first case occurs when the fibers $Q(u)$ are finite. Then, the measure-theoretic entropy term in the lower bound vanishes and the tightness has been proved in \cite{DK4} for the continuous-time case under accessibility and controllability assumptions. It is more or less obvious that the same proof works in discrete time. For the small-perturbation setting, this is demonstrated in Theorem \ref{thm_inv_achievability1}. The other case is the one in which $L(Q)$ supports an SRB measure for one of the RDS $(\varphi,P)$, implying that the lower bound vanishes. In this case, it should be possible to find an attractor inside $Q$ so that controllability on $Q$ would make it possible to steer from every initial state into the associated basin of attraction, where no further control actions are necessary, leading to $h_{\inv}(Q) = 0$. Again, in the small-perturbation setting we can provide a proof, see Theorem \ref{thm_inv_achievability2}.%

In Section \ref{sec_stabilization}, we prove a result on the necessary average data rate for local uniform stabilization to a uniformly hyperbolic set $\Lambda$ of the diffeomorphism $f_0 = f(\cdot,u^0)$ with $u^0 \in U$. From the analysis of the preceding section, it almost immediately follows that (under mild regularity assumptions) a lower bound on the data rate is given by the negative topological pressure of $f_0$ with respect to the negative unstable log-determinant on $\Lambda$. This quantity is well-studied in the theory of hyperbolic dynamical systems and, in particular, appears as the rate at which volume escapes from a small neighborhood of an Axiom A basic set \cite{Bo2,You}. For the case when $\Lambda$ is a periodic orbit, we prove that our lower bound is tight. For the case when $\Lambda$ is topologically transitive and supports an SRB measure, it is trivially tight, because this implies that $\Lambda$ is an attractor.%

Section \ref{sec_henon} presents an example built on the so-called \emph{H\'enon horseshoe}, a non-attracting uniformly hyperbolic set of a map from the H\'enon family. Our small-perturbation results allow to study uniformly hyperbolic sets that arise by adding small control terms to the given H\'enon map. In particular, numerical studies are available which provide estimates for the escape rate from a small neighborhood of the H\'enon horseshoe that in turn yield estimates for the invariance entropy of its small perturbations as well as for the smallest average data rate necessary for stabilization to the horseshoe.%

Section \ref{sec_openq} presents some open questions and the Appendix (Sections \ref{sec_appa} and \ref{sec_appb}) contains auxiliary results and supplementary material.%

\subsection{Remarks, interpretation and further directions}

\paragraph{The results presented in this paper.} Our results should not be seen first and foremost from a practical point of view (of applicability to engineering problems), but from the viewpoint of a theoretical understanding of stabilization over rate-limited channels. They relate the control-theoretic quantity $h_{\inv}$ to quantities that are well studied and of utmost importance in the theory of dynamical systems. Moreover, they give these dynamical quantities a new, control-theoretic interpretation. This should be an inspiration for the search for further relations of similar nature. In particular, in the context of stochastic control systems and stochastic stabilization objectives, it is very likely that weaker and by nature probabilistic/ergodic forms of hyperbolicity such as \emph{non-uniform hyperbolicity} \cite[Ch.~5]{Has} are helpful to derive similar and even more interesting results.%

\paragraph{The role of hyperbolicity (advantages and disadvantages).} The assumption of uniform hyperbolicity provides us with tools and techniques that allow to derive very clean and precise results. Additionally, uniform hyperbolicity guarantees the robustness that is necessary for a control strategy to work properly with regard to parameter uncertainties and external noise, cf.~\cite{DK2}. On the other hand, uniform hyperbolicity is a property that is hard to check for a concrete model, although some numerical approaches to this problem exist, see e.g.~\cite{BJu}. Moreover, most systems are not uniformly hyperbolic but exhibit some weaker form of hyperbolicity. Hence, the uniformly hyperbolic case should be seen only as a first step towards a more general theory.%

\paragraph{Extension to noisy systems.} For noisy systems of the form%
\begin{equation*}
  x_{t+1} = f(x_t,u_t,w_t)%
\end{equation*}
with reasonably small bounded noise $w_t$, it is conceivable that a finite-time analysis leads to comparable results on the minimal data rate for stabilization to a hyperbolic set of the unperturbed system $x_{t+1} = f(x_t,u_t,0)$. In this case, a time horizon $T$ needs to be chosen small enough so that the noise does not dominate over the control within a time interval of length $T$. The expected result would then characterize a trade-off between the noise amplitude and the time horizon, respectively, the achievable data rate.%

\paragraph{History and new contributions.} Many of the ideas and results in this paper have appeared before in other publications:%
\begin{itemize}
\item The idea of estimating invariance entropy from below by an escape rate has first appeared in \cite{Ka0}.%
\item For continuous-time systems, uniformly hyperbolic sets in the sense of this paper turn out to be quite simple, namely, their $u$-fibers are finite \cite{Ka1}. In \cite{DK4}, a closed-form expression for the invariance entropy of uniformly hyperbolic control sets of continuous-time systems has been derived. In particular, the derivation of the lower bound already contains some of the ideas involved in the paper at hand, and Theorem \ref{thm_inv_achievability1} (the achievability result) mainly uses ideas developed in \cite{DK4}.%
\item In \cite{DK1}, a special class of partially hyperbolic controlled invariant sets has been introduced and a lower bound for their invariance entropy has been derived. Most of the ideas leading to the estimate \eqref{eq_intro_ie_fourthlb} are already contained in \cite{DK1}.%
\end{itemize}
The genuinely new contributions of the paper at hand are the following:%
\begin{itemize}
\item The notion of an isolated (controlled) invariant set used in \cite{DK1} has been weakened. Instead of assuming an isolatedness condition on the state space $M$, we now assume that the lift $L(Q)$ in $\UC \tm M$ is an isolated invariant set of the control flow, which is a weaker and more natural condition.%
\item The ``small-perturbation'' construction of a uniformly hyperbolic set, presented in Subsection \ref{subsec_structure} (although well-known in another context) has not been presented before. This construction sheds some light on the assumption of lower semicontinuity of the fiber map which was already used in \cite{DK1}. Moreover, the analysis of the controllability properties on such a set is new and, to the best of my knowledge, this has not been studied before although somewhat related ideas can be found in Colonius \& Du \cite{CDu}.%
\item The results in Subsection \ref{subsec_optimal_measure} are new. In particular, the relation between vanishing invariance entropy and the existence of SRB measures (Corollary \ref{cor_srb} and Theorem \ref{thm_inv_achievability2}) is a new contribution of this paper.%
\item The topological characterization of the lower bound in Subsection \ref{subsec_lb_topological} is another novel contribution.%
\item The achievability results (Theorem \ref{thm_inv_achievability1} and Theorem \ref{thm_inv_achievability2}) and the local stabilization results (Theorem \ref{thm_local_stab} and Theorem \ref{thm_local_stab_achiev}) have not appeared before.%
\item The example built on the H\'enon horseshoe presented in Section \ref{sec_henon} is another new contribution.
\end{itemize}

\section{Preliminaries}

\subsection{Notation}

By $|A|$ we denote the cardinality of a set $A$. Logarithms are by default taken to the base $2$. We write $\Z$, $\Z_+$ and $\Z_{>0}$ for the sets of integers, nonnegative integers and positive integers, respectively. By $[a;b]$, $(a;b)$, $(a;b]$ and $[a;b)$ we denote the closed, open and half-open intervals in $\Z$, respectively. The notation $\unit_A$ stands for the indicator function of a subset $A$ of some space $X$, i.e., $\unit_A(x) = 1$ if $x \in A$ and $\unit_A(x) = 0$ otherwise. If $X$ and $Y$ are two spaces, we write $\pi_X:X \tm Y \rightarrow X$ and $\pi_Y:X \tm Y \rightarrow Y$ for the corresponding canonical projections $\pi_X(x,y) = x$ and $\pi_Y(x,y) = y$, respectively.%

All manifolds in this paper are assumed to be connected and smooth, i.e., equipped with a $C^{\infty}$ differentiable structure. If $M$ is a manifold, we write $T_xM$ for its tangent space at $x$. Also, Riemannian metrics are always assumed to be smooth. Given a manifold equipped with a Riemannian metric, we write $|\cdot|$ for the induced norm on each tangent space. Moreover, $d(\cdot,\cdot)$ denotes the induced distance function and $\vol(\cdot)$ the associated volume measure. Finally, we write $\exp_x$ for the Riemannian exponential map at $x$.%

In any metric space $(X,d)$, we write $B_{\ep}(x)$ for the open $\ep$-ball centered at $x$, $\dist(x,A) = \inf_{y\in A}d(x,y)$ for the distance of a point $x$ to a set $A$, and $N_{\ep}(A) = \{x \in X : \dist(x,A) \leq \ep\}$ for the closed $\ep$-neighborhood of a set $A$. The open $\ep$-neighborhood, in contrast, is denoted by $N^{\circ}_{\ep}(A)$. Moreover, we use the notation $d_H(A,B)$ for the Hausdorff distance of two sets $A,B$:%
\begin{equation*}
  d_H(A,B) = \max\{\Dist(A,B),\Dist(B,A)\}, \quad \Dist(A,B) = \sup_{a\in A}\dist(a,B).%
\end{equation*}
For any set $A \subset X$, we write $\cl\, A$, $\inner\, A$ and $\partial A$ for the closure, interior and boundary of $A$, respectively. Moreover, we write $\diam(A) = \sup_{x,y\in A}d(x,y)$ for the diameter of $A$. If $X$ and $Y$ are two metric spaces, $C^0(X,Y)$ stands for the space of all continuous mappings $f:X \rightarrow Y$.%

If $T:X \rightarrow Y$ is a measurable map between measurable spaces $(X,\FC_X)$ and $(Y,\FC_Y)$, respectively, we write $T_*$ for the operator induced by $T$ on the set of measures on $(X,\FC_X)$, i.e., $(T_*\mu)(B) = \mu(T^{-1}(B))$ for every measure $\mu$ on $(X,\FC_X)$ and all $B \in \FC_Y$. We write $\MC(T)$ for the set of all $T$-invariant Borel probability measures of a continuous map $T:X \rightarrow X$ on a compact metric space $X$. By $\BC(X)$ we denote the Borel $\sigma$-algebra of a metric space $X$ and by $\supp(\mu)$ the support of a Borel probability measure $\mu$. The notation $\delta_x$ stands for the Dirac measure at a point $x$. If $(\Omega,\FC,P)$ is a probability space, the \emph{Shannon entropy} of a finite or countably infinite measurable partition $\AC$ of $\Omega$ is defined as%
\begin{equation*}
  H_P(\AC) := -\sum_{A \in \AC}P(A) \log P(A).%
\end{equation*}
For two partitions $\AC$ and $\BC$, the conditional entropy of $\AC$ given $\BC$ is defined by%
\begin{equation*}
  H_{\mu}(\AC|\BC) := \sum_{B \in \BC} \mu(B) H_{\mu_B}(\AC),%
\end{equation*}
where $\mu_B(\cdot) := \mu( \cdot \cap B)/\mu(B)$.%

Let $A$ be a real $n \tm m$ matrix. Then $\rk\, A$ denotes the rank of $A$. If $A$ is a square matrix, we let $\spec(A)$ denote the spectrum of $A$. If $A$ is a continuous linear operator between normed vector spaces, we write $\|A\|$ for its operator norm.%

\subsection{Some concepts from dynamical systems}\label{subsec_dynsys_concepts}%

We recall some concepts from the theory of dynamical systems.%

For a homeomorphism $T:X \rightarrow X$ on a compact metric space $(X,d)$, we use the following notions:%
\begin{itemize}
\item A point $x \in X$ is called \emph{periodic} if there exists $n\in\Z_{>0}$ such that $T^n(x) = x$. Any $n$ with this property is called a \emph{period} of $x$. The smallest such $n$ is called the \emph{minimal period}.%
\item $T$ is called \emph{topologically transitive} if for every pair of nonempty open sets $U,V \subset X$ there exists an $n \in \Z_{>0}$ such that $T^{-n}(U) \cap V \neq \emptyset$. If $X$ has no isolated points, this is equivalent to the existence of a point $x_0 \in X$ whose forward orbit $\{x_0,T(x_0),T^2(x_0),\ldots\}$ is dense in $X$.%
\item For $\ep>0$, an $\ep$-chain for $T$ is a finite sequence of points $x_0,x_1,\ldots,x_n$ in $X$ with $n\in\Z_{>0}$, satisfying $d(T(x_i),x_{i+1}) \leq \ep$ for $i=0,1,\ldots,n-1$.%
\item A set $A\subset X$ is \emph{chain transitive} if for all $\ep>0$ and $x,y\in A$ there exists an $\ep$-chain of the form $x = x_0,x_1,\ldots,x_n = y$, where the intermediate points $x_1,\ldots,x_{n-1}$ are not necessarily elements of $A$. If we can always choose the intermediate points in $A$, we call $A$ \emph{internally chain transitive}. We say that $T$ is \emph{chain transitive} if $X$ is a chain transitive set. The maximal chain transitive sets of $T$ are called the \emph{chain components}.%
\item A point $x \in X$ is called \emph{chain recurrent} if for every $\ep>0$ there exists an $\ep$-chain from $x$ to $x$. The \emph{chain recurrent set} of $T$ is the set of all chain recurrent points.%
\item A subset $A \subset X$ is called \emph{invariant} if $T(A) = A$. A closed invariant set $A$ is called \emph{isolated invariant} if there is a neighborhood $N$ of $A$ (called an \emph{isolating neighborhood}) such that $T^n(x) \in N$ for all $n \in \Z$ implies $x \in A$.%
\item An \emph{additive cocycle} over $(X,T)$ is a mapping $\alpha:\Z_+ \tm X \rightarrow \R$, $(n,x) \mapsto \alpha_n(x)$, satisfying $\alpha_{n+m}(x) = \alpha_n(x) + \alpha_m(T^n(x))$ for all $n,m \in \Z_+$. If only the inequality $\alpha_{n+m}(x) \leq \alpha_n(x) + \alpha_m(T^n(x))$ holds, we call $\alpha$ a \emph{subadditive cocycle} over $(X,T)$.%
\item The \emph{nonwandering set} of $T$ is defined as the set of all $x\in X$ such that for every neighborhood $N$ of $x$ there is an $n \in \Z_{>0}$ with $T^n(N) \cap N \neq \emptyset$.%
\end{itemize}

Next, we recall the concept of a random dynamical system. Let $(\Omega,\FC,P)$ be a complete probability space and $\theta:\Omega \rightarrow \Omega$, $\omega \mapsto \theta\omega$, a $P$-preserving invertible map. Further, let $(X,\BC)$ be a Polish space and $\EC \subset \Omega \tm X$ a measurable subset. A \emph{bundle random dynamical system (bundle RDS)} over $(\Omega,\FC,P,\theta)$ is generated by mappings $f_{\omega}:\EC_{\omega} \rightarrow \EC_{\theta\omega}$ so that the map $(\omega,x) \mapsto f_{\omega}(x)$ is measurable, where $\EC_{\omega} := \{x\in X: (\omega,x)\in \EC\}$ (the \emph{$\omega$-fiber} of $\EC$). The map $\Phi:\EC\rightarrow\EC$ defined by $\Phi(\omega,x) := (\theta\omega,f_{\omega}(x))$ is called the \emph{skew-product transformation} of the bundle RDS. If $\EC = \Omega \tm X$, we simply speak of a \emph{random dynamical system (RDS)}. An \emph{invariant measure} $\mu$ of the bundle RDS is a probability measure on $\EC$ with marginal $P$ on $\Omega$, invariant under $\Phi$. Any such $\mu$ disintegrates as $\rmd \mu(\omega,x) = \rmd\mu_{\omega}(x)\rmd P(\omega)$ with $P$-almost everywhere defined sample measures $\mu_{\omega}$ on $\EC_{\omega}$. The invariance of $\mu$ can also be expressed by the identities $(f_{\omega})_*\mu_{\omega} = \mu_{\theta\omega}$ for $P$-almost all $\omega\in\Omega$. We write $\MC_P(\Phi;\EC)$ for the set of all invariant probability measures of a given bundle RDS. For the entropy theory of bundle RDS, we refer the reader to \cite[Sec.~1.1]{KLi}.%

\section{Hyperbolic sets of control systems}\label{sec_hyperbolic_sets}

\subsection{Setup}

We study a discrete-time control system%
\begin{equation}\label{eq_det_cs}
  \Sigma:\quad x_{t+1} = f(x_t,u_t)%
\end{equation}
with a right-hand side $f:M \tm U \rightarrow M$ satisfying the following assumptions:%
\begin{itemize}
\item $M$ is a smooth $d$-dimensional manifold for some $d\in\Z_{>0}$.%
\item $U$ is a compact and connected metrizable space.%
\item The map $f_u:M \rightarrow M$, defined by $f_u(x) := f(x,u)$, is a $C^1$-diffeomorphism for every $u\in U$, and its derivative $\rmD f_u(x)$ depends (jointly) continuously on $(u,x)$.%
\item Both $f$ and $(x,u) \mapsto f_u^{-1}(x)$ are continuous maps on $M \tm U$.%
\end{itemize}

The space of \emph{admissible control sequences} for $\Sigma$ is defined by%
\begin{equation}\label{eq_det_admissible_controls}
  \UC := U^{\Z} = \{ u = (u_t)_{t\in\Z} : u_t \in U,\ \forall t \in \Z \}.%
\end{equation}

The following facts are well-known and can be found in standard textbooks on set-theoretic topology.%

\begin{fact}
Equipped with the product topology induced by the topology of $U$, the space $\UC$ is compact, connected and metrizable. If $d_U$ is a metric on $U$, an induced product metric on $\UC$ is given by%
\begin{equation*}
  d_{\UC}(u,v) := \sum_{t \in \Z}\frac{1}{2^{|t|}}d_U(u_t,v_t).%
\end{equation*}
\end{fact}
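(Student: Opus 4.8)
The plan is to verify the four assertions --- compactness, connectedness, metrizability, and the explicit form of a compatible metric --- by combining the standard product constructions with a direct check that $d_{\UC}$ induces the product topology.

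Compactness is immediate from Tychonoff's theorem, since $\UC = U^{\Z}$ is a product of copies of the compact space $U$. Connectedness follows from the classical fact that an arbitrary product of connected topological spaces is connected in the product topology. Metrizability follows from the standard result that a countable product of metrizable spaces is metrizable; here the index set $\Z$ is countable and $U$ carries a metric $d_U$. None of these three points requires a new argument.

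For the explicit formula, first note that since $U$ is compact, $d_U$ is bounded, say $d_U \le D$. Then the series defining $d_{\UC}(u,v)$ is dominated by $D \sum_{t \in \Z} 2^{-|t|} = 3D < \infty$, so $d_{\UC}$ is well-defined and finite. Symmetry and the triangle inequality are inherited termwise from $d_U$, and $d_{\UC}(u,v) = 0$ forces $d_U(u_t,v_t) = 0$, hence $u_t = v_t$, for every $t \in \Z$; thus $d_{\UC}$ is a genuine metric. To identify the metric topology with the product topology I would check the two inclusions. From the estimate $d_U(u_t,v_t) \le 2^{|t|} d_{\UC}(u,v)$ each coordinate projection $\UC \to U$ is $d_{\UC}$-continuous, so every subbasic (hence every) set of the product topology is $d_{\UC}$-open; the product topology is therefore coarser than the metric topology. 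Conversely, given $u \in \UC$ and $\ep > 0$, choose $N \in \Z_{>0}$ with $D \sum_{|t| > N} 2^{-|t|} < \ep/2$ and set $\delta := \ep / (2(2N+1))$; then the cylinder set $\{ v \in \UC : d_U(u_t,v_t) < \delta \text{ for all } |t| \le N \}$ is a product-open neighborhood of $u$ contained in the $d_{\UC}$-ball $B_{\ep}(u)$, because for such $v$ one has $\sum_{|t| \le N} 2^{-|t|} d_U(u_t,v_t) \le \sum_{|t| \le N} d_U(u_t,v_t) < (2N+1)\delta = \ep/2$ while the tail contributes less than $\ep/2$. Hence the metric topology is coarser than the product topology as well, and the two coincide.

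There is no serious obstacle here: the statement is a standard exercise in point-set topology. The only mildly technical point is the tail estimate in the last step, which is needed to show that $d_{\UC}$-balls are open in the product topology; everything else is either a citation of a classical theorem or a termwise verification.
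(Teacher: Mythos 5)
Your proof is correct; the paper itself gives no argument for this Facts block, merely noting that the statements are standard and citing textbooks on set-theoretic topology. Your write-up is exactly the standard textbook argument (Tychonoff, product of connected spaces, countable product of metrizable spaces, and the routine tail estimate showing the weighted-sum metric induces the product topology), so it matches the paper's intent.
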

  
Sometimes, we will use the notation $d_{\UC \tm M}$ for a product metric on $\UC \tm M$, e.g.%
\begin{equation*}
  d_{\UC \tm M}((u,x),(v,y)) = d_{\UC}(u,v) + d(x,y),%
\end{equation*}
where $d$ is a given metric on $M$.%

The \emph{left shift operator} $\theta:\UC \rightarrow \UC$ is defined by%
\begin{equation*}
  (\theta u)_t :\equiv u_{t+1} \mbox{\quad for all\ } u = (u_t)_{t \in \Z} \in \UC.%
\end{equation*}
The \emph{transition map} $\varphi:\Z \tm M \tm \UC \rightarrow M$ associated with $\Sigma$ is given by%
\begin{equation*}
  \varphi(t,x,u) := \left\{\begin{array}{rl}
	                           f_{u_{t-1}} \circ \cdots \circ f_{u_1} \circ f_{u_0}(x) & \mbox{if } t > 0,\\
														                                                       x & \mbox{if } t = 0,\\
													   f_{u_t}^{-1} \circ \cdots \circ f_{u_{-2}}^{-1} \circ f_{u_{-1}}^{-1}(x) & \mbox{if } t < 0.%
													\end{array}\right.%
\end{equation*}
Together, $\theta$ and $\varphi$ constitute a \emph{skew-product system} called the \emph{control flow} of $\Sigma$:\footnote{Although the word ``flow'' is typically used for continuous-time systems, we also use it here for lack of a better name.}%
\begin{equation*}
  \Phi:\Z \tm \UC \tm M \rightarrow \UC \tm M,\quad (t,u,x) \mapsto \Phi_t(u,x) := (\theta^tu,\varphi(t,x,u)).%
\end{equation*}
We also introduce the notation $\varphi_{t,u}(x) := \varphi(t,x,u)$, $\varphi_{t,u}:M \rightarrow M$, for each pair $(t,u) \in \Z \tm \UC$. Obviously, $\varphi_{t,u}$ is a $C^1$-diffeomorphism.%

\begin{proposition}\label{prop_basics}
The maps $\theta$, $\varphi$ and $\Phi$ satisfy the following properties:%
\begin{enumerate}
\item[(a)] $\theta:\UC \rightarrow \UC$ is a homeomorphism.%
\item[(b)] $\varphi(t,\cdot,\cdot):M \tm \UC \rightarrow M$, $(x,u) \mapsto \varphi(t,x,u)$, is continuous for every $t\in\Z$.%
\item[(c)] $\varphi$ is a cocycle over the base $(\UC,\theta)$, i.e., it satisfies%
\begin{enumerate}
\item[(i)] $\varphi(0,x,u) = x$ for all $(u,x) \in \UC \tm M$,%
\item[(ii)] $\varphi(t+s,x,u) = \varphi(s,\varphi(t,x,u),\theta^tu)$ for all $t,s\in\Z$, $(u,x) \in \UC \tm M$.%
\end{enumerate}
\item[(d)] $\Phi$ is a dynamical system on $\UC \tm M$, i.e., $\Phi_0(u,x) = (u,x)$ and $\Phi_{t+s}(u,x) = \Phi_s(\Phi_t(u,x))$ for all $t,s\in\Z$ and $(u,x) \in \UC \tm M$.%
\item[(e)] For each $(t,u) \in \Z \tm \UC$, the derivative of $\varphi_{t,u}$ depends continuously on $(u,x) \in \UC \tm M$.%
\item[(f)] The periodic points of $\theta$ are dense in $\UC$ and $\theta$ is chain transitive.%
\end{enumerate}
\end{proposition}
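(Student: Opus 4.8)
The plan is to verify the six items one after another; each is a direct unwinding of the definitions, so I will be brief. For (a): in the product topology a map valued in $\UC=U^{\Z}$ is continuous iff each coordinate is, and the $t$-th coordinate of $\theta u$ is the continuous projection $u\mapsto u_{t+1}$; the same applies to the right shift $(\theta^{-1}u)_t:=u_{t-1}$, which is the set-theoretic inverse of $\theta$, so $\theta$ is a homeomorphism (equivalently, one reads off from the formula for $d_{\UC}$ that $\theta,\theta^{-1}$ are $2$-Lipschitz). For (b): I would induct on $|t|$ using the one-step recursions $\varphi(t+1,x,u)=f_{u_t}(\varphi(t,x,u))$ for $t\ge0$ and $\varphi(t-1,x,u)=f^{-1}_{u_{t-1}}(\varphi(t,x,u))$ for $t\le0$ (both read off directly from the defining formula), together with joint continuity of $(x,u)\mapsto f(x,u)$, joint continuity of $(x,u)\mapsto f_u^{-1}(x)$ (both assumed), and continuity of the coordinate projections; the base case $t=0$ is the identity.

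Item (c)(i) is the definition. For (c)(ii) I would fix $t,x,u$ and regard both sides of $\varphi(t+s,x,u)=\varphi(s,\varphi(t,x,u),\theta^tu)$ as functions of $s\in\Z$: using $(\theta^tu)_s=u_{t+s}$ and the one-step recursions from (b), both sides obey the same forward recursion in $s\ge0$ and the same backward recursion in $s\le0$, and they agree at $s=0$, so they coincide for all $s$ by induction in both directions (alternatively one expands the compositions and cancels matching pairs $f_{u_j}^{-1}\circ f_{u_j}$ in the mixed-sign cases). Then (d) is formal: $\Phi_0(u,x)=(\theta^0u,\varphi(0,x,u))=(u,x)$ by (c)(i), and
\begin{equation*}
  \Phi_s(\Phi_t(u,x)) = \bigl(\theta^s\theta^tu,\varphi(s,\varphi(t,x,u),\theta^tu)\bigr) = \bigl(\theta^{t+s}u,\varphi(t+s,x,u)\bigr) = \Phi_{t+s}(u,x)
\end{equation*}
by (c)(ii) and $\theta^s\theta^t=\theta^{t+s}$; continuity of $\Phi_t$ follows from (a),(b), and $\Phi_{-t}$ is its inverse. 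For (e): by the chain rule $\rmD\varphi_{t,u}(x)$ is, for $t>0$, the fibrewise composition $\rmD f_{u_{t-1}}(\varphi(t-1,x,u))\circ\cdots\circ\rmD f_{u_0}(x)$ of linear maps between tangent spaces; each factor $(u,x)\mapsto\rmD f_{u_i}(\varphi(i,x,u))$ is continuous, being $(u,x)\mapsto(u_i,\varphi(i,x,u))$ (continuous by (b)) followed by the assumed-continuous $(w,y)\mapsto\rmD f_w(y)$, and composition of linear maps depends continuously on the factors; for $t<0$ one uses $\rmD(f_w^{-1})(y)=[\rmD f_w(f_w^{-1}(y))]^{-1}$ with continuity of $(w,y)\mapsto f_w^{-1}(y)$ and of inversion on invertible operators, and $t=0$ is trivial. (The only care needed is to phrase ``continuity into linear maps between varying tangent spaces'' in local trivialisations of $TM$.)

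For (f), if $U$ is a singleton then $\UC$ is a singleton and both assertions are vacuous, so assume $|U|\ge2$. For density of periodic points: given $u\in\UC$, let $v^{(N)}$ be the $(2N+1)$-periodic sequence with $v^{(N)}_t=u_t$ for $|t|\le N$ (set $v^{(N)}_t:=u_{r(t)}$ with $r(t)\in[-N;N]$ representing $t$ modulo $2N+1$); then $\theta^{2N+1}v^{(N)}=v^{(N)}$ and
\begin{equation*}
  d_{\UC}(u,v^{(N)}) = \sum_{|t|>N}\frac{1}{2^{|t|}}d_U\bigl(u_t,v^{(N)}_t\bigr) \le \diam(U)\sum_{|t|>N}\frac{1}{2^{|t|}},
\end{equation*}
which tends to $0$ as $N\to\infty$, so periodic points accumulate at every point. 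For chain transitivity I would first prove topological transitivity by the usual cylinder argument: shrinking nonempty open $V,W$ to basic cylinders constraining coordinates only in a window $[-a;a]$, for any $n>2a$ the windows of $W$ and of $\theta^{-n}V$ are disjoint, so one may pick $z\in W$ with $\theta^nz\in V$, whence $\theta^{-n}(V)\cap W\neq\emptyset$. Since $\UC$ has no isolated points, the equivalence recalled in Subsection~\ref{subsec_dynsys_concepts} yields a point $p$ with dense forward orbit; then, for $x,y\in\UC$ and $\ep>0$, continuity of $\theta$ at $x$ gives $\delta\in(0,\ep]$ with $d_{\UC}(x,x')\le\delta\Rightarrow d_{\UC}(\theta x,\theta x')\le\ep$, and choosing $m$ with $d_{\UC}(\theta^mp,x)\le\delta$ and $k>m+1$ with $d_{\UC}(\theta^kp,y)\le\ep$, the sequence $x,\theta^{m+1}p,\theta^{m+2}p,\ldots,\theta^{k-1}p,y$ is an $\ep$-chain from $x$ to $y$; hence $\UC$ is chain transitive. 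I do not expect a genuine obstacle anywhere, the proposition being entirely bookkeeping; the only items deserving a moment's attention are (e), where the manifold formalism forces one to express continuity of derivatives in charts, and (f), where one must not forget the degenerate one-point alphabet and must either carry out the explicit $\ep$-chain construction or, equivalently, invoke ``topologically transitive $+$ no isolated points $\Rightarrow$ chain transitive'' after the cylinder argument.
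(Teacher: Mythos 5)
Your items (a)--(e) are exactly the routine verifications the paper dismisses with ``follow easily from the assumptions,'' and they are correct; the one place worth a pedantic remark is (c)(ii), where your double induction in $s$ needs the one-step recursion $\varphi(r+1,x,u)=f_{u_r}(\varphi(r,x,u))$ for \emph{all} $r\in\Z$ (not just $r\ge0$), which does hold by inspection of the defining formula, so the argument goes through.

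For (f) you take a genuinely different route from the paper. The paper's argument is: periodic points are dense (asserted, with your periodic-continuation construction being the obvious proof), every periodic point is chain recurrent, the chain recurrent set is closed and hence equals $\UC$, and then \emph{connectedness} of $\UC$ (inherited from connectedness of $U$ via the product) together with the cited result \cite[Prop.~3.3.5(iii)]{CK2} --- a closed, chain recurrent, connected set is chain transitive --- finishes the proof. You instead prove topological transitivity of $\theta$ directly by the cylinder argument, pass to a dense forward orbit via the Birkhoff-type equivalence recalled in Subsection \ref{subsec_dynsys_concepts}, and assemble explicit $\ep$-chains from segments of that orbit. Your proof is self-contained (no external citation) and, notably, makes no use of the connectedness of $U$ at all --- it only needs $U$ nonempty, with the one-point alphabet handled separately --- so it establishes chain transitivity of the shift in slightly greater generality than the paper's argument, at the cost of the extra step through topological transitivity and the dense orbit. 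Both proofs are valid; the paper's is shorter given the cited lemma, yours is more elementary and constructive.
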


All statements except for the very last one follow easily from the assumptions. Hence, we only remark that the chain transitivity of $\theta$ follows from the fact that the periodic points are dense combined with the connectedness of $\UC$. Indeed, every periodic point is trivially chain recurrent. Since the chain recurrent set is closed, it thus equals $\UC$. By \cite[Prop.~3.3.5(iii)]{CK2}, a closed set which is chain recurrent and connected is chain transitive.%

Observe that the cocycle property (item (c) above) implies that the inverse of $\varphi_{t,u}$ is given by $\varphi_{t,u}^{-1} = \varphi_{-t,\theta^tu}$.%

Since $(\Phi_t)_{t\in\Z}$ is a dynamical system on $\UC \tm M$, we have $\Phi_t = (\Phi_1)^t$ for all $t\in\Z$, i.e., the system is completely determined by its time-$1$ map. This justifies to write $\Phi$ not only for the sequence $(\Phi_t)_{t\in\Z}$ but also for the time-$1$ map $\Phi_1$.%

Sometimes, we also need to require a higher regularity of the system with respect to $x$. We say that the system $\Sigma$ is of \emph{regularity class $C^2$} if for each $u \in U$ the map $f_u$ is a $C^2$-diffeomorphism with first and second derivatives depending continuously on $(u,x) \in U \tm M$.%

We call a set $Q \subset M$ \emph{all-time controlled invariant} if for every $x\in Q$ there is a $u\in \UC$ such that $\varphi(\Z,x,u) \subset Q$. To such $Q$, we associate its \emph{all-time lift}%
\begin{equation*}
  L(Q) := \left\{ (u,x) \in \UC \tm M : \varphi(\Z,x,u) \subset Q \right\}.%
\end{equation*}
It is easy to see that $L(Q)$ is an invariant set of the control flow $\Phi$ which is compact if and only if $Q$ is compact. We define the \emph{$u$-fibers} of $Q$ by%
\begin{equation*}
  Q(u) := \left\{ x \in M : \varphi(\Z,x,u) \subset Q \right\}, \quad u \in \UC.%
\end{equation*}
The following properties of the $u$-fibers are easy to derive:%
\begin{itemize}
\item Each $u$-fiber $Q(u)$ is compact (but not necessarily nonempty).%
\item For all $t\in\Z$ and $u\in\UC$, the following relation holds:%
\begin{equation*}
  \varphi_{t,u}(Q(u)) = Q(\theta^tu).%
\end{equation*}
\item The set $\UC_Q := \{ u \in \UC : Q(u) \neq \emptyset \}$ is compact and $\theta$-invariant.%
\item The set-valued map $u \mapsto Q(u)$, defined on $\UC_Q$, is upper semicontinuous (but not necessarily lower semicontinuous).%
\end{itemize}

The map $u \mapsto Q(u)$ as defined above will be called the \emph{fiber map} of $Q$.%

Now we introduce the notion of uniform hyperbolicity which requires an additional structure on the smooth manifold $M$, namely a Riemannian metric. However, choosing a different metric only results in the change of the constant $c$ in condition (H2) below, and hence the notion of uniform hyperbolicity is metric-independent (see also Proposition \ref{prop_hypset_props} in the Appendix).%

\begin{definition}\label{def_uhs}
A nonempty compact all-time controlled invariant set $Q$ is called \emph{uniformly hyperbolic} (or simply \emph{hyperbolic}) if for every $(u,x) \in L(Q)$ there is a decomposition%
\begin{equation*}
  T_xM = E^-(u,x) \oplus E^+(u,x)%
\end{equation*}
as a direct sum, satisfying the following properties:%
\begin{enumerate}
\item[(H1)] The decomposition is invariant in the sense that%
\begin{equation*}
  \rmD \varphi_{t,u}(x)E^{\pm}(u,x) = E^{\pm}(\Phi_t(u,x)) \mbox{\quad for all\ } (u,x) \in L(Q),\ t \in \Z.%
\end{equation*}
\item[(H2)] There are constants $c \geq 1$ and $\lambda \in (0,1)$ such that for all $(u,x) \in L(Q)$ and $t\in\Z_+$ the following inequalities hold:%
\begin{align*}
  |\rmD\varphi_{t,u}(x)v| &\leq c\lambda^t |v| \mbox{\quad for all\ } v \in E^-(u,x),\\
  |\rmD\varphi_{-t,u}(x)v| &\leq c\lambda^t |v| \mbox{\quad for all\ } v \in E^+(u,x).%
\end{align*}
\item[(H3)] The dimensions of the subspaces $E^-(u,x)$ and $E^+(u,x)$ are constant over $(u,x) \in L(Q)$.%
\end{enumerate}
\end{definition}

An easy consequence of the hypotheses (H1) and (H2) is that the subspaces $E^{\pm}(u,x)$ vary continuously with $(u,x)$. This continuity statement can be expressed, e.g., in terms of the projections $\pi^{\pm}_{u,x}:T_xM \rightarrow E^{\pm}(u,x)$ along the respective complementary subspace, whose components in each coordinate chart are continuous functions of $(u,x)$. An implication of the continuity is that, also without hypothesis (H3), the dimensions of $E^{\pm}(u,x)$ are locally constant. Actually, our only reason to require (H3) is that we can avoid to include this as an extra assumption in many results that follow. For obvious reasons, we call $E^-(u,x)$ the \emph{stable subspace} and $E^+(u,x)$ the \emph{unstable subspace} at $(u,x)$, respectively. The case that one of the subspaces $E^{\pm}(u,x)$ is zero-dimensional is possible and we do not exclude it from the definition. Some elementary properties of hyperbolic sets are proved in Section \ref{sec_appb} of the Appendix.%

If the space $U$ of control values is a singleton $\{u\}$, Definition \ref{def_uhs} reduces to the classical definition of a uniformly hyperbolic set for the diffeomorphism $f_u$. In this case, we speak of a \emph{classical hyperbolic set}.%

A fundamental quantity used to describe the minimal required data rate above which a set can be rendered invariant by an appropriately designed coder-controller pair is known by the name \emph{invariance entropy}. We now recall its definition. A pair $(K,Q)$ of sets $K \subset Q \subset M$ is called an \emph{admissible pair} (for $\Sigma$) if for each $x\in K$ there is a $u\in\UC$ with $\varphi(\Z_+,x,u) \subset Q$. A set $\SC \subset \UC$ is called \emph{$(\tau,K,Q)$-spanning} for some $\tau \in \Z_{>0}$ if for every $x\in K$ there is a $u \in \SC$ with $\varphi(t,x,u) \in Q$ for all $t \in [0;\tau)$. The \emph{invariance entropy} of $(K,Q)$ is defined by%
\begin{equation}\label{eq_def_ie}
  h_{\inv}(K,Q) := \limsup_{\tau \rightarrow \infty}\frac{1}{\tau}\log r_{\inv}(\tau,K,Q),%
\end{equation}
where $r_{\inv}(\tau,K,Q)$ denotes the minimal cardinality of a $(\tau,K,Q)$-spanning set. Associated data-rate theorems that characterize the smallest average data rate required to make $Q$ invariant in terms of $h_{\inv}(K,Q)$ can be found in \cite[Thm.~2.4]{Ka3} and \cite[Thm.~8]{DK2}. If $K = Q$, then the $\limsup$ in \eqref{eq_def_ie} is a limit and we also write $h_{\inv}(Q)$ instead of $h_{\inv}(Q,Q)$.%

\subsection{Tools from the hyperbolic theory}\label{subsec_hyperbolic_tools}

In this subsection, we present the main results from the hyperbolic theory that we use in our proofs. Throughout, we assume that a Riemannian metric on $M$ is fixed.%

First, we introduce the concepts of pseudo-orbits and shadowing. Consider the control system $\Sigma$. A two-sided sequence $(u^t,x_t)_{t\in\Z}$ in $\UC \tm M$ is called an \emph{$\alpha$-pseudo-orbit} for some $\alpha>0$ if\footnote{To avoid abuse of notation, we use a superscript for the $u$-component, because $u_t$ already denotes the $t$-th component of the sequence $u$.}%
\begin{equation*}
  u^{t+1} = \theta u^t \mbox{\quad and \quad} d(\varphi_{1,u^t}(x_t),x_{t+1}) \leq \alpha \mbox{\quad for all\ } t \in \Z.%
\end{equation*}
Hence, any pseudo-orbit is a real orbit in the $u$-component, but not necessarily in the $x$-component where we allow jumps of size at most $\alpha$ in each step of time. We say that a $\Phi$-orbit $(\theta^t u,\varphi_{t,u}(x))_{t\in\Z}$ \emph{$\beta$-shadows} a pseudo-orbit $(u^t,x_t)_{t\in\Z}$ if%
\begin{equation*}
  u = u^0 \mbox{\quad and \quad} d(\varphi_{t,u}(x),x_t) \leq \beta \mbox{\quad for all\ } t \in \Z.%
\end{equation*}

The shadowing lemma roughly says that in a small neighborhood of a hyperbolic set, every $\alpha$-pseudo-orbit is $\beta$-shadowed by a real orbit if $\alpha = \alpha(\beta)$ is chosen small enough. The complete and precise statement is as follows. A proof can be found in Meyer \& Zhang \cite{MZh}.%

\begin{theorem}\label{thm_shadowing}
Let $Q$ be a hyperbolic set of $\Sigma$. Then there is a neighborhood $\NC \subset \UC \tm M$ of $L(Q)$ such that the following holds:%
\begin{enumerate}
\item[(a)] For every $\beta>0$, there is an $\alpha>0$ such that every $\alpha$-pseudo-orbit in $\NC$ is $\beta$-shadowed by an orbit.%
\item[(b)] There is $\beta_0 > 0$ such that for every $\beta \in (0,\beta_0)$ the $\beta$-shadowing orbit in (a) is unique.%
\item[(c)] If $L(Q)$ is an isolated invariant set of the control flow, then the unique $\beta$-shadowing orbit in (b) is completely contained in $L(Q)$.%
\end{enumerate}
\end{theorem}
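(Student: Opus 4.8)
The plan is to prove the shadowing lemma for the hyperbolic set $Q$ of the control system $\Sigma$ by reducing it to a known shadowing result for hyperbolic sets of a single diffeomorphism, applied to the time-$1$ map $\Phi = \Phi_1$ of the control flow on the extended state space $\UC \tm M$. The key observation is that, up to the product structure, a hyperbolic set $Q$ of $\Sigma$ in the sense of Definition \ref{def_uhs} is precisely a hyperbolic set of the homeomorphism $\Phi:\UC \tm M \rightarrow \UC \tm M$ in a suitable sense: the lift $L(Q)$ is compact and $\Phi$-invariant, and the subbundle decomposition $T_xM = E^-(u,x) \oplus E^+(u,x)$ over $L(Q)$ is $\rmD\Phi$-invariant with uniform contraction/expansion by (H1)–(H2). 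The only subtlety is that $\UC \tm M$ is not a manifold, since $\UC$ carries no differentiable structure; the ``$\UC$-direction'' is a neutral (non-hyperbolic) direction along which $\Phi$ acts isometrically by a shift, and for shadowing purposes one only needs a metric and the hyperbolic splitting in the $M$-direction plus the trivial fact that $\theta$ is an isometry-like homeomorphism (indeed $\theta$ is a homeomorphism of the compact metric space $\UC$ and every pseudo-orbit is a genuine orbit in the $\UC$-component). One then invokes the version of the shadowing lemma proved in Meyer \& Zhang \cite{MZh}, which is stated precisely for this skew-product setting.

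Concretely, I would proceed in the following steps. First, fix the Riemannian metric on $M$ and the product metric $d_{\UC \tm M}$; by the continuity statement following Definition \ref{def_uhs}, the projections $\pi^\pm_{u,x}$ depend continuously on $(u,x) \in L(Q)$, so the hyperbolic splitting extends continuously to a small neighborhood and the angle between $E^-$ and $E^+$ is bounded below. Second, construct the neighborhood $\NC$ of $L(Q)$ in $\UC \tm M$ small enough that the standard hyperbolicity estimates (cone conditions, uniform expansion/contraction of the extended cones) hold on $\NC$; this is where compactness of $L(Q)$ and continuity of $(u,x) \mapsto \rmD\varphi_{1,u}(x)$ and of the splitting are used. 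Third, for part (a), given $\beta > 0$, run the classical graph-transform / fixed-point argument: an $\alpha$-pseudo-orbit $(u^t,x_t)$ has $u^{t+1} = \theta u^t$ automatically, so the $\UC$-coordinate of the shadowing orbit is forced to be $u := u^0$ and the problem reduces to finding $x$ with $d(\varphi_{t,u}(x),x_t) \le \beta$ for all $t$; this is solved by the usual hyperbolic fixed-point scheme in the $M$-fibers with $\alpha = \alpha(\beta)$ chosen small. Fourth, for part (b), uniqueness of the $\beta$-shadowing orbit for $\beta < \beta_0$ follows from expansivity of hyperbolic sets (two orbits that stay $\beta_0$-close forever must coincide) — this is Proposition-level material referenced in the Appendix. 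Fifth, for part (c), if $L(Q)$ is isolated invariant with isolating neighborhood $N$, choose $\beta_0$ so that $N_{\beta_0}(L(Q)) \subset N$; then the unique $\beta$-shadowing orbit lies in $N$ for all $t \in \Z$, hence in $L(Q)$ by the isolation property.

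Since the excerpt explicitly says ``A proof can be found in Meyer \& Zhang \cite{MZh}'', the cleanest route is simply to cite that reference and indicate the correspondence, rather than reproduce the graph-transform argument; so the actual ``proof'' here is essentially a pointer plus the remark that the skew-product structure over $(\UC,\theta)$ causes no difficulties because $\theta$ is merely a homeomorphism of a compact metric space acting as identity on the pseudo-orbit relation in that coordinate, and because uniform hyperbolicity in the $M$-direction over the compact set $L(Q)$ is exactly the hypothesis $\cite{MZh}$ requires. The main obstacle — or rather the main thing to get right — is bookkeeping about \emph{which} neighborhood is taken in $\UC \tm M$ versus in $M$: one must ensure that ``$\alpha$-pseudo-orbit in $\NC$'' and ``$\beta$-shadowing'' are interpreted with the extended-state-space metric, so that closeness in the $\UC$-component (which is free) does not interfere, and that the isolating neighborhood in part (c) is taken in $\UC \tm M$ for the control flow $\Phi$, consistent with the weakened isolatedness hypothesis emphasized in the introduction. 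Beyond that the result is standard, and I would expect the formal proof in the paper to be a short paragraph deferring to \cite{MZh} together with the Appendix facts on continuity of the splitting and expansivity.
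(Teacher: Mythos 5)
Your proposal matches the paper exactly: the paper gives no proof of Theorem~\ref{thm_shadowing} at all and simply defers to Meyer \& Zhang \cite{MZh}, which is precisely the skew-product shadowing lemma you describe, and your sketch of the standard graph-transform, expansivity, and isolating-neighborhood arguments for (a)--(c) is correct. The one bookkeeping point worth noting is that in part (c) the shadowing orbit is $\beta$-close to the pseudo-orbit, which lies in $\NC$ rather than in $L(Q)$, so one must also take $\NC$ inside a small metric neighborhood of $L(Q)$ before concluding that the orbit remains in the isolating neighborhood for all time.
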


Another extremely useful property of hyperbolic sets is called \emph{expansivity}. It is an easy consequence of the stable manifold theorem, see \cite[Thm.~2.1]{MZh}.%

\begin{theorem}\label{thm_expansiveness}
Let $Q$ be a hyperbolic set of $\Sigma$. Then there exists $\delta>0$ such that for all $(u,x) \in L(Q)$ and $y\in M$ the following implication holds: If $d(\varphi(t,x,u),\varphi(t,y,u)) \leq \delta$ for all $t\in\Z$, then $x = y$. Any constant $\delta$ with this property is called an expansivity constant.%
\end{theorem}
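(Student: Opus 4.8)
The plan is to deduce expansivity from the stable manifold theorem, exactly as in the classical autonomous case, but carried out fiberwise over the lift $L(Q)$. First I would invoke the local stable/unstable manifold theorem for the hyperbolic set $Q$ (available by the standard RDS machinery, cf.\ the references in Subsection \ref{subsec_hyperbolic_tools}): there is $\rho>0$ and, for each $(u,x)\in L(Q)$, local stable and unstable manifolds $W^-_{\rho}(u,x)$ and $W^+_{\rho}(u,x)$ tangent at $x$ to $E^-(u,x)$ and $E^+(u,x)$ respectively, characterized by
\begin{equation*}
  W^-_{\rho}(u,x) = \{ y\in M : d(\varphi(t,x,u),\varphi(t,y,u)) \leq \rho \text{ for all } t\in\Z_+ \},
\end{equation*}
and symmetrically $W^+_{\rho}(u,x)$ with the condition for all $t\in\Z_-$ (shrinking $\rho$ if necessary so that the two sets are $C^1$ discs of complementary dimension meeting transversally at $x$, with the size of the transversality uniform in $(u,x)$ by continuity of the splitting and compactness of $L(Q)$).

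Next I would choose the expansivity constant $\delta\in(0,\rho)$ small enough that, for every $(u,x)\in L(Q)$, the intersection $W^-_{\rho}(u,x)\cap W^+_{\rho}(u,x)$ inside the ball $B_{\delta}(x)$ consists of the single point $x$; such a uniform $\delta$ exists because the angle between $E^-$ and $E^+$ is bounded below (by continuity of the projections $\pi^{\pm}_{u,x}$ on the compact set $L(Q)$), and the local manifolds depend continuously on the base point. Now suppose $d(\varphi(t,x,u),\varphi(t,y,u))\leq\delta$ for all $t\in\Z$. The forward half of this condition places $y\in W^-_{\rho}(u,x)$; the backward half places $y\in W^+_{\rho}(u,x)$. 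Since also $y\in B_{\delta}(x)$ (take $t=0$), the choice of $\delta$ forces $y=x$, which is the claim.

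The main obstacle — and the reason this "easy consequence" needs a remark — is establishing that the local stable manifold is genuinely characterized as the set of points whose forward orbit stays $\rho$-close, rather than merely contains that set, and that $\rho$ and the transversality bound can be taken uniform over all of $L(Q)$ rather than just locally. In the control (non-autonomous) setting the manifolds $W^{\pm}_{\rho}(u,x)$ depend on $u\in\UC$ and not only on $x$, so one must appeal to the version of the stable manifold theorem for uniformly hyperbolic sets of RDS (e.g.\ \cite{Liu,KLi}) and use compactness of $L(Q)$ together with continuity of $(u,x)\mapsto E^{\pm}(u,x)$ to extract the uniform constants. Once the uniform stable manifold theorem is in hand, the remainder is the short transversality argument above; since the paper only wants the statement recorded and refers to \cite[Thm.~2.1]{MZh} for the full proof, it suffices to indicate this reduction.
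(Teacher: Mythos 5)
Your proposal is correct and follows essentially the same route as the paper, which proves expansivity precisely by invoking the stable manifold theorem of Meyer \& Zhang \cite{MZh} and noting that the transversal intersection of the local stable and unstable manifolds through $x$ reduces to the single point $x$ for a uniformly chosen $\delta$. Your additional remarks on the uniformity of $\rho$ and of the transversality bound over the compact set $L(Q)$ correctly identify the only points requiring care, and they are supplied by the RDS version of the stable manifold theorem cited in Subsection \ref{subsec_hyperbolic_tools}.
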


For all $u \in \UC$, $x \in M$, $\ep>0$ and $\tau \in \Z_{>0}$, we introduce the \emph{Bowen-ball}%
\begin{equation*}
  B^{u,\tau}_{\ep}(x) := \left\{ y \in M : d(\varphi(t,x,u),\varphi(t,y,u)) \leq \ep \mbox{ for } t = 0,1,\ldots,\tau-1 \right\}.%
\end{equation*}
We call $B^{u,\tau}_{\ep}(x)$ the Bowen-ball \emph{of order $\tau$} and \emph{radius $\ep$}, \emph{centered at $x$} and \emph{associated with the control $u$}. Observe that this is the usual closed $\ep$-ball in the metric%
\begin{equation*}
  d^{u,\tau}(x,y) := \max_{t \in [0;\tau)}d(\varphi(t,x,u),\varphi(t,y,u)),%
\end{equation*}
which is compatible with the topology of $M$.%

The \emph{(Bowen-Ruelle) volume lemma} provides asymptotically precise estimates for the volumes of Bowen-balls centered in hyperbolic sets. It requires a little more regularity in the state variable. A detailed proof can be found in \cite{DK4}.%

\begin{theorem}\label{thm_volume_lemma}
Assume that $\Sigma$ is of regularity class $C^2$ and let $Q$ be a hyperbolic set of $\Sigma$. Then, for every sufficiently small $\ep>0$, the following estimates hold for all $(u,x) \in L(Q)$ and $\tau\in\Z_{>0}$ with some constant $C_{\ep} \geq 1$:%
\begin{equation*}
   C_{\ep}^{-1}|\det\rmD\varphi_{\tau,u}(x)_{|E^+(u,x)}|^{-1} \leq \vol(B^{u,\tau}_{\ep}(x)) \leq C_{\ep}|\det\rmD\varphi_{\tau,u}(x)_{|E^+(u,x)}|^{-1}.%
\end{equation*}
\end{theorem}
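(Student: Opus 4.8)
My plan rests on the heuristic that, in a chart adapted to the hyperbolic splitting at $x$, the Bowen-ball $B^{u,\tau}_{\ep}(x)$ is an approximate product: along the stable directions $E^-(u,x)$ it has size comparable to $\ep$ uniformly in $\tau$, while along the unstable directions $E^+(u,x)$ its $\dim E^+(u,x)$-dimensional volume is comparable to $\ep^{\dim E^+(u,x)}\,|\det\rmD\varphi_{\tau,u}(x)_{|E^+(u,x)}|^{-1}$. Multiplying the two contributions produces $\ep^d$ times $|\det\rmD\varphi_{\tau,u}(x)_{|E^+(u,x)}|^{-1}$, and absorbing $\ep^d$ and all uniform constants into $C_\ep$ yields the asserted two-sided bound. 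Every constant arising below can be taken uniform in $(u,x)\in L(Q)$: the hyperbolicity constants $c,\lambda$ are uniform by (H2), the dimensions are constant by (H3), and the angles between $E^-(u,x)$ and $E^+(u,x)$, the radii of the adapted charts, and the $C^2$-norms of the maps $f_u$ read in these charts are uniform over the compact set $L(Q)$ by continuity (the statement is, in any case, metric-independent up to the value of $C_\ep$, since on the compact set $Q$ any two Riemannian metrics are uniformly comparable).

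\textbf{Setup (Steps 1 and 2).} First I would, for $(u,x)\in L(Q)$, use $\exp_x$ together with an orthonormal frame adapted to $T_xM=E^-(u,x)\oplus E^+(u,x)$ to identify a neighborhood of $x$ of uniform radius with a product $V^-\tm V^+\subset E^-(u,x)\tm E^+(u,x)$. Reading the one-step maps $\varphi_{1,\theta^tu}$ in the charts attached to consecutive points of the orbit $(\Phi_t(u,x))_{t\ge 0}$ gives $C^2$-maps with uniformly bounded first and second derivatives whose linear part at the origin is block-diagonal for $E^-\oplus E^+$; uniformity here uses continuity of $(u,x)\mapsto E^{\pm}(u,x)$ and compactness of $L(Q)$. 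Next, from (H1)--(H3) one builds, on a uniform neighborhood of $L(Q)$, an invariant pair of cone fields $\mathcal{C}^{\pm}$ around $E^{\pm}$ with uniform one-step contraction and expansion factors (the standard cone characterization of uniform hyperbolicity) and, via the usual graph-transform argument, the associated families of \emph{admissible local stable and unstable disks}: graphs over $V^-$, respectively $V^+$, lying in $\mathcal{C}^-$, respectively $\mathcal{C}^+$, with uniformly bounded slope and curvature, which are contracted, respectively expanded, by the dynamics at the uniform hyperbolic rates, and such that two nearby disks of the same type converge exponentially under forward iteration. Only $C^1$-regularity is needed for this step.

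\textbf{The box estimate (Steps 3 and 4).} Call a \emph{box} a set of the form $\bigcup_{y\in D^+}W^-(y)$, where $D^+$ is an admissible unstable disk through $x$ of some radius $\rho$ and each $W^-(y)$ is the admissible stable disk through $y$ of a fixed radius $r$; Fubini in the chart, together with the uniform bounds on slopes and curvatures, makes the volume of such a box comparable, with uniform constants, to $r^{\dim E^-(u,x)}\,\vol_{\dim E^+(u,x)}(D^+)$. I would then show that there are uniform constants $c_1\le c_2$ and radii $\rho_{\mathrm{in}}\le\rho_{\mathrm{out}}$ so that, for all sufficiently small $\ep$ and all $\tau$, the box with $r=c_1\ep$, $\rho=\rho_{\mathrm{in}}$ is contained in $B^{u,\tau}_{\ep}(x)$, which is in turn contained in the box with $r=c_2\ep$, $\rho=\rho_{\mathrm{out}}$: the bound $r\le c_2\ep$ is immediate from the constraint at time $t=0$; the inclusion of $B^{u,\tau}_{\ep}(x)$ in the outer box follows from the local product structure near the orbit of $x$ together with the time-$(\tau-1)$ constraint bounding the unstable coordinate; and the inner box lies in $B^{u,\tau}_{\ep}(x)$ because each of its forward iterates, $0\le t<\tau$, stays in the $\ep$-ball around the corresponding orbit point, by stable contraction and the choice of $\rho_{\mathrm{in}}$. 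The radii $\rho_{\mathrm{in}},\rho_{\mathrm{out}}$ are pinned down by the constraint at the last time $t=\tau-1$: an admissible unstable disk of radius $\rho$ through $x$ is carried by $\varphi_{\tau-1,u}$ onto an admissible unstable disk through $\varphi(\tau-1,x,u)$ whose $\dim E^+$-volume equals $\rho^{\dim E^+(u,x)}$ times a Jacobian factor which, by a bounded-distortion estimate, stays within a uniform constant of $|\det\rmD\varphi_{\tau-1,u}(x)_{|E^+(u,x)}|$; requiring this image to lie inside, respectively to contain, the $\ep$-ball around $\varphi(\tau-1,x,u)$ makes $\vol_{\dim E^+(u,x)}(D^+)$ comparable to $\ep^{\dim E^+(u,x)}\,|\det\rmD\varphi_{\tau-1,u}(x)_{|E^+(u,x)}|^{-1}$ with uniform constants. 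Since $|\det\rmD\varphi_{\tau,u}(x)_{|E^+}|$ differs from $|\det\rmD\varphi_{\tau-1,u}(x)_{|E^+}|$ only by the one-step factor $|\det\rmD\varphi_{1,\theta^{\tau-1}u}(\varphi(\tau-1,x,u))_{|E^+}|$, bounded above and below uniformly over $L(Q)$, the two may be interchanged; combining this with the box sandwich gives $\vol(B^{u,\tau}_{\ep}(x))$ between two quantities of the form $C\cdot\ep^{d}\cdot|\det\rmD\varphi_{\tau,u}(x)_{|E^+(u,x)}|^{-1}$ with uniform $C>0$, and absorbing $\ep^d$ and the constants into one $C_\ep\ge 1$ finishes the proof.

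\textbf{The main obstacle.} I expect the crux to be the bounded-distortion estimate: one must compare $\rmD\varphi_{\tau-1,u}$ restricted to the curved, $(u,x)$-dependent admissible unstable disk, over a region of diameter of order $\ep$, with the linear isomorphism $\rmD\varphi_{\tau-1,u}(x)_{|E^+(u,x)}\colon E^+(u,x)\to E^+(\Phi_{\tau-1}(u,x))$ --- a map between two \emph{different} subspaces --- and the per-step discrepancies have to be summed over all $\tau-1$ steps without the estimate degenerating as $\tau\to\infty$. The resolution is the standard one: by uniform hyperbolicity two nearby admissible unstable disks converge exponentially under forward iteration, so the one-step distortion is dominated by the general term of a convergent geometric series and the total distortion stays bounded independently of $\tau$. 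This is exactly where the $C^2$-hypothesis on $\Sigma$ is used --- the $C^1$-data control the cone angles and the Lipschitz constants of the unstable directions, while the second derivatives control the curvature of the admissible disks and the Lipschitz variation of the unstable Jacobian along them, which is the input a bounded-distortion argument requires. A complete and careful execution of this program in the present control-system setting is carried out in \cite{DK4}.
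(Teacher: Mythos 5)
The paper does not prove this theorem itself but defers to the detailed proof in \cite{DK4}, and your outline --- adapted charts along the orbit, invariant cone fields and admissible stable/unstable disks, the Fubini-type box sandwich for the Bowen-ball, and a bounded-distortion estimate for the unstable Jacobian summed as a geometric series (this being where the $C^2$-hypothesis enters) --- is exactly the Bowen/Liu-style argument carried out there. So the proposal is correct in approach and coincides with the proof the paper relies on; the only caveat is that it remains a strategy sketch, with the distortion estimate described rather than executed.
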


Since the determinant that appears in the above estimates will be used frequently, we introduce an abbreviation for it:%
\begin{equation*}
  J^+\varphi_{\tau,u}(x) := |\det\rmD\varphi_{\tau,u}(x)_{|E^+(u,x)}:E^+(u,x) \rightarrow E^+(\Phi_{\tau}(u,x))|.%
\end{equation*}
We also call this function the \emph{unstable determinant}. It is easy to see that%
\begin{itemize}
\item $(u,x) \mapsto J^+\varphi_{\tau,u}(x)$ is continuous for every $\tau \in \Z$ and%
\item $J^+\varphi_{\tau_1 + \tau_2,u}(x) = J^+\varphi_{\tau_1,u}(x) \cdot J^+\varphi_{\tau_2,\theta^{\tau_1}u}(\varphi_{\tau_1,u}(x))$ for all $\tau_1,\tau_2 \in \Z$ and $(u,x) \in L(Q)$. That is, $\log J^+\varphi$ is an additive cocycle over $(\Phi_{|L(Q)},L(Q))$.%
\end{itemize}

\subsection{Properties of hyperbolic sets}\label{subsec_structure}

In this subsection, we first prove the following theorem on the structure of hyperbolic sets. Then, we provide a way of constructing hyperbolic sets via small ``control-perturbations'' of diffeomorphisms. Finally, we study controllability properties of these sets.%

\begin{theorem}\label{thm_structure}
Let $Q$ be a hyperbolic set of the control system $\Sigma$ and assume that its all-time lift $L(Q)$ is an isolated invariant set of the control flow. Then all fibers $Q(u)$, $u \in \UC$, are nonempty and homeomorphic to each other.%
\end{theorem}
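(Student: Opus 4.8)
The plan is to exploit the shadowing lemma (Theorem \ref{thm_shadowing}) together with the isolatedness of $L(Q)$ to transfer points between fibers. First I would establish that every fiber $Q(u)$ is nonempty. Fix $u \in \UC$. Since $Q$ is nonempty, there exists $(v,y) \in L(Q)$, i.e. $y \in Q(v)$. The idea is to build an $\alpha$-pseudo-orbit whose $\UC$-component eventually agrees with the shifts of $u$: concretely, choose a large $N$ and form the control sequence that coincides with $\theta^{-N}v$ on times $t < 0$ and with $u$ (suitably shifted) on times $t \geq 0$; because $U$ is connected and $\UC$ carries the product metric, the ``jump'' in the $x$-component at the gluing time can be made as small as we like by letting the two orbit pieces come close — more carefully, one uses a density/recurrence argument in $\UC$ (Proposition \ref{prop_basics}(f)) to find a single $\tilde u$ whose orbit passes near both $v$ and $u$, and then the corresponding concatenated $\Phi$-orbit through $Q$ is an $\alpha$-pseudo-orbit lying in the neighborhood $\NC$ of $L(Q)$. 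By Theorem \ref{thm_shadowing}(a)--(c), it is $\beta$-shadowed by a genuine orbit that is contained in $L(Q)$, and whose $\UC$-component is $u$; hence $Q(u) \neq \emptyset$.

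Next I would construct the homeomorphisms between fibers. Given $u, v \in \UC$, I want a map $Q(u) \to Q(v)$. The natural candidate, when $\theta^t u$ is close to $v$ for some $t$, is: take $x \in Q(u)$, push it forward to $\varphi(t,x,u) \in Q(\theta^t u)$, regard the $\Phi$-orbit through $(\theta^t u, \varphi(t,x,u))$ as an $\alpha$-pseudo-orbit for the control $v$ (the $\UC$-component jump $d_{\UC}(\theta^t u, v)$ is small, which by uniform continuity of $\varphi_{1,\cdot}$ forces the $x$-component jumps to be $\leq \alpha$), and define the image to be the unique $\beta$-shadowing orbit's value in $Q(v)$, which by Theorem \ref{thm_shadowing}(b)--(c) exists, is unique, and lies in $L(Q)$. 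For general $v$ not close to a shift of $u$, one composes finitely many such maps along a chain in $\UC$ (available by chain transitivity of $\theta$), or — cleaner — one observes that $\theta$ has dense periodic points and handles the dependence by a limiting/continuity argument. Uniqueness of the shadowing orbit gives that this construction does not depend on the auxiliary choices and that the composition $Q(u) \to Q(v) \to Q(u)$ is the identity (both it and $\id_{Q(u)}$ are orbits $\beta$-shadowing the same pseudo-orbit), so the map is a bijection; continuity in both directions follows from continuity of $\varphi$ and from the uniform (in the base point) nature of the $\alpha$--$\beta$ relation in the shadowing lemma, together with compactness of the fibers.

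The main obstacle I anticipate is making the pseudo-orbit construction precise and uniform: one must ensure simultaneously that (i) the glued pseudo-orbit stays inside the fixed neighborhood $\NC$ of $L(Q)$ so that the shadowing lemma applies, (ii) the $\UC$-components match up exactly as required (they must be genuine $\theta$-orbits, not approximate), and (iii) the resulting maps between fibers are inverse to each other and continuous. Point (ii) is the subtle one — since a pseudo-orbit is forced to be an exact orbit in the $\UC$-component, one cannot freely ``glue'' control sequences; the fix is to perturb only the $x$-component, absorbing the mismatch between $\theta^t u$ and $v$ into the allowed $\alpha$-sized jumps via the joint continuity of $(u,x) \mapsto \varphi_{1,u}(x)$ on the compact set $\NC$. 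Ensuring (i) requires choosing $\beta$ (hence $\alpha$) small enough at the outset that the $\beta$-neighborhood of $L(Q)$ sits inside $\NC$, and choosing the shifts/chains so that the pseudo-orbit never strays; compactness of $L(Q)$ and upper semicontinuity of the fiber map do the bookkeeping here. Once these uniformity issues are settled, bijectivity and bicontinuity are routine consequences of the uniqueness clause in Theorem \ref{thm_shadowing}(b).
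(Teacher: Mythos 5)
There is a genuine gap, and it sits exactly at the point you flag as "the subtle one"; unfortunately the fix you propose does not close it. For the glued sequence $(\theta^s v,\,x_s)_{s\in\Z}$ with $x_s=\varphi(t+s,x,u)$ to be an $\alpha$-pseudo-orbit, you need $d\bigl(f_{v_s}(x_s),f_{u_{t+s}}(x_s)\bigr)\leq\alpha$ for \emph{every} $s\in\Z$, which via uniform continuity of $f$ on $Q\tm U$ requires $d_U(v_s,u_{t+s})\leq\ep$ \emph{uniformly in $s$} — that is, closeness of $v$ and $\theta^tu$ in the sup-metric $d_{\infty}(u,v)=\sup_t d_U(u_t,v_t)$, not in the product metric $d_{\UC}$. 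Product-metric closeness only constrains finitely many coordinates (the weight $2^{-|s|}$ lets the tails be arbitrary), so for large $|s|$ the $x$-component jump is uncontrolled and the shadowing lemma does not apply. This also defeats the tools you invoke to pass between arbitrary $u$ and $v$: chain transitivity of $\theta$ and density of its periodic points are statements about the product topology, and a product-metric $\delta$-chain from $u$ to $v$ gives no uniform control on the tails of the control sequences at each link. The same problem afflicts your nonemptiness argument: gluing $\theta^{-N}v$ to $u$ produces a legitimate element of $\UC$, but the resulting pseudo-orbit has uncontrolled jumps on the half-axis where the controls disagree.

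The paper resolves this by working throughout in the metric space $(U^{\Z},d_{\infty})$: one shows $Q(u)\cong Q(v)$ whenever $d_{\infty}(u,v)\leq\ep$ (then the jumps $d(f_{v_t}(x_t),f_{u_t}(x_t))\leq\alpha$ hold for all $t$ simultaneously, the pseudo-orbit stays $3\ep$-close to $L(Q)$, and the unique shadowing orbit in $L(Q)$ defines $h_{uv}$, with $h_{vu}$ its inverse by uniqueness). The homeomorphism type of $Q(u)$ is therefore locally constant on $(U^{\Z},d_{\infty})$, and the decisive extra ingredient is Lemma \ref{lem_connectedness}: $(U^{\Z},d_{\infty})$ is connected because $U$ is (this is not the product topology, so it needs the separate appendix argument via sequences with finitely many values). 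Global constancy of the homeomorphism type then gives both claims at once, since $Q\neq\emptyset$ forces at least one fiber to be nonempty. A secondary point: bicontinuity of $h_{uv}$ is not "routine from the uniformity of the $\alpha$--$\beta$ relation"; the paper derives it from expansivity (Theorem \ref{thm_expansiveness}), comparing the shadowing orbits of nearby centers and concluding they coincide in the limit. Your outline would need both the sup-metric/connectedness mechanism and the expansivity argument to become a proof.
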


\begin{proof}
The shadowing lemma yields a neighborhood $\NC \subset \UC \tm M$ of $L(Q)$, a $\beta>0$ and an $\alpha = \alpha(\beta) > 0$ so that every $\alpha$-pseudo-orbit in $\NC$ is $\beta$-shadowed by a unique orbit in $L(Q)$. Let $\ep = \ep(\alpha) > 0$ be small enough so that $N_{3\ep}(L(Q)) \subset \NC$ and so that for all $u,v \in U$ and $x\in Q$ we have%
\begin{equation*}
  d_U(u,v) \leq \ep \quad \Rightarrow \quad d(f_u(x),f_v(x)) \leq \alpha.%
\end{equation*}
This is possible by compactness of $L(Q)$ and uniform continuity of $f$ on the compact set $Q \tm U$, respectively.%

In the following, we will use the metric%
\begin{equation*}
  d_{\infty}(u,v) := \sup_{t \in \Z} d_U(u_t,v_t)%
\end{equation*}
on $\UC$, which in general is not compatible with the product topology. We claim that $d_{\infty}(u,v) \leq \ep$ implies that $Q(u)$ and $Q(v)$ are homeomorphic. If $Q(u)$ and $Q(v)$ are both empty, there is nothing to show. Hence, let us assume that $Q(u) \neq \emptyset$. Then choose $x\in Q(u)$ arbitrarily and consider the two-sided sequence $x_t := \varphi(t,x,u)$, $t\in\Z$, which lies in $Q$ and, by the choice of $\ep$, satisfies%
\begin{equation*}
  d(\varphi(1,x_t,\theta^tv),x_{t+1}) = d(f_{v_t}(x_t),f_{u_t}(x_t)) \leq \alpha \mbox{\quad for all\ } t \in \Z.%
\end{equation*}
Hence, the sequence $(\theta^tv,x_t)_{t\in\Z}$ is an $\alpha$-pseudo-orbit which is $3\ep$-close to the orbit $(\theta^tu,x_t)_{t\in\Z}$ that is completely contained in $L(Q)$. (A simple computation shows that $d_{\infty}(u,v) \leq \ep$ implies $d_{\UC}(\theta^tu,\theta^tv) \leq 3\ep$ for all $t\in\Z$.) By the choice of $\ep$, there exists a unique orbit $(\theta^tv,\varphi(t,y,v))_{t\in\Z}$ in $L(Q)$ which $\beta$-shadows $(\theta^tv,x_t)_{t\in\Z}$, i.e., $y \in Q(v)$ and%
\begin{equation*}
  d(\varphi(t,y,v),\varphi(t,x,u)) \leq \beta \mbox{\quad for all\ } t \in \Z.%
\end{equation*}
We can thus define the mapping%
\begin{equation*}
  h_{uv}:Q(u) \rightarrow Q(v),\quad x \mapsto y%
\end{equation*}
that sends a point $x \in Q(u)$ to the unique point $y \in Q(v)$ given by the shadowing lemma. Since the roles of $u$ and $v$ can be interchanged, we also have a mapping $h_{vu}:Q(v) \rightarrow Q(u)$, defined analogously, which must be the inverse of $h_{uv}$ by the uniqueness of shadowing orbits. It remains to prove the continuity of $h_{uv}$. To this end, consider a sequence $x_k \rightarrow x$ in $Q(u)$ and let $y_k := h_{uv}(x_k)$, $y := h_{uv}(x)$. Then $\varphi_{t,u}(x_k) \rightarrow \varphi_{t,u}(x)$ for each $t\in\Z$. Let $\delta > 0$ be an expansivity constant according to Theorem \ref{thm_expansiveness}. We prove the continuity of $h_{uv}$ under the assumption that $\beta \leq \delta/3$. For every $t\in\Z$, let $k_0(t)$ be large enough so that%
\begin{equation*}
  d(\varphi_{t,u}(x_k),\varphi_{t,u}(x)) \leq \frac{\delta}{3} \mbox{\quad for all\ } k \geq k_0(t).%
\end{equation*}
Then for every $t\in\Z$ and $k \geq k_0(t)$ we obtain%
\begin{align*}
  d(\varphi_{t,v}(y_k),\varphi_{t,v}(y)) &\leq d(\varphi_{t,v}(y_k),\varphi_{t,u}(x_k)) + d(\varphi_{t,u}(x_k),\varphi_{t,u}(x))\\
	&\qquad + d(\varphi_{t,u}(x),\varphi_{t,v}(y)) \leq 2\beta + \frac{\delta}{3} \leq \delta.%
\end{align*}
Hence, for any limit point $y_* \in Q(v)$ of the sequence $(y_k)$, it follows that%
\begin{equation*}
  d(\varphi_{t,v}(y_*),\varphi_{t,v}(y)) \leq \delta \mbox{\quad for all\ } t \in \Z,%
\end{equation*}
implying $y_* = y$ by the choice of $\delta$. We thus obtain $y_k \rightarrow y$ which proves the continuity of $h_{uv}$.%

We have shown that up to homeomorphisms $Q(u)$ is locally constant on the metric space $(U^{\Z},d_{\infty})$. Since $U$ is connected, Lemma \ref{lem_connectedness} implies that $(U^{\Z},d_{\infty})$ is connected as well. It thus follows that all $u$-fibers are homeomorphic to each other. In particular, this implies that none of them is empty.%
\end{proof}

\begin{remark}
For a constant control $u \in \UC$, the fiber $Q(u)$ is a compact hyperbolic set of the diffeomorphism $f_u$. Assuming a little more regularity, namely that $f_u$ is a $C^{1+\alpha}$-diffeomorphism for some $\alpha > 0$, there are only two alternatives for the fiber $Q(u)$: either it coincides with the whole state space $M$ (in which case $M$ is compact and $Q = M$) or it has Lebesgue measure zero (see \cite[Cor.~5.7]{BRu}). It is unclear if the same is true for every $u$-fiber. The homeomorphisms $h_{uv}$ constructed in the above proof can only be expected to be H\"{o}lder continuous which does not allow for a statement on the Lebesgue measure.%
\end{remark}

In addition to the above result, we would like to prove that the fiber map $u \mapsto Q(u)$ is continuous with respect to the Hausdorff metric on the space of nonempty closed subsets of $Q$. However, in the general case, it is completely unclear how to do this or whether it is true. Instead, we only prove it for hyperbolic sets that are sufficiently ``small''. At the same time, we provide a method to construct examples of hyperbolic sets.%

Consider the control system $\Sigma$, fix a control value $u^0 \in U$ and assume that the following holds:%
\begin{itemize}
\item The diffeomorphism $f_{u^0}:M \rightarrow M$ has a compact isolated invariant set $\Lambda \subset M$ which is hyperbolic (in the classical sense).%
\item The metric space $U$ is locally connected at $u^0$. That is, every neighborhood of $u^0$ contains a connected neighborhood.%
\end{itemize}

The following lemma, which is standard in the hyperbolic theory, will turn out to be useful (see also \cite[Lem.~1.2]{Liu}).%

\begin{lemma}\label{lem_hyperbolic}
Under the given assumptions, there exist a neighborhood $U_0 \subset U$ of $u^0$, a neighborhood $N \subset M$ of $\Lambda$, and numbers $\rho_0
>0$, $C_0>0$ and $\alpha_0 \in (0,1)$ such that the following holds for every $T \in \Z_+$: If $u \in U_0^{\Z}$, $x,y \in N$ and $\varphi(t,x,u),\varphi(t,y,u) \in N$ with $d(\varphi(t,x,u),\varphi(t,y,u)) \leq \rho_0$ for all $t$ with $|t| \leq T$, then $d(x,y) \leq C_0 \alpha_0^T$.%
\end{lemma}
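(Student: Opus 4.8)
The plan is to reduce this statement to the classical uniform hyperbolicity estimate for the single diffeomorphism $f_{u^0}$ on $\Lambda$, and then to port it to nearby constant-like controls by a standard persistence argument. Concretely, since $\Lambda$ is a classical hyperbolic set of $f_{u^0}$, there is an adapted (Lyapunov) metric and constants $\tilde{\rho}_0>0$, $\tilde{C}_0>0$, $\tilde{\alpha}_0 \in (0,1)$ such that any two points whose $f_{u^0}$-orbits stay $\tilde{\rho}_0$-close in a small neighborhood $\tilde{N}$ of $\Lambda$ for times $|t| \le T$ must be $\tilde{C}_0\tilde{\alpha}_0^T$-close: this is exactly the exponential closing/expansivity estimate that falls out of the invariant splitting $T\Lambda = E^- \oplus E^+$ together with the contraction/expansion bounds, once one writes the difference vector in local charts and uses that the stable component contracts forward while the unstable component contracts backward. (This is the $U = \{u^0\}$ case of what the lemma asserts; I would cite a standard reference such as \cite{KH2} or \cite{HPe} for it, or note that it is the classical analogue of Theorem \ref{thm_expansiveness}.)

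Next I would upgrade from $f_{u^0}$ to nearby controls. Fix $\rho_0 \in (0,\tilde{\rho}_0)$ and a neighborhood $N$ of $\Lambda$ with $N_{2\rho_0}(N) \subset \tilde{N}$, say. By joint continuity of $(x,u) \mapsto f(x,u)$ and $(x,u) \mapsto f_u^{-1}(x)$ on the compact set $\cl\, N \times U$, there is a neighborhood $U_0$ of $u^0$ such that $\sup_{x \in \cl\, N} d(f_u(x),f_{u^0}(x))$ is as small as we like for all $u \in U_0$; in particular, for $u \in U_0^{\Z}$ and a point $x$ whose $\varphi(\cdot,x,u)$-orbit stays in $N$ over $|t| \le T$, the sequence $(\varphi(t,x,u))_{|t|\le T}$ is a genuine $\eta$-pseudo-orbit for $f_{u^0}$ with $\eta$ small. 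The cleanest route is then to invoke the classical shadowing lemma for $\Lambda$ (the $U=\{u^0\}$ specialization of Theorem \ref{thm_shadowing}): each such $\varphi(\cdot,x,u)$-segment is $\rho_0$-shadowed by a true $f_{u^0}$-orbit through some point $\hat{x} \in N$ (and similarly $\hat{y}$ for $y$), after which the triangle inequality gives that $\hat{x},\hat{y}$ have $f_{u^0}$-orbits staying within $\rho_0 + \rho_0 + \rho_0 = 3\rho_0 \le \tilde{\rho}_0$ of each other in $\tilde{N}$ for $|t|\le T$; applying the classical estimate to $\hat{x},\hat{y}$ yields $d(\hat{x},\hat{y}) \le \tilde{C}_0\tilde{\alpha}_0^T$, and one more triangle inequality ($d(x,y) \le d(x,\hat{x}) + d(\hat{x},\hat{y}) + d(\hat{y},y)$) gives $d(x,y) \le 2\rho_0 + \tilde{C}_0\tilde{\alpha}_0^T$. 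This is not quite of the claimed form $C_0\alpha_0^T$ because of the constant $2\rho_0$, so one must absorb it: this is the one genuinely delicate point.

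To get the pure exponential bound $d(x,y) \le C_0\alpha_0^T$, I would not pass through shadowing at all but instead run the adapted-metric estimate directly on the perturbed cocycle. The point is that the hyperbolic splitting and its cone-field characterization persist under $C^1$-small perturbations: shrinking $U_0$ if necessary, the map $\varphi_{1,u^t}$ restricted to $N$ still admits, at each relevant point, an invariant stable cone (contracted forward) and unstable cone (contracted backward), with rates $\alpha_0 \in (\tilde{\alpha}_0,1)$ uniform in $u \in U_0^{\Z}$ and in the base point. Decomposing the difference of the two orbits in local charts into stable-cone and unstable-cone parts, the stable part is contracted by $\alpha_0$ under forward iteration from $t=0$ up to $t=T$ and the unstable part under backward iteration from $t=0$ down to $t=-T$; combining the two half-orbit estimates with the uniform bound $\rho_0$ on the orbit distance at the endpoints $t = \pm T$ gives $d(x,y) \le C_0 \alpha_0^T$ with $C_0$ depending only on $\rho_0$, the chart distortion, and the cone opening. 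The main obstacle, then, is this cone-field persistence and the bookkeeping of the two-sided estimate; everything else is routine compactness and uniform-continuity. An alternative that sidesteps cone fields entirely is to observe that, after shrinking the control range, the perturbed system possesses a genuine uniformly hyperbolic set $Q$ in the sense of Definition \ref{def_uhs} with $Q(u^0) = \Lambda$ (by the persistence results cited from \cite{Liu}), in which case the two-sided exponential closeness is exactly the quantitative form of the expansivity in Theorem \ref{thm_expansiveness} applied along $\varphi(\cdot,\cdot,u)$ — but one still has to extract the explicit rate $\alpha_0^T$, so I would present the direct cone-field computation as the core of the proof.
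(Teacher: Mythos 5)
Your second route --- extending the splitting continuously to a neighborhood of $\Lambda$, writing the perturbed time-one maps in exponential charts as a block-hyperbolic linear part plus a small-Lipschitz remainder, and running the cone/dominated-component estimate on $\exp_x^{-1}(y)$ --- is exactly the paper's proof, and you correctly diagnose why the shadowing detour only yields $2\rho_0+\tilde C_0\tilde\alpha_0^{\,T}$ rather than a pure exponential bound. The one point to phrase carefully when writing it out: the paper does not split a single difference vector into two parts estimated in opposite time directions, but argues by cases (if $|v^+|\ge|v^-|$, this dominance persists under forward iteration and $\|v\|_0$ grows at rate $a_0^{-1}-\ep_0$, so the bound $C\rho_0$ at time $T$ forces $\|v\|_0\le C\rho_0(a_0^{-1}-\ep_0)^{-T}$; symmetrically backward otherwise), which is precisely what makes the non-invariance of the extended splitting and the mixing terms harmless.
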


\begin{proof}
Let $T_{\Lambda}M = E^- \oplus E^+$ be the hyperbolic splitting on $\Lambda$. We choose the neighborhoods $U_0$ and $N$ such that the following holds:%
\begin{itemize}
\item The hyperbolic splitting on $\Lambda$ can be extended continuously\footnote{It is a standard fact in the theory of hyperbolic systems that such an extension always exists. However, the extended splitting might no longer be invariant.} to the neighborhood $N$ and there is $C>0$ such that%
\begin{equation*}
  \|v\|_0 := \max\{ |v^-|,|v^+| \} \leq C |v|%
\end{equation*}
whenever $v \in T_xM$, $x\in N$ and $v^{\pm} \in E^{\pm}_x$ with $v = v^- + v^+$. Let $\lambda_0 \in (0,1)$ be the hyperbolic constant on $\Lambda$ and assume that the constant $c$ equals $1$ (i.e., contraction is seen in one step of time), which can always be achieved by using an adapted Riemannian metric (see, e.g., \cite[Lem.~3.1]{Bo2}).%
\item There are $\rho_0,a_0,\ep_0 > 0$ satisfying $\lambda_0 < a_0 < 1$ and $0 < \ep_0 < \min\{\frac{1}{2}(1 - a_0),\frac{1}{2}(a_0^{-1} - 1)\}$ such that the following holds: If $x \in N$ and $u \in U_0$ with $\varphi_{1,u}(x) \in N$, then%
\begin{equation*}
  \tilde{\varphi}_{u,x} := \exp^{-1}_{\varphi_{1,u}(x)} \circ\, \varphi_{1,u} \circ \exp_x:\{ v \in T_xM : |v| \leq \rho_0 \} \rightarrow T_{\varphi_{1,u}(x)}M%
\end{equation*}
is well-defined and, writing%
\begin{equation*}
  \rmD\tilde{\varphi}_{u,x}(0) = \left(\begin{array}{cc} A^{--}_{u,x} & A^{-+}_{u,x} \\ A^{+-}_{u,x} & A^{++}_{u,x} \end{array}\right):E^-_x \oplus E^+_x \rightarrow E^-_{\varphi_{1,u}(x)} \oplus E^+_{\varphi_{1,u}(x)},%
\end{equation*}
we can express $\tilde{\varphi}_{u,x}$ as%
\begin{equation*}
  \tilde{\varphi}_{u,x}(\cdot) = \left(\begin{array}{cc} A^{--}_{u,x} & 0 \\ 0 & A^{++}_{u,x} \end{array}\right) + R_{u,x}(\cdot),%
\end{equation*}
where $\|A^{--}_{u,x}\| \leq a_0$, $\|(A^{++}_{u,x})^{-1}\| \leq a_0$ and $R_{u,x}$ is a Lipschitz map whose Lipschitz constant with respect to $\|\cdot\|_0$ is not bigger than $\ep_0$. Indeed, this follows from the fact that the derivative of $R_{u,x}(\cdot)$ at the origin is determined by $A^{-+}_{u,x}$ and $A^{+-}_{u,x}$ which have arbitrarily small norms if we choose $\rho_0$, $N$ and $U_0$ small enough. Moreover, we make our choices so that the map%
\begin{equation*}
  \tilde{\varphi}_{u,x}^- := \exp_x^{-1} \circ\, \varphi_{-1,\theta u} \circ \exp_{\varphi_{1,u}(x)}: \{ v \in T_{\varphi_{1,u}(x)}M : |v| \leq \rho_0 \} \rightarrow T_xM%
\end{equation*}
has analogous properties.%
\end{itemize}
Now, if $u \in U_0^{\Z}$ and $x,y \in N$ are as in the formulation of the lemma, let us write $v := \exp_x^{-1}(y) = v^- + v^+ \in E^-_x \oplus E^+_x$ and assume without loss of generality that $|v^+| \geq |v^-|$. Then we can show that%
\begin{equation*}
  (a_0^{-1} - \ep_0)^T \|v\|_0 \leq \|\exp_{\varphi_{T,u}(x)}^{-1}(\varphi_{T,u}(y))\|_0 \leq C\rho_0.%
\end{equation*}
The second inequality follows from%
\begin{align*}
  \|\exp_{\varphi_{T,u}(x)}^{-1}(\varphi_{T,u}(y))\|_0 &\leq C |\exp_{\varphi_{T,u}(x)}^{-1}(\varphi_{T,u}(y))|\\
	&= C d(\varphi_{T,u}(x),\varphi_{T,u}(y)) \leq C \rho_0.%
\end{align*}
The first one can be shown as follows. Using that $|v^+| \geq |v^-|$, we obtain%
\begin{align*}
  |\tilde{\varphi}_{u,x}(v)^+| &= |A^{++}_{u,x} v^+ + R_{u,x}(v)^+| \geq |A^{++}_{u,x} v^+| - |R_{u,x}(v)^+| \\
															 &\geq a_0^{-1} |v^+| - \|R_{u,x}(v)\|_0 \geq a_0^{-1} |v^+| - \ep_0 \|v\|_0 
															    = (a_0^{-1} - \ep_0)|v^+|.%
\end{align*}
Similarly,%
\begin{align*}
  |\tilde{\varphi}_{u,x}(v)^-| &= |A^{--}_{u,x} v^- + R_{u,x}(v)^-| \leq |A^{--}_{u,x} v^-|+ |R_{u,x}(v)^-| \\
													 &\leq a_0 |v^-| + \|R_{u,x}(v)\|_0 \leq a_0 |v^-| + \ep_0 |v^+| \leq (a_0 + \ep_0)|v^+|.%
\end{align*}
This implies%
\begin{equation*}
  \frac{|\tilde{\varphi}_{u,x}(v)^+|}{|\tilde{\varphi}_{u,x}(v)^-|} \geq \frac{a_0^{-1} - \ep_0}{a_0 + \ep_0} > 1.%
\end{equation*}
Hence, we can repeat these arguments and obtain the claimed inequality inductively.\footnote{In the case when $|v^-| \geq |v^+|$, the maps $\tilde{\varphi}^-_{u,x}$ come into play.} These inequalities imply%
\begin{align*}
  d(x,y) &= |v| = |v^- + v^+| \leq 2 \max\{|v^-|,|v^+|\} \\
	       &= 2\|v\|_0 \leq 2C \rho_0 (a_0^{-1} - \ep_0)^{-T}.%
\end{align*}
Hence, the statement of the lemma holds with $C_0 := 2C\rho_0$ and $\alpha_0 := (a_0^{-1} - \ep_0)^{-1}$, where we observe that $a_0^{-1} - \ep_0 > a_0^{-1} - (a_0^{-1} - 1)/2 = (a_0^{-1} + 1)/2 > 1$.
\end{proof}

The next proposition describes the dynamics of the control system that we obtain by restricting the control values to a small neighborhood of $u^0$, when we also consider a small neighborhood of $\Lambda$. Essentially, this is \cite[Thm.~1.1]{Liu} (the corresponding result for RDS).%

\begin{proposition}\label{prop_smallhs}
Consider the control system $\Sigma$ under the given assumptions. Then there are $\beta_0>0$ and a compact, connected neighborhood $U_0 \subset U$ of $u^0$ such that the following holds:%
\begin{enumerate}
\item[(a)] For each $u \in \UC_0 := U_0^{\Z}$ and each $x \in \Lambda$, there is a unique $x_u \in M$ with%
\begin{equation*}
  d(\varphi(t,x,u^0),\varphi(t,x_u,u)) \leq \beta_0 \mbox{\quad for all\ } t \in \Z.%
\end{equation*}
\item[(b)] For any $\beta \in (0,\beta_0)$, one can shrink $U_0$ so that (a) holds with $\beta$ in place of $\beta_0$.%
\item[(c)] For every $u \in \UC_0$, define $\Lambda_u := \{ x_u : x \in \Lambda \}$ and $h_u:\Lambda \rightarrow \Lambda_u$, $x \mapsto x_u$. Then $\Lambda_u$ is compact and $h_u$ is a homeomorphism.%
\item[(d)] The family of maps $\{h_u\}_{u \in \UC_0}$ has the following properties:%
\begin{enumerate}
\item[(i)] $\varphi_{1,u}(\Lambda_u) = \Lambda_{\theta u}$ and $h_{\theta u} \circ \varphi_{1,u^0} = \varphi_{1,u} \circ h_u$ for all $u \in \UC_0$.%
\item[(ii)] The family $\{h_u\}_{u \in \UC_0}$ is equicontinuous. That is, for any $\ep>0$ there is $\delta>0$ so that $d(x,y) < \delta$ implies $d(h_u(x),h_u(y)) <\ep$ for all $x,y\in\Lambda$ and $u \in \UC_0$. The analogous property holds for the family $\{h_u^{-1}\}_{u\in\UC_0}$.%
\item[(iii)] The map $H:\UC_0 \rightarrow C^0(\Lambda,M)$, $u \mapsto h_u$, is continuous, when $C^0(\Lambda,M)$ is equipped with the topology of uniform convergence.%
\end{enumerate}
\end{enumerate}
\end{proposition}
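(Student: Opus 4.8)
The plan is to obtain Proposition \ref{prop_smallhs} as an application of the shadowing lemma (Theorem \ref{thm_shadowing}) together with the uniform expansivity estimate in Lemma \ref{lem_hyperbolic}, mimicking the structure-theory proof of Theorem \ref{thm_structure} but now with the \emph{reference} fiber being the classical hyperbolic set $\Lambda$ of $f_{u^0}$. First I would fix the neighborhoods $U_0$, $N$ and the constants $\rho_0, C_0, \alpha_0$ supplied by Lemma \ref{lem_hyperbolic}. I then pass to the control system $\Sigma|_{U_0}$ obtained by restricting the control range to $U_0$; since $\Lambda$ is an isolated invariant set of $f_{u^0}$ contained in the interior of $N$, standard persistence of hyperbolicity under perturbation (see \cite{Liu}) shows that, after possibly shrinking $U_0$, the set $Q := \cl\{x : \varphi(\Z,x,u) \in N \text{ for some } u \in \UC_0\}$ (equivalently the maximal invariant set of $\Sigma|_{U_0}$ inside $N$) is a hyperbolic set of $\Sigma|_{U_0}$ whose all-time lift $L(Q)$ is isolated invariant in $\UC_0 \tm M$, with $Q(u^0) = \Lambda$. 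This lets me invoke the shadowing lemma on $Q$.

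For part (a), given $u \in \UC_0$ and $x \in \Lambda$, I would consider the sequence $x_t := \varphi(t,x,u^0) \in \Lambda$. Because $d_U(u_t, u^0) $ is small for all $t$ (both lie in $U_0$, which I take to be small), uniform continuity of $f$ on $Q \tm U$ gives $d(\varphi_{1,\theta^t u}(x_t), x_{t+1}) = d(f_{u_t}(x_t), f_{u^0}(x_t)) \leq \alpha$ for every $t$, so $(\theta^t u, x_t)_{t\in\Z}$ is an $\alpha$-pseudo-orbit lying in a prescribed neighborhood of $L(Q)$. The shadowing lemma then produces a unique orbit $(\theta^t u, \varphi(t, x_u, u))_{t \in \Z}$ in $L(Q)$ with $d(\varphi(t, x, u^0), \varphi(t, x_u, u)) \leq \beta_0$ for all $t$; uniqueness of $x_u$ among \emph{all} points of $M$ (not just of $Q$) follows from the expansivity estimate of Lemma \ref{lem_hyperbolic}, provided $\beta_0 \leq \rho_0/2$, since two shadowing points would stay $\rho_0$-close for all time inside $N$ and hence coincide. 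Part (b) is then immediate: shrinking $U_0$ forces $\alpha$ down (by uniform continuity), hence $\beta$ down (by the shadowing lemma), so any prescribed $\beta \in (0,\beta_0)$ can be realized.

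For (c), injectivity of $h_u: x \mapsto x_u$ follows from expansivity of the \emph{reference} dynamics: if $x_u = y_u$ then $\varphi(t,x,u^0)$ and $\varphi(t,y,u^0)$ are both $\beta_0$-close to the same orbit, hence $2\beta_0$-close to each other for all $t$, so $x = y$ by an expansivity constant for $\Lambda$. Surjectivity onto $\Lambda_u := h_u(\Lambda)$ is by definition; compactness of $\Lambda_u$ and continuity of $h_u$ (hence of $h_u^{-1}$, the inverse being the map going back via shadowing with respect to $u^0$, which is unique) follow from the same Lipschitz/expansivity bracketing argument used in the proof of Theorem \ref{thm_structure}: a convergent sequence $x_k \to x$ in $\Lambda$ gives $\varphi(t,x_k,u^0) \to \varphi(t,x,u^0)$ for each fixed $t$, and then the expansivity estimate forces any limit point of $(h_u(x_k))$ to equal $h_u(x)$. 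The equivariance identity in (d)(i), $h_{\theta u} \circ \varphi_{1,u^0} = \varphi_{1,u} \circ h_u$, is a formal consequence of uniqueness of shadowing orbits together with the cocycle property of $\varphi$: shifting a shadowing pair in time produces a shadowing pair, and $\varphi_{1,u}(\Lambda_u) = \Lambda_{\theta u}$ then follows since $\varphi_{1,u^0}(\Lambda) = \Lambda$.

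The remaining work is the uniformity statements (d)(ii) and (d)(iii), which I expect to be the main obstacle — not because they are deep, but because they require the shadowing/expansivity constants to be chosen \emph{uniformly in} $u \in \UC_0$, which is exactly what Lemma \ref{lem_hyperbolic} is designed to deliver: the constants $C_0, \alpha_0$ there do not depend on $u \in \UC_0$. For equicontinuity of $\{h_u\}$, suppose $d(x,y) < \delta$ in $\Lambda$; I want $d(h_u(x), h_u(y))$ small uniformly in $u$. Both $\varphi(\cdot, h_u(x), u)$ and $\varphi(\cdot, h_u(y), u)$ stay in $N$; for $|t| \leq T$ one estimates $d(\varphi(t, h_u(x), u), \varphi(t, h_u(y), u)) \leq 2\beta_0 + d(\varphi(t,x,u^0), \varphi(t,y,u^0))$, and the second term is controlled for $|t| \le T$ by continuity of the finitely many maps $\varphi(t,\cdot,u^0)$, uniformly in $x,y$ with $d(x,y)<\delta$ after shrinking $\delta = \delta(T)$; but this only bounds the separation by $2\beta_0 + (\text{small})$, not by something small, so a direct application of Lemma \ref{lem_hyperbolic} does not yet close the gap. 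The fix is the standard two-step argument: first shrink $\beta_0$ (via (b)) so that $2\beta_0$ is below the $\rho_0$-threshold of Lemma \ref{lem_hyperbolic}, apply that lemma on a window $|t| \le T$ to get $d(h_u(x), h_u(y)) \leq C_0 \alpha_0^T + (\text{error from the middle term})$, then choose $T$ large and finally $\delta$ small — all bounds being $u$-independent. The analogous argument with the roles of $(x, u^0)$ and $(h_u(x), u)$ interchanged, using the backward estimate built into Lemma \ref{lem_hyperbolic}, gives equicontinuity of $\{h_u^{-1}\}$. Finally (d)(iii): continuity of $u \mapsto h_u$ in the sup-norm follows from (a)--(b) and equicontinuity — given $\ep$, pick $\beta < \ep$ via (b), and note that if $v$ is $d_\UC$-close to $u$ then $\varphi(t,h_u(x),u)$ is an $\alpha$-pseudo-orbit for the $v$-dynamics on a long window, so by shadowing plus expansivity $h_v(x)$ is within $\ep$ of $h_u(x)$, uniformly in $x \in \Lambda$; the honest version of this argument again uses the finite-window continuity of $\varphi$ together with the uniform expansivity of Lemma \ref{lem_hyperbolic} to propagate closeness from a finite time window to all times.
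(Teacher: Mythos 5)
Your proposal follows essentially the same route as the paper's proof: the orbit $(\varphi(t,x,u^0))_{t\in\Z}$ in $\Lambda$ is viewed as an $\alpha$-pseudo-orbit for the control $u$, the shadowing lemma (Theorem \ref{thm_shadowing}) supplies $x_u$, uniqueness and injectivity come from expansivity, (d)(i) from uniqueness plus the cocycle property, and equicontinuity from the uniform contraction estimate of Lemma \ref{lem_hyperbolic} exactly as in the paper (the triangle inequality puts the two $u$-orbits within $3\beta_0\le\rho_0$ of each other on the window $|t|\le T$, whence $d(h_u(x),h_u(y))\le C_0\alpha_0^T$ with no additional error term). The only deviations are immaterial: your preliminary construction of a hyperbolic set $Q$ via persistence under perturbation is unnecessary and, within the paper's logical order, would be circular with Theorem \ref{thm_smallpert_hypset} (it suffices to apply shadowing in a neighborhood of the compact hyperbolic invariant set $\{u^0\}\tm\Lambda$ of the control flow, as the paper does), and for (d)(iii) you give a direct finite-window propagation argument where the paper uses Arzel\`a--Ascoli together with uniqueness of the shadowing point to identify the limit.
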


\begin{proof}
We put $\tilde{\Lambda} := \{u^0\} \tm \Lambda \subset \UC \tm M$, where we regard $u^0$ as the constant sequence $(\ldots,u^0,u^0,u^0,\ldots) \in \UC$, and observe that $\tilde{\Lambda}$ is a hyperbolic set of the control system $\Sigma$. Now we choose a neighborhood $\NC \subset \UC \tm M$ of $\tilde{\Lambda}$ and a constant $\beta_0>0$ satisfying the following properties:%
\begin{itemize}
\item $N_{3\beta_0}(\Lambda)$ is an isolating neighborhood of $\Lambda$ for $f_{u^0}$.%
\item If $d(\varphi(t,x,u^0),\varphi(t,y,u^0)) \leq 2\beta_0$ for some $x,y\in\Lambda$ and all $t \in \Z$, then $x = y$, which is possible by expansivity on hyperbolic sets.%
\item There is $\alpha = \alpha(\beta_0)$ so that every $\alpha$-pseudo-orbit of $\Sigma$, contained in $\NC$, is uniquely $\beta_0$-shadowed by an orbit.%
\end{itemize}
Subsequently, we choose a compact, connected neighborhood $U_0$ of $u^0$ (where we use the assumption that $U$ is locally connected at $u^0$) small enough so that%
\begin{equation}\label{eq_pseudoorbits}
  (u,x) \in U_0^{\Z} \tm N_{2\beta_0}(\Lambda)\ \Rightarrow\ d(\varphi_{1,u}(x),\varphi_{1,u^0}(x)) \leq \alpha \mbox{ and } (u,x) \in \NC.%
\end{equation}
This is possible by the uniform continuity of $\varphi(1,\cdot,\cdot)$ on the compact set $N_{2\beta_0}(\Lambda) \tm U$. Now fix $u \in \UC_0$ and $x \in \Lambda$. Defining $x_t := \varphi(t,x,u^0) = f^t_{u^0}(x)$, $t \in \Z$, we find that $(\theta^t u,x_t)_{t\in\Z}$ is an $\alpha$-pseudo-orbit in $\NC$, since%
\begin{equation*}
  d(\varphi(1,x_t,\theta^tu),x_{t+1}) = d(\varphi_{1,\theta^tu}(x_t),\varphi_{1,u^0}(x_t)) \leq \alpha \mbox{\quad for all\ } t \in \Z.%
\end{equation*}
Hence, there exists a unique point $x_u \in M$ such that%
\begin{equation*}
  d(\varphi(t,x,u^0),\varphi(t,x_u,u)) \leq \beta_0 \mbox{\quad for all\ } t \in \Z.%
\end{equation*}
This proves (a).%

Statement (b) follows from item (b) of the shadowing lemma (Theorem \ref{thm_shadowing}).%

Statement (c) is seen as follows. First, the compactness of $\Lambda_u$ follows from the continuity of $h_u$ established in (d)(ii). The invertibility of $h_u$ follows from the choice of $\beta_0$, since $h_u(x) = h_u(y)$ implies $d(\varphi(t,x,u^0),\varphi(t,y,u^0)) \leq 2\beta_0$ for all $t\in\Z$. Since any invertible and continuous map between compact metric spaces is a homeomorphism, (c) is proved.%

It remains to prove (d). To prove (d)(i), pick $x_u = h_u(x) \in \Lambda_u$. Then $d(\varphi(t,x_u,u),\varphi(t,x,u^0)) \leq \beta_0$ for all $t \in \Z$. By the cocycle property of $\varphi$, this is equivalent to $d(\varphi(t,\varphi_{1,u}(x_u),\theta u),\varphi(t,\varphi_{1,u^0}(x),u^0)) \leq \beta_0$ for all $t\in\Z$. Hence, (a) implies that $\varphi_{1,u}(x_u) = h_{\theta u}(\varphi_{1,u^0}(x)) \in \Lambda_{\theta u}$.%

To prove (d)(ii), we assume that $U_0$ is chosen small enough such that the statement of Lemma \ref{lem_hyperbolic} holds with a neighborhood $N$ of $\Lambda$ and constants $\rho_0,C_0,\alpha_0$. Moreover, we choose $\beta_0$ small enough such that $\beta_0 \leq \rho_0/3$ and $N_{\beta_0}(\Lambda) \subset N$. Now, for a given $\ep>0$, we choose $T\in\Z_{>0}$ satisfying $C_0\alpha_0^T < \ep$. We let further $\delta>0$ be small enough so that $x,y\in\Lambda$, $d(x,y) < \delta$ implies%
\begin{equation*}
  d(\varphi(t,x,u^0),\varphi(t,y,u^0)) \leq \beta_0 \mbox{\quad for all\ } |t| \leq T.%
\end{equation*}
Then $d(x,y) < \delta$ and $-T \leq t \leq T$ implies%
\begin{align*}
  &d(\varphi_{t,u}(h_u(x)),\varphi_{t,u}(h_u(y))) \leq d(\varphi_{t,u}(h_u(x)),\varphi_{t,u^0}(x)) \\
	&\quad + d(\varphi_{t,u^0}(x),\varphi_{t,u^0}(y)) + d(\varphi_{t,u^0}(y),\varphi_{t,u}(h_u(y))) \leq 3\beta_0 \leq \rho_0.%
\end{align*}
Hence, Lemma \ref{lem_hyperbolic} yields $d(h_u(x),h_u(y)) \leq C_0\alpha_0^T < \ep$. The proof of equicontinuity of $\{h_u^{-1}\}$ follows the same lines. Here we need to assume that $\delta$ is chosen small enough so that $d(\varphi(t,x,u),\varphi(t,y,u)) \leq \beta_0$ for $|t| \leq T$ whenever $u \in \UC_0$, $x,y\in\Lambda_u$ and $d(x,y) < \delta$. This is possible by the uniform continuity of $\varphi(t,\cdot,\cdot)$ on the compact set $N_{\beta_0}(\Lambda) \tm \UC_0$.%

To prove (d)(iii), consider a sequence $u_k \rightarrow u$ in $\UC_0$. By (ii) and the Arzel\`a-Ascoli Theorem, every subsequence of $(h_{u_k})_{k\in\Z_{>0}}$ has a limit point. That is, there exists a homeomorphism $h:\Lambda \rightarrow h(\Lambda)$ so that the subsequence converges uniformly to $h$. If $h_{u_{k_n}} \rightarrow h$ as $n \rightarrow \infty$, then for every $t\in\Z$ and $x\in\Lambda$ we obtain%
\begin{equation*}
  d(\varphi(t,h(x),u),\varphi(t,x,u^0)) = \lim_{n \rightarrow \infty}d(\varphi(t,h_{u_{k_n}}(x),u_{k_n}),\varphi(t,x,u^0)) \leq \beta_0.%
\end{equation*}
By statement (a), this implies $h = h_u$. Hence, $h_{u_{k_n}}$ converges to $h_u$, proving that $H$ is continuous.%
\end{proof}

Using the above proposition, we can show the existence of a hyperbolic set with isolated invariant lift for the given control system with restricted control range $U_0$.%

\begin{theorem}\label{thm_smallpert_hypset}
Given the conclusions of the Proposition \ref{prop_smallhs}, the control system%
\begin{equation}\label{eq_sigma0}
  \Sigma^0:\quad x_{t+1} = f(x_t,u_t),\quad u \in \UC_0%
\end{equation}
has a hyperbolic set $Q \subset M$ with the following properties:%
\begin{enumerate}
\item[(a)] $Q(u) = \Lambda_u$ for all $u \in \UC_0$.%
\item[(b)] The all-time lift $L(Q)$ is an isolated invariant set for the control flow of $\Sigma^0$.%
\item[(c)] The fiber map $u \mapsto Q(u)$ is continuous when $\UC_0$ is equipped with the product topology.%
\end{enumerate}
\end{theorem}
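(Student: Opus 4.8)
The plan is to take $Q := \bigcup_{u \in \UC_0} \Lambda_u$ (with $\Lambda_u = h_u(\Lambda)$ as in Proposition \ref{prop_smallhs}) and to check everything directly from that proposition and the shadowing/isolation properties of $\Lambda$. First I would record the elementary facts. The map $\UC_0 \tm \Lambda \to M$, $(u,x) \mapsto h_u(x)$, is continuous (continuity of $u \mapsto h_u$ in the uniform topology, Proposition \ref{prop_smallhs}(d)(iii), together with the equicontinuity in (d)(ii)), so $Q$ is compact, and $Q \subset N_{\beta_0}(\Lambda)$ since $d(h_u(x),x) \leq \beta_0$ for all $u,x$. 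For $x = h_u(y) \in Q$ with $u \in \UC_0$, the intertwining relation (d)(i) iterates to $\varphi(t,x,u) = h_{\theta^t u}(\varphi(t,y,u^0)) \in \Lambda_{\theta^t u} \subset Q$ for all $t \in \Z$; hence $Q$ is all-time controlled invariant and $\Lambda_u \subseteq Q(u)$ for every $u \in \UC_0$.

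The key lemma I would isolate is the sharpened converse: \emph{if $u \in \UC_0$ and $x \in M$ satisfy $\varphi(t,x,u) \in N_{2\beta_0}(\Lambda)$ for all $t \in \Z$, then $x \in \Lambda_u$}. Set $z_t := \varphi(t,x,u)$; property \eqref{eq_pseudoorbits}, applied with $\theta^t u \in \UC_0$ and $z_t \in N_{2\beta_0}(\Lambda)$, gives $d(f_{u^0}(z_t), z_{t+1}) = d(\varphi_{1,u^0}(z_t), \varphi_{1,\theta^t u}(z_t)) \leq \alpha$, so $(z_t)_{t\in\Z}$ is an $\alpha$-pseudo-orbit of $f_{u^0}$ lying in $N_{2\beta_0}(\Lambda)$. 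By the shadowing lemma for the classical hyperbolic set $\Lambda$ of $f_{u^0}$ (the $U=\{u^0\}$ case of Theorem \ref{thm_shadowing}) there is a unique orbit $(f_{u^0}^t(y))_{t\in\Z}$ with $d(f_{u^0}^t(y), z_t) \leq \beta_0$ for all $t$; since then $f_{u^0}^t(y) \in N_{3\beta_0}(\Lambda)$ for all $t$ and $N_{3\beta_0}(\Lambda)$ is an isolating neighborhood of $\Lambda$, we get $y \in \Lambda$. As $d(\varphi(t,y,u^0), \varphi(t,x,u)) \leq \beta_0$ for all $t$, the uniqueness statement in Proposition \ref{prop_smallhs}(a) forces $x = h_u(y) \in \Lambda_u$. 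Given this lemma, (a) is immediate: $x \in Q(u)$ means $\varphi(\Z,x,u) \subset Q \subset N_{\beta_0}(\Lambda) \subset N_{2\beta_0}(\Lambda)$, hence $x \in \Lambda_u$; with $\Lambda_u \subseteq Q(u)$ this gives $Q(u) = \Lambda_u$ (so all fibers are nonempty, $\UC_Q = \UC_0$). Statement (b) also follows: $\UC_0 \tm N^{\circ}_{2\beta_0}(\Lambda)$ is an open neighborhood of $L(Q)$ in $\UC_0 \tm M$ (by (a) and $Q \subset N_{\beta_0}(\Lambda)$), and if $\Phi_t(u,x)$ stays in it for all $t\in\Z$ then $\varphi(t,x,u) \in N_{2\beta_0}(\Lambda)$ for all $t$, so the lemma yields $(u,x) \in L(Q)$; thus it is an isolating neighborhood for the control flow of $\Sigma^0$.

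To prove that $Q$ is a hyperbolic set of $\Sigma^0$ I would run the standard cone-field persistence argument, which in fact only repackages estimates already present in the proof of Lemma \ref{lem_hyperbolic}. Shrinking $\beta_0$ if necessary so that $\beta_0 \leq \rho_0/3$ and $N_{\beta_0}(\Lambda) \subset N$, extend the hyperbolic splitting $T_\Lambda M = E^- \oplus E^+$ continuously over $N$ and form the associated stable/unstable cone fields $C^-, C^+$ of the fixed dimensions. The decomposition of $\rmD\tilde\varphi_{u,x}(0)$ into a block-diagonal part $(A^{--}_{u,x}, A^{++}_{u,x})$ with $\|A^{--}_{u,x}\| \leq a_0$, $\|(A^{++}_{u,x})^{-1}\| \leq a_0$ plus a Lipschitz remainder of constant $\leq \ep_0$, established there for all $u$ with entries in $U_0$, shows that $C^\pm$ are forward/backward invariant and uniformly expanding under every time-one map $\varphi_{1,u}$, $u \in \UC_0$, as long as the orbit remains in $N$. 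Since $L(Q) \subset \UC_0 \tm N_{\beta_0}(\Lambda) \subset \UC_0 \tm N$ is $\Phi$-invariant, the nested intersections
\begin{equation*}
  E^+(u,x) := \bigcap_{n\geq 0}\rmD\varphi_{n,\theta^{-n}u}\bigl(\varphi(-n,x,u)\bigr)C^+\bigl(\varphi(-n,x,u)\bigr),\quad E^-(u,x) := \bigcap_{n\geq 0}\rmD\varphi_{-n,\theta^{n}u}\bigl(\varphi(n,x,u)\bigr)C^-\bigl(\varphi(n,x,u)\bigr)
\end{equation*}
are well-defined over $L(Q)$, and the usual arguments show they are continuous, $\Phi$-invariant, complementary, and contracted/expanded at uniform rates governed by $a_0$ and $\ep_0$ — that is, (H1)--(H3) of Definition \ref{def_uhs} hold (this is essentially \cite[Thm.~1.1]{Liu} transcribed to control systems). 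Finally, for (c): by (a) one has $Q(u) = h_u(\Lambda)$, so if $u_k \to u$ in $\UC_0$ then $h_{u_k} \to h_u$ uniformly on $\Lambda$ by Proposition \ref{prop_smallhs}(d)(iii), whence $\Dist(Q(u_k),Q(u)) \to 0$ and $\Dist(Q(u),Q(u_k)) \to 0$, i.e.\ $d_H(Q(u_k),Q(u)) \to 0$; the fiber map is Hausdorff-continuous with respect to the product topology.

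I expect the only genuinely laborious step to be the verification of hyperbolicity — the cone-field construction of $E^\pm$ and the exponential rate estimates — while everything else is a short deduction from Proposition \ref{prop_smallhs}. The other point requiring care is the bookkeeping of the neighborhoods ($N$, $N_{\beta_0}(\Lambda)$, $N_{2\beta_0}(\Lambda)$, $N_{3\beta_0}(\Lambda)$, $\NC$) so that the constructed pseudo-orbit lies exactly where the shadowing lemma applies and its shadowing orbit lands inside the isolating neighborhood of $\Lambda$.
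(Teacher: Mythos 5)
Your proposal is correct and follows essentially the same route as the paper: the same definition $Q=\bigcup_{u\in\UC_0}\Lambda_u$, the same pseudo-orbit/shadowing argument (yours packaged as a single lemma serving both (a) and (b), with the same isolating neighborhood $\UC_0\tm N^{\circ}_{2\beta_0}(\Lambda)$), and the same continuity argument for (c) via $h_{u_k}\to h_u$. The only cosmetic difference is that you spell out the cone-field construction of $E^{\pm}$ where the paper simply cites the standard persistence results (\cite[Prop.~6.4.6]{KH2}, \cite[Thm.~1.1]{Liu}).
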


\begin{proof}
We define%
\begin{equation*}
  Q := \left\{ h_u(x) : x \in \Lambda,\ u \in \UC_0 \right\} = \bigcup_{u\in\UC_0}\Lambda_u.%
\end{equation*}
We first prove that $Q$ is compact and all-time controlled invariant. Consider the map $\alpha:\UC_0\tm\Lambda \rightarrow M$, $(u,x) \mapsto h_u(x)$. From the continuity of $u \mapsto h_u$, it easily follows that $\alpha$ is continuous implying that $Q = \alpha(\UC_0 \tm \Lambda)$ is compact. All-time controlled invariance follows from Proposition \ref{prop_smallhs}(d)(i), which implies $\varphi(t,h_u(x),u) = h_{\theta u}(\varphi(t,x,u^0))$ for all $t\in\Z$. With standard arguments from the hyperbolic theory of dynamical systems (see, e.g., \cite[Prop.~6.4.6]{KH2}), one can show that for each $(u,h_u(x)) \in Q$ there is a splitting%
\begin{equation*}
  T_{h_u(x)}M = E^-_{u,h_u(x)} \oplus E^+_{u,h_u(x)}%
\end{equation*}
which is invariant and uniformly hyperbolic (provided that all relevant constants and neighborhoods are chosen small enough).%

Now we prove (a). It is clear that $\Lambda_u \subset Q(u)$ for all $u\in\UC_0$. To prove the converse, take $x \in Q(u)$ and recall the notation introduced in the proof of Proposition \ref{prop_smallhs}(a). The sequence $x_t := \varphi(t,x,u)$, $t\in\Z$, then yields the $\alpha$-pseudo-orbit $(u^0,x_t)_{t\in\Z}$ in $\NC$ by \eqref{eq_pseudoorbits}. Hence, there exists a unique $y \in M$ such that%
\begin{equation*}
  d(\varphi(t,y,u^0),x_t) \leq \beta_0 \mbox{\quad for all\ } t \in \Z.%
\end{equation*}
Since $N_{3\beta_0}(\Lambda)$ is an isolating neighborhood of $\Lambda$ for $f_{u^0}$, this implies $y \in \Lambda$. Then, by the uniqueness of shadowing orbits, it follows that $x = h_u(y) \in \Lambda_u$.%

To prove (b), we show that the open set $\UC_0 \tm N^{\circ}_{2\beta_0}(\Lambda)$ is an isolating neighborhood of $L(Q)$. Since every $x \in Q$ satisfies $\dist(x,\Lambda) \leq \beta_0$ by definition, this is a neighborhood of $L(Q)$. If $(\theta^tu,\varphi(t,x,u)) \in \UC_0 \tm N^{\circ}_{2\beta_0}(\Lambda)$ for all $t\in\Z$, then \eqref{eq_pseudoorbits} implies that $x_t := \varphi(t,x,u)$ satisfies $d(\varphi_{1,u^0}(x_t),x_{t+1}) \leq \alpha$ and $(\theta^tu,x_t)\in\NC$ for all $t\in\Z$. Hence, there exists a unique $y\in M$ with $d(\varphi(t,y,u^0),\varphi(t,x,u)) \leq \beta_0$ for all $t\in\Z$. We thus have%
\begin{equation*}
  \dist(\varphi(t,y,u^0),\Lambda) \leq d(\varphi(t,y,u^0),\varphi(t,x,u)) + \dist(\varphi(t,x,u),\Lambda) \leq 3\beta_0%
\end{equation*}
for all $t\in\Z$. Since $N_{3\beta_0}(\Lambda)$ is an isolating neighborhood of $\Lambda$, it follows that $y\in\Lambda$, implying $x = h_u(y)$ and $(u,x) \in L(Q)$ as desired.%

Finally, we prove (c). Since the fiber map is always upper semicontinuous, it remains to prove its lower semicontinuity. Let $u \in \UC_0$ and $x \in Q(u)$. Consider a sequence $u_k \rightarrow u$ in $\UC_0$. Since $Q(u) = \Lambda_u$, we have $x = h_u(x')$ for some $x'\in\Lambda$. By Proposition \ref{prop_smallhs}(d)(iii), we know that $h_{u_k}(x') \rightarrow h_u(x')$. Since $h_{u_k}(x') \in \Lambda_{u_k} = Q(u_k)$, we have proved the lower semicontinuity at $u$.%
\end{proof}

In the following, we study controllability properties on the set $Q$ as constructed above. Moreover, we are interested in finding out under which conditions $Q$ has nonempty interior.%

We start with some definitions and a technical lemma. A \emph{controlled $\ep$-chain} from a point $x \in M$ to a point $y \in M$ consists of a finite sequence of points $x = x_0,x_1,\ldots,x_r = y$, $r \geq 1$, and controls $u_0,u_1,\ldots,u_{r-1} \in \UC$ so that $d(\varphi(1,x_i,u_i),x_{i+1}) \leq \ep$ for $i=0,1,\ldots,r-1$. We say that \emph{chain controllability} holds on a set $E \subset M$ if any two points $x,y\in E$ can be joined by a controlled $\ep$-chain for every $\ep>0$. For more details on this concept, see \cite{CKl,Wir}.%

\begin{lemma}\label{lem_chains}
Assume that the restriction of $f_{u^0}$ to the hyperbolic set $\Lambda$ is topologically transitive. Then chain controllability holds on the set $Q$ as constructed above. In particular, for any $x,y \in Q$ and $\ep>0$, a controlled $\ep$-chain $(x_i,u^i)$ from $x$ to $y$ can be chosen so that $(u^i,x_i) \in L(Q)$ for all $i$. Moreover, the all-time lift $L(Q)$ is internally chain transitive.
\end{lemma}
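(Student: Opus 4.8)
The plan is to combine the topological transitivity of $f_{u^0}|_\Lambda$ with Proposition \ref{prop_smallhs} to lift chains from $\Lambda$ to $Q$, using the homeomorphisms $h_u$ to keep everything inside $L(Q)$. First I would recall that any point of $Q$ lies in some fiber $\Lambda_u = h_u(\Lambda)$, and that $\varphi(t,h_u(x),u) = h_{\theta^t u}(\varphi(t,x,u^0))$ by Proposition \ref{prop_smallhs}(d)(i), so an orbit through a point of $Q$ stays in $Q$ and its $\UC_0\times M$-lift stays in $L(Q)$. The strategy is therefore: given $x,y\in Q$ and $\ep>0$, write $x\in\Lambda_{u}$, $y\in\Lambda_{v}$; first build a controlled $\ep$-chain that moves $x$ to a point of the reference fiber $\Lambda_{u^0}=\Lambda$ (and similarly, via a chain run ``backwards'' or by a symmetric argument, arrange that from $\Lambda$ one can reach $y$); then inside $\Lambda$ use topological transitivity of $f_{u^0}$ to connect the two points of $\Lambda$ by a genuine $f_{u^0}$-orbit segment (which is automatically a controlled $\ep$-chain with all controls equal to the constant $u^0$ and all points in $\Lambda$, hence in $L(Q)$).

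The key mechanism for moving between fibers with small jumps is the equicontinuity and continuity statements of Proposition \ref{prop_smallhs}(d): since $H:\UC_0\to C^0(\Lambda,M)$ is continuous and $\{h_u^{-1}\}$ is equicontinuous, for any $w,w'\in\UC_0$ that are sufficiently close in the product topology one has $d(h_{w}(z),h_{w'}(z))$ uniformly small over $z\in\Lambda$, i.e.\ the fibers $\Lambda_{w}$ and $\Lambda_{w'}$ are Hausdorff-close (this is really just Theorem \ref{thm_smallpert_hypset}(c)). So I would interpolate between $u$ and $u^0$ by a finite sequence of controls in $\UC_0$, each consecutive pair close enough that the corresponding fibers are within $\ep$ in Hausdorff distance; then a point in $\Lambda_{u}$ can be ``pushed'' to a nearby point of $\Lambda_{u^0}$ by a controlled $\ep$-chain whose intermediate points are the nearest points in the successive fibers — and by Theorem \ref{thm_smallpert_hypset}(a), each such intermediate point lies in the appropriate fiber $Q(u^i)=\Lambda_{u^i}$, so the lift $(u^i,x_i)$ is in $L(Q)$. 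A cleaner bookkeeping: instead of ``jumping'' between fibers I would do a single step — pick $x\in\Lambda_u$, write $x=h_u(z)$ with $z\in\Lambda$, and note that $z=h_{u^0}(z)\in\Lambda$; if $u$ is replaced by a control agreeing with $u$ on a long time window $[-T;T]$ and equal to $u^0$ outside, the shadowing/uniqueness in Proposition \ref{prop_smallhs}(a) forces the corresponding fiber point to be within $\ep$ of $x$, and iterating (shortening the window) lands us at $z\in\Lambda$ through a controlled $\ep$-chain inside $L(Q)$. Either way the point is that $L(Q)$ projects continuously over $\UC_0$ with homeomorphic fibers, and $\UC_0$ is connected, so fibers can be connected by small Hausdorff-steps.

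Once $x$ has been brought (along a controlled $\ep$-chain in $L(Q)$) to a point $x'\in\Lambda$ and we know we can reach $y$ from some point $y'\in\Lambda$ by such a chain, it remains to connect $x'$ to $y'$ within $\Lambda$: here I invoke the hypothesis that $f_{u^0}|_\Lambda$ is topologically transitive. A compact topologically transitive system has a dense orbit, and by a standard shadowing/pseudo-orbit argument on the hyperbolic set $\Lambda$ (or, more elementarily, just by the definition of topological transitivity plus compactness) any two points of $\Lambda$ can be joined by an $\ep$-chain whose intermediate points lie in $\Lambda$; concatenating such a chain (controls $\equiv u^0$) with the two fiber-transfer chains yields the desired controlled $\ep$-chain from $x$ to $y$, entirely lifting into $L(Q)$. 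Finally, internal chain transitivity of $L(Q)$ follows from this: given $(u,x),(v,y)\in L(Q)$ and $\ep>0$, the controlled $\ep$-chain just constructed in $M$, together with the fact that the first control can be taken to be (a small modification of) $u$ and the controls are concatenated consistently with the shift $\theta$, produces an $\ep$-chain for the control flow $\Phi$ from $(u,x)$ to $(v,y)$ with all intermediate points in $L(Q)$ — one needs here that the lift metric $d_{\UC\times M}$ is controlled by closeness in the product topology of the control components, which holds because the chain only perturbs the $M$-component while the $\UC$-component evolves under $\theta$ exactly.

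The main obstacle I anticipate is the bookkeeping in the fiber-transfer step: making precise, in the product topology on $\UC_0$, how small a perturbation of the control is needed so that the resulting jump in the fiber point is below $\ep$, and ensuring this can be done in finitely many steps reaching exactly the reference fiber $\Lambda=\Lambda_{u^0}$ — this requires carefully combining the continuity of $H$ (Proposition \ref{prop_smallhs}(d)(iii)) with the uniqueness in part (a) and with connectedness of $\UC_0$, and taking care that at every intermediate stage the point genuinely lies in the corresponding $u$-fiber (so that the chain lifts into $L(Q)$) rather than merely near it. The transitivity step and the final assembly into an $L(Q)$-internal chain are comparatively routine.
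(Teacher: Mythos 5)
Your overall strategy -- lifting chains through the conjugacies $h_u$ and using chain transitivity of $f_{u^0}$ on $\Lambda$ -- is the same as the paper's, and your middle segment (a chain inside $\Lambda$ with constant control $u^0$) is fine. But the fiber-transfer step, in both versions you describe, has a genuine gap: it conflates ``the fibers $\Lambda_{w^i}$ and $\Lambda_{w^{i+1}}$ are Hausdorff-close'' with ``there is a controlled $\ep$-chain from a point of $\Lambda_{w^i}$ to a nearby point of $\Lambda_{w^{i+1}}$.'' A controlled $\ep$-chain must satisfy $d(\varphi(1,x_i,u^i),x_{i+1})\leq\ep$, i.e.\ each successive point must be $\ep$-close to the image of the previous one under one step of the dynamics. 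If $x_i=h_{w^i}(z)$, then $\varphi(1,x_i,w^i)=h_{\theta w^i}(f_{u^0}(z))$, which is one dynamical step ahead; the ``nearest point of $\Lambda_{w^{i+1}}$ to $x_i$'' is instead approximately $h_{w^{i+1}}(z)$, and these two points are far apart in general (the dynamics on $\Lambda$ is not near the identity). The same objection applies to your ``cleaner bookkeeping'' variant, where the base point $z$ is held fixed while the control window shrinks: the resulting sequence of fiber points is not a chain for the dynamics. The correct repair is exactly the paper's construction: choose a $\delta$-chain $w^0,\ldots,w^r$ for the \emph{shift} $\theta$ in $\UC_0$ (so $d_{\UC}(\theta w^i,w^{i+1})\leq\delta$, not merely $w^i$ close to $w^{i+1}$) together with a $\delta$-chain $z_0',\ldots,z_r'$ for $f_{u^0}$ in $\Lambda$, and set $z_i:=h_{w^i}(z_i')$; then $d(\varphi(1,z_i,w^i),z_{i+1})=d(h_{\theta w^i}(f_{u^0}(z_i')),h_{w^{i+1}}(z_{i+1}'))$ is small by uniform continuity of $u\mapsto h_u$ and equicontinuity of $\{h_u\}$.

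Two further points. First, to run the transfer segments you need chain transitivity of $\theta$ on $\UC_0$ (to produce the $\theta$-chain from $u$ through $u^0$ to $v$); this is not an immediate consequence of connectedness of $\UC_0$ in the product topology, but follows from density of periodic points plus connectedness, as recorded in Proposition \ref{prop_basics}(f). Second, the paper runs the two chains (in $\UC_0$ and in $\Lambda$) simultaneously and therefore has to synchronize their lengths, which it does by idling at a constant control; your three-segment decomposition sidesteps this synchronization, which is a minor simplification, but only once the transfer step is repaired as above.
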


\begin{proof}
We use that topological transitivity implies chain transitivity (easy to see).\footnote{In fact, from the shadowing lemma it follows that topological transitivity is equivalent to chain transitivity on an isolated invariant hyperbolic set of a diffeomorphism.} To prove the assertion, pick two points $x,y \in Q$ and write them as $x = h_u(x')$, $y = h_v(y')$ with $x',y' \in \Lambda$ and $u,v \in \UC_0$. We now choose $\delta$-chains of equal length $r$ from $u$ to $v$ in $\UC_0$ and from $x'$ to $y'$ in $\Lambda$, respectively. That is, we pick $x' = z_0',z_1',\ldots,z_r' = y'$ in $\Lambda$ and $u = w^0,w^1,\ldots,w^r = v$ in $\UC_0$ such that%
\begin{equation*}
  d(f_{u^0}(z_i'),z_{i+1}') \leq \delta \mbox{\quad and \quad} d_{\UC}(\theta w^i,w^{i+1}) \leq \delta%
\end{equation*}
for all $i \in [0;r)$. This is possible by chain transitivity of $f_{u^0}$ on $\Lambda$ and of $\theta$ on $\UC_0$, respectively (see Proposition \ref{prop_basics}(f) for the latter). The reason why we can choose the length $r$ identical for both chains is that $\UC_0$ contains fixed points. Indeed, we can let every $\delta$-chain in $\UC_0$ run through a fixed point and at this fixed point we can stop as long as we want to (introducing an arbitrary number of trivial jumps).%

Now, we define%
\begin{equation*}
  z_i := h_{w^i}(z_i') \in Q(w^i),\quad i = 0,1,\ldots,r.%
\end{equation*}
Observe that $z_0 = h_{w^0}(z_0') = h_u(x') = x$ and $z_r = h_{w^r}(z_r') = h_v(y') = y$. We claim that if $\delta = \delta(\ep)$ is chosen small enough, then $(z_i,w^i)$, $i=0,1,\ldots,r$, is a controlled $\ep$-chain from $x$ to $y$, i.e.,%
\begin{equation}\label{eq_controlled_chain}
  d(\varphi(1,z_i,w^i),z_{i+1}) \leq \ep,\quad i = 0,1,\ldots,r-1.%
\end{equation}
We can check this as follows:%
\begin{align*}
  d(\varphi(1,z_i,w^i),z_{i+1}) &= d(\varphi_{1,w^i}(h_{w^i}(z_i')),h_{w^{i+1}}(z_{i+1}')) \\
	                              &= d(h_{\theta w^i}(\varphi_{1,u^0}(z_i')),h_{w^{i+1}}(z_{i+1}')) \\
																&\leq d(h_{\theta w^i}(\varphi_{1,u^0}(z_i')),h_{w^{i+1}}(\varphi_{1,u^0}(z_i'))) \\
																& \quad + d(h_{w^{i+1}}(\varphi_{1,u^0}(z_i')),h_{w^{i+1}}(z_{i+1}')).%
\end{align*}
We know that $h_u$ depends continuously on $u$. By compactness of $\UC_0$, we even have uniform continuity. This implies that we can choose $\delta$ small enough so that the first term becomes smaller than $\ep/2$ for all $i$. By equicontinuity of the maps $h_u$, we can choose $\delta$ also small enough so that the second term becomes smaller than $\ep/2$ for all $i$. Altogether, we have proved \eqref{eq_controlled_chain}.%

To show the last statement, observe that by choosing $(u,x),(v,y) \in L(Q)$, the same construction as above yields the $(\ep+\delta)$-chain $(w^i,x_i)$ from $(u,x)$ to $(v,y)$, which is completely contained in $L(Q$).%
\end{proof}

To make use of the chain controllability and also for later purposes, it is important to know when $Q$ has nonempty interior.\footnote{For our main result on invariance entropy, we need to assume that $Q$ has positive volume.} To provide a quite general and checkable sufficient condition, we need to recall some concepts and a result from Sontag \& Wirth \cite{SWi}.%

The system $\Sigma$ is called \emph{analytic} if the state space $M$ is a real-analytic manifold, $U$ is a compact subset of some $\R^m$, satisfying $U = \cl\, \inner\, U$, and the restriction of $f$ to $M \tm \inner\, U$ is a real-analytic map.%

For a fixed $t \in \Z_{>0}$, a pair $(x,u) \in M \tm (\inner\, U)^t$ is called \emph{regular} if%
\begin{equation*}
  \rk\,\frac{\partial \varphi(t,\cdot,\cdot)}{\partial u}(x,u) = d = \dim M,%
\end{equation*}
where $\varphi(t,\cdot,\cdot)$ is regarded as a map from $M \tm (\inner U)^t$ to $M$ so that $\frac{\partial \varphi(t,\cdot,\cdot)}{\partial u}(x,u)$ is a $d \tm tm$ matrix.%

A control sequence $u$ of length $t>0$ is called \emph{universally regular} if $(x,u)$ is regular for every $x \in M$. We write $S(t)$ for the set of all universally regular control sequences $u \in (\inner\, U)^t$.%

We write $\OC_t^+(x) = \{\varphi(t,x,u) : u \in \UC\}$ for $t \geq 0$, and $\OC^+(x) = \bigcup_{t\geq0}\OC^+_t(x)$ for the \emph{forward orbit} of a point $x\in M$. If we only allow control sequences taking values in a subset $\tilde{U} \subset U$, we also write $\OC^+_t(x;\tilde{U})$ and $\OC^+(x;\tilde{U})$, respectively. The \emph{negative orbit} of $x$ is the set $\OC^-(x) = \{\varphi(t,x,u) : t \leq 0,\ u \in \UC \}$.%

The system $\Sigma$ is called \emph{forward accessible from $x$} if $\inner\, \OC^+(x) \neq \emptyset$. It is called \emph{forward accessible} if it is forward accessible from every point.%

Note that by Sard's theorem $\inner\, \OC^+(x;\inner\,U) \neq \emptyset$ is equivalent to the existence of $t \in \Z_{>0}$ and $u \in (\inner\,U)^t$ such that $\rk\,\frac{\partial \varphi(t,\cdot,\cdot)}{\partial u}(x,u) = d$.%

We then have the following result from \cite[Prop.~1]{SWi}.%

\begin{theorem}\label{thm_universal_controls}
Let the following assumptions hold:%
\begin{enumerate}
\item[(i)] The system $\Sigma$ is analytic.%
\item[(ii)] $\Sigma$ is uniformly forward accessible with control range $\inner\, U$, i.e., there exists $t_0\in\Z_{>0}$ such that $\inner\, \OC^+_{t_0}(x;\inner\, U) \neq \emptyset$ for all $x\in M$.%
\end{enumerate}
Then the set $S(t)$ is dense in $U^t$ for all $t$ large enough.\footnote{The result in \cite{SWi} actually makes a much stronger statement, which we do not use in our paper.}%
\end{theorem}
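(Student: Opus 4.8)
The plan is the following. Since this is the discrete-time analytic form of a classical fact on universal nonsingular controls (it is essentially \cite[Prop.~1]{SWi}, see also Albertini \& Sontag \cite{ASo}), in the paper one would simply quote it; but to reconstruct it I would combine three ingredients: analyticity, which forces singular loci to be thin; the uniform accessibility hypothesis (ii), which forces those loci to be \emph{proper}; and a chain-rule bookkeeping that assembles universally regular controls from local pieces. The bookkeeping to record first is that regularity is preserved under concatenation: if $(x,v)$ is regular for length $t$ and $w$ is any control of length $s$, then $(x,vw)$ is regular for length $t+s$, since $\frac{\partial\varphi(t+s,\cdot,\cdot)}{\partial v}(x,vw)=\rmD_y\varphi(s,\cdot,w)|_{y=\varphi(t,x,v)}\circ\frac{\partial\varphi(t,\cdot,\cdot)}{\partial v}(x,v)$ and the first factor is a linear isomorphism; symmetrically, if $(\varphi(t,x,v),w)$ is regular for length $s$, then so is $(x,vw)$ for length $t+s$. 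Consequently a concatenation $u^{(1)}\cdots u^{(k)}$ of length-$t_0$ blocks is regular at $x$ as soon as \emph{some} block $u^{(j)}$ is regular at the intermediate state $\varphi((j-1)t_0,x,u^{(1)}\cdots u^{(j-1)})$ reached before it.

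Next, the local step for a fixed state $x\in M$. By (ii) and Sard's theorem (as noted right before the statement) there is $u\in(\inner\,U)^{t_0}$ with $\rk\,\frac{\partial\varphi(t_0,\cdot,\cdot)}{\partial u}(x,u)=d$; appending blocks then shows that for every $t\ge t_0$ the set of controls in $(\inner\,U)^t$ regular at $x$ is nonempty. Since $f$ is real-analytic on $M\tm\inner\,U$, the map $u\mapsto\frac{\partial\varphi(t,\cdot,\cdot)}{\partial u}(x,u)$ is real-analytic, so the controls at which its rank drops below $d$ form the common zero set of the $d\tm d$ minors, a \emph{proper} real-analytic subset of $(\inner\,U)^t$, hence closed, nowhere dense and Lebesgue-null. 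Thus for each $x$ and each $t\ge t_0$ the regular controls at $x$ form an open, dense, full-measure subset of $(\inner\,U)^t$.

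The serious work, and the main obstacle, is the global step: $S(t)=\bigcap_{x\in M}\{u:(x,u)\text{ regular}\}$ is an \emph{uncountable} intersection of open dense sets, so Baire does not apply directly (and $M$ may be noncompact, so $S(t)$ is not visibly the complement of a projection of a closed set). I would handle this by a dimension-drop induction. Writing $B_i:=\{x:(x,u^{(1)}\cdots u^{(i)})\text{ not regular for length }it_0\}$ and $\psi_i:=\varphi(it_0,\cdot,u^{(1)}\cdots u^{(i)})$, the concatenation bookkeeping gives $B_{i+1}=B_i\cap\psi_i^{-1}(\{y:(y,u^{(i+1)})\text{ not regular for length }t_0\})$, and by the local step $B_i$ is either empty or a proper analytic subset of $M$. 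Exhausting the connected manifold $M$ by relatively compact opens and picking one smooth point of each top-dimensional irreducible component of $\psi_i(B_i)$ seen along the exhaustion gives a countable set of states; the controls in $(\inner\,U)^{t_0}$ regular at all of them form a dense $G_{\delta}$ by Baire, and any $u^{(i+1)}$ from it fails to contain any top-dimensional component of $\psi_i(B_i)$ in its non-regularity set, whence $\dim B_{i+1}<\dim B_i$. So $B_k=\emptyset$ after at most $k=\dim M+1$ blocks, i.e.\ $S(N)\ne\emptyset$ for $N:=t_0(\dim M+1)$; moreover, since the admissible choices at each step form an open dense set, a fibered (Kuratowski--Ulam) Baire argument shows the successful concatenations are not merely nonempty but dense, so $S(N)$ is dense in $(\inner\,U)^N$.

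Finally, density for all large $t$ follows cheaply from tail-inheritance: given $t\ge N$ and a target $w\in(\inner\,U)^t$, split $w=w'w''$ with $w''$ of length $N$, approximate $w''$ by a \emph{universally regular} block $u''\in S(N)$, and note that $w'u''\in S(t)$ (regularity is inherited from the universally regular tail $u''$, applied at $\varphi(t-N,x,w')$ for every $x$) while $w'u''$ is as close to $w$ as we like. Hence $S(t)$ is dense in $(\inner\,U)^t$, and therefore in $U^t$ because $U=\cl\,\inner\,U$. The genuinely delicate point in this program is the globalization in the third paragraph — keeping track of the top-dimensional components of the analytic ``bad sets'' and running the Baire arguments along the exhaustion without losing density — which is precisely where the careful accounting of \cite{SWi,ASo} is needed.
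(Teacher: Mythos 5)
You have correctly identified the situation: the paper offers no proof of this theorem at all --- it is quoted verbatim as \cite[Prop.~1]{SWi} (the footnote even notes that the cited result is stronger than what is stated). So the only fair comparison is against the strategy of that reference, which your reconstruction matches: a concatenation lemma for regular controls plus a dimension-drop induction on the real-analytic ``bad sets'' $B_i$, with genericity at each stage coming from analyticity, Sard, and the uniform accessibility hypothesis. Your bookkeeping is sound: both directions of the concatenation observation are correct (the relevant Jacobian contains a block equal to, or composed on the left with, the derivative of a diffeomorphism $\varphi_{s,w}$, which preserves rank), the bound $N=t_0(\dim M+1)$ is of the right order, and the tail-inheritance step reducing general $t\ge N$ to $t=N$ is exactly how one passes from ``some $t$'' to ``all $t$ large enough'', with $U=\cl\,\inner\,U$ giving density in $U^t$ from density in $(\inner\,U)^t$.

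Two points deserve flagging. First, your displayed identity for $B_{i+1}$ should be an inclusion $B_{i+1}\subseteq B_i\cap\psi_i^{-1}(\cdots)$ rather than an equality: the concatenated Jacobian can have full rank even when neither block does, since the two column spaces may jointly span $T_{\varphi(t,x,u)}M$. Only the inclusion is used for the dimension bound, so this is cosmetic. Second, and more substantively, the step ``a proper real-analytic subset of $(\inner\,U)^t$ is closed, nowhere dense and null'' needs every connected component of $(\inner\,U)^t$ to contain a regular control; $U=\cl\,\inner\,U$ with $U$ connected does not force $\inner\,U$ to be connected, and on a component containing no regular control the minors could vanish identically. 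This caveat, together with the genuinely delicate real-analytic dimension bookkeeping in your third paragraph (global irreducible decompositions of real-analytic sets are not as well behaved as in the complex case, whence your recourse to a compact exhaustion, countably many witness points, and Baire), is precisely the content the paper outsources to \cite{SWi,ASo}. As a reconstruction of the cited result your argument is correct in outline and honest about where the technical weight lies; the paper itself proves nothing here.
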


Under the assumptions of this theorem, imposed on the system $\Sigma^0$, we obtain that the hyperbolic set $Q$ as constructed above has nonempty interior.%

\begin{proposition}\label{prop_Q_nonempty_int}
Consider the set $Q$ from Theorem \ref{thm_smallpert_hypset} and assume that $\Sigma$ is an analytic system and that $\Sigma^0$ is uniformly forward accessible with control range $\inner\, U_0$. Then $Q$ has nonempty interior.
\end{proposition}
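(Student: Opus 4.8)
\emph{Strategy.} The plan is to enlarge the fibre $\Lambda=Q(u^0)$ inside $Q$ by perturbing the control on a finite time window, using universally regular controls to force the resulting family of points to sweep out an open set. First I would apply Theorem~\ref{thm_universal_controls} to $\Sigma^0$ --- which is analytic and, by hypothesis, uniformly forward accessible with control range $\inner U_0$ --- to obtain an integer $t$ (any sufficiently large one) with $S(t)$ dense in $U_0^t$; then fix such a $t$ and pick universally regular controls $u^{\mathrm L},u^{\mathrm R}\in S(t)$ whose coordinates are so close to $u^0$ that every orbit segment of length $t$ that starts near $\Lambda$ and is generated by $u^{\mathrm L}$ or by $u^{\mathrm R}$ stays inside $N^{\circ}_{2\beta_0}(\Lambda)$ (possible for the fixed $t$, since the $u^0$-segments lie in $\Lambda$); here $\beta_0$ and $N^{\circ}_{2\beta_0}(\Lambda)$ are chosen as in the proof of Theorem~\ref{thm_smallpert_hypset}, so that $\UC_0\tm N^{\circ}_{2\beta_0}(\Lambda)$ is an isolating neighbourhood of $L(Q)$. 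Next fix $x_0\in\Lambda$, put $a:=\varphi(-t,x_0,u^0)$ and $b:=\varphi(t,x_0,u^0)$ (both in $\Lambda$), and take $W^{\mathrm u}(a)$, $W^{\mathrm s}(b)$ to be the local unstable, resp.\ stable, manifolds of $a$, $b$ for $f_{u^0}$ (smooth embedded disks of dimension $\dim E^+$, resp.\ $\dim E^-$, by the stable manifold theorem), small enough to lie in $N^{\circ}_{2\beta_0}(\Lambda)$ and to be detected by $\beta_0$-closeness of the relevant one-sided $f_{u^0}$-orbit to that of $a$, resp.\ $b$.

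\emph{A family of points in $Q$.} For $\xi\in W^{\mathrm u}(a)$ near $a$ and a control $v=(v^{\mathrm L},v^{\mathrm R})\in(\inner U_0)^{2t}$ near $(u^{\mathrm L},u^{\mathrm R})$ I would set $p(\xi,v):=\varphi(t,\xi,v^{\mathrm L})$ and $\eta(\xi,v):=\varphi(t,p(\xi,v),v^{\mathrm R})=\varphi(2t,\xi,v)$, and consider
\begin{equation*}
  \Gamma:=\bigl\{(\xi,v):\ \xi\in W^{\mathrm u}(a)\ \text{near}\ a,\ \ v\ \text{near}\ (u^{\mathrm L},u^{\mathrm R}),\ \ \eta(\xi,v)\in W^{\mathrm s}(b)\bigr\}.
\end{equation*}
For $(\xi,v)\in\Gamma$ the point $p:=p(\xi,v)$ lies in $Q$: gluing $u^0$ to $v$ outside the window yields a bi-infinite control $W\in\UC_0$ for which the orbit through $p$ (with $\xi$ at time $-t$ and $\eta$ at time $t$) equals the $u^0$-orbit of $\xi$ on $(-\infty;-t]$, the $u^0$-orbit of $\eta$ on $[t;\infty)$, and the two length-$t$ segments of $v^{\mathrm L}$, $v^{\mathrm R}$ on $[-t;t]$; by the choices above this entire orbit stays in $N^{\circ}_{2\beta_0}(\Lambda)$ (for $v$ close enough to $(u^{\mathrm L},u^{\mathrm R})$), so $(W,p)\in L(Q)$ and hence $p\in Q$.

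\emph{Transversality and submersion.} It then remains to show that $\{p(\xi,v):(\xi,v)\in\Gamma\}$ has nonempty interior. At the constant control, $\xi\mapsto\varphi(2t,\xi,u^0)=f_{u^0}^{2t}(\xi)$ maps $W^{\mathrm u}(a)$ onto a piece of the unstable manifold through $b=f_{u^0}^{2t}(a)$, which meets $W^{\mathrm s}(b)$ transversally at $b$ since $E^+_b\oplus E^-_b=T_bM$; as transverse intersections are stable under $C^1$-small perturbation of the map, for our $(u^{\mathrm L},u^{\mathrm R})$ the map $\xi\mapsto\varphi(2t,\xi,(u^{\mathrm L},u^{\mathrm R}))$ still meets $W^{\mathrm s}(b)$ transversally at some $\xi^*$ near $a$, giving a base point $(\xi^*,(u^{\mathrm L},u^{\mathrm R}))\in\Gamma$. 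Because $u^{\mathrm L}\in S(t)$, the partial derivative of $(\xi,v)\mapsto\varphi(2t,\xi,v)$ in the $v^{\mathrm L}$-block already has rank $d=\dim M$; hence $(u^{\mathrm L},u^{\mathrm R})\in S(2t)$, this map is a submersion at the base point, and $\Gamma$ is therefore a $C^1$-submanifold of $W^{\mathrm u}(a)\tm(\inner U_0)^{2t}$ near $(\xi^*,(u^{\mathrm L},u^{\mathrm R}))$ of codimension $\dim E^+$, i.e.\ of dimension $2tm$. Finally, $\Pi:\Gamma\to M$, $(\xi,v)\mapsto p(\xi,v)=\varphi(t,\xi,v^{\mathrm L})$, should be a submersion at the base point: given $\delta v^{\mathrm L}$, the surjectivity onto $T_bM$ of the partial derivative of $\varphi(2t,\cdot,\cdot)$ in the $v^{\mathrm R}$-block (valid since $u^{\mathrm R}\in S(t)$) lets one choose $\delta v^{\mathrm R}$ so that $(0,\delta v^{\mathrm L},\delta v^{\mathrm R})$ is tangent to $\Gamma$, and its image under $\rmD\Pi$ equals $\tfrac{\partial\varphi(t,\cdot,\cdot)}{\partial v^{\mathrm L}}(\xi^*,u^{\mathrm L})\,\delta v^{\mathrm L}$, which ranges over all of $T_{p^*}M$ because $u^{\mathrm L}\in S(t)$. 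Thus $\Pi(\Gamma)\subset Q$ contains an open neighbourhood of $p^*:=\varphi(t,\xi^*,u^{\mathrm L})$, and $Q$ has nonempty interior.

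\emph{Main obstacle.} The genuinely delicate point is keeping the perturbed orbits inside $Q$ rather than merely inside a neighbourhood of $\Lambda$: this is what forces one to anchor the two ends of the orbit segment on a local unstable and a local stable manifold and to take $u^{\mathrm L},u^{\mathrm R}$ close to $u^0$, and the verification $p\in Q$ then rests on the isolating-neighbourhood description of $L(Q)$ from Theorem~\ref{thm_smallpert_hypset}. A more clerical difficulty is dovetailing the submersion and transversality facts coming from universal regularity --- which concern only the finite-time endpoint maps $\varphi(t,\cdot,\cdot)$ and $\varphi(2t,\cdot,\cdot)$ --- with the hyperbolic local product structure near $\Lambda$; in particular one has to check that $(u^{\mathrm L},u^{\mathrm R})\in S(2t)$ and that each length-$t$ half carries full rank along the relevant orbit points.
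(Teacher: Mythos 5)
Your argument is correct, but it follows a genuinely different route from the paper's. The paper first invokes the Anosov closing lemma to find a periodic orbit in $\Lambda$, lifts it via $h_{u^*}$ to a $\tau$-periodic controlled trajectory for a $\tau$-periodic universally regular control $u^*$, and then uses the local controllability of the linearization (implicit function theorem) to steer each $y$ in a small ball $B_{\delta}(x^*)$ back to itself in time $\tau$ while staying $\ep$-close to the periodic trajectory; periodic continuation then produces a bi-infinite orbit in the isolating neighbourhood, whence $B_{\delta}(x^*)\subset Q$. You instead avoid periodic orbits entirely: you anchor the two ends of a finite perturbation window on a local unstable manifold $W^{\mathrm u}(a)$ and a local stable manifold $W^{\mathrm s}(b)$ of $f_{u^0}$, use persistence of the transverse intersection $E^+_b\oplus E^-_b=T_bM$ to make the constraint set $\Gamma$ nonempty, and then exploit universal regularity of each length-$t$ half separately --- one half to make the midpoint map $\Pi$ a submersion, the other to absorb the stable-manifold constraint --- so that the midpoints sweep out an open subset of $Q$. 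Both proofs hinge on exactly the same two ingredients (genericity of universally regular controls from Theorem \ref{thm_universal_controls} and the isolating neighbourhood $\UC_0\tm N^{\circ}_{2\beta_0}(\Lambda)$ of $L(Q)$ to upgrade ``orbit stays near $L(Q)$'' to ``orbit lies in $L(Q)$''), and your device of landing on $W^{\mathrm s}(b)$ plays the same closing-up role as the paper's choice $y=z$. The paper's version is shorter because periodic continuation trivializes the extension to a bi-infinite orbit; yours requires more differential-topological bookkeeping (the preimage theorem for $\Gamma$, checking $(u^{\mathrm L},u^{\mathrm R})\in S(2t)$, the tangent-space computation for $\rmD\Pi$) but dispenses with the closing lemma and gives a more explicit picture of the open set produced. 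All the steps you flag as delicate do go through as you describe.
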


\begin{proof}
Choose $t_*$ large enough such that $S(t)$ (defined with respect to $\Sigma^0$) is nonempty for all $t \geq t_*$. Since $\Lambda$ is hyperbolic and isolated invariant, there exists a periodic orbit in $\Lambda$ (this is an implication of the Anosov Closing Lemma \cite[Thm.~6.4.15]{KH2}), say $\{f_{u^0}^t(x_0)\}$. Let $\tau \in \Z_{>0}$ denote its period and assume w.l.o.g.~that $\tau \geq t_*$. Now pick a universally regular $u^* \in (\inner\, U_0)^{\tau}$. By periodic continuation, we can extend $u^*$ to a $\tau$-periodic sequence in $(\inner\, U_0)^{\Z}$ that we also denote by $u^*$.%

Now consider the point $x^* := h_{u^*}(x_0) \in Q(u^*)$. By Proposition \ref{prop_smallhs}(d), we have%
\begin{equation*}
  \varphi_{\tau,u^*}(x^*) = \varphi_{\tau,u^*}(h_{u^*}(x_0)) = h_{\theta^{\tau} u^*}(f_{u^0}^{\tau}(x_0)) = h_{u^*}(x_0) = x^*.%
\end{equation*}
Hence, the trajectory $\varphi(\cdot,x^*,u^*)$ is $\tau$-periodic. Using the regularity, we can find $\delta = \delta(\ep) > 0$ so that every $y \in B_{\delta}(x^*)$ can be steered to every $z \in B_{\delta}(x^*)$ in time $\tau$ via some control sequence $u = u(y,z)$ of length $\tau$, so that the controlled trajectory $(u_t,\varphi(t,y,u))_{t=0}^{\tau}$ is never further away from $(u^*_t,\varphi(t,x^*,u^*))$ than $\ep$.\footnote{This is a consequence of the implicit function theorem, cf.~\cite[Thm.~7]{Son}.} By choosing $y = z$ and using periodic continuation again, we obtain a $\tau$-periodic trajectory on the full time axis that completely evolves in the $\ep$-neighborhood of $Q$ and, by choosing $\ep$ small enough, we can also achieve that $u_t \in U_0$ for all $t\in\Z$. Since $L(Q)$ is isolated invariant, this implies that the trajectory evolves in $Q$, hence $y \in Q(u)$. This, in turn, implies $B_{\delta}(x^*) \subset Q$, which completes the proof.%
\end{proof}

Now we study the controllability properties of $\Sigma^0$ on the set $Q$. To formulate the next proposition, we introduce the \emph{core} of a subset $Y \subset M$ as%
\begin{equation*}
  \core(Y) := \left\{ y \in \inner\, Y : \inner(\OC^-(y) \cap Y) \neq \emptyset \mbox{ and } \inner(\OC^+(y) \cap Y) \neq \emptyset \right\}.%
\end{equation*}

\begin{proposition}\label{prop_complete_controllability}
Consider the hyperbolic set $Q$ from Theorem \ref{thm_smallpert_hypset} for the control system $\Sigma^0$. Additionally, let the following assumptions hold:%
\begin{enumerate}
\item[(a)] $\Lambda$ is a topologically transitive set of $f_{u^0}$.%
\item[(b)] $Q$ has nonempty interior.%
\end{enumerate}
Then complete controllability holds on $\core(Q)$.%
\end{proposition}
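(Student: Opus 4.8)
The plan is to show that any two points $x,y \in \core(Q)$ can be joined by a trajectory of $\Sigma^0$, i.e.\ $y \in \OC^+(x)$ (if $\core(Q) = \emptyset$ there is nothing to prove). The idea is to use chain controllability on $Q$ (Lemma~\ref{lem_chains}) to build a controlled $\ep$-chain lying in $L(Q)$ between convenient points, to convert it into a genuine $\alpha$-pseudo-orbit of the control flow $\Phi$, and then to straighten it by the shadowing lemma (Theorem~\ref{thm_shadowing}). The shadowing orbit will start and end only \emph{near} the prescribed points; the crucial role of $x,y \in \core(Q)$ is to supply \emph{open} reachable and controllable sets that absorb this residual error.

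First I would fix the nonempty open sets $V := \inner(\OC^+(x) \cap Q)$ and $W := \inner(\OC^-(y) \cap Q)$ (all orbits for $\Sigma^0$), so that $x$ reaches every point of $V$ and every point of $W$ can be steered to $y$. Pick $v_0 \in V$, $w_0 \in W$, let $\NC$ be the neighborhood of $L(Q)$ from Theorem~\ref{thm_shadowing}, choose $\rho > 0$ with $N_{\rho}(L(Q)) \subset \NC$, and choose $\beta \in (0,\beta_0)$ small enough that $B_{2\beta}(v_0) \subset V$ and $B_{2\beta}(w_0) \subset W$; let $\alpha = \alpha(\beta)$ be as in Theorem~\ref{thm_shadowing}. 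From the proof of Lemma~\ref{lem_chains} (where the chain in $\UC_0$ is constructed as a $\theta$-chain through a fixed point) one obtains, for any prescribed $\ep, \delta_1 > 0$, a controlled $\ep$-chain $z_0 = v_0, z_1, \ldots, z_r = w_0$ with controls $w^0, \ldots, w^r \in \UC_0$ such that $(w^i, z_i) \in L(Q)$ for all $i$ and, moreover, $d_{\UC}(\theta w^i, w^{i+1}) \le \delta_1$ for all $i$. We take $\ep := \alpha$.

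Next I would splice this into a bi-infinite pseudo-orbit. Using all-time controlled invariance of $Q$ for $\Sigma^0$ and $\Phi$-invariance of $L(Q)$, extend the data to all of $\Z$ by genuine orbits at the ends: $z_i := \varphi(i,z_0,w^0)$, $w^i := \theta^i w^0$ for $i < 0$, and $z_i := \varphi(i-r,z_r,w^r)$, $w^i := \theta^{i-r} w^r$ for $i > r$; then $(w^i, z_i) \in L(Q)$ and $z_i \in Q$ for all $i \in \Z$, and $(w^i)_{i\in\Z}$ is a $\delta_1$-chain for $\theta$. Define $\bar u \in U_0^{\Z}$ by $\bar u_i := (w^i)_0$. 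Since a one-step transition $\varphi_{1,v}$ depends only on $v_0$, one has $\varphi_{1,\theta^i \bar u}(z_i) = \varphi_{1,w^i}(z_i)$, hence $d(\varphi_{1,\theta^i\bar u}(z_i), z_{i+1}) \le \alpha$ for all $i$, so $(\theta^i\bar u, z_i)_{i\in\Z}$ is an $\alpha$-pseudo-orbit. Moreover, because $(w^i)$ is a $\delta_1$-chain and $(\theta^i\bar u)_s = (w^{i+s})_0$, a short estimate (propagating the per-step error through the exponentially weighted metric $d_{\UC}$ and using $d_U \le \diam U$ on the remaining coordinates) shows that $d_{\UC}(\theta^i\bar u, w^i)$ can be made $< \rho$ uniformly in $i$ by choosing $\delta_1$ small; since $(w^i, z_i) \in L(Q)$, the pseudo-orbit then lies in $N_{\rho}(L(Q)) \subset \NC$. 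Theorem~\ref{thm_shadowing}, applicable since $L(Q)$ is isolated invariant (Theorem~\ref{thm_smallpert_hypset}(b)), now yields a unique orbit $(\theta^i\bar u, \varphi(i,\hat z,\bar u))_{i\in\Z} \subset L(Q)$ with $d(\varphi(i,\hat z,\bar u), z_i) \le \beta$ for all $i$. In particular $\hat z \in B_{2\beta}(v_0) \subset V \subset \OC^+(x)$ and $\varphi(r,\hat z,\bar u) \in B_{2\beta}(w_0) \subset W \subset \OC^-(y)$. Concatenating three controls in $\UC_0$ — first steer $x$ to $\hat z$, then apply $\bar u$ for time $r$, then steer $\varphi(r,\hat z,\bar u)$ to $y$ — gives $y \in \OC^+(x)$.

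I expect the main obstacle to be exactly this tension between shadowing, which only produces an orbit close to a prescribed chain and never exactly through prescribed endpoints, and \emph{exact} controllability; it is resolved by the two facts that $\beta$ in Theorem~\ref{thm_shadowing} may be taken arbitrarily small and that $\core(Q)$ provides open sets $V,W$ large enough to contain a whole $2\beta$-ball, after which a single local reachability step at each end closes the argument. The secondary technical point deserving care is the passage from a controlled chain — whose successive controls are \emph{not} related by the shift — to a genuine $\alpha$-pseudo-orbit of $\Phi$ that still sits inside the \emph{fixed} neighborhood $\NC$; here one uses both that $\varphi_{1,v}$ sees only $v_0$ (so matching zeroth components preserves the pseudo-orbit inequalities) and the $\theta$-chain structure of the chain's controls (so $\theta^i\bar u$ stays $d_{\UC}$-close to the $i$-th control, uniformly in $i$).
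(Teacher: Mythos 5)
Your proposal is correct and follows essentially the same route as the paper's proof: build a controlled $\ep$-chain through $L(Q)$ via Lemma \ref{lem_chains}, replace the chain of controls by the single sequence $\bar u_i = (w^i)_0$ (using that $\varphi_{1,v}$ depends only on $v_0$ and that the $w^i$ form a $\theta$-chain, so the resulting pseudo-orbit stays near $L(Q)$), shadow it, and absorb the residual $\beta$-error at both ends using the open sets supplied by the definition of $\core(Q)$. Your endpoint bookkeeping (choosing $v_0,w_0$ in the open reachable/controllable sets and shrinking $\beta$ so the shadowing orbit lands inside them) is a slightly cleaner packaging of the same final step the paper performs with the points $x',x'',y',y''$.
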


\begin{proof}
By using the construction in the proof of Lemma \ref{lem_chains}, we can produce bi-infinite controlled $\ep$-chains passing through any two given points in $Q$. Let $(x_t,w^t)_{t\in\Z}$ be such a controlled chain, that is%
\begin{equation*}
  (w^t,x_t) \in L(Q) \mbox{\quad and \quad} d(\varphi(1,x_t,w^t),x_{t+1}) \leq \ep \mbox{\quad for all\ } t \in \Z.%
\end{equation*}
We define another control sequence $w^* \in \UC_0$ by putting%
\begin{equation*}
  w^*_t := w^t_0 \mbox{\quad for all\ } t \in \Z.%
\end{equation*}
In this way, $(\theta^t w^*,x_t)_{t\in\Z}$ becomes an $\ep$-pseudo-orbit, since%
\begin{align*}
  d(\varphi(1,x_t,\theta^t w^*),x_{t+1}) &= d(f_{w^*_t}(x_t),x_{t+1})\\
	&= d(f_{w^t_0}(x_t),x_{t+1}) = d(\varphi(1,x_t,w^t),x_{t+1}) \leq \ep.%
\end{align*}
We want to apply the shadowing lemma to shadow such chains, but we need to make sure that they are close enough to $L(Q)$. Recalling that we constructed the chains with $d_{\UC}(\theta w^t,w^{t+1}) \leq \delta$ (where $\delta$ only depends on $\ep$), we find that%
\begin{align*}
  &d_{\UC \tm M}( (\theta^t w^*,x_t), (w^t,x_t) ) = d_{\UC}(\theta^t w^*,w^t)\\
	&= \sum_{s \in \Z} \frac{1}{2^{|s|}} d_U(w^*_{t+s},w^t_s) = \sum_{s \in \Z} \frac{1}{2^{|s|}} d_U(w^{t+s}_0,w^t_s) \\
	&= \sum_{s \in \Z}\frac{1}{2^{|s|}} d_U ( w^{t+s}_0, (\theta^s w^t)_0 ) \leq \sum_{s \in \Z} \frac{1}{2^{|s|}} d_{\UC}(w^{t+s},\theta^s w^t).%
\end{align*}
Now we can split the sum into a finite and an infinite part, the latter being small because of the factor $2^{-|s|}$, and the first being small due to the choice of $\delta$. To be more precise, to achieve that the sum becomes smaller than a given $\gamma>0$, first pick $s_0 > 0$ large enough so that%
\begin{equation*}
  \diam\, \UC \sum_{|s| > s_0} \frac{1}{2^{|s|}} \leq \frac{\gamma}{2}.%
\end{equation*}
Then choose $\delta>0$ small enough so that for all $|s| \leq s_0$ we have the following:%
\begin{itemize}
\item If $s > 0$, then%
\begin{equation*}
  \frac{1}{2^s} d_{\UC}(w^{t+s},\theta^s w^t) \leq \frac{1}{2^s}\sum_{i=0}^{s-1} d_{\UC}(\theta^i w^{t+s-i},\theta^i \theta w^{t+s-i-1}) \leq \frac{\gamma}{2(2s_0+1)},%
\end{equation*}
which is possible, since $\{\theta^i\}_{i=0}^{s_0-1}$ is a uniformly equicontinuous family and $d_{\UC}(w^{t+s-i},\theta w^{t+s-i-1}) \leq \delta$.
\item If $s < 0$, then%
\begin{equation*}
  \frac{1}{2^{-s}} d_{\UC}(w^{t+s},\theta^s w^t) \leq \frac{1}{2^{-s}}\sum_{i=s}^{-1} d_{\UC}(\theta^i w^{t+s-i}, \theta^i \theta w^{t+s-i-1}) \leq \frac{\gamma}{2(2s_0+1)},%
\end{equation*}
which is possible by similar reasons as used in the former case.%
\end{itemize}
Altogether, $d_{\UC \tm M}( (\theta^t w^*,x_t), (w^t,x_t) ) \leq \gamma$. Hence, it follows that the $\ep$-pseudo-orbit $(\theta^t w^*,x_t)_{t\in\Z}$, for $\delta$ sufficiently small, can be $\beta$-shadowed by a real orbit in $L(Q)$ of the form $(\theta^t w^*, \varphi(t,z,w^*))_{t\in\Z}$:%
\begin{equation*}
  (w^*,z) \in L(Q) \mbox{\quad and \quad} d(\varphi(t,z,w^*),x_t) \leq \beta \mbox{\quad for all\ } t \in \Z.%
\end{equation*}
This implies that for any given points $x,y \in Q$ we find a trajectory in $Q$ starting in an arbitrarily small neighborhood of $x$ and ending (after a finite time) in an arbitrarily small neighborhood of $y$. Now assume that $x,y \in \core(Q)$. Pick points $x' \in \inner(\OC^+(x) \cap Q)$ and $y' \in \inner(\OC^-(y) \cap Q)$ and a trajectory starting at some $x'' \in \inner(\OC^+(x) \cap Q)$ and ending in $y'' \in \inner(\OC^-(y) \cap Q)$ (obtained by shadowing a chain from $x'$ to $y'$). Then one can steer from $x$ to $x''$, from $x''$ to $y''$ and from $y''$ to $y$. This proves the controllability statement.%
\end{proof}

It is important to understand how large $\core(Q)$ is. From \cite{ASo}, we know that $\core(Q)$ is always an open set under mild assumptions on the system.%

\begin{lemma}\label{lem_core_open_dense}
Assume that $U \subset \R^m$ for some $m\in\Z_{>0}$ and $U_0 = \cl\, \inner\, U_0$. Furthermore, let $f:M \tm U \rightarrow M$ be of class $C^1$. Then $\core(Q) \neq \emptyset$ implies that $\core(Q)$ is open in $M$ and dense in $Q$.%
\end{lemma}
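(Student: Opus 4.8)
I would prove the two assertions separately, quoting the openness essentially from \cite{ASo} and obtaining the density by a shadowing construction.

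\emph{Openness.} The key input, which under the hypotheses $U\subset\R^m$, $U_0=\cl\,\inner\,U_0$ and $f\in C^1$ is supplied by the accessibility theory of Albertini \& Sontag \cite{ASo}, is that an interior point of a reachable set $\OC^+(x)$ is attained by a control at which the end-point map is submersive; by the implicit function theorem this turns ``$y\in\inner\,\OC^+(x)$'' into an open condition jointly in $(x,y)$, i.e.\ the relation $\RC^+:=\{(x,y)\in M\tm M:y\in\inner\,\OC^+(x)\}$ is open, and symmetrically so is $\RC^-:=\{(x,y):y\in\inner\,\OC^-(x)\}$ (the negative orbit of $\Sigma$ is the forward orbit of the time-reversed $C^1$ control system). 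Using $\inner(\OC^{\pm}(x)\cap Q)=\inner\,\OC^{\pm}(x)\cap\inner\,Q$, a point $x$ lies in $\core(Q)$ exactly when $x\in\inner\,Q$ and there exist $y^{\pm}\in\inner\,Q$ with $(x,y^{\pm})\in\RC^{\pm}$. Fixing such witnesses $y^{\pm}$ for a given $x_0\in\core(Q)$ and using that $\RC^{\pm}$ and $\inner\,Q$ are open then yields a whole neighbourhood of $x_0$ contained in $\core(Q)$.

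\emph{Density.} Let $\mathcal{C}$ denote the (open, by the above) set of $x$ with $\inner\,\OC^+(x)\cap\inner\,Q\neq\emptyset$ and $\inner\,\OC^-(x)\cap\inner\,Q\neq\emptyset$, so that $\core(Q)=\mathcal{C}\cap\inner\,Q$. I would first observe that if $p\in\core(Q)$ and $z\in\OC^+(p)\cap\OC^-(p)$ (so $z$ is \emph{mutually reachable} with $p$), then $\OC^{\pm}(z)\supset\OC^{\pm}(p)\supset\core(Q)$ by complete controllability on $\core(Q)$ (Proposition \ref{prop_complete_controllability}), hence $z\in\mathcal{C}$. So it suffices to produce, arbitrarily close to any given $x_0\in Q$, a point of $Q$ that is mutually reachable with a core point. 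For this I would mimic the proofs of Lemma \ref{lem_chains} and Proposition \ref{prop_complete_controllability}: choose $p_0\in\core(Q)$ and $\beta>0$ with $B_{\beta}(p_0)\subset\core(Q)$, let $\alpha=\alpha(\beta)$ be the shadowing constant, and concatenate a controlled $\alpha$-chain in $L(Q)$ from $x_0$ to $p_0$ with one from $p_0$ to $x_0$; after equalising chain lengths by inserting trivial loops at a $\theta$-fixed point one obtains a periodic controlled chain, which collapses (setting $w^*_t:=w^t_0$, as in the proof of Proposition \ref{prop_complete_controllability}) to a genuine periodic $\alpha$-pseudo-orbit of $\Phi$ lying in the isolating neighbourhood of $L(Q)$. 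Theorem \ref{thm_shadowing} together with uniqueness of shadowing orbits then gives a periodic $\Phi$-orbit in $L(Q)$ staying $\beta$-close to that pseudo-orbit; its time-$0$ value $z\in Q$ is $\beta$-close to $x_0$, some later value $p$ is $\beta$-close to $p_0$ and hence lies in $\core(Q)$, and $z$ and $p$, being on one periodic orbit, are mutually reachable. Thus $z\in\mathcal{C}$ is close to $x_0$; since $\mathcal{C}$ is open and $\inner\,Q$ is dense in $Q$ for the set $Q$ under consideration (seen by transporting an open ball contained in $Q$, as produced in the proof of Proposition \ref{prop_Q_nonempty_int}, along a shadowed controlled chain), one may replace $z$ by a nearby point of $\mathcal{C}\cap\inner\,Q=\core(Q)$, completing the argument.

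\emph{Main obstacle.} All the substance lies in the openness of the reachability relations $\RC^{\pm}$: the fact that interior points of reachable sets of a merely $C^1$ system with $U_0=\cl\,\inner\,U_0$ are reached submersively, and hence in a perturbation-stable way, is exactly the nontrivial content of \cite{ASo}, and is the sole place where the regularity of $f$ and the condition $U_0=\cl\,\inner\,U_0$ are used. A secondary technical point is the density of $\inner\,Q$ in $Q$, which is needed only to promote the density of the open set $\mathcal{C}$ to that of $\core(Q)$; absent it, one still obtains that $\core(Q)$ is dense in $\cl\,\inner\,Q$.
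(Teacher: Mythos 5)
Your openness argument is, in substance, the paper's: the paper simply cites \cite[Lem.~7.8]{ASo} for the openness of $\core(Q)$, and your derivation from the openness of the relations $\RC^{\pm}$ is a reasonable unpacking of that citation (with the caveat that the submersivity characterization of interior points of reachable sets is really an analytic-systems fact; for a merely $C^1$ right-hand side with $U_0 = \cl\,\inner\,U_0$ the openness of the core is exactly what \cite[Lem.~7.8]{ASo} supplies and is best quoted as such). For density the two arguments are organized differently. The paper applies the one-directional shadowing construction from the proof of Proposition \ref{prop_complete_controllability} twice, showing that $\OC^-(\core(Q))$ and $\OC^+(\core(Q))$ are each open and dense in $Q$, and then identifies $\core(Q)$ with their intersection. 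Your periodic-orbit shadowing instead produces, near any $x_0 \in Q$, a single point $z \in Q$ that is mutually reachable with a core point, which yields both accessibility conditions at $z$ in one stroke; this is a legitimate and arguably tidier way to get an open dense set $\mathcal{C}$ of points satisfying the two orbit conditions.

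The genuine gap is the final step, where you must intersect $\mathcal{C}$ with $\inner\,Q$. Your justification for the density of $\inner\,Q$ in $Q$ --- transporting an open ball $B \subset Q$ along a shadowed controlled trajectory --- does not work: $Q$ is only all-time controlled \emph{invariant}, so for a fixed pair $(\tau,u)$ the diffeomorphism $\varphi_{\tau,u}$ need not map $B$ into $Q$ even when $B \subset Q$; only the single trajectory delivered by the shadowing lemma is known to remain in $Q$. Placing an open set inside $Q$ near a prescribed point is exactly the content of Proposition \ref{prop_Q_nonempty_int}, which uses regular periodic orbits and hence analyticity and accessibility hypotheses that the present lemma does not assume. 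Note also that density of $\inner\,Q$ in $Q$ is a \emph{consequence} of the lemma (since $\core(Q) \subset \inner\,Q$), so it cannot be invoked as a harmless auxiliary fact without risking circularity. You flag the dependence honestly and fall back to density in $\cl\,\inner\,Q$, but as written the proposal does not establish density in $Q$. In fairness, the paper's own proof glosses over the same point: the inclusion $\OC^-(\core(Q)) \cap \OC^+(\core(Q)) \subset \core(Q)$ asserted there also requires membership in $\inner\,Q$, which is not addressed.
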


\begin{proof}
Consider the sets%
\begin{align*}
  \OC^-(\core(Q)) &= \{ x \in M : \exists y \in \core(Q),\ u \in \UC_0,\ t \geq 0 \mbox{ s.t. } \varphi(t,x,u) = y\},\\
	\OC^+(\core(Q)) &= \{ y \in M : \exists x \in \core(Q),\ u \in \UC_0,\ t \geq 0 \mbox{ s.t. } \varphi(t,x,u) = y\}.%
\end{align*}
Since $\core(Q)$ is nonempty by assumption and open by \cite[Lem.~7.8]{ASo}, the preceding proof shows that $\OC^-(\core(Q))$ is open and dense in $Q$. Moreover, every $x \in \OC^-(\core(Q))$ satisfies $\inner(Q \cap \OC^+(x)) \neq \emptyset$. The set $\OC^+(\core(Q))$ is also open and dense in $Q$ by the preceding proof and every point $x \in \OC^+(\core(Q))$ satisfies $\inner(Q \cap \OC^-(x)) \neq \emptyset$. Hence, $\OC^-(\core(Q)) \cap \OC^+(\core(Q)) = \core(Q)$ and the assertion follows.%
\end{proof}

We can thus formulate the following corollary.%

\begin{corollary}
Consider the hyperbolic set $Q$ from Theorem \ref{thm_smallpert_hypset} for the control system $\Sigma^0$. Additionally, let the following assumptions hold:%
\begin{enumerate}
\item[(a)] $U \subset \R^m$ for some $m\in\Z_{>0}$ and $U_0 = \cl\, \inner\, U_0$.%
\item[(b)] $f:M \tm U \rightarrow M$ is of class $C^1$. 
\item[(c)] $\Lambda$ is a topologically transitive set of $f_{u^0}$.%
\item[(d)] $\core(Q)$ is nonempty.%
\end{enumerate}
Then complete controllability holds on an open and dense subset of $Q$.%
\end{corollary}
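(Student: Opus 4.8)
The plan is to derive the corollary by simply combining Proposition~\ref{prop_complete_controllability} with Lemma~\ref{lem_core_open_dense}; essentially no new argument is required beyond checking that the hypotheses line up. The set witnessing the claim will be $E := \core(Q)$ itself.

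First I would observe that assumption~(d) already guarantees the ``nonempty interior'' hypothesis needed by Proposition~\ref{prop_complete_controllability}. Indeed, by the very definition of $\core(Q)$ one has $\core(Q) \subset \inner\, Q$, so $\core(Q) \neq \emptyset$ forces $\inner\, Q \neq \emptyset$. Hypothesis~(c) of the corollary (topological transitivity of $\Lambda$ under $f_{u^0}$) is precisely hypothesis~(a) of Proposition~\ref{prop_complete_controllability}, and the nonempty interior just obtained is hypothesis~(b); therefore complete controllability holds on $\core(Q)$.

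Next I would invoke Lemma~\ref{lem_core_open_dense}. Hypotheses~(a) and~(b) of the corollary are exactly the standing hypotheses of that lemma ($U \subset \R^m$ with $U_0 = \cl\, \inner\, U_0$, and $f$ of class $C^1$), and hypothesis~(d) provides $\core(Q) \neq \emptyset$; the lemma then yields that $\core(Q)$ is open in $M$ — hence open in the subspace $Q$ — and dense in $Q$. Putting the two conclusions together, $E = \core(Q)$ is an open and dense subset of $Q$ on which complete controllability holds, which is exactly the assertion. The only step demanding any thought — and it is a trivial one — is the inclusion $\core(Q) \subset \inner\, Q$ used to turn assumption~(d) into the interior hypothesis of Proposition~\ref{prop_complete_controllability}; the rest is hypothesis-matching.
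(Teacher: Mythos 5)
Your proposal is correct and is exactly the argument the paper intends: the corollary is stated without proof precisely because it is the immediate combination of Proposition \ref{prop_complete_controllability} (whose interior hypothesis follows from $\core(Q)\subset\inner\,Q$, as you note) with Lemma \ref{lem_core_open_dense}. Nothing is missing.
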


The following proposition provides a sufficient condition for $\core(Q) \neq \emptyset$.%

\begin{proposition}\label{prop_core_nonempty}
Assume that the given system is analytic and forward accessible.\footnote{It is actually enough to assume that the system is forward accessible from one point $x \in M$. Then, by analyticity it is forward accessible from all $x$ in an open and dense set, which is enough for the conclusion of the proposition.} Then $\inner\, Q \neq \emptyset$ implies $\core(Q) \neq \emptyset$.%
\end{proposition}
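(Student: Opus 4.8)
The plan is to adapt the periodic-orbit-plus-implicit-function-theorem argument used in the proof of Proposition~\ref{prop_Q_nonempty_int}, replacing the appeal to universally regular controls — which rests on Theorem~\ref{thm_universal_controls} and hence on \emph{uniform} forward accessibility — by an analyticity-based genericity argument that only needs forward accessibility. First I would record that, under analyticity and forward accessibility, $\Sigma^0$ is forward accessible with control range $\inner\,U_0$ (the full-rank set of controls is cut out by real-analytic minor conditions, so density in $(\inner\,U)^T$ transfers to the open subset $(\inner\,U_0)^T$), and that the set $R$ of points of $M$ from which $\Sigma^0$ is forward accessible is open and dense (see the footnote and \cite{ASo,SWi}). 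Padding a regular control with extra steps shows that, for each horizon $\tau$ larger than some $\tau_*$, the set $G_\tau := \{(x,w) \in M \tm (\inner\,U_0)^\tau : \rk\,\frac{\partial\varphi(\tau,\cdot,\cdot)}{\partial u}(x,w) = d\}$ is nonempty; since its complement is the common zero set of finitely many real-analytic functions on the connected manifold $M \tm (\inner\,U_0)^\tau$, the set $G_\tau$ is in fact open and dense.

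Next I would produce a periodic trajectory of $\Sigma^0$ that lies in $Q$ and carries a regular control. By the Anosov Closing Lemma there is a periodic point $x_0 \in \Lambda$ of $f_{u^0}$ of some period $p$, and for every control $u^* \in \UC_0$ that is $\tau$-periodic with $p \mid \tau$ the point $y^* := h_{u^*}(x_0)$ has a $\tau$-periodic $\Sigma^0$-trajectory contained in $Q$ (by the conjugacy identity of Proposition~\ref{prop_smallhs}(d)(i), since $\{ h_{\theta^t u^*}(f_{u^0}^t(x_0)) \}_t \subset \bigcup_u \Lambda_u = Q$). The remaining task is to choose $\tau$ and the periodic control $u^*$, with values in $\inner\,U_0$, so that the pair $(h_{u^*}(x_0), u^*|_{[0,\tau)})$ lies in $G_\tau$, i.e.\ is regular. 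This is where the hypothesis $\inner\,Q \neq \emptyset$ enters: it guarantees that $Q$ is not nowhere dense, hence meets the open dense accessibility set $R$, giving enough room to perturb the periodic control inside $(\inner\,U_0)^{\Z}$ — keeping the trajectory closed and in $Q$ — until the minor conditions defining $G_\tau$ are satisfied along it.

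Given such $y^*$, $\tau$ and $u^*$, I would then run the argument of Proposition~\ref{prop_Q_nonempty_int} verbatim: regularity at $(y^*, u^*|_{[0,\tau)})$ together with $\varphi(\tau, y^*, u^*) = y^*$ yields, via the implicit function theorem, a $\delta > 0$ such that every $y \in B_\delta(y^*)$ can be steered to every $z \in B_\delta(y^*)$ in time $\tau$ along a controlled trajectory that stays $\ep$-close to the periodic orbit and uses control values in $U_0$. Taking $z = y$ and continuing periodically gives, for every $y \in B_\delta(y^*)$, a $\tau$-periodic $\Sigma^0$-trajectory evolving in $N_{\ep}(Q)$, hence in $Q$ since $L(Q)$ is isolated invariant (Theorem~\ref{thm_smallpert_hypset}(b)); thus $B_\delta(y^*) \subseteq Q$ and therefore $B_\delta(y^*) \subseteq \inner\,Q$. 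Moreover, for each $y \in B_\delta(y^*)$ we have $B_\delta(y^*) \subseteq \OC^+(y)$ and $B_\delta(y^*) \subseteq \OC^-(y)$, so $\inner(\OC^{\pm}(y) \cap Q) \supseteq B_\delta(y^*) \neq \emptyset$; together with $y \in \inner\,Q$ this gives $y \in \core(Q)$. Hence $\core(Q) \supseteq B_\delta(y^*) \neq \emptyset$.

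The main obstacle is the second step: extracting a \emph{regular} control along a \emph{closed} trajectory contained in $Q$. Under uniform forward accessibility this is immediate from universally regular controls, but here one must combine the analyticity-genericity of $G_\tau$ with the structural room provided by $\inner\,Q \neq \emptyset$. Making this rigorous amounts to showing that the complement of $G_\tau$ — a proper real-analytic subset of $M \tm (\inner\,U_0)^\tau$ — cannot contain the whole family of pairs $(h_{u^*}(x_0), u^*|_{[0,\tau)})$ arising from $\tau$-periodic controls $u^*$, i.e.\ a transversality/non-degeneracy statement about local controllability along periodic orbits of $\Sigma^0$ through $\Lambda$-periodic points; the hypothesis $\inner\,Q \neq \emptyset$ is precisely what one uses to rule out this degeneracy.
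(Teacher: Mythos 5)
Your proposal does not close the proof: the step you yourself flag as ``the main obstacle'' is a genuine gap, not a technicality. Openness and density of $G_\tau$ in $M \tm (\inner\,U_0)^\tau$ buys you nothing for the family of pairs $\{(h_{u^*}(x_0),\,u^*|_{[0;\tau)})\}$, because that family is a \emph{graph} over $(\inner\,U_0)^\tau$ --- a thin set that an open dense subset of the ambient product has no obligation to meet. Worse, the map $u^* \mapsto h_{u^*}(x_0)$ produced by shadowing is only continuous (at best H\"older), not real-analytic, so you cannot argue that the graph is an analytic subvariety and intersect it with the complement of a proper analytic set. And the hypothesis $\inner\,Q \neq \emptyset$ carries no information whatsoever about the rank of $\partial\varphi(\tau,\cdot,\cdot)/\partial u$ along trajectories through the conjugated periodic points; the sentence claiming it ``is precisely what one uses to rule out this degeneracy'' is an assertion, not an argument. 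In effect you are trying to reprove Proposition~\ref{prop_Q_nonempty_int} (a ball of exact local controllability around a periodic orbit inside $Q$) under strictly weaker hypotheses, which is both more than the statement requires and not achievable by the genericity tools you invoke.

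The paper's proof takes a different and lighter route that sidesteps periodic orbits and regular controls entirely. By analyticity and forward accessibility, the Lie algebra rank condition holds on an open and dense subset of $M$ (\cite[Lem.~5.1]{ASo}); intersect it with $\inner\,Q$ to get a nonempty open set $W$ and pick $z\in W$ with $B_\gamma(z)\subset W$. A bi-infinite pseudo-orbit whose $x$-component returns to $B_{\gamma/3}(z)$ infinitely often is shadowed (hyperbolicity of $Q$ plus isolatedness of $L(Q)$) by a true controlled orbit starting at some $x\in\inner\,Q$ and accumulating at a point $y\in\cl\,B_{\gamma/2}(z)\subset W$. Since the rank condition holds at $y$, \cite[Lem.~4.1]{ASo} yields that from $x$ one reaches an open set inside every neighborhood of $y$, so $\inner(\OC^+(x)\cap Q)\neq\emptyset$; the time-reversed construction gives $\inner(\OC^-(x)\cap Q)\neq\emptyset$, hence $x\in\core(Q)$. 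The point is that one only needs the \emph{pointwise} Lie algebra rank condition at a single accumulation point of a shadowed recurrent orbit --- a condition that is generic on $M$ itself, where analyticity genuinely applies --- rather than full-rank regularity of a specific finite-horizon control along a closed trajectory. If you want to salvage your approach, you would have to prove the transversality statement you defer; as written, the proof is incomplete at its decisive step.
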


\begin{proof}
By \cite[Lem.~5.1]{ASo}, on an open and dense subset of $M$ the Lie algebra rank condition (introduced in \cite[p.~5]{ASo}) is satisfied. Let $W$ denote the intersection of this set with $\inner\, Q$. Now we pick a point $z \in W$ and a $\gamma>0$ so that $B_{\gamma}(z) \subset W$. Consider a bi-infinite $\ep$-pseudo-orbit whose $x$-component passes through $B_{\gamma/3}(z)$ infinitely many times. By shadowing this pseudo-orbit (choosing $\ep$ sufficiently small), we can find an orbit starting in some $x \in \inner\, Q$ that passes through $B_{\gamma/2}(z)$ infinitely many times. Then there exists a sequence of points $x_k \in \OC^+_{n_k}(x) \cap B_{\gamma/2}(z)$, where $n_k \rightarrow \infty$. We may assume that $x_k$ converges to some point $y \in \cl\, B_{\gamma/2}(z)$. Since $y \in W$, the Lie algebra rank condition holds at $y$. By \cite[Lem.~4.1]{ASo} and the subsequent remarks, one can reach from $x$ an open set in every neighborhood of $y$. This implies $\inner(Q \cap \OC^+(x)) \neq \emptyset$. Since the same construction works in backward time, we conclude that also $\inner(Q \cap \OC^-(x)) \neq \emptyset$. Hence, $x \in \core(Q)$.%
\end{proof}

\begin{corollary}\label{cor_controllability}
Let the following assumptions hold for the control system $\Sigma^0$ and the hyperbolic set $Q$ from Theorem \ref{thm_smallpert_hypset}:%
\begin{enumerate}
\item[(a)] $\Sigma^0$ is analytic and uniformly forward accessible.%
\item[(b)] $\Lambda$ is a topologically transitive set of $f_{u^0}$.%
\end{enumerate}
Then complete controllability holds on $\core(Q)$, which is an open and dense subset of $Q$.%
\end{corollary}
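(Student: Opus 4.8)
The plan is to obtain Corollary~\ref{cor_controllability} as a synthesis of Propositions~\ref{prop_Q_nonempty_int}, \ref{prop_core_nonempty} and \ref{prop_complete_controllability} together with Lemma~\ref{lem_core_open_dense}, all read for the control system $\Sigma^0$ (which is itself a control system of the type studied, with control range $U_0$ in place of $U$). First I would observe that the single hypothesis ``$\Sigma^0$ is analytic'' already supplies all the standing regularity assumptions appearing in those statements: by definition it forces $U_0$ to be a compact subset of some $\R^m$ with $U_0 = \cl\,\inner\,U_0$, and $f$ to be real-analytic --- hence $C^1$ --- on $M \tm \inner\,U_0$. Likewise, ``$\Sigma^0$ uniformly forward accessible'' entails $\inner\,\OC^+(x) \neq \emptyset$ for every $x$, i.e.\ plain forward accessibility, so assumption~(a) of the corollary covers what is needed both in Proposition~\ref{prop_Q_nonempty_int} and in Proposition~\ref{prop_core_nonempty} (for the latter, even forward accessibility from a single point would suffice).

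With the bookkeeping settled, the argument is a short chain. Proposition~\ref{prop_Q_nonempty_int}, applied to $\Sigma^0$, gives $\inner\,Q \neq \emptyset$; this is the one genuinely substantive input, since it is where the universally-regular-control machinery of Sontag \& Wirth (Theorem~\ref{thm_universal_controls}) together with the Anosov closing lemma enters --- but that work has already been carried out. Next, Proposition~\ref{prop_core_nonempty} (whose hypotheses, analyticity and forward accessibility, we have just verified) upgrades $\inner\,Q \neq \emptyset$ to $\core(Q) \neq \emptyset$, and Lemma~\ref{lem_core_open_dense} then shows $\core(Q)$ is open in $M$ and dense in $Q$ (in particular open and dense in $Q$ for the subspace topology, as claimed). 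Finally, Proposition~\ref{prop_complete_controllability} applies, because its two hypotheses are exactly ``$\Lambda$ topologically transitive for $f_{u^0}$'' (assumption~(b) of the corollary) and ``$\inner\,Q \neq \emptyset$'' (just established); it yields complete controllability on $\core(Q)$. Concatenating these four statements gives precisely the corollary.

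I expect the only point requiring care --- the ``main obstacle,'' such as it is --- to be the alignment of control ranges across the cited results: Proposition~\ref{prop_Q_nonempty_int} is phrased with ``uniformly forward accessible with control range $\inner\,U_0$'' while the corollary writes ``$\Sigma^0$ uniformly forward accessible,'' and Lemma~\ref{lem_core_open_dense} is stated in terms of the ambient pair consisting of $U$ and $U_0$. Both are resolved by reading those statements for the system $\Sigma^0$, so that its control space $U_0$ plays the role of $U$; since $U_0 = \cl\,\inner\,U_0$, forward accessibility of $\Sigma^0$ and forward accessibility ``with control range $\inner\,U_0$'' coincide, by the Sard's-theorem remark preceding Theorem~\ref{thm_universal_controls}. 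I would record this identification explicitly at the outset of the proof, after which nothing further is needed beyond citing the four results in order.
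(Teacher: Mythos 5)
Your proposal is correct and is exactly the synthesis the paper intends: the corollary is stated without a separate proof precisely because it follows by chaining Proposition \ref{prop_Q_nonempty_int}, Proposition \ref{prop_core_nonempty}, Lemma \ref{lem_core_open_dense} and Proposition \ref{prop_complete_controllability} in the order you give, with analyticity of $\Sigma^0$ supplying the regularity hypotheses ($U_0 = \cl\,\inner\,U_0$, $f$ of class $C^1$). Your explicit remark on aligning the control ranges ($U_0$ playing the role of $U$, and accessibility with range $U_0$ versus $\inner\,U_0$ coinciding via the Sard's-theorem observation) is a point the paper glosses over, and is worth recording.
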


In Section \ref{sec_henon}, we will show by an example how uniform forward accessibility can be checked for a concrete system with a finite number of computations.%

\section{Invariance entropy of hyperbolic sets}\label{sec_invariance_entropy}

In this section, we derive a lower bound on the invariance entropy of a hyperbolic set in terms of dynamical quantities.%

\subsection{A first lower estimate on invariance entropy}\label{subsec_first_lb}

Let $Q$ be a compact all-time controlled invariant set of $\Sigma$. For $u\in\UC_Q$, $\tau\in\Z_{>0}$ and $\ep>0$, we define%
\begin{equation*}
  Q(u,\tau,\ep) := \left\{ x \in M : \dist(\varphi_{t,u}(x),Q(\theta^tu)) \leq \ep,\ \forall 0 \leq t < \tau \right\}.%
\end{equation*}
Hence, $Q(u,\tau,\ep)$ is the set of all initial states so that the trajectory under $u$ stays $\ep$-close to the corresponding fiber in the time interval $[0;\tau)$.%

The following lemma provides a first lower estimate on invariance entropy under the assumption that the fiber map is lower semicontinuous.%

\begin{lemma}\label{lem_first_lb}
Let $Q$ be a compact all-time controlled invariant set of $\Sigma$ and assume that the fiber map $u \mapsto Q(u)$, defined on $\UC_Q$, is lower semicontinuous. Then, for every compact set $K \subset Q$ with positive volume and every $\ep>0$, we have%
\begin{equation*}
  h_{\inv}(K,Q) \geq -\liminf_{\tau \rightarrow \infty}\sup_{u \in \UC_Q}\frac{1}{\tau}\log\vol(Q(u,\tau,\ep)).%
\end{equation*}
\end{lemma}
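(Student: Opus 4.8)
The plan is to show that any $(\tau,K,Q)$-spanning set $\SC$ must have cardinality bounded below by roughly $\vol(K)$ divided by the largest possible volume of one of the sets $Q(u,\tau,\ep)$, and then take logarithms and pass to the limit. The starting observation is that if $\SC \subset \UC$ is $(\tau,K,Q)$-spanning, then by definition every $x \in K$ admits some $u \in \SC$ with $\varphi(t,x,u) \in Q$ for all $t \in [0;\tau)$. Since $\varphi_{t,u}(x) \in Q$ certainly implies $\dist(\varphi_{t,u}(x),Q(\theta^t u)) = 0 \le \ep$, we get $x \in Q(u,\tau,\ep)$ — but there is a subtlety: $Q(u,\tau,\ep)$ is only defined for $u \in \UC_Q$, and a priori the spanning controls need not lie in $\UC_Q$. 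First I would handle this by noting that if $\varphi(t,x,u) \in Q$ for $0\le t<\tau$ with $x \in K \subset Q$, one can still make sense of the containment, or else simply observe that $\dist(\varphi_{t,u}(x),Q(\theta^t u))$ may be infinite/undefined when $Q(\theta^t u)=\emptyset$; the clean fix is to only keep the portion of the trajectory that matters and use that $K$ has positive volume to force nonempty fibers along the relevant orbit segments. In any case the conclusion is $K \subset \bigcup_{u\in\SC} Q(u,\tau,\ep)$ (intersecting with $\UC_Q$ as needed), hence by subadditivity of volume
\begin{equation*}
  \vol(K) \le \sum_{u \in \SC} \vol(Q(u,\tau,\ep)) \le |\SC| \cdot \sup_{u \in \UC_Q}\vol(Q(u,\tau,\ep)).
\end{equation*}

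Taking the minimum over all $(\tau,K,Q)$-spanning sets gives
\begin{equation*}
  r_{\inv}(\tau,K,Q) \ge \frac{\vol(K)}{\sup_{u\in\UC_Q}\vol(Q(u,\tau,\ep))}.
\end{equation*}
Since $\vol(K) > 0$ is a fixed positive constant, taking $\frac{1}{\tau}\log$ of both sides, the $\vol(K)$ term contributes $0$ in the limit, and we obtain
\begin{equation*}
  \frac{1}{\tau}\log r_{\inv}(\tau,K,Q) \ge -\frac{1}{\tau}\log\Bigl(\sup_{u\in\UC_Q}\vol(Q(u,\tau,\ep))\Bigr) + \frac{1}{\tau}\log\vol(K).
\end{equation*}
Here it is convenient to record that $\sup_{u}\vol(Q(u,\tau,\ep)) = \sup_u \vol(Q(u,\tau,\ep))$ is a genuine supremum of a measurable (indeed, I expect, upper semicontinuous) function over the compact set $\UC_Q$, and in particular is finite and positive (positive because $\ep$-neighborhoods of nonempty fibers have positive volume). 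Then passing to $\limsup_{\tau\to\infty}$ on the left and using $\limsup(-a_\tau) = -\liminf a_\tau$ yields exactly
\begin{equation*}
  h_{\inv}(K,Q) = \limsup_{\tau\to\infty}\frac{1}{\tau}\log r_{\inv}(\tau,K,Q) \ge -\liminf_{\tau\to\infty}\sup_{u\in\UC_Q}\frac{1}{\tau}\log\vol(Q(u,\tau,\ep)),
\end{equation*}
which is the claim. (One should be slightly careful that $\sup_u \frac{1}{\tau}\log\vol(Q(u,\tau,\ep)) = \frac{1}{\tau}\log\sup_u\vol(Q(u,\tau,\ep))$ since $\log$ and division by $\tau>0$ are monotone.)

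The role of the lower semicontinuity hypothesis — which I have not yet used above — is the main subtlety, and I expect this to be the real content of the lemma rather than the volume-counting, which is essentially formal. The issue is that a spanning control $u$ need not belong to $\UC_Q$, so the set $Q(u,\tau,\ep)$ as literally defined (via $Q(\theta^t u)$) may be vacuous or meaningless. The fix is to approximate: given a spanning control $u$ and a point $x \in K$ with $\varphi(t,x,u)\in Q$ for $0\le t<\tau$, one uses lower semicontinuity of $u \mapsto Q(u)$ to find a nearby control $u' \in \UC_Q$ (perturbing only in the relevant finite window) such that the fibers $Q(\theta^t u')$ are within $\ep$ of the actual trajectory points $\varphi(t,x,u)$ — more carefully, one first shrinks to a slightly shorter horizon, uses compactness of $K$ to get finitely many controls, and uses lower semicontinuity at those controls to guarantee $x \in Q(u',\tau,\ep)$ for a control $u' \in \UC_Q$. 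This replaces $\SC$ by a set $\SC' \subset \UC_Q$ of the same cardinality with $K \subset \bigcup_{u\in\SC'}Q(u,\tau,\ep)$, after which the counting argument above applies verbatim. The hard part will be arranging this approximation uniformly enough that the $\ep$ on the right-hand side does not have to be enlarged (or, if it must be, showing that enlarging it is harmless since the statement is "for every $\ep>0$" and one can always start from a smaller $\ep$).
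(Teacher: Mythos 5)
There is a genuine gap, and it sits at the very first step. You assert that $\varphi_{t,u}(x) \in Q$ implies $\dist(\varphi_{t,u}(x),Q(\theta^t u)) = 0$, and hence that a $(\tau,K,Q)$-spanning set automatically covers $K$ by the sets $Q(u,\tau,\ep)$. This confuses $Q$ with the $u$-fibers. The fiber $Q(\theta^t u) = \{y : \varphi(\Z,y,\theta^t u)\subset Q\}$ consists of points whose \emph{entire two-sided} orbit under the shifted control stays in $Q$; it is in general a much smaller set than $Q$ (typically of measure zero, cf.\ the remark after Theorem \ref{thm_structure}). A point $x\in K$ whose forward trajectory under a spanning control $u$ stays in $Q$ only on $[0;\tau)$ need not be anywhere near $Q(u)$, and likewise $\varphi_{t,u}(x)$ need not be near $Q(\theta^t u)$ — especially for $t$ close to $0$ or to $\tau$, where the orbit has only been confined to $Q$ over a short window on one side. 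So the inclusion $K\subset\bigcup_{u\in\SC}Q(u,\tau,\ep)$ does not follow from the definition of spanning; establishing (a version of) it is precisely the content of the lemma, not a formality.

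This is also where lower semicontinuity actually enters — not, as you guessed, to move spanning controls into $\UC_Q$. The correct mechanism is: lower plus upper semicontinuity give continuity of the fiber map in the Hausdorff metric, hence uniform continuity on the compact set $\UC_Q$; one then shows that the set $Q^{\pm}(u,\tau_0)$ of points whose orbit stays in $Q$ on the two-sided window $(-\tau_0;\tau_0)$ equals $\bigcup_{v}Q(v)$ over all $v$ agreeing with $u$ on that window, and for $\tau_0$ large this union is contained in $N_\ep(Q(u))$. Consequently, if one takes a spanning set for the \emph{longer} horizon $2\tau_0+t$, then the middle segment of each controlled trajectory (times $\tau_0$ to $\tau_0+t$) has been in $Q$ for at least $\tau_0$ steps in both directions and is therefore $\ep$-close to the corresponding fibers; this yields $\varphi_{\tau_0,u}(Q(u,2\tau_0+t))\subset Q(\theta^{\tau_0}u,t,\ep)$. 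Your volume-counting and limit manipulations are fine and match the paper's, up to the additional (harmless, subexponential) volume distortion by $\varphi_{\tau_0,u}^{-1}$ that the time shift introduces; but without the two-sided-window argument the covering step on which everything rests is unproved.
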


\begin{proof}
For all $\tau \in \Z_{>0}$ and $u \in \UC$, we define the sets%
\begin{align*}
  Q(u,\tau) &:= \left\{ x \in M : \varphi(t,x,u) \in Q,\ \forall 0 \leq t < \tau \right\},\\
	Q^{\pm}(u,\tau) &:= \left\{ x \in M : \varphi(t,x,u) \in Q,\ \forall - \tau < t < \tau \right\},\\
	V(u,\tau) &:= \left\{ v \in \UC : u_t = v_t,\ \forall -\tau < t < \tau - 1 \right\}.%
\end{align*}
The set $Q^{\pm}(u,\tau)$ can be characterized as%
\begin{equation*}
  Q^{\pm}(u,\tau) = \bigcup_{v \in V(u,\tau)} Q(v).%
\end{equation*}
Indeed, if $x \in Q^{\pm}(u,\tau)$, then by all-time controlled invariance, the control sequence $u$ can be modified outside of the interval $(-\tau;\tau-1)$ so that $\varphi(\Z,x,u^*) \subset Q$, where $u^*$ denotes the modified sequence. Hence, $x \in Q(u^*)$. Conversely, if $x \in Q(u^*)$ for some $u^*$ which coincides with $u$ on $(-\tau;\tau-1)$, then clearly $x \in Q^{\pm}(u,\tau)$.%

Now let $\ep>0$. Since the fiber map $u \mapsto Q(u)$ is always upper semicontinuous, the assumption of lower semicontinuity implies its continuity with respect to the Hausdorff metric. Since $\UC_Q$ is compact, we even have uniform continuity. Hence, there exists $\delta>0$ so that $d_{\UC}(u,v) < \delta$ (for any $u,v\in\UC$) implies%
\begin{equation*}
  Q(v) \subset N_{\ep}(Q(u)).%
\end{equation*}
We choose $\tau_0 \in \Z_{>0}$ large enough so that $V(u,\tau_0) \subset B_{\delta}(u)$ for all $u \in \UC$, which is possible by definition of the product topology. This implies%
\begin{equation*}
  Q^{\pm}(u,\tau_0) = \bigcup_{v \in V(u,\tau_0)} Q(v) \subset N_{\ep}(Q(u)) \mbox{\quad for all\ } u \in \UC.%
\end{equation*}

Now let $\SC \subset \UC$ be a minimal $(2\tau_0 + t,K,Q)$-spanning set for some $t \in \Z_+$. We may assume without loss of generality that $\SC$ is finite and contained in $\UC_Q$. Then%
\begin{equation}\label{eq_spanning_union}
  K \subset \bigcup_{u \in \SC}Q(u,2\tau_0+t).%
\end{equation}
We claim that%
\begin{equation*}
  \varphi_{s,\theta^{\tau_0}u}(\varphi_{\tau_0,u}(Q(u,2\tau_0 + t))) \subset Q^{\pm}(\theta^{s+\tau_0}u,\tau_0) \mbox{\quad for all\ } s \in [0;t).%
\end{equation*}
Indeed, let $x$ be an element of the left-hand side. Then we can write $x = \varphi(s + \tau_0,y,u)$ for some $y \in Q(u,2\tau_0 + t)$. Hence,%
\begin{equation*}
  \varphi(r,x,\theta^{s+\tau_0}u) = \varphi(r+s+\tau_0,y,u) \in Q \mbox{\quad for all\ } r \in [-\tau_0-s;\tau_0 + t - s)%
\end{equation*}
and $(-\tau_0;\tau_0) \subset [-\tau_0-s;\tau_0 + t - s)$ for all $s \in [0;t)$. We thus have%
\begin{align*}
   \varphi_{\tau_0,u}(Q(u,2\tau_0 + t)) &\subset \bigcap_{s=0}^{t-1} \varphi_{s,\theta^{\tau_0}u}^{-1}\left[ Q^{\pm}(\theta^{s+\tau_0}u,\tau_0) \right]\\
	&\subset \bigcap_{s=0}^{t-1} \varphi_{s,\theta^{\tau_0}u}^{-1} \left[ N_{\ep}(Q(\theta^{s+\tau_0}u)) \right] = Q(\theta^{\tau_0}u,t,\ep).%
\end{align*}
Together with \eqref{eq_spanning_union}, this yields%
\begin{equation*}
  0 < \vol(K) \leq |\SC| \cdot \max_{u \in \SC} \vol(\varphi_{\tau_0,u}^{-1}(Q(\theta^{\tau_0}u,t,\ep))).%
\end{equation*}
Observing that the volume change of a set affected by the application of $\varphi_{\tau_0,u}^{-1}$ (within some compact domain) does not change the exponential volume growth rate, this estimate implies%
\begin{equation*}
  0 \leq h_{\inv}(K,Q) + \liminf_{t \rightarrow \infty} \sup_{u \in \UC_Q} \frac{1}{t} \log \vol(Q(u,t,\ep)),%
\end{equation*}
which is equivalent to the desired inequality.%
\end{proof}

\subsection{Bowen-balls, measure-theoretic entropy and pressure}\label{subsec_central_est_lb}%

In this subsection, we assume throughout that $Q$ is a hyperbolic set for $\Sigma$ so that $L(Q)$ is an isolated invariant set of the control flow. Moreover, we assume that $\Sigma$ is of regularity class $C^2$.%

For $u \in \UC$, $\tau \in \Z_{>0}$ and $\ep>0$, we say that a set $E \subset M$ \emph{$(u,\tau,\ep)$-spans} another set $K \subset M$ if for each $x\in K$ there is $y \in E$ with $d^{u,\tau}(x,y) \leq \ep$. In other words, the Bowen-balls of order $\tau$ and radius $\ep$ centered at the points in $E$ cover the set $K$. A set $F \subset M$ is called \emph{$(u,\tau,\ep)$-separated} if $d^{u,\tau}(x,y) > \ep$ for all $x,y \in F$ with $x \neq y$.%

We will use Bowen-balls in order to estimate $\vol(Q(u,\tau,\ep))$ as follows. For a small number $\delta>0$, we let $F_{u,\tau,\delta}$ be a maximal $(u,\tau,\delta)$-separated subset of the $u$-fiber $Q(u)$. By compactness of $Q(u)$, $F_{u,\tau,\delta}$ is finite. Moreover, it is easy to see that a maximal $(u,\tau,\delta)$-separated subset of some set also $(u,\tau,\delta)$-spans this set.\footnote{This can easily be proved by contradiction.}

Now, for an arbitrary $x \in Q(u,\tau,\ep)$, pick $x_* \in Q(u)$ and $x^* \in Q(\theta^{\tau-1}u)$ so that $d(x,x_*) \leq \ep$ and $d(\varphi(\tau-1,x,u),x^*) \leq \ep$. Then we consider the sequence $(x_t)_{t\in\Z}$ defined by%
\begin{equation*}
  x_t := \left\{\begin{array}{rl}
	                      \varphi(t,x_*,u) & \mbox{if } t < 0,\\
												\varphi(t,x,u) & \mbox{if } 0 \leq t \leq \tau-2,\\
												\varphi(t-(\tau-1),x^*,\theta^{\tau-1}u) & \mbox{if } t \geq \tau-1.%
								\end{array}\right.%
\end{equation*}
The joint sequence $(\theta^tu,x_t)_{t\in\Z}$ is an $\ep$-pseudo-orbit. Since $(\theta^tu,x_t) \in L(Q)$ for all $t < 0$ and $t \geq \tau-1$ and $(\theta^tu,x_t)$ is $\ep$-close to some point in $L(Q)$ for all $t\in [0;\tau-2]$, for $\ep$ small enough the shadowing lemma yields a point $z \in Q(u)$ so that\footnote{We have to be a little bit careful when we consider $x_{\tau-1} = x^*$. Note that $d(\varphi(\tau-1,x,u),\varphi(\tau-1,z,u)) \leq d(\varphi(\tau-1,x,u),x^*) + d(x^*,\varphi(\tau-1,z,u)) \leq \ep + \beta$. Hence, we should replace $\beta$ with $\beta-\ep$.}%
\begin{equation*}
  d(\varphi(t,x,u),\varphi(t,z,u)) \leq \beta \mbox{\quad for all\ } t \in [0;\tau).%
\end{equation*}
This implies $x \in B^{u,\tau}_{\beta}(z)$. Now pick some $y \in F_{u,\tau,\delta}$ so that $d^{u,\tau}(y,z) \leq \delta$. Then $x \in B^{u,\tau}_{\beta+\delta}(y)$. We conclude that%
\begin{equation*}
  Q(u,\tau,\ep) \subset \bigcup_{y\in F_{u,\tau,\delta}}B^{u,\tau}_{\beta+\delta}(y).%
\end{equation*}
If $\beta$ and $\delta$ are chosen small enough, we can thus apply the volume lemma in order to estimate%
\begin{equation}\label{eq_volume_q}
  \vol(Q(u,\tau,\ep)) \leq C_{\beta+\delta}\sum_{y \in F_{u,\tau,\delta}}J^+\varphi_{\tau,u}(y)^{-1}.%
\end{equation}
To turn this into a meaningful estimate for $h_{\inv}(K,Q)$, a significant amount of additional work is necessary.%

First, we need to pay attention to the fact that the control flow can be regarded as a \emph{random dynamical system}, once we equip the space $\UC$ with a Borel probability measure $P$, invariant under $\theta$. We denote such a random dynamical system briefly by $(\varphi,P)$.\footnote{Be aware that $(\varphi,P)$ is an RDS on a purely formal level. We actually do not consider any randomness here.} An \emph{invariant measure} of $(\varphi,P)$ is a Borel probability measure $\mu$ on $\UC \tm M$ satisfying the following two properties:%
\begin{itemize}
\item $\Phi$ preserves the measure $\mu$, i.e., $\Phi_*\mu = \mu$.%
\item The marginal of $\mu$ on $\UC$ coincides with $P$, i.e., $(\pi_{\UC})_*\mu = P$.%
\end{itemize}
By the disintegration theorem, each invariant measure $\mu$ admits a disintegration into sample measures $\mu_u$ on $(M,\BC(M))$, defined for $P$-almost all $u\in\UC$. That is,%
\begin{equation*}
  \rmd \mu(u,x) = \rmd \mu_u(x) \rmd P(u).%
\end{equation*}
To each invariant measure $\mu$, we can associate the \emph{measure-theoretic entropy} $h_{\mu}(\varphi,P)$. Let $\AC$ be a finite Borel partition of $M$. An induced dynamically defined sequence of (finite Borel) partitions of $M$ is given by%
\begin{equation*}
  \AC(u,\tau) := \bigvee_{t=0}^{\tau-1}\varphi_{t,u}^{-1}\AC = \left\{ A_0 \cap \varphi_{1,u}^{-1}(A_1) \cap \ldots \cap \varphi_{\tau-1,u}^{-1}(A_{\tau-1}) : A_s \in \AC,\ \forall s \right\}.%
\end{equation*}
The entropy associated with the partition $\AC$ is defined as%
\begin{equation*}
  h_{\mu}(\varphi,P;\AC) := \lim_{\tau \rightarrow \infty} \frac{1}{\tau} \int_{\UC} H_{\mu_u}(\AC(u,\tau))\, \rmd P(u),%
\end{equation*}
where $H_{\mu_u}(\cdot)$ denotes the Shannon entropy of a partition and the limit exists because of subadditivity, see \cite{Bog}.%

The measure-theoretic entropy of $(\varphi,P)$ with respect to $\mu$ is then defined as%
\begin{equation*}
  h_{\mu}(\varphi,P) := \sup_{\AC}h_{\mu}(\varphi,P;\AC) \in [0,\infty],%
\end{equation*}
the supremum taken over all finite Borel partitions of $M$. A related quantity is the \emph{measure-theoretic pressure} of $(\varphi,P)$ with respect to $\mu$ and a $\mu$-integrable ``potential'' $\alpha:\UC \tm M \rightarrow \R$, defined as%
\begin{equation*}
  \pi_{\mu}(\varphi,P;\alpha) := h_{\mu}(\varphi,P) + \int \alpha\, \rmd\mu.%
\end{equation*}

Our aim is to prove the following lower bound for the invariance entropy:%
\begin{equation}\label{eq_desired_lb}
  h_{\inv}(K,Q) \geq \inf_{P \in \MC(\theta)} \inf_{\mu \in \MC_P(\Phi;L(Q))} -\pi_{\mu}(\varphi,P;-\log J^+\varphi),%
\end{equation}
where $J^+\varphi$ denotes the function $(u,x) \mapsto J^+\varphi_{1,u}(x)$ and $\MC_P(\Phi;L(Q))$ the set of all invariant probability measures of the bundle RDS that is defined by the restriction of $\Phi$ to $L(Q)$ together with the measure $P$ on $\UC$.\footnote{Observe that the sets $\EC_{\omega}$ in the definition of a bundle RDS in Subsection \ref{subsec_dynsys_concepts} here are precisely the $u$-fibers $Q(u)$.} By the definition of pressure, this estimate is equivalent to%
\begin{equation*}
  h_{\inv}(K,Q) \geq \inf_{P \in \MC(\theta)} \inf_{\mu \in \MC_P(\Phi;L(Q))}\Bigl[ \int \log J^+\varphi_{1,u}(x)\, \rmd \mu(u,x) - h_{\mu}(\varphi,P) \Bigr].%
\end{equation*}
Obviously, the double infimum can be written as a single infimum as follows:%
\begin{equation*}
  h_{\inv}(K,Q) \geq \inf_{\mu \in \MC(\Phi_{|L(Q)})}\Bigl[ \int \log J^+\varphi_{1,u}(x)\, \rmd\mu(u,x) - h_{\mu}(\varphi,(\pi_{\UC})_*\mu)\Bigr].%
\end{equation*}
By the \emph{Margulis-Ruelle inequality} \cite{BBo}, this lower bound is always nonnegative.%

We propose the following interpretation of the terms involved in the right-hand side of the above estimate:%
\begin{itemize}
\item $\int \log J^+\varphi_{1,u}(x)\, \rmd\mu(u,x)$: the total instability of the dynamics on $L(Q)$ seen by the measure $\mu$.%
\item $h_{\mu}(\varphi,(\pi_{\UC})_*\mu)$: the part of the instability not leading to exit from $Q$.%
\item $\inf_{\mu \in \MC(\Phi_{|L(Q)})}$: the infimum over all possible control strategies to make $Q$ invariant.%
\end{itemize}
The first item is obvious. The second one can be justified by observing that the entropy with respect to a measure supported on $L(Q$) captures the complexity of the fiber dynamics which is constituted by the trajectories that completely evolve within $Q$ (here the definition of entropy for a bundle RDS as discussed in \cite[Sec.~1.1]{KLi} is helpful for a precise understanding). Finally, the third item hopefully will be justified by future work on achievability results (upper bounds for invariance entropy) which are still missing for the general case.%

From now on, we will frequently use the following three assumptions on the compact all-time controlled invariant set $Q$:%
\begin{enumerate}
\item[(A1)] $Q$ is uniformly hyperbolic.%
\item[(A2)] $L(Q)$ is an isolated invariant set of the control flow.%
\item[(A3)] The fiber map $u \mapsto Q(u)$ is lower semicontinuous.%
\end{enumerate}

\subsection{Construction of approximating subadditive cocycles}\label{subsec_approx_cocycles}

Let the assumptions (A1)--(A3) be satisfied for the compact all-time controlled invariant set $Q$. For every $\ep>0$, we define the function%
\begin{equation*}
  v^{\ep}:(\tau,u) \mapsto v_{\tau}^{\ep}(u) := \log\vol(Q(u,\tau,\ep)),\quad v^{\ep}:\Z_{>0} \tm \UC \rightarrow \R.%
\end{equation*}
It would be useful if $v^{\ep}$ was a subadditive cocycle over the system $(\UC,\theta)$. This cannot be expected, however. Instead, we approximate $v^{\ep}$ by subadditive cocycles.%

For a fixed $u\in\UC$, let $\AC = (\AC_t)_{t=0}^{\infty}$ be a sequence so that $\AC_t$ is an open cover of the compact set $Q(\theta^tu)$, i.e., a collection of subsets of $Q(\theta^tu)$, open relative to $Q(\theta^tu)$, whose union equals $Q(\theta^tu)$. We write%
\begin{equation*}
  \AC^{\tau} := \bigvee_{t=0}^{\tau-1}\varphi_{t,u}^{-1}(\AC_t),\quad \tau \in \Z_{>0}.%
\end{equation*}
This is the collection of all sets of the form%
\begin{equation*}
  A_0 \cap \varphi_{1,u}^{-1}(A_1) \cap \ldots \cap \varphi_{\tau-1,u}^{-1}(A_{\tau-1}),\quad A_t \in \AC_t.%
\end{equation*}
Observe that $\AC^{\tau}$ is an open cover of $Q(u)$. We define%
\begin{align*}
  w^{\AC}_{\tau}(u) := \log \inf \Bigl\{\sum_{A\in\alpha}\sup_{x\in A} J^+\varphi_{\tau,u}(x)^{-1} :&\ \alpha \mbox{ is a finite subcover}\\
	&\qquad\qquad \mbox{of } \AC^{\tau} \mbox{ for } Q(u)\Bigr\},%
\end{align*}
which is well-defined, because $J^+\varphi_{\tau,u}(x)$ is only evaluated at points $(u,x) \in L(Q)$. We write $\AC(\tau)$ for the shifted sequence $(\AC_{\tau},\AC_{\tau+1},\AC_{\tau+2},\ldots)$.%

Now let $\alpha$ be a finite subcover of $\AC^{\tau_1}$ for $Q(u)$ and $\beta$ a finite subcover of $\AC(\tau_1)^{\tau_2}$ for $Q(\theta^{\tau_1}u)$. Then%
\begin{align*}
  &\sum_{C \in \alpha \vee \varphi_{\tau_1,u}^{-1}(\beta)} \sup_{z \in C} J^+\varphi_{\tau_1+\tau_2,u}(z)^{-1} \\
	&\qquad = \sum_{C \in \alpha \vee \varphi_{\tau_1,u}^{-1}(\beta)}\sup_{z\in C}\Bigl[J^+\varphi_{\tau_1,u}(z)^{-1} \cdot J^+\varphi_{\tau_2,\theta^{\tau_1}u}(\varphi_{\tau_1,u}(z))^{-1} \Bigr] \\
	&\qquad \leq \sum_{(A,B) \in \alpha \tm \beta} \Bigl[\sup_{x\in A} J^+\varphi_{\tau_1,u}(x)^{-1}\Bigr] \cdot \Bigl[\sup_{y\in B} J^+\varphi_{\tau_2,\theta^{\tau_1}u}(y)^{-1}\Bigr] \\
	&\qquad = \sum_{A \in \alpha} \Bigl[\sup_{x\in A} J^+\varphi_{\tau_1,u}(x)^{-1}\Bigr] \cdot \sum_{B \in \beta} \Bigl[\sup_{y\in B} J^+\varphi_{\tau_2,\theta^{\tau_1}u}(y)^{-1}\Bigr].%
\end{align*}
Hence, if we choose $\alpha$ and $\beta$ so that the corresponding sums are close to their infima, we see that%
\begin{equation}\label{eq_subadditivity}
  w_{\tau_1 + \tau_2}^{\AC}(u) \leq w_{\tau_1}^{\AC}(u) + w_{\tau_2}^{\AC(\tau_1)}(\theta^{\tau_1}u),%
\end{equation}
where we use that $\alpha \vee \varphi_{\tau_1,u}^{-1}(\beta)$ is a subcover of $\AC^{\tau_1+\tau_2}$ for $Q(u)$.%

For a fixed (small) $\delta>0$, let $\AC(u) = (\AC_t(u))_{t=0}^{\infty}$ be the unique sequence so that $\AC_t(u)$ consists of all open $\delta$-balls in $Q(\theta^tu)$ and put%
\begin{equation*}
  w_{\tau}^{\delta}(u) := w^{\AC(u)}_{\tau}(u).%
\end{equation*}
Then we have the following result which shows that the family of functions $w^{\delta}:\Z_{>0} \tm \UC \rightarrow \R$, $(\tau,u) \mapsto w^{\delta}_{\tau}(u)$, consists of subadditive cocycles that can be used to approximate $v^{\ep}$.%

\begin{proposition}\label{prop_sc_props}
The functions $w^{\delta}$ have the following properties, where the constants in (b) and (c) come from the volume lemma (Theorem \ref{thm_volume_lemma}):%
\begin{enumerate}
\item[(a)] For every $\delta>0$, the function $(\tau,u) \mapsto w^{\delta}_{\tau}(u)$ is a subadditive cocycle over $(\UC,\theta)$.%
\item[(b)] For every $\delta>0$, there exists $\ep>0$ so that for all $u\in\UC$ and $\tau\in\Z_{>0}$:%
\begin{equation*}
  v_{\tau}^{\ep}(u) \leq \log C_{3\delta} + w_{\tau}^{\delta}(u)%
\end{equation*}
\item[(c)] For every $\gamma>0$, there exists $\delta>0$ so that for all $u\in\UC$ and $\tau\in\Z_{>0}$:%
\begin{equation*}
  w_{\tau}^{\delta}(u) - \log C_{\delta/2} \leq \tau\gamma + v_{\tau}^{\delta/2}(u)%
\end{equation*}
\item[(d)] For all $\ep>0$ small enough and $\delta \in (0,\ep)$, there exist a constant $\tilde{C}_{\delta} > 0$ and $T\in\Z_{>0}$ so that for all $u\in\UC$ and $\tau>2T$:%
\begin{equation*}
  v_{\tau}^{\ep}(u) \leq \tilde{C}_{\delta} + v_{\tau-2T}^{\delta}(\theta^Tu)%
\end{equation*}
\end{enumerate}
\end{proposition}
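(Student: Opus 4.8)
The plan is to handle (a) directly from the subadditivity estimate already in hand, and to derive (b)--(d) as variations on the shadowing‑plus‑volume‑lemma argument that produced \eqref{eq_volume_q}. For (a) I would observe that \eqref{eq_subadditivity} was established for an \emph{arbitrary} sequence of open covers $\AC=(\AC_t)_t$, and that for the specific choice $\AC(u)$ the shifted sequence $\AC(u)(\tau_1)$ is literally $\AC(\theta^{\tau_1}u)$ — an open $\delta$-ball of $Q(\theta^{\tau_1+t}u)=Q(\theta^t(\theta^{\tau_1}u))$ being the same set whether one reads it off the sequence for $u$ at index $\tau_1+t$ or off the sequence for $\theta^{\tau_1}u$ at index $t$. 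Plugging this into \eqref{eq_subadditivity} gives $w^\delta_{\tau_1+\tau_2}(u)\le w^\delta_{\tau_1}(u)+w^\delta_{\tau_2}(\theta^{\tau_1}u)$, i.e.\ the subadditive‑cocycle property; finiteness of $w^\delta_\tau(u)$ is clear since $Q(u)$ is compact (finite subcovers exist) and $(u,x)\mapsto J^+\varphi_{\tau,u}(x)$ is continuous and strictly positive on the compact set $L(Q)$.

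For (b) and (c) I would re‑run the argument preceding \eqref{eq_volume_q} but phrased in terms of arbitrary subcovers of $\AC(u)^\tau$ rather than one separated set. For (b): fix a small $\delta$, pick a shadowing radius $\beta\le\delta$ with $\beta<\beta_0$, and choose $\ep$ small enough that the pseudo‑orbit construction of Subsection \ref{subsec_central_est_lb} sends every $x\in Q(u,\tau,\ep)$ to a point $z=z(x)\in Q(u)$ with $d(\varphi_{t,u}(x),\varphi_{t,u}(z))\le\beta$ on $[0;\tau)$; for any finite subcover $\alpha$ (whose cells one may assume to meet $Q(u)$, in points $z_A$), the cell $A\ni z(x)$ puts $x$ in the dynamical ball $B^{u,\tau}_{3\delta}(z_A)$ by the triangle inequality, so $Q(u,\tau,\ep)\subset\bigcup_{A\in\alpha}B^{u,\tau}_{3\delta}(z_A)$ and the volume lemma at radius $3\delta$ yields $\vol(Q(u,\tau,\ep))\le C_{3\delta}\sum_{A\in\alpha}\sup_{x\in A\cap Q(u)}J^+\varphi_{\tau,u}(x)^{-1}$; taking the infimum over $\alpha$ gives (b). For (c) I would go the other way: take a maximal $(u,\tau,\delta/2)$-separated set $F\subset Q(u)$, so the dynamical $\delta/2$-balls cover $Q(u)$ and each $B^{u,\tau}_{\delta/2}(y)\cap Q(u)$ lies inside the cell $A_y:=\bigcap_{t<\tau}\varphi_{t,u}^{-1}(B_\delta(\varphi_{t,u}(y))\cap Q(\theta^tu))\in\AC(u)^\tau$, giving a subcover $\{A_y\}$; on each $A_y\cap Q(u)$ the bounded‑distortion estimate (uniform continuity of $\log J^+\varphi_{1,\cdot}$ on $L(Q)$, summed over $\tau$ steps, with $\delta$ chosen small relative to $\gamma$) bounds $J^+\varphi_{\tau,u}(x)^{-1}$ by $2^{\tau\gamma}J^+\varphi_{\tau,u}(y)^{-1}$; the lower bound in the volume lemma (radius $\delta/4$) turns this into $2^{\tau\gamma}C\,\vol(B^{u,\tau}_{\delta/4}(y))$, and pairwise disjointness of the $\delta/4$-balls — which are centred in $Q(u)$, hence contained in $Q(u,\tau,\delta/2)$ — collapses the sum to $2^{\tau\gamma}C\,\vol(Q(u,\tau,\delta/2))$. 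A routine tracking of radii and of the volume‑lemma constants then puts this in the stated form with $C_{\delta/2}$.

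Part (d) is where I expect the real work. The tempting shortcut — shadow the truncated orbit of $x\in Q(u,\tau,\ep)$ by an orbit in $L(Q)$ — does \emph{not} suffice, because shadowing only places the orbit of $x$ within $\beta(\ep)$ of a genuine fibre orbit, and $\beta(\ep)$ need not beat the prescribed $\delta<\ep$. The plan is instead to combine shadowing with the \emph{exponential} contraction of hyperbolicity on the interior of a long orbit segment: shadow $x$ by $z\in Q(u)$ as before (so the orbits stay $\beta\le\rho_0$-close on $[0;\tau)$ once $\ep$ is small), and then, for each $t_0$ with $T\le t_0\le\tau-1-T$, apply the quantitative expansivity of hyperbolic sets — the estimate of Lemma \ref{lem_hyperbolic}, run with the control $\theta^{t_0}u$, which is a general feature of hyperbolic sets and not peculiar to the small‑perturbation construction — to the two orbit segments over $[t_0-T;t_0+T]$ to get $d(\varphi_{t_0,u}(x),\varphi_{t_0,u}(z))\le C_0\alpha_0^T$. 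Fixing $T=T(\delta)$ with $C_0\alpha_0^T\le\delta$ and using $\varphi_{t_0,u}(z)\in Q(\theta^{t_0}u)$ yields $\varphi_{T,u}\bigl(Q(u,\tau,\ep)\bigr)\subset Q(\theta^Tu,\tau-2T,\delta)$; since $|\det\rmD\varphi_{-T,v}(y)|$ is bounded uniformly over $v\in\UC$ and $y$ in a compact neighbourhood of $Q$ by a constant depending only on $T$ (hence only on $\delta$), comparing volumes under the change of variables $\varphi_{T,u}$ gives $\vol(Q(u,\tau,\ep))\le 2^{\tilde C_\delta}\vol(Q(\theta^Tu,\tau-2T,\delta))$ for all $\tau>2T$.

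The main obstacle is exactly this last point: recognising that plain shadowing is too weak, that the $\delta$-dependent truncation by $T$ is forced by the need to invoke exponential (rather than merely $\ep\to0$) contraction, and that $T$ and $\tilde C_\delta$ can be arranged to depend on $\delta$ alone. Everything else is a re‑packaging of the shadowing/volume‑lemma estimate behind \eqref{eq_volume_q}, modulo the bounded‑distortion bound used in (c) and the bookkeeping of radii and volume‑lemma constants.
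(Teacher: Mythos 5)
Parts (a)--(c) of your proposal are correct and essentially coincide with the paper's proof: (a) is exactly the observation that $\AC(u)(\tau_1)=\AC(\theta^{\tau_1}u)$ plugged into \eqref{eq_subadditivity}; your (b) covers $Q(u,\tau,\ep)$ directly by Bowen-balls of radius $3\delta$ centred in the cells of an arbitrary subcover (the paper routes the same estimate through a maximal $(u,\tau,2\delta)$-separated set, but the two are interchangeable); and your (c) is the paper's bounded-distortion argument, up to a harmless shift in the subscript of the volume-lemma constant ($C_{\delta/4}$ in your bookkeeping versus $C_{\delta/2}$ in the statement, which is immaterial for every subsequent use).

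Part (d) is where you diverge, and where there is a genuine dependency gap. You derive the inclusion $\varphi_{T,u}(Q(u,\tau,\ep))\subset Q(\theta^Tu,\tau-2T,\delta)$ from shadowing plus the quantitative exponential estimate of Lemma \ref{lem_hyperbolic}. But that lemma is proved in the paper only in the small-perturbation setting of Subsection \ref{subsec_structure}: its proof extends the splitting of the \emph{classical} hyperbolic set $\Lambda$ of $f_{u^0}$ to a neighbourhood $N\subset M$ and runs cone estimates there, and its hypotheses require $u\in U_0^{\Z}$ for a small control range $U_0$. Proposition \ref{prop_sc_props}, however, is stated and used for a general $Q$ satisfying (A1)--(A3). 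Your remark that the estimate "is a general feature of hyperbolic sets" is true in spirit, but establishing it for a general $Q$ (where $E^{\pm}$ depends on $(u,x)\in L(Q)\subset\UC\tm M$ and must be extended to a neighbourhood of $L(Q)$ in the product space) is additional work that neither you nor the paper carries out; the only expansivity statement available in this generality is the qualitative, infinite-time Theorem \ref{thm_expansiveness}. The paper avoids the issue entirely: it proves the squeezing implication \eqref{eq_squeezing} by a soft compactness/contradiction argument -- a sequence of counterexamples $(u_T,x_T)$ would converge to a point whose full orbit stays in $N_{\ep}(L(Q))$, hence lies in $L(Q)$ by isolated invariance (A2), contradicting $\dist(x,Q(u))\geq\delta$ -- using only continuity of $\varphi$ and of the fiber map. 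So either supply the general quantitative expansivity lemma (a nontrivial but standard extension of \cite{MZh}), or replace your Step (d) mechanism by this compactness argument; the remaining volume comparison via the Jacobian of $\varphi_{T,u}^{-1}$ is the same in both versions.
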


\begin{proof}
(a) This follows directly from \eqref{eq_subadditivity}.%

(b) Choose $\ep>0$ small enough so that every $\ep$-pseudo-orbit contained in an $\ep$-neighborhood of $L(Q)$ is $\delta$-shadowed by an orbit in $L(Q)$. For an arbitrary $u\in\UC$, let $\AC = \AC(u)$ and let $F \subset Q(u)$ be a maximal $(u,\tau,2\delta)$-separated set. Then each member of $\AC^{\tau}$ contains at most one element of $F$. Indeed, if there were two such elements $x_1$ and $x_2$, then%
\begin{equation*}
  d(\varphi(t,x_1,u),\varphi(t,x_2,u)) < 2\delta \mbox{\quad for\ } t = 0,1,\ldots,\tau-1%
\end{equation*}
in contradiction to the separation property. Hence, for every finite subcover $\alpha$ of $\AC^{\tau}$ we have%
\begin{equation*}
  \sum_{x \in F}J^+\varphi_{\tau,u}(x)^{-1} \leq \sum_{A\in\alpha}\sup_{x\in A}J^+\varphi_{\tau,u}(x)^{-1}.%
\end{equation*}
By \eqref{eq_volume_q}, we can estimate%
\begin{equation*}
  v_{\tau}^{\ep}(u) \leq \log C_{3\delta} + \log\sum_{A\in\alpha}\sup_{x\in A}J^+\varphi_{\tau,u}(x)^{-1},%
\end{equation*}
which implies the assertion.%

(c) For the given $\gamma>0$ choose $\delta>0$ small enough so that%
\begin{equation}\label{eq_alpha_choice}
  \frac{J^+\varphi_{1,u}(x_1)}{J^+\varphi_{1,u}(x_2)} \leq 2^{\gamma}%
\end{equation}
for all $x_1,x_2$ in $Q$ satisfying $d(x_1,x_2) \leq 2\delta$ and all $u \in \UC$, which is possible by uniform continuity of $(x,u) \mapsto \log J^+\varphi_{1,u}(x)$ on the compact set $Q \tm \UC$.%
  
Let $\AC := \AC(u)$ and consider a finite $(u,\tau,\delta)$-spanning set $E$ for $Q(u)$, contained in $Q(u)$. For each $z \in E$, consider $A_t(z) \in \AC_t$ so that $B_{\delta}(\varphi(t,z,u)) = A_t(z)$ for $t=0,1,\ldots,\tau-1$. Let%
\begin{equation*}
  C(z) := \bigcap_{t=0}^{\tau-1}\varphi_{t,u}^{-1}(A_t(z)) \in \AC^{\tau},%
\end{equation*}
which is an open Bowen-ball centered at $z$ and intersected with $Q(u)$. The definition of $C(z)$ together with \eqref{eq_alpha_choice} implies%
\begin{equation*}
  \sup_{x \in C(z)}J^+\varphi_{\tau,u}(x)^{-1} \leq 2^{\tau\gamma} \cdot J^+ \varphi_{\tau,u}(z)^{-1}.%
\end{equation*}
Since the sets $C(z)$, $z\in E$, form a finite subcover of $\AC^{\tau}$ for $Q(u)$,%
\begin{equation*}
  w_{\tau}^{\delta}(u) \leq \tau\gamma + \log\sum_{z \in E}J^+\varphi_{\tau,u}(z)^{-1}.%
\end{equation*}
Since a maximal $(u,\tau,\delta)$-separated set is also $(u,\tau,\delta)$-spanning and the corresponding Bowen-balls of radius $\delta/2$ are disjoint and contained in $Q(u,\tau,\delta/2)$, the volume lemma implies%
\begin{equation*}
  w_{\tau}^{\delta}(u) \leq \tau\gamma + \log C_{\delta/2} + v_{\tau}^{\delta/2}(u).%
\end{equation*}

(d) Fix $\ep$ and $\delta$ as in the statement. We claim that there exists $T\in\Z_{>0}$ so that for all $u\in\UC$ and $x\in M$ the following implication holds:%
\begin{equation}\label{eq_squeezing}
  \max_{-T < t < T}\dist(\varphi(t,x,u),Q(\theta^tu)) \leq \ep \quad\Rightarrow\quad \dist(x,Q(u)) < \delta.%
\end{equation}
Suppose to the contrary that for every $T\in\Z_{>0}$ there are $u_T \in \UC$ and $x_T \in M$ with%
\begin{equation*}
  \dist(\varphi(t,x_T,u_T),Q(\theta^tu_T)) \leq \ep \mbox{ for } |t| < T \mbox{ and } \dist(x_T,Q(u_T)) \geq \delta.%
\end{equation*}
By compactness of $\UC$, we may assume that $u_T \rightarrow u \in \UC$ and by compactness of small closed neighborhoods of $Q$, we may assume that $x_T \rightarrow x \in M$. For arbitrary $t\in\Z$, we have $\dist(\varphi(t,x_T,u_T),Q(\theta^t u_T)) \leq \ep$ whenever $T > |t|$. Since $\varphi(t,\cdot,\cdot)$, $Q(\cdot)$ and $\dist(\cdot,\cdot)$ are continuous functions, this implies $\dist(\varphi(t,x,u),Q(\theta^tu)) \leq \ep$ for all $t\in\Z$ and $\dist(x,Q(u)) \geq \delta$. For each $t\in\Z$, pick $y_t \in Q(\theta^tu)$ so that $d(\varphi(t,x,u),y_t) \leq \ep$. Then $\Phi_t(u,x) = (\theta^tu,\varphi(t,x,u))$ is $\ep$-close to $(\theta^tu,y_t) \in L(Q)$. Hence, if $\ep>0$ is small enough so that $N_{\ep}(L(Q))$ is an isolating neighborhood of $L(Q)$, then $(u,x) \in L(Q)$, which contradicts $\dist(x,Q(u)) \geq \delta$.%

Now choose $T$ according to \eqref{eq_squeezing} and let $x \in Q(u,\tau,\ep)$ for some $\tau>2T$. We want to show that $\varphi_{T,u}(x) \in Q(\theta^Tu,\tau-2T,\delta)$. To show this, let $x_s := \varphi(s,\varphi(T,x,u),\theta^Tu) = \varphi(T+s,x,u)$ for $0 \leq s < \tau - 2T$ and observe that%
\begin{align*}
 & \dist(\varphi(r,x_s,\theta^{T+s}u),Q(\theta^r\theta^{T+s}u))\\
	&\quad = \dist(\varphi(T+r+s,x,u),Q(\theta^{T+s+r}u)) \leq \ep%
\end{align*}
whenever $|r| < T$, since $0 < T + r + s < T + (T-1) + (\tau - 2T) = \tau - 1$. By \eqref{eq_squeezing}, this implies $\dist(x_s,Q(\theta^su)) < \delta$ for $0 \leq s < \tau - 2T$, hence $x \in Q(\theta^Tu,\tau - 2T,\delta)$. It follows that $\varphi_{T,u}(Q(u,\tau,\ep)) \subset Q(\theta^Tu,\tau-2T,\delta)$, and therefore%
\begin{align*}
  v_{\tau}^{\ep}(u) &= \log \vol(Q(u,\tau,\ep)) \leq \log \vol(\varphi_{T,u}^{-1}(Q(\theta^Tu,\tau-2T,\delta))) \\
	&\leq \log \max_{(u,x) \in \UC \tm N_{\delta}(Q)}|\det\rmD\varphi_{T,u}^{-1}(x)| + v_{\tau-2T}^{\delta}(\theta^Tu),%
\end{align*}
which completes the proof of (d).%
\end{proof}

We do not know if the functions $w^{\delta}_{\tau}$ are continuous, which would be desirable to carry out the proofs in the following subsections. This can be compensated, however, by the following two lemmas.%

\begin{lemma}\label{lem_vcont}
The function $u \mapsto \vol(Q(u,\tau,\ep))$ is continuous for all $\tau \in \Z_{>0}$ and $\ep>0$.%
\end{lemma}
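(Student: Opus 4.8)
The plan is to fix $\tau \in \Z_{>0}$ and $\ep>0$, take an arbitrary sequence $u_k \to u$ in $\UC$, and show $\vol(Q(u_k,\tau,\ep)) \to \vol(Q(u,\tau,\ep))$ by squeezing $Q(u_k,\tau,\ep)$ between $Q(u,\tau,\ep-\eta)$ and $Q(u,\tau,\ep+\eta)$ for small $\eta>0$. First I would fix a compact set $K_0 \subset M$ with $N_{2\ep}(Q) \subset K_0$ (possible since $Q$ is compact); since $Q(v,\tau,\ep) \subset N_{\ep}(Q(v)) \subset N_{\ep}(Q)$ for every $v$, and $Q(u,\tau,\ep\pm\eta) \subset N_{2\ep}(Q)$ whenever $0<\eta<\ep$, all sets appearing below lie in $K_0$ and hence have finite volume. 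Two uniform-smallness facts will be used: by joint continuity of $(x,v)\mapsto\varphi(t,x,v)$ (Proposition \ref{prop_basics}(b)) and compactness of $K_0$, one has $\sup_{x\in K_0}d(\varphi_{t,u_k}(x),\varphi_{t,u}(x))\to0$ for every $t$; and, since under (A1)--(A3) all fibers are nonempty (Theorem \ref{thm_structure}) and the (always upper semicontinuous) fiber map is lower semicontinuous, it is continuous for the Hausdorff metric, hence uniformly continuous on the compact space $\UC$, so (using continuity of $\theta$) $\max_{0\le t<\tau}d_H(Q(\theta^tu_k),Q(\theta^tu))\to0$.

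Combining these with the elementary bound $|\dist(p,A)-\dist(q,B)|\le d(p,q)+d_H(A,B)$, for any $\eta\in(0,\ep)$ there is $k_0$ such that $d(\varphi_{t,u_k}(x),\varphi_{t,u}(x))+d_H(Q(\theta^tu_k),Q(\theta^tu))<\eta$ for all $x\in K_0$, all $0\le t<\tau$ and all $k\ge k_0$; a direct check then gives
\[
  Q(u,\tau,\ep-\eta)\ \subseteq\ Q(u_k,\tau,\ep)\ \subseteq\ Q(u,\tau,\ep+\eta)\qquad(k\ge k_0),
\]
and therefore
\[
  \vol(Q(u,\tau,\ep-\eta))\ \le\ \liminf_{k\to\infty}\vol(Q(u_k,\tau,\ep))\ \le\ \limsup_{k\to\infty}\vol(Q(u_k,\tau,\ep))\ \le\ \vol(Q(u,\tau,\ep+\eta)).
\]
Letting $\eta\downarrow0$ and using continuity of $\vol$ from above and below on the finite-volume set $K_0$ --- note $\bigcap_{\eta>0}Q(u,\tau,\ep+\eta)=Q(u,\tau,\ep)$ and $\bigcup_{\eta>0}Q(u,\tau,\ep-\eta)=Q_{<}(u,\tau,\ep):=\{x\in M:\dist(\varphi_{t,u}(x),Q(\theta^tu))<\ep,\ 0\le t<\tau\}$ --- yields
\[
  \vol(Q_{<}(u,\tau,\ep))\ \le\ \liminf_{k\to\infty}\vol(Q(u_k,\tau,\ep))\ \le\ \limsup_{k\to\infty}\vol(Q(u_k,\tau,\ep))\ \le\ \vol(Q(u,\tau,\ep)).
\]

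Everything thus reduces to showing $\vol\big(Q(u,\tau,\ep)\setminus Q_{<}(u,\tau,\ep)\big)=0$. This difference set is contained in $\bigcup_{t=0}^{\tau-1}\varphi_{t,u}^{-1}(S_t)$ with $S_t:=\{y\in M:\dist(y,Q(\theta^tu))=\ep\}$; since each $\varphi_{t,u}^{-1}=\varphi_{-t,\theta^tu}$ is a $C^1$-diffeomorphism and hence maps null sets to null sets, it suffices to prove $\vol(S_t)=0$. This is the genuine content of the lemma. I would argue that $y\mapsto\dist(y,A)$ (for the fixed compact set $A=Q(\theta^tu)$) is $1$-Lipschitz, hence differentiable $\vol$-almost everywhere, and that at every point of differentiability $y$ with $\dist(y,A)>0$ its Riemannian gradient has norm exactly $1$ (move toward a nearest point of $A$ along a minimizing geodesic). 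If $\vol(S_t)>0$, then by the Lebesgue density theorem (valid for Riemannian volume, being locally comparable to Lebesgue measure in charts) $\vol$-almost every point of $S_t$ is a Lebesgue density point of $S_t$; but at a density point of a level set the gradient of the function must vanish, contradicting $|\nabla\dist(\cdot,A)|\equiv1$ on $\{\dist(\cdot,A)>0\}\supseteq S_t$ (here $\ep>0$ is used). Hence $\vol(S_t)=0$, the two outer terms in the last display coincide, and $\vol(Q(u_k,\tau,\ep))\to\vol(Q(u,\tau,\ep))$, which is the asserted continuity.

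The main obstacle is precisely this last step, $\vol(S_t)=0$ for \emph{every} $\ep>0$: monotonicity of $t\mapsto\vol(N_t(A))$ only excludes countably many bad radii, so one genuinely needs the almost-everywhere eikonal identity for the distance function together with the density-point argument. The remainder is routine, the only real bookkeeping being to keep all sets inside a single compact set of finite volume so that continuity of measure from above and below applies.
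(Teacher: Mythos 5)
Your proof is correct, but it is organized quite differently from the paper's. The paper writes $\vol(Q(u,\tau,\ep))$ as an integral of a product of indicator functions $\unit_{N_{\ep}(Q(\theta^tu))}(\varphi_{t,u}(x))$, telescopes the difference of the two products, and reduces the problem to showing that the symmetric difference $\varphi_{t,u}^{-1}(N_{\ep}(Q(\theta^tu)))\,\triangle\,\varphi_{t,\tilde u}^{-1}(N_{\ep}(Q(\theta^t\tilde u)))$ has small volume as $\tilde u\to u$; this is done set-theoretically using the same two inputs you use (Hausdorff continuity of the fiber map and uniform continuity of $\varphi(t,\cdot,\cdot)$ on compacta) together with an appendix lemma (Lemma \ref{lem_volzero}) stating that $\vol(\partial N_{\ep}(K))=0$ for compact $K$. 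You instead squeeze $Q(u_k,\tau,\ep)$ between $Q(u,\tau,\ep\mp\eta)$ and invoke continuity of the measure along monotone families, which reduces everything to the nullity of the level sets $\{y:\dist(y,Q(\theta^tu))=\ep\}$; this is exactly the measure-theoretic content that Lemma \ref{lem_volzero} supplies in the paper's route. Note that the paper's proof of that lemma only uses that points of the set in question lie at distance exactly $\ep$ from $K$ (to produce the disjoint ball $B_{\ep}(y)$ in the density estimate), so it applies verbatim to the full level set and not merely to $\partial N_{\ep}(K)$; you could therefore have cited it in place of your Rademacher/eikonal/density-point argument, which is correct but requires a minimizing-geodesic construction that the paper also only sketches ("replace straight lines with geodesics"). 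On balance, your squeeze argument is somewhat cleaner in that it isolates the single nontrivial fact (null $\ep$-spheres around compact sets) and avoids the bookkeeping of symmetric differences, while the paper's telescoping computation avoids the inner/outer approximation in $\ep$ and the appeal to continuity of measure from above and below. Both proofs use the standing assumptions (A1)--(A3) in the same way, namely only through the Hausdorff continuity of $u\mapsto Q(u)$.
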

                                  
\begin{proof}
Putting $Q^{\ep} := N_{\ep}(Q)$ and $A_t(u) := N_{\ep}(Q(\theta^tu))$, we write the volume of $Q(u,\tau,\ep)$ as%
\begin{equation*}
  \vol(Q(u,\tau,\ep)) = \int_{Q^{\ep}}\unit_{A_0(u)}(x)\unit_{A_1(u)}(\varphi_{1,u}(x)) \cdots \unit_{A_{\tau-1}(u)}(\varphi_{\tau-1,u}(x))\, \rmd x.%
\end{equation*}
For brevity, we write $g_t(u,x) := \unit_{A_t(u)}(\varphi_{t,u}(x))$. We fix $u\in\UC$ and prove the continuity of $\vol(Q(\cdot,\tau,\ep))$ at $u$. To this end, first observe that for arbitrary $\tilde{u}\in\UC$ we have%
\begin{align*}
  & |\vol(Q(u,\tau,\ep)) - \vol(Q(\tilde{u},\tau,\ep))| \\
	&\leq \Bigl|\int_{Q^{\ep}} (g_0(u,x)g_1(u,x) \cdots g_{\tau-1}(u,x) - g_0(\tilde{u},x)g_1(u,x) \cdots g_{\tau-1}(u,x))\, \rmd x\Bigr| \\
	& + \Bigl|\int_{Q^{\ep}} (g_0(\tilde{u},x)g_1(u,x) \cdots g_{\tau-1}(u,x)\\
	&\qquad\qquad\qquad\qquad\qquad - g_0(\tilde{u},x)g_1(\tilde{u},x)g_2(u,x) \cdots g_{\tau-1}(u,x))\, \rmd x \Bigr| \\
	& + \cdots \\
	& + \Bigl|\int_{Q^{\ep}} (g_0(\tilde{u},x) \cdots g_{\tau-1}(\tilde{u},x) g_{\tau-1}(u,x) - g_0(\tilde{u},x) \cdots g_{\tau-1}(\tilde{u},x)) \,\rmd x \Bigr| \\
	&\leq \sum_{t=0}^{\tau-1} \int_{Q^{\ep}}|g_t(u,x) - g_t(\tilde{u},x)|\, \rmd x.%
\end{align*}
For a fixed $t \in [0;\tau)$, the integral%
\begin{equation*}
  \int_{Q^{\ep}}|g_t(u,x) - g_t(\tilde{u},x)|\,\rmd x = \int_{Q^{\ep}} |\unit_{A_t(u)}(\varphi_{t,u}(x)) - \unit_{A_t(\tilde{u})}(\varphi_{t,\tilde{u}}(x))|\, \rmd x%
\end{equation*}
is not larger than the volume of the symmetric set difference%
\begin{equation}\label{eq_symmetric_difference}
  \Bigl[\varphi_{t,u}^{-1}(A_t(u)) \backslash \varphi_{t,\tilde{u}}^{-1}(A_t(\tilde{u}))\Bigr] \cup \Bigl[\varphi_{t,\tilde{u}}^{-1}(A_t(\tilde{u})) \backslash \varphi_{t,u}^{-1}(A_t(u))\Bigr].%
\end{equation}
We show that the volumes of these two sets become arbitrarily small as $\tilde{u} \rightarrow u$:%
\begin{enumerate}
\item[(i)] We write the first term in \eqref{eq_symmetric_difference} as%
\begin{equation*}
  \varphi_{t,u}^{-1}(A_t(u)) \backslash \varphi_{t,\tilde{u}}^{-1}(A_t(\tilde{u})) = \varphi_{t,u}^{-1}\left(A_t(u) \backslash \varphi_{t,u}(\varphi_{t,\tilde{u}}^{-1}(A_t(\tilde{u})))\right).%
\end{equation*}
Since $u$ is fixed, it suffices to show that the volume of $A_t(u) \backslash \varphi_{t,u}(\varphi_{t,\tilde{u}}^{-1}(A_t(\tilde{u})))$ tends to zero as $\tilde{u} \rightarrow u$. Using the notation $I_{\rho}(B) := \{ x \in \inner\, B : \dist(x,\partial B) \geq \rho \}$ for any subset $B \subset M$, it is enough to show that%
\begin{equation}\label{eq_at_incl}
  A_t(u) \backslash \varphi_{t,u}(\varphi_{t,\tilde{u}}^{-1}(A_t(\tilde{u}))) \subset A_t(u) \backslash I_{\rho}(A_t(u))%
\end{equation}
for an arbitrarily small $\rho>0$ as $\tilde{u} \rightarrow u$, by continuity of the measure and $\vol(\partial A_t(u)) = 0$ (see Lemma \ref{lem_volzero}). The inclusion \eqref{eq_at_incl} is implied by%
\begin{equation*}
  \varphi_{t,\tilde{u}} \circ \varphi_{t,u}^{-1}(I_{\rho}(A_t(u))) \subset A_t(\tilde{u}) = N_{\ep}(Q(\theta^t\tilde{u})).%
\end{equation*}
Take $x \in I_{\rho}(A_t(u))$ and let $y \in Q(\theta^tu)$ be a point that minimizes the distance $d(x,y)$, i.e., $d(x,y) = \dist(x,Q(\theta^tu))$. Let $\tilde{y} \in Q(\theta^t\tilde{u})$ be chosen so that $d(y,\tilde{y}) \leq d_H(Q(\theta^tu),Q(\theta^t\tilde{u}))$. Then%
\begin{align*}
  d(\varphi_{t,\tilde{u}} \circ \varphi_{t,u}^{-1}(x),\tilde{y}) &\leq d(\varphi_{t,\tilde{u}} \circ \varphi_{t,u}^{-1}(x),x)\\
	&\quad + d(x,y) + d_H(Q(\theta^t\tilde{u}),Q(\theta^tu)).%
\end{align*}
If we can show that this sum becomes smaller than $\ep$ (independently of the choice of $x$) as $d_{\UC}(u,\tilde{u})$ becomes sufficiently small, we are done. The third term becomes small by continuity of $Q(\cdot)$ and $\theta$. The first term becomes small by the continuity properties of $\varphi$. Indeed, $\varphi(t,\cdot,\cdot)$ is uniformly continuous on an appropriately chosen compact set, showing that $d(\varphi_{t,\tilde{u}}(\varphi_{t,u}^{-1}(x)),\varphi_{t,u}(\varphi_{t,u}^{-1}(x))) \rightarrow 0$ as $\tilde{u} \rightarrow u$, uniformly with respect to $x$. Now $x \in I_{\rho}(A_t(u))$ implies that the second term is smaller than and uniformly bounded away from $\ep$. This implies the assertion.%
\item[(ii)] Consider now the second term in \eqref{eq_symmetric_difference}. Writing%
\begin{equation*}
  \varphi_{t,\tilde{u}}^{-1}(A_t(\tilde{u})) \backslash \varphi_{t,u}^{-1}(A_t(u)) = \varphi_{t,u}^{-1}(\varphi_{t,u} \circ \varphi_{t,\tilde{u}}^{-1}(A_t(\tilde{u})) \backslash A_t(u)),%
\end{equation*}
we see that it suffices to prove that the volume of $\varphi_{t,u} \circ \varphi_{t,\tilde{u}}^{-1}(A_t(\tilde{u})) \backslash A_t(u)$ tends to zero as $\tilde{u} \rightarrow u$. From the continuity of $\varphi$ it follows that%
\begin{equation*}
  \varphi_{t,u} \circ \varphi_{t,\tilde{u}}^{-1}(A_t(\tilde{u})) \subset N_{\rho}(A_t(\tilde{u}))%
\end{equation*}
for any given $\rho>0$ if $d_{\UC}(\tilde{u},u)$ is sufficiently small. Hence,%
\begin{equation*}
  \varphi_{t,u} \circ \varphi_{t,\tilde{u}}^{-1}(A_t(\tilde{u})) \backslash A_t(u) \subset N_{\rho+\ep}(Q(\theta^t\tilde{u})) \backslash N_{\ep}(Q(\theta^tu)).%
\end{equation*}
From the Hausdorff convergence $Q(\theta^t\tilde{u}) \rightarrow Q(\theta^tu)$, it follows that $N_{\rho+\ep}(Q(\theta^t\tilde{u})) \subset N_{2\rho+\ep}(Q(\theta^tu))$ if $d_{\UC}(u,\tilde{u})$ is small enough, implying%
\begin{equation*}
  \varphi_{t,u} \circ \varphi_{t,\tilde{u}}^{-1}(A_t(\tilde{u})) \backslash A_t(u) \subset N_{2\rho+\ep}(Q(\theta^tu)) \backslash N_{\ep}(Q(\theta^tu)).%
\end{equation*}
By continuity of the measure and Lemma \ref{lem_volzero}, the volume of the right-hand side certainly tends to zero as $\rho \rightarrow 0$.%
\end{enumerate}
The proof is complete.%
\end{proof}

\begin{lemma}\label{lem_sc_boundedness}
For every $\delta>0$ small enough, there exist constants $-\infty < \underline{w} < 0 < \overline{w} < \infty$ such that%
\begin{equation*}
  \underline{w} \leq \frac{1}{\tau}w^{\delta}_{\tau}(u) \leq \overline{w} \mbox{\quad for all\ } (\tau,u) \in \Z_{>0} \tm \UC.%
\end{equation*}
\end{lemma}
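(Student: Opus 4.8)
The plan is to sandwich $w_\tau^\delta(u)$ between $-\tau L$ and $\tau\omega(\delta)+\mathrm{const}$, using nothing more than the boundedness and cocycle property of the one-step unstable log-determinant on the compact set $L(Q)$ together with the Bowen--Ruelle volume lemma (Theorem~\ref{thm_volume_lemma}). Here $\delta$ is ``small enough'' precisely when Theorem~\ref{thm_volume_lemma} applies at radius $\delta/4$ and $N_{\delta/4}(Q)$ is relatively compact, so that $V_0:=\vol(N_{\delta/4}(Q))<\infty$. First I would record the basic estimates. Since $(u,x)\mapsto\log J^+\varphi_{1,u}(x)$ is continuous on the compact set $L(Q)$, it attains there a finite minimum $\ell$ and maximum $L$, and by the additive cocycle property of $\log J^+\varphi$ over $(\Phi_{|L(Q)},L(Q))$ together with the invariance of $L(Q)$,
\begin{equation*}
  2^{-\tau L}\le J^+\varphi_{\tau,u}(x)^{-1}\le 2^{-\tau\ell}\qquad\text{for all }(u,x)\in L(Q),\ \tau\in\Z_{>0}.
\end{equation*}
I would also fix a modulus of uniform continuity $\omega$ for $\log J^+\varphi_1$ on $L(Q)$; it is finite (bounded by $2\max\{|\ell|,|L|\}$) and $\omega(r)\to0$ as $r\downarrow0$, though only finiteness will be used.

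For the lower bound on $w_\tau^\delta(u)$: by assumption (A2) and Theorem~\ref{thm_structure}, every fibre $Q(u)$ is nonempty, so any finite subcover $\alpha$ of the cover $\AC(u)^\tau=\bigvee_{t=0}^{\tau-1}\varphi_{t,u}^{-1}(\AC_t(u))$ of $Q(u)$ contains at least one nonempty member, each member being a subset of $Q(u)$ (so all points $x$ occurring satisfy $(u,x)\in L(Q)$). The displayed estimate then gives $\sum_{A\in\alpha}\sup_{x\in A}J^+\varphi_{\tau,u}(x)^{-1}\ge 2^{-\tau L}$, and taking logarithms and the infimum over $\alpha$ yields $\tfrac1\tau w_\tau^\delta(u)\ge -L$, uniformly in $(u,\tau)$.

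For the upper bound I would exhibit an economical subcover. Let $E\subset Q(u)$ be a maximal $(u,\tau,\delta/2)$-separated set; it is finite and also $(u,\tau,\delta/2)$-spanning for $Q(u)$, so the sets $C(z):=\bigcap_{t=0}^{\tau-1}\varphi_{t,u}^{-1}(B_\delta(\varphi_{t,u}(z)))$, $z\in E$ --- each a member of $\AC(u)^\tau$ because $B_\delta(\varphi_{t,u}(z))$ is a $\delta$-ball in $Q(\theta^tu)$, and each nonempty as it contains $z$ --- form a finite subcover of $\AC(u)^\tau$ for $Q(u)$. If $x\in C(z)$ then $d(\varphi_{t,u}(x),\varphi_{t,u}(z))<\delta$ for $0\le t<\tau$ with both $(\theta^tu,\varphi_{t,u}(x))$ and $(\theta^tu,\varphi_{t,u}(z))$ in $L(Q)$, so summing the modulus estimate over $t$ (via the cocycle identity for $\log J^+\varphi$) gives $|\log J^+\varphi_{\tau,u}(x)-\log J^+\varphi_{\tau,u}(z)|\le\tau\omega(\delta)$, hence $\sup_{x\in C(z)}J^+\varphi_{\tau,u}(x)^{-1}\le 2^{\tau\omega(\delta)}J^+\varphi_{\tau,u}(z)^{-1}$. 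Separately, the Bowen-balls $B_{\delta/4}^{u,\tau}(z)$, $z\in E$, are pairwise disjoint (by the $(u,\tau,\delta/2)$-separation) and contained in $N_{\delta/4}(Q)$ (their centres lie in $Q$), so Theorem~\ref{thm_volume_lemma} gives $\vol(B_{\delta/4}^{u,\tau}(z))\ge C_{\delta/4}^{-1}J^+\varphi_{\tau,u}(z)^{-1}$ and therefore $\sum_{z\in E}J^+\varphi_{\tau,u}(z)^{-1}\le C_{\delta/4}V_0$. Combining the two estimates, $w_\tau^\delta(u)\le\tau\omega(\delta)+\log(C_{\delta/4}V_0)$, hence $\tfrac1\tau w_\tau^\delta(u)\le\omega(\delta)+|\log(C_{\delta/4}V_0)|$. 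Then $\underline w:=\min\{-1,-L\}<0$ and $\overline w:=\max\{1,\omega(\delta)+|\log(C_{\delta/4}V_0)|\}>0$ have the required properties.

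I do not expect a genuine difficulty: the only delicate points are bookkeeping ones --- keeping the radii $\delta$, $\delta/2$, $\delta/4$ consistent so that the spanning property, the disjointness from separation, and the volume lemma can all be invoked simultaneously, and verifying that each $C(z)$ is really a member of $\AC(u)^\tau$ and that every point at which $J^+\varphi$ is evaluated lies in a fibre of $L(Q)$. This last check is exactly where the nonemptiness of all fibres (Theorem~\ref{thm_structure}, via assumption (A2)) enters.
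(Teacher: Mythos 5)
Your proof is correct and takes essentially the same route as the paper: the lower bound is the same direct estimate from the definition using the uniform bound on $\log J^+\varphi_{1,u}(x)$ over the compact set $L(Q)$ and the cocycle property (with the nonemptiness of the fibers, which you rightly flag, supplied by Theorem \ref{thm_structure}). For the upper bound the paper simply invokes Proposition \ref{prop_sc_props}(c) with $\gamma=1$ and then bounds $v^{\delta/2}_{\tau}(u)$ by $\tau\max\{0,\log\vol(N_{\delta/2}(Q))\}$, whereas you inline the content of that proposition (economical subcover built from a maximal separated set, uniform continuity of the unstable log-determinant, and the volume lemma applied to disjoint Bowen-balls) — the same argument, just self-contained.
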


\begin{proof}
By item (c) of Proposition \ref{prop_sc_props}, we can choose $\delta>0$ small enough so that%
\begin{align*}
  \frac{1}{\tau} w_{\tau}^{\delta}(u) &\leq \frac{1}{\tau}\log C_{\delta/2} + 1 + \frac{1}{\tau}v_{\tau}^{\delta/2}(u)\\
	&\leq \log C_{\delta/2} + 1 + \max\{0,\log \vol(N_{\delta/2}(Q))\} =: \overline{w} < \infty.%
\end{align*}
On the other hand, the definition of $w^{\delta}$ implies%
\begin{align*}
  \frac{1}{\tau} w_{\tau}^{\delta}(u) &\geq \frac{1}{\tau} \inf\Bigl\{\log|\alpha| + \log \min_{(u,x) \in L(Q)} J^+\varphi_{\tau,u}(x)^{-1} : \alpha \ldots \Bigr\} \\
	&\geq \frac{1}{\tau}\log\min_{(u,x)\in L(Q)} J^+\varphi_{\tau,u}(x)^{-1} \geq \min_{(u,x)\in L(Q)} \log J^+\varphi_{1,u}(x)^{-1} \\
	&=: \underline{w} > - \infty.%
\end{align*}
This completes the proof.%
\end{proof}

\subsection{Interchanging limit inferior and supremum}\label{subsec_interchange}

Recall that for all compact sets $K \subset Q$ of positive volume, in Lemma \ref{lem_first_lb} we have proved the estimate%
\begin{equation*}
  h_{\inv}(K,Q) \geq -\liminf_{\tau \rightarrow \infty}\sup_{u \in \UC}\frac{1}{\tau}v_{\tau}^{\ep}(u).%
\end{equation*}
Our next aim is to prove that the limit inferior and the supremum on the right-hand side can be interchanged. First observe that the estimate%
\begin{equation*}
  \liminf_{\tau \rightarrow \infty}\sup_{u \in \UC} \frac{1}{\tau}v_{\tau}^{\ep}(u) \geq \sup_{u \in \UC} \liminf_{\tau \rightarrow \infty} \frac{1}{\tau}v_{\tau}^{\ep}(u)%
\end{equation*}
is trivial on the one hand and useless on the other, since for obtaining a lower estimate of $h_{\inv}(K,Q)$ only the converse inequality can be used. The following proposition shows that under the limit for $\ep\downarrow0$, the converse inequality holds.%

\begin{proposition}\label{prop_interchanged}
Under the assumptions (A1)--(A3), for any compact set $K \subset Q$ of positive volume%
\begin{equation}\label{eq_ie_lb_interchanged}
  h_{\inv}(K,Q) \geq - \lim_{\ep\downarrow0} \sup_{u\in\UC} \liminf_{\tau \rightarrow \infty}\frac{1}{\tau}\log\vol(Q(u,\tau,\ep)).%
\end{equation}
\end{proposition}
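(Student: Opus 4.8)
The plan is to run the argument off Lemma~\ref{lem_first_lb}, which already gives $h_{\inv}(K,Q)\geq-\liminf_{\tau\to\infty}\sup_{u\in\UC}\frac1\tau v^{\ep}_\tau(u)$ for each $\ep>0$. Abbreviating $a(\ep):=\liminf_\tau\sup_u\frac1\tau v^{\ep}_\tau(u)$ and $b(\ep):=\sup_u\liminf_\tau\frac1\tau v^{\ep}_\tau(u)$, and noting that $Q(u,\tau,\ep)$ is increasing in $\ep$ so that $a,b$ are non-decreasing and the limits $\ep\downarrow0$ are infima, it suffices to prove $\inf_{\ep>0}a(\ep)\leq b(\ep')$ for every fixed $\ep'>0$. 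The idea is to squeeze $v^{\ep}$ between the genuine subadditive cocycles $w^{\delta}$ of Proposition~\ref{prop_sc_props} at two different scales, transfer the $\sup_u$ onto a single invariant measure by a Krylov--Bogolyubov/subadditive-ergodic-theorem argument, and read off a point realising, up to a controllable error, the value $b(\ep')$.

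First I would fix, in this order, the auxiliary parameters: $\ep'>0$; then $\gamma_1>0$ and, via Proposition~\ref{prop_sc_props}(c) (whose hypothesis is a uniform-continuity estimate on $\log J^+\varphi$ that persists for all smaller $\delta$), a scale $\delta_1:=\min\{\delta_1(\gamma_1),2\ep'\}$ so that $w^{\delta_1}_\tau\leq\log C_{\delta_1/2}+\tau\gamma_1+v^{\delta_1/2}_\tau$ and $\delta_1/2\leq\ep'$; then, via Proposition~\ref{prop_sc_props}(b), a radius $\ep(\delta_1)>0$ with $v^{\ep(\delta_1)}_\tau\leq\log C_{3\delta_1}+w^{\delta_1}_\tau$; then $\gamma>0$ and $\delta:=\min\{\delta(\gamma),2\ep(\delta_1)\}$ so that $w^{\delta}_\tau\leq\log C_{\delta/2}+\tau\gamma+v^{\delta/2}_\tau$ and, using $\delta/2\leq\ep(\delta_1)$ together with monotonicity in the radius, $v^{\delta/2}_\tau\leq\log C_{3\delta_1}+w^{\delta_1}_\tau$; and finally $\ep:=\ep(\delta)$ with $v^{\ep}_\tau\leq\log C_{3\delta}+w^{\delta}_\tau$. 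All these inequalities hold for every $\tau$ and $u$. By Lemma~\ref{lem_vcont} and compactness the continuous function $u\mapsto v^{\ep}_\tau(u)$ attains its maximum at some $u_\tau$; put $P_\tau:=\frac1\tau\sum_{j=0}^{\tau-1}\delta_{\theta^j u_\tau}$, pick $\tau_k\to\infty$ with $\frac1{\tau_k}v^{\ep}_{\tau_k}(u_{\tau_k})\to a(\ep)$, and (passing to a further subsequence) let $P_{\tau_k}\to P$ weakly, so $P\in\MC(\theta)$. For a fixed $m$, Proposition~\ref{prop_sc_props}(a) and the uniform bound of Lemma~\ref{lem_sc_boundedness} give the usual tiling-over-all-$m$-phases estimate $w^{\delta}_\tau(u)\leq\frac1m\sum_{j=0}^{\tau-1}w^{\delta}_m(\theta^j u)+O(m)$, uniformly in $u$ for $\tau\geq m$; chaining this with $v^{\ep}_\tau\leq\log C_{3\delta}+w^{\delta}_\tau$ and, under $\int(\cdot)\,\rmd P_\tau$, with $w^{\delta}_m\leq\log C_{\delta/2}+m\gamma+v^{\delta/2}_m$, dividing by $\tau=\tau_k$ and sending $k\to\infty$ while using continuity of $u\mapsto v^{\delta/2}_m(u)$ (Lemma~\ref{lem_vcont}) to pass $\int v^{\delta/2}_m\,\rmd P_{\tau_k}\to\int v^{\delta/2}_m\,\rmd P$, yields $a(\ep)\leq\gamma+\frac1m\bigl(\log C_{\delta/2}+\int v^{\delta/2}_m\,\rmd P\bigr)$ for every $m$.

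To close the estimate I would bound $v^{\delta/2}_m\leq\log C_{3\delta_1}+w^{\delta_1}_m$ inside the integral and let $m\to\infty$: since $P$ is $\theta$-invariant, $m\mapsto\int w^{\delta_1}_m\,\rmd P$ is subadditive, so by Fekete and Kingman's subadditive ergodic theorem $\frac1m\int w^{\delta_1}_m\,\rmd P\to\int\bar w^{\delta_1}\,\rmd P$, where $\bar w^{\delta_1}(u)=\lim_\tau\frac1\tau w^{\delta_1}_\tau(u)$ exists $P$-a.e.\ and is $\theta$-invariant; hence $a(\ep)\leq\gamma+\int\bar w^{\delta_1}\,\rmd P$. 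The ergodic decomposition of $P$ furnishes an ergodic component on which $\int\bar w^{\delta_1}$ is at least $\int\bar w^{\delta_1}\,\rmd P$, and therefore a point $u^{*}$ with $\bar w^{\delta_1}(u^{*})\geq\int\bar w^{\delta_1}\,\rmd P$. Dividing $w^{\delta_1}_\tau(u^{*})\leq\log C_{\delta_1/2}+\tau\gamma_1+v^{\delta_1/2}_\tau(u^{*})$ by $\tau$ and taking $\liminf_\tau$ (the left-hand side converges) then gives $\bar w^{\delta_1}(u^{*})\leq\gamma_1+\liminf_\tau\frac1\tau v^{\delta_1/2}_\tau(u^{*})\leq\gamma_1+b(\delta_1/2)\leq\gamma_1+b(\ep')$, the last inequality by $\delta_1/2\leq\ep'$ and monotonicity. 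Assembling the pieces, $\inf_{\ep''>0}a(\ep'')\leq a(\ep)\leq\gamma+\gamma_1+b(\ep')$, and letting $\gamma\downarrow0$, then $\gamma_1\downarrow0$, then $\ep'\downarrow0$ gives $\lim_{\ep\downarrow0}a(\ep)\leq\lim_{\ep\downarrow0}b(\ep)$, which together with Lemma~\ref{lem_first_lb} is the assertion.

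The part I expect to be delicate is the nested choice of scales $\ep',\gamma_1,\delta_1,\gamma,\delta,\ep$ and the verification that they are mutually consistent (notably $\delta/2\leq\ep(\delta_1)$ and $\delta_1/2\leq\ep'$) and that the order of the limits $\gamma\downarrow0$, $\gamma_1\downarrow0$, $\ep'\downarrow0$ makes every error term vanish. The reason this cannot be reduced to a one-line appeal to the subadditive ergodic theorem is precisely that the genuine cocycles $w^{\delta}$ are not known to be continuous, so every passage to a weak-$*$ limit of empirical measures must be funnelled through the continuous functions $v^{\ep}_m$ of Lemma~\ref{lem_vcont} --- this is where both the cocycle structure of Proposition~\ref{prop_sc_props} and the continuity of Lemma~\ref{lem_vcont} are used in tandem. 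A secondary technical point worth recording is the uniform two-sided bound on $\frac1m v^{\delta/2}_m$ (bounded above since $Q(u,m,\delta/2)\subset N_{\delta/2}(Q)$, and bounded below by the volume lemma applied to Bowen balls centred in the fibres), which is needed to justify the Fekete/Kingman steps.
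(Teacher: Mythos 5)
Your overall architecture is genuinely different from the paper's: you funnel the supremum over $u$ into a weak$^*$ limit $P$ of empirical measures, then invoke Kingman's subadditive ergodic theorem and the ergodic decomposition to extract a single point $u^*$ realising the bound. The paper instead stays entirely pointwise: it defines $S$ as the supremum of all subsequential limits $\lim_k \frac{1}{t_k}v^{\ep}_{t_k}(u_k)$, applies the deterministic subadditivity lemma (Lemma \ref{lem_subadd}) to near-maximising sequences of $w^{\delta}$ to produce shifted controls $\tilde u_k = \theta^{t_k^*}u_k$ from which \emph{all} partial averages of $w^{\delta}$ exceed $S - o(1)$, and then passes to a limit point $\tilde u$ using compactness of $\UC$ and the continuity of $v^{\delta/2}_t$ (Lemma \ref{lem_vcont}). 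No invariant measures appear. Your parameter bookkeeping (the nested choices of $\ep', \gamma_1, \delta_1, \gamma, \delta, \ep$, the monotonicity of $v^{\ep}$ in $\ep$, and the order of the final limits) is consistent, and the tiling estimate together with the passage $\int v^{\delta/2}_m\,\rmd P_{\tau_k} \to \int v^{\delta/2}_m\,\rmd P$ is correct since only the continuous functions $v^{\delta/2}_m$ are integrated there.

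There is, however, one genuine gap: the step where you form $\int w^{\delta_1}_m\,\rmd P$ and apply Fekete/Kingman to it. This requires $w^{\delta_1}_m$ to be Borel measurable (and its limit function $\bar w^{\delta_1}$ to be measurable and invariant), and nothing in the paper establishes this --- the functions $w^{\delta}_{\tau}$ are defined as infima over \emph{all} finite subcovers of an uncountable, $u$-dependent cover by $\delta$-balls of the fibers $Q(\theta^t u)$, and the paper explicitly states that it does not even know whether they are continuous. This is not a removable convenience in your argument: you cannot simply replace $w^{\delta_1}_m$ by the continuous $v^{\delta_1/2}_m$ under the integral before letting $m\to\infty$, because $m \mapsto \int v^{\delta_1/2}_m\,\rmd P$ is not subadditive, and without subadditivity the quantity $\liminf_m \frac{1}{m}\int v^{\delta_1/2}_m\,\rmd P$ cannot be dominated by $\sup_u \liminf_m \frac{1}{m} v^{\delta_1/2}_m(u)$ (Fatou runs in the wrong direction). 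The paper's proofs of both Proposition \ref{prop_interchanged} and Proposition \ref{prop_randomescaperate} are structured precisely to avoid this obstacle: $w^{\delta}$ is only ever evaluated along a single orbit (finite sums $\sum_s w^{\delta}_r(\theta^s u)$, for which no measurability is needed), and the conversion to the continuous $v$'s happens \emph{before} any integration against a limit measure. To salvage your route you would need to either prove measurability of $w^{\delta}_m$ (e.g.\ by exhibiting it as a countable infimum of measurable functions) or restructure the last step along the lines of Lemma \ref{lem_subadd}.
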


\begin{proof}
Fix $\gamma>0$ and choose $\delta = \delta(\gamma) > 0$ according to Proposition \ref{prop_sc_props}(c). Then choose $\ep = \ep(\delta) \in (0,\delta/2)$ according to Proposition \ref{prop_sc_props}(b). In particular, this implies%
\begin{equation}\label{eq_sc_estimates}
  v^{\ep}_{\tau}(u) - \log C_{3\delta} \leq w^{\delta}_{\tau}(u) \leq \tau\gamma + \log C_{\delta/2} + v_{\tau}^{\delta/2}(u)%
\end{equation}
for all $u \in \UC$ and $\tau \in \Z_{>0}$. We define%
\begin{equation*}
  S := \sup\Bigl\{\lambda\in\R : \exists u_k \in \UC,\ t_k \rightarrow \infty \mbox{ with } \lambda = \lim_{k\rightarrow\infty}\frac{1}{t_k}v^{\ep}_{t_k}(u_k)\Bigr\}.%
\end{equation*}
This number is finite, since \eqref{eq_sc_estimates} together with Lemma \ref{lem_sc_boundedness} implies%
\begin{equation*}
  \frac{1}{t}v^{\ep}_t(u) \leq \log C_{3\delta} + \overline{w} \mbox{\quad for all\ } t \geq 1.%
\end{equation*}
Moreover, $S$ is independent of $\ep$ (as long as $\ep$ is small enough), which follows from Proposition \ref{prop_sc_props}(d). Now consider a sequence $\rho_k \downarrow 0$ and sequences of $u_k \in \UC$ and $t_k \rightarrow \infty$ such that%
\begin{equation*}
  \frac{1}{t_k}w^{\delta}_{t_k}(u_k) > S - \rho_k \mbox{\quad for all\ } k \geq 0,%
\end{equation*}
which is possible by \eqref{eq_sc_estimates}. We put $\tilde{\rho}_k := 1/\sqrt{t_k}$. By Lemma \ref{lem_subadd}, we find times $t_k^* < t_k$ so that%
\begin{equation*}
  \frac{1}{l}w_l^{\delta}(\theta^{t_k^*}u_k) > S - \rho_k - \tilde{\rho}_k \mbox{\quad for\ } 0 < l \leq t_k - t_k^*,%
\end{equation*}
where $t_k - t_k^* \geq \sqrt{t_k}/(2\omega)$ and $\omega = \max\{-\underline{w},\overline{w}\}$ (see Lemma \ref{lem_sc_boundedness}). Using \eqref{eq_sc_estimates} again, this leads to%
\begin{equation*}
  \frac{1}{l}v_l^{\delta/2}(\theta^{t_k^*}u_k) > S - \rho_k - \tilde{\rho}_k - \frac{1}{l}\log C_{\delta/2} - \gamma \mbox{\quad for\ } 0 < l \leq t_k - t_k^*.%
\end{equation*}
We put $\tilde{u}_k := \theta^{t_k^*}u_k$, $\tilde{t}_k := t_k - t_k^* \rightarrow \infty$. By compactness, we may assume that $\tilde{u}_k \rightarrow \tilde{u}$ for some $\tilde{u} \in \UC$. Fix $t\in\Z_{>0}$ and $\rho>0$. Then, for $k$ large enough, $t \leq \tilde{t}_k$ and, by continuity of $v_t^{\delta/2}(\cdot)$ (see Lemma \ref{lem_vcont}),%
\begin{equation*}
  \bigl|v_t^{\delta/2}(\tilde{u}) - v_t^{\delta/2}(\tilde{u}_k)\bigr| < \rho.%
\end{equation*}
We thus obtain%
\begin{align*}
  \frac{1}{t}v_t^{\delta/2}(\tilde{u}) &= \frac{1}{t}v_t^{\delta/2}(\tilde{u}_k) + \Bigl(\frac{1}{t}v_t^{\delta/2}(\tilde{u}) - \frac{1}{t}v_t^{\delta/2}(\tilde{u}_k)\Bigr) \\
	&> S - \rho_k - \tilde{\rho}_k - \frac{1}{t}\log C_{\delta/2} - \gamma - \frac{\rho}{t}.%
\end{align*}
Letting $t \rightarrow \infty$, this yields%
\begin{equation*}
  \liminf_{t \rightarrow \infty}\frac{1}{t}v_t^{\delta/2}(\tilde{u}) \geq S - \gamma.%
\end{equation*}
Now choose for each $t\in\Z_{>0}$ some $u_t^* \in \UC$ with $\sup_{u\in\UC}v_t^{\ep}(u)/t = v_t^{\ep}(u_t^*)/t$, which is possible by continuity of $v_t^{\ep}(\cdot)$. Then, using Proposition \ref{prop_sc_props}(d),%
\begin{align*}
  \liminf_{t \rightarrow \infty}\sup_{u\in\UC}\frac{1}{t}v_t^{\ep}(u) &= \liminf_{t \rightarrow \infty}\frac{1}{t}v_t^{\ep}(u_t^*) \\
	&\leq S \leq \gamma + \liminf_{t \rightarrow \infty}\frac{1}{t}v_t^{\delta/2}(\tilde{u}) \\
	&\leq \gamma + \sup_{u\in\UC} \liminf_{t \rightarrow \infty}\frac{1}{t}v_t^{\delta/2}(u) \\
	&\leq \gamma + \sup_{u\in\UC} \liminf_{t \rightarrow \infty}\frac{1}{t}\bigl(\tilde{C}_{\ep} + v_{t-2T}^{\ep}(u)\bigr) \\
	&= \gamma + \sup_{u\in\UC} \liminf_{t \rightarrow \infty}\frac{1}{t}v_t^{\ep}(u).%
\end{align*}
Together with the estimate of Lemma \ref{lem_first_lb}, this yields%
\begin{equation*}
  h_{\inv}(K,Q) \geq -\liminf_{t \rightarrow \infty}\sup_{u\in\UC}\frac{1}{t} v_t^{\ep}(u)
	              \geq -\gamma - \sup_{u\in\UC} \liminf_{t \rightarrow \infty}\frac{1}{t}v_t^{\ep}(u).%
\end{equation*}
We can choose $\gamma$ arbitrarily small, which also enforces $\ep$ to become arbitrarily small. Hence, the desired inequality follows.%
\end{proof}

\subsection{An estimate in terms of random escape rates}\label{subsec_random_er}%

Our next goal is to replace the supremum over $u\in\UC$ in the right-hand side of \eqref{eq_ie_lb_interchanged} by a supremum over all $\theta$-invariant probability measures to obtain the estimate%
\begin{equation*}
  h_{\inv}(K,Q) \geq - \lim_{\ep\downarrow0} \sup_{P \in \MC(\theta)} \liminf_{\tau \rightarrow \infty}\frac{1}{\tau}\int \log\vol(Q(u,\tau,\ep))\, \rmd P(u).%
\end{equation*}
Once this is accomplished, we can prove the desired lower bound \eqref{eq_desired_lb} in terms of pressure by standard methods from thermodynamic formalism.%

Before we prove the desired inequality, we note that any limit of the form%
\begin{equation*}
  \lim_{\tau \rightarrow \infty}\frac{1}{\tau}\int \log\vol(Q(u,\tau,\ep))\, \rmd P(u),%
\end{equation*}
if it exists, is called a \emph{random escape rate} for the RDS $(\varphi,P)$ (see \cite{Liu}).%

The main ideas of the proof of the following proposition are taken from \cite[Lem.~A.6]{Mor} (a result on abstract subadditive cocycles).%

\begin{proposition}\label{prop_randomescaperate}
Under the assumptions (A1)--(A3), for any compact set $K \subset Q$ of positive volume%
\begin{equation}\label{eq_ie_lb_random_er}
  h_{\inv}(K,Q) \geq - \lim_{\ep\downarrow0} \sup_{P \in \MC(\theta)} \liminf_{\tau \rightarrow \infty}\frac{1}{\tau}\int \log\vol(Q(u,\tau,\ep))\, \rmd P(u).%
\end{equation}
\end{proposition}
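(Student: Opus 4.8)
The plan is to deduce \eqref{eq_ie_lb_random_er} from the already-established estimate \eqref{eq_ie_lb_interchanged} by showing that the pointwise quantity $\sup_{u\in\UC}\liminf_{\tau\to\infty}\tfrac1\tau v_\tau^\ep(u)$ can be bounded above (up to an error that vanishes as $\ep\downarrow0$) by $\sup_{P\in\MC(\theta)}\liminf_{\tau\to\infty}\tfrac1\tau\int v_\tau^\ep\,\rmd P$. The natural device is the family of subadditive cocycles $w^\delta$ from Proposition \ref{prop_sc_props}: they are genuine subadditive cocycles over $(\UC,\theta)$ (part (a)), they are uniformly $O(\tau)$ (Lemma \ref{lem_sc_boundedness}), and they sandwich $v^\ep$ between themselves for suitably linked $\ep,\delta$ via parts (b), (c). So I would first fix $\gamma>0$, pick $\delta=\delta(\gamma)$ by (c) and $\ep=\ep(\delta)$ by (b), giving the two-sided bound \eqref{eq_sc_estimates}, and then work entirely with $w^\delta$.

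The core step is an abstract statement about a bounded subadditive cocycle $w=w^\delta$ over a homeomorphism $\theta$ of a compact metric space:
\begin{equation*}
  \sup_{u\in\UC}\liminf_{\tau\to\infty}\frac1\tau w_\tau(u)\ \le\ \sup_{P\in\MC(\theta)}\liminf_{\tau\to\infty}\frac1\tau\int w_\tau\,\rmd P.
\end{equation*}
(This is the content invoked from \cite[Lem.~A.6]{Mor}.) To prove it I would take $u\in\UC$ and a sequence $t_k\to\infty$ realizing the left-hand supremum-liminf (or approaching it within $\rho_k\downarrow0$), form the empirical measures $P_k:=\tfrac1{t_k}\sum_{j=0}^{t_k-1}\theta^j_*\delta_{u}$, and pass to a weak-$*$ limit $P\in\MC(\theta)$ along a subsequence (Krylov--Bogolyubov / compactness of $\MC(\theta)$). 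Subadditivity of $w$ gives, for any fixed $m$,
\begin{equation*}
  \frac1{t_k}w_{t_k}(u)\ \le\ \frac1{t_k}\sum_{j=0}^{t_k-1}\frac1m w_m(\theta^j u)+\text{(boundary terms $=O(m/t_k)$)}\ =\ \frac1m\int w_m\,\rmd P_k+o(1),
\end{equation*}
using Lemma \ref{lem_sc_boundedness} to control the $O(m/t_k)$ tail uniformly. Letting $k\to\infty$ — here I need $u\mapsto w_m(u)$ to be, say, upper semicontinuous or at least bounded measurable so that $\int w_m\,\rmd P_k\to\int w_m\,\rmd P$; since continuity of $w^\delta_m$ is not known, I would instead route this through $v^\ep$ using Lemma \ref{lem_vcont} (continuity of $u\mapsto\vol(Q(u,\tau,\ep))$) together with \eqref{eq_sc_estimates}, exactly as in the proof of Proposition \ref{prop_interchanged}, so that the weak-$*$ convergence argument is applied to a genuinely continuous function — one gets $\liminf_{k}\tfrac1{t_k}w_{t_k}(u)\le \tfrac1m\int w_m\,\rmd P$ for every $m$, and then taking $\inf_m$ and using subadditivity of $m\mapsto\int w_m\,\rmd P$ (Fekete) yields $\le \lim_m\tfrac1m\int w_m\,\rmd P=\inf_m\tfrac1m\int w_m\,\rmd P\le\liminf_\tau\tfrac1\tau\int w_\tau\,\rmd P$.

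Feeding this back through the sandwich \eqref{eq_sc_estimates}: the left side loses only the constant $\log C_{3\delta}$ (which disappears after dividing by $\tau$), and on the right side $\tfrac1\tau\int w_\tau^\delta\,\rmd P\le\gamma+\tfrac1\tau\log C_{\delta/2}+\tfrac1\tau\int v_\tau^{\delta/2}\,\rmd P$, so in the $\liminf$ one recovers $\sup_{P}\liminf_\tau\tfrac1\tau\int v_\tau^{\delta/2}\,\rmd P$ with an additive $\gamma$. Combining with Proposition \ref{prop_interchanged} gives
\begin{equation*}
  h_{\inv}(K,Q)\ \ge\ -\gamma-\sup_{P\in\MC(\theta)}\liminf_{\tau\to\infty}\frac1\tau\int v_\tau^{\delta/2}\,\rmd P,
\end{equation*}
and since $\delta/2<\ep$ can be made arbitrarily small as $\gamma\downarrow0$ (and the relevant sup-liminf is monotone in the radius, shrinking as the radius shrinks, with the extra room absorbed exactly as in Proposition \ref{prop_interchanged} via part (d)), passing to the limit $\gamma\downarrow0$ produces \eqref{eq_ie_lb_random_er}.

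The main obstacle I anticipate is precisely the lack of continuity of the cocycles $w^\delta_\tau$: the weak-$*$ limit argument wants to integrate a continuous function against $P_k\to P$, but $w^\delta_\tau$ is only known to be bounded. As indicated, the remedy is to never take limits of $\int w^\delta\,\rmd P_k$ directly; instead use \eqref{eq_sc_estimates} to replace $w^\delta$ by $v^{\delta/2}$ (continuous, Lemma \ref{lem_vcont}) at the cost of the controlled error $\gamma+O(1/\tau)$, and to replace $v^\ep$ by $w^\delta$ whenever subadditivity is needed — the same two-way trading already used in Subsection \ref{subsec_interchange}. A secondary technical point is bookkeeping the $\ep$-versus-$\delta$ radii consistently (via parts (b), (c), (d) of Proposition \ref{prop_sc_props}) so that all radii collapse to $0$ simultaneously; this is routine once the chain $\gamma\rightsquigarrow\delta\rightsquigarrow\ep$ is fixed at the outset.
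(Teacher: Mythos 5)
Your proposal is correct and follows essentially the same route as the paper: fix $\gamma$, set up the $\gamma\rightsquigarrow\delta\rightsquigarrow\ep$ chain via Proposition \ref{prop_sc_props}(b),(c), form empirical measures $\frac{1}{t}\sum_{s<t}\delta_{\theta^su}$ and pass to a weak$^*$ limit in $\MC(\theta)$, exploit subadditivity of $w^\delta$ to decompose $w^\delta_t(u)$ into Birkhoff sums of $w^\delta_r$ (the paper's Lemma \ref{lem_combinatorics} is exactly your boundary-term bookkeeping), and crucially trade $w^\delta$ for the continuous $v^{\delta/2}$ before taking the weak$^*$ limit, then return to radius $\ep$ via part (d) and $\theta$-invariance of $P$. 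You correctly identified the one real subtlety (non-continuity of $w^\delta_\tau$) and its remedy, which is precisely how the paper handles it; only the harmless slip ``$\delta/2<\ep$'' (the paper has $\ep<\delta/2$) would need fixing in a write-up.
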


\begin{proof}
We fix $\gamma>0$, choose $\delta = \delta(\gamma) > 0$ according to Proposition \ref{prop_sc_props}(c) and $\ep = \ep(\delta)>0$ according to Proposition \ref{prop_sc_props}(b). Then we pick an arbitrary $u \in \UC$ and let%
\begin{equation*}
  \beta := \liminf_{t \rightarrow \infty}\frac{1}{t}v_t^{\ep}(u).%
\end{equation*}
Now we consider the sequence of Borel probability measures on the measurable space $(\UC,\BC(\UC))$ defined by%
\begin{equation*}
  P_t := \frac{1}{t}\sum_{s=0}^{t-1}\delta_{\theta^su},\quad t \in \Z_{>0}.%
\end{equation*}
Since $\UC$ is compact, there exists a weak$^*$ limit point $P$ of $(P_t)_{t>0}$. With standard arguments, one shows that $P$ is $\theta$-invariant. Then the following chain of inequalities holds for any fixed $r \in \Z_{>0}$:%
\begin{align*}
  \beta &\leq \liminf_{t \rightarrow \infty}\frac{1}{t}w_t^{\delta}(u) \\
				&\leq \liminf_{t \rightarrow \infty}\frac{1}{tr}\sum_{s=0}^{t-r}w_r^{\delta}(\theta^su) \\
				&=    \liminf_{t \rightarrow \infty}\frac{1}{tr}\sum_{s=0}^{t-1}w_r^{\delta}(\theta^su) \\
				&\leq \frac{1}{r}\log C_{\delta/2} + \gamma + \liminf_{t \rightarrow \infty}\frac{1}{tr}\sum_{s=0}^{t-1}v_r^{\delta/2}(\theta^su) \\
				&= \frac{1}{r}\log C_{\delta/2} + \gamma + \liminf_{t \rightarrow \infty}\frac{1}{r}\int v_r^{\delta/2}\,\rmd P_t.%
\end{align*}
The first line follows from Proposition \ref{prop_sc_props}(b) and the fourth from item (c) of the same proposition. The third line uses that $w_r^{\delta}$ is bounded on $\UC$ and the last line simply uses the definition of $P_t$. It remains to prove the second inequality. To this end, for each $s$ in the range $0 \leq s < r$ let us choose integers $q_s,r_s$ such that $t = s + q_sr + r_s$ with $q_s \geq 0$ and $0 \leq r_s < r$. By Lemma \ref{lem_combinatorics},%
\begin{equation*}
  \sum_{s=0}^{r-1}\sum_{j=0}^{q_s-1}w_r^{\delta}(\theta^{s + jr}u) = \sum_{s=0}^{t-r}w_r^{\delta}(\theta^su).%
\end{equation*}
Hence, using subadditivity, we find that%
\begin{align*}
  rw_t^{\delta}(u) &\leq \sum_{s=0}^{r-1} \Bigl(w_s^{\delta}(u) + \sum_{j=0}^{q_s-1}w_r^{\delta}(\theta^{s+jr}u) + w_{r_s}^{\delta}(\theta^{s+q_sr}u)\Bigr) \\
	&= \sum_{s=0}^{r-1}w_s^{\delta}(u) + \sum_{s=0}^{t-r}w_r^{\delta}(\theta^su) + \sum_{s=0}^{r-1}w_{r_s}^{\delta}(\theta^{t-r_s}u).%
\end{align*}
Dividing both sides by $tr$ and letting $t \rightarrow \infty$ completes the proof of the second inequality above. We have thus proved the estimate%
\begin{equation*}
  \liminf_{t \rightarrow \infty}\frac{1}{r}\int v_r^{\delta/2}\,\rmd P_t \geq \beta - \gamma - \frac{1}{r}\log C_{\delta/2}%
\end{equation*}
for all $r\in\Z_{>0}$. By continuity of $v_r^{\delta/2}(\cdot)$, this implies%
\begin{equation*}
  \frac{1}{r}\int v_r^{\delta/2}\,\rmd P \geq \beta - \gamma - \frac{1}{r}\log C_{\delta/2}.%
\end{equation*}
According to Proposition \ref{prop_sc_props}(d), choose $T\in\Z_{>0}$ such that $v_r^{\delta/2}(u) \leq \tilde{C} + v_{r - 2T}^{\ep}(\theta^Tu)$, which yields%
\begin{equation*}
  \frac{\tilde{C}}{r} + \frac{1}{r}\int v_{r-2T}^{\ep}\,\rmd P \geq \beta - \gamma - \frac{1}{r}\log C_{\delta/2},%
\end{equation*}
where we use that $P$ is $\theta$-invariant. Letting $r \rightarrow \infty$, we arrive at%
\begin{equation*}
  \beta \leq \gamma + \liminf_{r \rightarrow \infty}\frac{1}{r} \int v_r^{\ep}\, \rmd P,%
\end{equation*}
which implies%
\begin{equation*}
  \sup_{u\in\UC}\liminf_{t\rightarrow\infty}\frac{1}{t}v^{\ep}_t(u) \leq \gamma + \sup_{P \in \MC(\theta)}\liminf_{t \rightarrow \infty}\frac{1}{t}\int v_t^{\ep}\, \rmd P.%
\end{equation*}
Since $\gamma$ can be chosen arbitrarily small, this together with Proposition \ref{prop_interchanged} leads to the desired estimate.%
\end{proof}

\subsection{An estimate in terms of pressure}\label{subsec_pressure_est}

To complete the proof of the lower bound, we need to relate the random escape rate bound from Proposition \ref{prop_randomescaperate} to the pressure of the associated random dynamical systems. This is accomplished by the following theorem whose proof follows the proof of the variational principle for the pressure of random dynamical systems \cite{Bog}. The idea to use these arguments to compute escape rates can be found in many works, including \cite{Bo2,Liu,You}.%

\begin{theorem}\label{thm_ie_pressure_bound}
Assume that the control system $\Sigma$ is of regularity class $C^2$ and let $Q$ be a compact all-time controlled invariant set of $\Sigma$ satisfying the following assumptions:%
\begin{enumerate}
\item[(A1)] $Q$ is uniformly hyperbolic.%
\item[(A2)] $L(Q)$ is an isolated invariant set of the control flow.%
\item[(A3)] The fiber map $u \mapsto Q(u)$ is lower semicontinuous.%
\end{enumerate}
Then for every compact set $K\subset Q$ of positive volume, the invariance entropy satisfies%
\begin{equation}\label{eq_ie_pressure_bound}
  h_{\inv}(K,Q) \geq \inf_{\mu \in \MC(\Phi_{|L(Q)})}\Bigl[ \int \log J^+\varphi_{1,u}(x)\, \rmd\mu(u,x) - h_{\mu}(\varphi,(\pi_{\UC})_*\mu)\Bigr].%
\end{equation}
\end{theorem}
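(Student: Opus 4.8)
The plan is to combine the random escape-rate bound of Proposition \ref{prop_randomescaperate} with the volume estimate \eqref{eq_volume_q} and then run the upper-bound half of the variational principle for the topological pressure of a bundle RDS. By Proposition \ref{prop_randomescaperate} it suffices to show that for every $P \in \MC(\theta)$ and every sufficiently small $\ep>0$,
\begin{equation}\label{eq_pressure_reduction}
  \liminf_{\tau\to\infty}\frac{1}{\tau}\int\log\vol(Q(u,\tau,\ep))\,\rmd P(u) \leq \sup_{\mu\in\MC_P(\Phi;L(Q))}\Bigl[h_{\mu}(\varphi,P) - \int\log J^+\varphi_{1,u}(x)\,\rmd\mu(u,x)\Bigr],
\end{equation}
since the right-hand side is bounded by $\sup_{\mu\in\MC(\Phi_{|L(Q)})}[h_\mu(\varphi,(\pi_{\UC})_*\mu) - \int\log J^+\varphi_{1,u}\,\rmd\mu] = -\inf_\mu[\int\log J^+\varphi_{1,u}\,\rmd\mu - h_\mu(\varphi,(\pi_{\UC})_*\mu)]$ and does not depend on $\ep$ (the dependence of \eqref{eq_volume_q} on $\ep$ enters only through $\delta,\beta$ and the additive constant $\log C_{\beta+\delta}$, which disappears after division by $\tau$). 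Fixing $P$, $\ep$ and the associated $\delta,\beta$, and writing $\phi:=-\log J^+\varphi$, $S_\tau\phi(u,x):=\sum_{t=0}^{\tau-1}\phi(\Phi_t(u,x))$ so that $J^+\varphi_{\tau,u}(x)^{-1}=2^{S_\tau\phi(u,x)}$ by the cocycle property, I would pick the maximal $(u,\tau,\delta)$-separated sets $F_{u,\tau,\delta}\subset Q(u)$ depending measurably on $u$ (via a measurable-selection argument) and set $\Lambda_\tau(u):=\sum_{y\in F_{u,\tau,\delta}}2^{S_\tau\phi(u,y)}$; then \eqref{eq_volume_q} gives $\frac1\tau\int\log\vol(Q(u,\tau,\ep))\,\rmd P \le \tau^{-1}\log C_{\beta+\delta}+\frac1\tau\int\log\Lambda_\tau\,\rmd P$, so \eqref{eq_pressure_reduction} reduces to bounding $\liminf_\tau\frac1\tau\int\log\Lambda_\tau\,\rmd P$ by the variational quantity.

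For that I would follow the bundle-RDS form of the proof of the variational principle for pressure, as in \cite{Bog} (the idea of using such arguments for escape rates being present in \cite{Bo2,Liu,You}). First form, for each $u$, the atomic probability measure $\sigma^u_\tau:=\Lambda_\tau(u)^{-1}\sum_{y\in F_{u,\tau,\delta}}2^{S_\tau\phi(u,y)}\delta_y$ on $Q(u)$; let $\nu_\tau$ be the measure on $L(Q)$ with disintegration $\rmd\nu_\tau(u,x)=\rmd\sigma^u_\tau(x)\,\rmd P(u)$, and put $\mu_\tau:=\frac1\tau\sum_{j=0}^{\tau-1}(\Phi_j)_*\nu_\tau$. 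Since $P$ is $\theta$-invariant, $\mu_\tau$ has marginal $P$ on $\UC$, and $\nu_\tau,\mu_\tau$ live on the compact set $L(Q)$. Passing to a subsequence $\tau_k\to\infty$ realising the $\liminf$ and along which $\mu_{\tau_k}\to\mu$ weakly${}^*$, the standard averaging argument makes $\mu$ a $\Phi$-invariant element of $\MC_P(\Phi;L(Q))$. Next fix a finite Borel partition $\AC$ of $M$ with $\diam\AC<\delta$ and with $\mu$-null boundary (possible on a manifold); since $\diam\AC<\delta$, each atom of $\AC(u,\tau)=\bigvee_{t=0}^{\tau-1}\varphi_{t,u}^{-1}\AC$ contains at most one point of $F_{u,\tau,\delta}$, which yields the exact identity $H_{\sigma^u_\tau}(\AC(u,\tau))+\int S_\tau\phi\,\rmd\sigma^u_\tau=\log\Lambda_\tau(u)$ for every $u$. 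Integrating over $P$ and using $\int S_\tau\phi\,\rmd\nu_\tau=\tau\int\phi\,\rmd\mu_\tau$ turns this into $\frac1\tau\int H_{\sigma^u_\tau}(\AC(u,\tau))\,\rmd P+\int\phi\,\rmd\mu_\tau=\frac1\tau\int\log\Lambda_\tau\,\rmd P$.

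Finally I would let $\tau=\tau_k\to\infty$: the potential term converges because $\phi$ is continuous, and for the entropy term the Misiurewicz-type concavity/subadditivity estimate in the bundle setting gives, for each fixed $m\in\Z_{>0}$, $\frac1\tau\int H_{\sigma^u_\tau}(\AC(u,\tau))\,\rmd P \le \frac1m\int H_{(\mu_\tau)_u}(\AC(u,m))\,\rmd P+\frac{2m\log|\AC|}{\tau}$; upper semicontinuity of $\mu\mapsto\int H_{\mu_u}(\AC(u,m))\,\rmd P$ at $\mu$ (valid since $\AC$ has $\mu$-null boundary) bounds the $\limsup$ over $k$ by $\frac1m\int H_{\mu_u}(\AC(u,m))\,\rmd P$, and $m\to\infty$ bounds it by $h_\mu(\varphi,P;\AC)\le h_\mu(\varphi,P)$. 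Hence $\liminf_\tau\frac1\tau\int\log\Lambda_\tau\,\rmd P\le h_\mu(\varphi,P)+\int\phi\,\rmd\mu=h_\mu(\varphi,P)-\int\log J^+\varphi_{1,u}\,\rmd\mu$, which proves \eqref{eq_pressure_reduction} and, via Proposition \ref{prop_randomescaperate}, the theorem. The hard part will be making this last step rigorous — the upper-bound half of the variational principle in the random/bundle setting: the Misiurewicz entropy count carried out with conditional entropies of sample measures, the upper semicontinuity of the integrated entropy functional (choosing partitions with $\mu$-null boundary and controlling the disintegrations), and the measurable dependence of $F_{u,\tau,\delta}$ and $\sigma^u_\tau$ on $u$. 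The reduction via Proposition \ref{prop_randomescaperate}, the volume estimate \eqref{eq_volume_q}, and the passage from $\MC_P(\Phi;L(Q))$ to $\MC(\Phi_{|L(Q)})$ are routine.
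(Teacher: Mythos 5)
Your proposal is correct and follows essentially the same route as the paper: reduction via Proposition \ref{prop_randomescaperate} and the volume estimate \eqref{eq_volume_q}, the weighted atomic measures $\sigma^u_\tau$ on the separated sets, the Ces\`aro averages $\mu_\tau$ and their weak$^*$ limit in $\MC_P(\Phi;L(Q))$, the exact entropy--pressure identity for partitions of diameter less than $\delta$ with $\mu$-null boundaries, and the Misiurewicz block-splitting to pass to the limit. The steps you flag as the ``hard part'' (measurable selection of $F_{u,\tau,\delta}$, the conditional-entropy bookkeeping, upper semicontinuity at $\mu$) are exactly the ones the paper carries out following \cite{Bog}, so no new idea is missing.
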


\begin{proof}
To simplify some arguments, we assume without loss of generality that the manifold $M$ is compact. Fix some $P \in \MC(\theta)$ and sufficiently small $\ep,\delta>0$. Let $F_{u,t,\delta} \subset Q(u)$ be a maximal $(u,t,\delta)$-separated set for each $u \in \UC$ and $t\in\Z_{>0}$. By \eqref{eq_volume_q}, this implies%
\begin{equation}\label{eq_vep_estimate}
  v^{\ep}_t(u) \leq \log C_{\beta+\delta} + \log\sum_{x \in F_{u,t,\delta}}J^+\varphi_{t,u}(x)^{-1}.%
\end{equation}
We define sequences of probability measures on $(M,\BC(M))$ by%
\begin{equation*}
  \eta^u_t := \frac{\sum_{x \in F_{u,t,\delta}}2^{-\log J^+\varphi_{t,u}(x)}\delta_x}{\sum_{x \in F_{u,t,\delta}}2^{-\log J^+\varphi_{t,u}(x)}},\quad t \in \Z_{>0},\ u \in \UC%
\end{equation*}
and%
\begin{equation*}
  \nu^u_t := \frac{1}{t}\sum_{s=0}^{t-1}\varphi(-s,\cdot,u)^{-1}_*\eta_t^{\theta^{-s}u},\quad t \in \Z_{>0}.%
\end{equation*}
We can choose the sets $F_{u,t,\delta}$ such that $\eta^u_t$ depends measurably on $u$ (see \cite[Proof of Thm.~6.1]{Bog}), implying that we can define probability measures $\sigma_t$ on $\UC \tm M$ by $\rmd\sigma_t(u,x) := \rmd\eta^u_t(x)\rmd P(u)$.%

Observe that for any $A \in \BC(\UC \tm M)$ we have%
\begin{align*}
  \frac{1}{t}\sum_{s=0}^{t-1} (\Phi_s)_*\sigma_t(A) &= \frac{1}{t}\sum_{s=0}^{t-1} \sigma_t(\Phi_s^{-1}(A)) \\
	   &= \frac{1}{t}\sum_{s=0}^{t-1} \int_{\UC}\int_M \unit_{\Phi_s^{-1}(A)}(u,x)\, \rmd\eta^u_t(x)\rmd P(u) \\
		 &= \frac{1}{t}\sum_{s=0}^{t-1} \int_{\UC}\int_M \unit_A(\theta^su,\varphi(s,x,u))\, \rmd\eta^u_t(x)\rmd P(u) \\
		 &= \frac{1}{t}\sum_{s=0}^{t-1} \int_{\UC}\int_M \unit_A(v,\varphi(s,x,\theta^{-s}v))\, \rmd\eta^{\theta^{-s}v}_t(x)\rmd P(v) \\
		 &= \frac{1}{t}\sum_{s=0}^{t-1} \int_{\UC}\int_M \unit_A(v,y)\, \rmd\bigl[ \varphi(s,\cdot,\theta^{-s}v)_*\eta_t^{\theta^{-s}v} \bigr](y) \rmd P(v) \\
		 &= \int_{\UC}\int_M \unit_A(v,y)\, \rmd \nu^v_t(y)\rmd P(v).%
\end{align*}
Hence, the measures $\nu^u_t$ are the sample measures of $\mu_t := \frac{1}{t}\sum_{s=0}^{t-1}(\Phi_s)_*\sigma_t$. By weak$^*$ compactness, there exists a limit point $\mu$ of the sequence $(\mu_t)_{t>0}$. Then $\mu$ is a $\Phi$-invariant measure with marginal $P$ on $\UC$, i.e., an invariant measure of the random dynamical system $(\varphi,P)$. Indeed, for any $g \in C^0(\UC \tm M,\R)$ and an appropriate subsequence $(t_k)_{k>0}$, we have%
\begin{align*}
  &(\Phi_*\mu - \mu)(g) = \lim_{k \rightarrow \infty} \Bigl|\int_{\UC \tm M} g(\Phi(u,x))\, \rmd \mu_{t_k}(u,x) - \int_{\UC \tm M} g(u,x)\, \rmd \mu_{t_k}(u,x)\Bigr| \\
	&= \lim_{k \rightarrow \infty} \Bigl|\frac{1}{t_k}\sum_{s=0}^{t_k-1} \int_{\UC} \int_M (g(\Phi(u,x)) - g(u,x))\, \rmd[\varphi(-s,\cdot,u)_*^{-1}\eta^{\theta^{-s}u}_{t_k}](x)\rmd P(u) \Bigr| \\	
		&= \lim_{k \rightarrow \infty} \Bigl|\frac{1}{t_k}\sum_{s=0}^{t_k-1} \int_{\UC} \int_M (g(\Phi(\theta^su,x)) - g(\theta^su,x))\, \rmd[(\varphi_{s,u})_*\eta^{u}_{t_k}](x)\rmd P(u) \Bigr| \\
  	&= \lim_{k \rightarrow \infty} \Bigl|\frac{1}{t_k}\sum_{s=0}^{t_k-1} \int_{\UC} \int_M (g(\Phi_{s+1}(u,x)) - g(\Phi_s(u,x)))\, \rmd \eta^u_{t_k}(x) \rmd P(u) \Bigr| \\
	&= \lim_{k \rightarrow \infty} \frac{1}{t_k}\Bigl| \int_{\UC} \Bigl[ \int_M g(\Phi_{t_k}(u,x))\, \rmd \eta^u_{t_k}(x) - \int_M g(u,x)\, \rmd \eta^u_{t_k}(x) \Bigr] \rmd P(u) \Bigr| \\
	&\leq \lim_{k \rightarrow \infty} \frac{1}{t_k} \int_{\UC} \int_M |g(\Phi_{t_k}(u,x)) - g(u,x)|\, \rmd \eta^u_{t_k}(x) \rmd P(u) \\
	&\leq \lim_{k \rightarrow \infty} \frac{2}{t_k} \max_{(u,x)\in\UC \tm M} |g(u,x)| = 0,%
\end{align*}
showing that $\mu$ is $\Phi$-invariant. The fact that $(\pi_{\UC})_*\mu = P$ follows from the continuity of the operator $(\pi_{\UC})_*$.%

By Lemma \ref{lem_partition_construction}, we can choose a finite Borel partition $\PC = \{P_1,\ldots,P_k\}$ of $M$ with $\diam(P_i) < \delta$ and $(\pi_M)_*\mu(\partial P_i) = 0$ for $i = 1,\ldots,k$. Since $(\pi_M)_*\mu(\partial P_i) = \int \mu^u(\partial P_i)\, \rmd P(u)$, where $\mu^u$ are the sample measures of $\mu$, we have $\mu^u(\partial P_i) = 0$ for $P$-almost all $u\in\UC$.%

Put $\gamma_t(u,x) := -\log J^+\varphi_{t,u}(x)$ and $S_t(u) := \sum_{x \in F_{u,t,\delta}}2^{\gamma_t(u,x)}$. Since each element of $\bigvee_{s=0}^{t-1}\varphi(s,\cdot,u)^{-1}\PC$ contains at most one element of $F_{u,t,\delta}$, we obtain for $P$-almost all $u\in\UC$ that%
\begin{align*}
 &  H_{\eta^u_t}\Bigl(\bigvee_{s=0}^{t-1}\varphi_{s,u}^{-1}\PC\Bigr) - \int (-\gamma_t(u,x))\, \rmd\eta^u_t(x) \\
&\qquad = - \sum_{x \in F_{u,t,\delta}} \frac{2^{\gamma_t(u,x)}}{S_t(u)}\log  \frac{2^{\gamma_t(u,x)}}{S_t(u)} + \sum_{x \in F_{u,t,\delta}} \frac{2^{\gamma_t(u,x)}}{S_t(u)} \log 2^{\gamma_t(u,x)} \\
&\qquad = \sum_{x \in F_{u,t,\delta}} \frac{2^{\gamma_t(u,x)}}{S_t(u)} \log S_t(u) = \log S_t(u).%
\end{align*}
Now consider $q,t \in \Z_{>0}$ with $1 < q < t$ and let $a(r)$ denote the integer part of $(t-r)/q$ for $0 \leq r \leq q - 1$. Then%
\begin{align*}
  \bigvee_{s=0}^{t-1} \varphi_{s,u}^{-1}\PC = \bigvee_{i=0}^{a(r)-1}\varphi_{r + iq,u}^{-1}\bigvee_{j=0}^{q-1}\varphi_{j,\theta^{r+iq}u}^{-1}\PC \vee \bigvee_{s \in R}\varphi_{s,u}^{-1}\PC,%
\end{align*}
where the set $R$ satisfies $|R| \leq 2q$. Hence, using elementary properties of Shannon entropy, we conclude that%
\begin{equation*}
  H_{\eta^u_t}\Bigl(\bigvee_{s=0}^{t-1}\varphi_{s,u}^{-1}\PC\Bigr) \leq \sum_{i=0}^{a(r)-1}H_{(\varphi_{r+iq,u})_*\eta^u_t}\Bigl(\bigvee_{j=0}^{q-1}\varphi_{j,\theta^{r+iq}u}\PC\Bigr) + 2q \log k.%
\end{equation*}
Summing over $r = 0,1,\ldots,q-1$, we obtain%
\begin{align}\label{eq_basic_h_est}
\begin{split}
  q\log S_t(u) &\leq \sum_{s=0}^{t-1}H_{(\varphi_{s,u})_*\eta^u_t}\Bigl(\bigvee_{j=0}^{q-1}\varphi_{j,\theta^su}\PC\Bigr)\\
	&\quad + 2q^2 \log k - q\int (-\gamma_t(u,x))\, \rmd \eta^u_t(x).%
\end{split}
\end{align}
Using the notation%
\begin{equation*}
  h^t_{s,q}(u) := H_{(\varphi_{-s,u}^{-1})_*\eta^{\theta^{-s}u}_t}\Bigl(\bigvee_{j=0}^{q-1}\varphi_{j,u}^{-1}\PC\Bigr),%
\end{equation*}
we find that\footnote{Here we use the elementary property $\sum_i \alpha_i H_{\mu_i}(\PC) \leq H_{\sum_i\alpha_i \mu_i}(\PC)$ of Shannon entropy for convex combinations of measures.}
\begin{equation*}
  \frac{1}{t}\sum_{s=0}^{t-1}h^t_{s,q}(u) \leq H_{\nu^u_t}\Bigl(\bigvee_{j=0}^{q-1}\varphi_{j,u}^{-1}\PC\Bigr)%
\end{equation*}
and%
\begin{equation*}
  \frac{1}{t}\sum_{s=0}^{t-1}H_{(\varphi_{s,u})_*\eta^u_t}\Bigl(\bigvee_{j=0}^{q-1}\varphi_{j,\theta^su}\PC\Bigr) = \frac{1}{t} \sum_{s=0}^{t-1} h_{s,q}^t(\theta^su).%
\end{equation*}
Integrating both sides over $u$ and using $\theta$-invariance of $P$ leads to%
\begin{equation*}
  \frac{1}{t}\sum_{s=0}^{t-1} \int H_{(\varphi_{s,u})_*\eta^u_t}\Bigl(\bigvee_{j=0}^{q-1}\varphi_{j,\theta^su}\PC\Bigr)\,\rmd P(u) \leq 
 \int H_{\nu^u_t}\Bigl(\bigvee_{j=0}^{q-1}\varphi_{j,u}^{-1}\PC\Bigr)\,\rmd P(u).%
\end{equation*}
Dividing \eqref{eq_basic_h_est} by $t$ and integrating, we thus obtain%
\begin{align}\label{eq_vp_est}
\begin{split}
  \frac{q}{t}\int \log S_t(u)\, \rmd P(u) &\leq \int H_{\nu^u_t}\Bigl(\bigvee_{j=0}^{q-1}\varphi_{j,u}^{-1}\PC\Bigr)\rmd P(u) \\
	&\qquad + 2\frac{q^2}{t} \log k - q\int (-\gamma_1(u,x))\, \rmd \mu_t(u,x),%
\end{split}
\end{align}
where we use that%
\begin{align*}
  &\frac{1}{t} \int\int \gamma_t(u,x)\, \rmd \eta^u_t(x) \rmd P(u) = \frac{1}{t} \int\int \sum_{s=0}^{t-1} \gamma_1(\Phi_s(u,x))\, \rmd \eta^u_t(x) \rmd P(u) \\
	&\quad = \frac{1}{t} \sum_{s=0}^{t-1} \int\int \gamma_1(\theta^su,x)\, \rmd[\varphi(s,\cdot,u)_*\eta^u_t](x) \rmd P(u) \\
	&\quad = \frac{1}{t} \sum_{s=0}^{t-1} \int\int \gamma_1(\theta^su,x)\, \rmd[\varphi(-s,\cdot,\theta^su)^{-1}_*\eta^u_t](x) \rmd P(u) \\
	&\quad = \int\int \gamma_1(\theta^su,x)\, \rmd \nu_t^{\theta^su}(x)\rmd P(u) \\
	&\quad = \int\int \gamma_1(u,x)\, \rmd \nu_t^u(x) \rmd P(u) = \int \gamma_1(u,x)\, \rmd\mu_t(u,x).%
\end{align*}
Letting $t \rightarrow \infty$ (respectively, an appropriate subsequence) in \eqref{eq_vp_est}, we thus obtain%
\begin{align*}
  q\liminf_{t\rightarrow\infty}\frac{1}{t}\int \log S_t(u)\, \rmd P(u) &\leq \int H_{\mu^u}\Bigl(\bigvee_{j=0}^{q-1}\varphi_{j,u}^{-1}\PC\Bigr)\rmd P(u)\\
	&\qquad - q\int (-\gamma_1(u,x))\, \rmd \mu(u,x),%
\end{align*}
where we use that $\gamma_1$ is continuous and $\mu^u(\partial P_i) = 0$ for $P$-almost all $u$ and all $P_i \in \PC$. Using \eqref{eq_vep_estimate}, we arrive at%
\begin{align*}
 & \liminf_{t \rightarrow \infty}\frac{1}{t}\int v^{\ep}_t(u)\, \rmd P(u) \\
	&\leq \liminf_{t \rightarrow \infty}\frac{1}{t}\int \log S_t(u)\, \rmd P(u) \\
	&\leq \frac{1}{q}\int H_{\mu^u}\Bigl(\bigvee_{j=0}^{q-1}\varphi_{j,u}^{-1}\PC\Bigr)\,\rmd P(u) - \int \log J^+\varphi_{1,u}(x)\, \rmd \mu(u,x).%
\end{align*}
Since this estimate holds for all $q\in\Z_{>0}$, we can let $q \rightarrow \infty$ and obtain%
\begin{align*}
  \liminf_{t \rightarrow \infty}\frac{1}{t}\int v^{\ep}_t(u)\, \rmd P(u) &\leq h_{\mu}(\varphi,P;\PC) - \int \log J^+\varphi_{1,u}(x)\, \rmd \mu(u,x) \\
	&\leq h_{\mu}(\varphi,P) - \int \log J^+\varphi_{1,u}(x)\, \rmd \mu(u,x).%
\end{align*}
It remains to show that $\mu$ is supported on $L(Q)$. By construction, $\supp(\sigma_t) \subset L(Q)$ for every $t\in\Z_{>0}$, which implies $\supp(\mu_t) \subset L(Q)$ by $\Phi$-invariance of $L(Q)$, and consequently $\supp(\mu) \subset L(Q)$. Together with Proposition \ref{prop_randomescaperate}, this yields the desired estimate.%
\end{proof}

\subsection{Optimal measures}\label{subsec_optimal_measure}

A natural question that arises from the estimate \eqref{eq_ie_pressure_bound} is whether the infimum is attained as a minimum. Indeed, we will show that this always holds, which leads to an interesting conclusion.%

A sufficient condition for the existence of the minimum is the upper semicontinuity of the functional\footnote{Here we use that an upper semicontinuous function defined on a compact space attains its maximum.}%
\begin{equation*}
  \mu \mapsto h_{\mu}(\varphi,(\pi_{\UC})_*\mu) - \int \log J^+\varphi\, \rmd \mu,%
\end{equation*}
where the space of $\Phi$-invariant measures is equipped with the standard weak$^*$-topology. Since the integrand in the last term is a continuous function, it follows that this term is continuous in $\mu$. Hence, it suffices to prove the upper semicontinuity of the measure-theoretic entropy.%

\begin{lemma}
The functional $\mu \mapsto h_{\mu}(\varphi,(\pi_{\UC})_*\mu)$, defined on $\MC(\Phi_{|L(Q)})$, is upper semicontinuous.%
\end{lemma}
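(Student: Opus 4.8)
The statement to prove is upper semicontinuity of $\mu \mapsto h_{\mu}(\varphi,(\pi_{\UC})_*\mu)$ on $\MC(\Phi_{|L(Q)})$. The natural route is to exploit the \emph{expansivity} of the hyperbolic set $Q$ (Theorem~\ref{thm_expansiveness}), which is the discrete-time analogue of the classical fact that measure-theoretic entropy is upper semicontinuous for expansive homeomorphisms. First I would fix an expansivity constant $\delta_0>0$ for $Q$, and use Lemma~\ref{lem_partition_construction} to pick, for each target measure $\mu_0$, a finite Borel partition $\PC = \{P_1,\dots,P_k\}$ of $M$ with $\diam(P_i)<\delta_0$ and with $(\pi_M)_*\mu_0(\partial P_i)=0$ for all $i$; the latter guarantees that $\mu_n \to \mu_0$ weak$^*$ forces $\mu_n(\partial P_i)\to 0$, which is what makes the entropy of refinements of $\PC$ behave continuously in the measure.

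\textbf{Step 1: reduce to a generating partition.} I would show that for the bundle RDS $(\varphi, P)$ restricted to $L(Q)$, any partition $\PC$ with $\diam(P_i) < \delta_0$ is a \emph{generator} in the sense relevant to random entropy: expansivity implies that $\bigvee_{t=-\infty}^{\infty}\varphi_{t,u}^{-1}\PC$ separates points of $Q(u)$ for every $u$, hence by the Kolmogorov--Sinai-type theorem for bundle RDS (see \cite[Sec.~1.1]{KLi}) one has $h_{\mu}(\varphi,(\pi_{\UC})_*\mu) = h_{\mu}(\varphi,(\pi_{\UC})_*\mu;\PC)$ for every $\mu \in \MC(\Phi_{|L(Q)})$. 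This is the key structural input that turns the supremum over all partitions (which is only lower semicontinuous in general) into a single partition functional.

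\textbf{Step 2: upper semicontinuity of the fixed-partition functional.} With $\PC$ fixed, I would use the defining formula
\begin{equation*}
  h_{\mu}(\varphi,(\pi_{\UC})_*\mu;\PC) = \inf_{q\geq 1} \frac{1}{q}\int H_{\mu_u}\Bigl(\bigvee_{j=0}^{q-1}\varphi_{j,u}^{-1}\PC\Bigr)\, \rmd (\pi_{\UC})_*\mu(u),
\end{equation*}
the infimum coming from subadditivity. For each fixed $q$, the finitely many sets in $\bigvee_{j=0}^{q-1}\varphi_{j,u}^{-1}\PC$ have boundaries of $\mu_0$-measure zero (since $\varphi_{j,u}$ is a diffeomorphism and boundaries map to boundaries, up to $(\pi_M)_*\mu_0$-null sets), so along $\mu_n \to \mu_0$ the integrand converges and, the entropy function $H$ being continuous and bounded, $\int H_{(\mu_n)_u}(\cdots)\,\rmd(\pi_{\UC})_*\mu_n(u) \to \int H_{(\mu_0)_u}(\cdots)\,\rmd(\pi_{\UC})_*\mu_0(u)$. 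Thus $\mu \mapsto \frac{1}{q}\int H_{\mu_u}(\bigvee_{j=0}^{q-1}\varphi_{j,u}^{-1}\PC)\,\rmd(\pi_{\UC})_*\mu(u)$ is continuous for each $q$, hence an infimum over $q$ is upper semicontinuous, giving upper semicontinuity of $h_{\bullet}(\varphi,(\pi_{\UC})_*\bullet;\PC)$ at $\mu_0$. Combined with Step~1 (applied \emph{at} $\mu_0$, so that the value equals the $\PC$-value), and with the trivial inequality $h_{\mu}(\varphi,(\pi_{\UC})_*\mu) \geq h_{\mu}(\varphi,(\pi_{\UC})_*\mu;\PC)$ for nearby $\mu$, we get $\limsup_{n} h_{\mu_n}(\cdots) \leq \limsup_n h_{\mu_n}(\cdots;\PC)$... wait, that inequality goes the wrong way; instead I would argue directly that Step~1 gives equality $h_{\mu_n}(\cdots) = h_{\mu_n}(\cdots;\PC_n)$ only for partitions fine relative to $\delta_0$, and since our fixed $\PC$ is such a partition for \emph{all} $\mu$, equality holds uniformly, so $\limsup_n h_{\mu_n}(\cdots) = \limsup_n h_{\mu_n}(\cdots;\PC) \leq h_{\mu_0}(\cdots;\PC) = h_{\mu_0}(\cdots)$.

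\textbf{Main obstacle.} The delicate point is Step~1: one must be careful that the generator property for expansive \emph{bundle} RDS genuinely yields $h_{\mu} = h_{\mu}(\cdot;\PC)$ for \emph{every} invariant $\mu$ (not merely for ergodic ones or $P$-a.e.\ fiber), and that the relevant Kolmogorov--Sinai theorem in \cite{KLi} applies in the present setting where the fibers $Q(u)$ vary and $P=(\pi_{\UC})_*\mu$ ranges over all of $\MC(\theta)$. A secondary technical nuisance is the $\mu_0$-null-boundary condition for the iterated partitions $\bigvee_{j=0}^{q-1}\varphi_{j,u}^{-1}\PC$: one needs $(\pi_M)_*\mu_0(\partial(\varphi_{j,u}^{-1}P_i)) = 0$, which follows from $\partial(\varphi_{j,u}^{-1}P_i) \subset \varphi_{j,u}^{-1}(\partial P_i)$ and the invariance of $\mu_0$, but this should be spelled out. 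Everything else — continuity of $H$, dominated convergence, the $\inf$-of-continuous argument — is routine.
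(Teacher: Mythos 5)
Your overall architecture coincides with the paper's: use expansivity plus the Kolmogorov--Sinai theorem for bundle RDS to replace the supremum over partitions by a single fine partition $\PC$ with zero $(\pi_M)_*\mu_0$-boundary (your Step~1 is the paper's Step~2 and is correct, including the observation that the generator property holds uniformly in $\mu$), and then prove upper semicontinuity of the fixed-partition functional. However, your Step~2 has a genuine gap at its central claim.

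The problem is the assertion that, for fixed $q$, the map $\mu \mapsto \frac{1}{q}\int H_{\mu_u}\bigl(\bigvee_{j=0}^{q-1}\varphi_{j,u}^{-1}\PC\bigr)\,\rmd(\pi_{\UC})_*\mu(u)$ is weak$^*$-continuous at $\mu_0$ ``by dominated convergence.'' The integrand depends on the sample measures $\mu_u$, and the disintegration $\mu \mapsto (\mu_u)_u$ is \emph{not} continuous under weak$^*$ convergence of $\mu$: one can easily have $\mu_n \to \mu_0$ with every $(\mu_n)_u$ supported on a single partition element (so the fiber Shannon entropies all vanish) while $(\mu_0)_u$ spreads mass over several elements. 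The functional in question is exactly the conditional entropy $H_{\mu}\bigl(\bigvee_{s=0}^{q-1}\Phi_{-s}(\{\UC\}\tm\PC)\,|\,\FC\bigr)$ with $\FC=\pi_{\UC}^{-1}(\BC(\UC))$, and conditional entropy given a $\sigma$-algebra is in general only upper semicontinuous, never continuous; your null-boundary bookkeeping for the sets $\varphi_{j,u}^{-1}P_i$ does not repair this, because the discontinuity lives in the conditioning, not in the partition being measured. Upper semicontinuity is of course all you need, but it must be earned: write $H_{\mu}(\cdot\,|\,\FC)$ as an infimum of $H_{\mu}(\cdot\,|\,\RC)$ over countable $\FC$-measurable partitions $\RC$, restrict to a refining sequence of such $\RC$ with zero $\mu_0$-boundary (so that each $H_{\mu}(\cdot\,|\,\RC)$ is continuous at $\mu_0$ by the Portmanteau theorem) while checking the infimum is still attained along this sequence, and then conclude that the infimum of continuous functions is upper semicontinuous. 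This extra approximation layer is precisely what the paper's Step~1 supplies (via \cite[Def.~1.4.5, Fact 6.6.6, Lem.~1.7.11]{Dow}) and is absent from your argument.
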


\begin{proof}
Throughout the proof, we say that a partition has \emph{zero $\mu$-boundary} if the $\mu$-measure of the boundary of each member of the partition vanishes. The proof proceeds in two steps.%

\emph{Step 1:} Writing $\FC := \pi_{\UC}^{-1}(\BC(\UC))$ (which is a $\Phi$-invariant $\sigma$-algebra on $\UC \tm M$), we will use the following alternative characterization of the measure-theoretic entropy (with respect to a partition $\PC$) due to \cite[Thm.~3.1]{Bog}, using conditional entropy:%
\begin{equation}\label{eq_ent_diffchar}
  h_{\mu}(\varphi,(\pi_{\UC})_*\mu;\PC) = h_{\mu}(\Phi;\PC|\FC) := \lim_{\tau \rightarrow \infty}\frac{1}{\tau}H_{\mu}\Bigl(\bigvee_{s=0}^{\tau-1} \Phi_{-s}(\{\UC\} \tm \PC) | \FC \Bigr).%
\end{equation}
We fix $\mu_0 \in \MC(\Phi_{|L(Q)})$ and prove that $h_{\mu}(\varphi,(\pi_{\UC})_*\mu;\PC)$ depends upper semicontinuously on $\mu$ at $\mu_0$ if $\PC$ has zero $(\pi_M)_*\mu_0$-boundary.%

To this end, first note that due to subadditivity the limit in \eqref{eq_ent_diffchar} can be written as the infimum over $\tau\in\Z_{>0}$. Since the infimum over upper semicontinuous functions is upper semicontinuous, it suffices to prove the upper semicontinuity of the function%
\begin{equation*}
  \mu \mapsto H_{\mu}\Bigl(\bigvee_{s=0}^{\tau-1} \Phi_{-s}(\{\UC\} \tm \PC) | \FC \Bigr)%
\end{equation*}
at $\mu_0$ for each fixed $\tau$. By the definition of conditional entropy (see \cite[Def.~1.4.5]{Dow}), we have%
\begin{equation}\label{eq_def_cond_ent}
  H_{\mu}\Bigl(\bigvee_{s=0}^{\tau-1} \Phi_{-s}(\{\UC\} \tm \PC) | \FC \Bigr) = \inf \Bigl\{ H_{\mu}\Bigl(\bigvee_{s=0}^{\tau-1} \Phi_{-s}(\{\UC\} \tm \PC) | \RC \Bigr) : \RC \preceq \FC \Bigr\},%
\end{equation}
where the infimum is taken over all countable partitions $\RC$ whose elements belong to $\FC$. Hence, it is sufficient to prove that%
\begin{equation*}
  \mu \mapsto H_{\mu}\Bigl(\bigvee_{s=0}^{\tau-1} \Phi_{-s}(\{\UC\} \tm \PC) | \RC \Bigr)%
\end{equation*}
is upper semicontinuous for each partition $\RC$ as above. Recall that for any partitions $\AC$ and $\BC$, the conditional entropy is defined by%
\begin{equation*}
  H_{\mu}(\AC|\BC) = \sum_{B \in \BC} \mu(B) H_{\mu_B}(\AC),%
\end{equation*}
where $\mu_B(\cdot) = \mu( \cdot \cap B)/\mu(B)$. As long as both partitions $\AC$ and $\BC$ have zero $\mu_0$-boundaries, the Portmanteau-Theorem tells us that $\mu \mapsto H_{\mu}(\AC|\BC)$ is continuous at $\mu_0$. Applying this fact to our problem, we see that we are fine if we can restrict ourselves to partitions $\RC$ with zero $\mu_0$-boundaries (observing that $\{\UC\} \tm \PC$ has zero $\mu_0$-boundary and thus also the joint partitions $\bigvee_{s=0}^{\tau-1} \Phi_{-s}(\{\UC\} \tm \PC)$). By a general fact, see \cite[Fact 6.6.6]{Dow}, we can find a so-called refining sequence of partitions $\RC_k$, $k\in\Z_{>0}$, with zero $\mu_0$-boundaries so that the infimum in \eqref{eq_def_cond_ent} is approached along this sequence for every $\mu$ (see \cite[Lem.~1.7.11]{Dow}). Hence, we have proved that $h_{\mu}(\varphi,P;\PC)$ is upper semicontinuous at $\mu_0$ if $\PC$ has zero $(\pi_M)_*\mu_0$-boundary.%

\emph{Step 2:} To complete the proof, it suffices to show that for every $\mu_0 \in \MC(\Phi_{|L(Q)})$ there exists a finite measurable partition $\PC$ of $M$ with zero $(\pi_M)_*\mu_0$-boundary so that%
\begin{equation*}
  h_{\mu}(\varphi,(\pi_{\UC})_*\mu) = h_{\mu}(\varphi,(\pi_{\UC})_*\mu;\PC) \mbox{\quad for all\ } \mu \in \MC(\Phi_{|L(Q)}).%
\end{equation*}
This follows from expansivity. Indeed, to understand this, we need to regard the restriction of $\Phi$ to $L(Q)$ as a bundle random dynamical system over the base $(\UC,\theta)$. Then we can write the entropy above as%
\begin{equation*}
  h_{\mu}(\varphi,(\pi_{\UC})_*\mu;\PC) = \lim_{\tau \rightarrow \infty}\frac{1}{\tau}\int H_{\mu_u}\Bigl(\bigvee_{s=0}^{\tau-1}\varphi_{s,u}^{-1}\hat{\PC}(\theta^s u) \Bigr)\, \rmd P(u),%
\end{equation*}
where $\hat{\PC}(u) = \{Q(u) \cap P : P \in \PC\}$, see \cite[Formula (1.1.5)]{KLi}. We call the partition $\PC$ a \emph{generator} if the sequence of partitions $\varphi_{t,u}^{-1}\hat{\PC}(\theta^tu)$, $t \in \Z$, generates the Borel $\sigma$-algebra of $Q(u)$ for all $u \in \UC$. Assume that $\diam(P) < \delta$ for all $P \in \PC$, where $\delta>0$ is an expansivity constant for the hyperbolic set $Q$. Then $\hat{\PC}^{\infty}(u)$ contains arbitrarily fine partitions of $Q(u)$, and thus it generates the Borel $\sigma$-algebra. By \cite[Thm.~1.1.2]{KLi}, it follows that the entropy is attained on the partition $\PC$ for every $\mu \in \MC(\Phi_{|L(Q)})$, and from Lemma \ref{lem_partition_construction} it follows that for every fixed $\mu_0$ we can find a partition $\PC$ with zero $(\pi_M)_*\mu_0$-boundary and diameter smaller than $\delta$.%
\end{proof}

Hence, we have the following corollary of Theorem \ref{thm_ie_pressure_bound}.%

\begin{corollary}\label{cor_optimal_measure}
Let $Q$ be a compact all-time controlled invariant set of $\Sigma$ satisfying (A1)--(A3). Then there exists $\hat{\mu} \in \MC(\Phi_{|L(Q)})$ so that for every compact set $K \subset Q$ with positive volume%
\begin{equation*}
  h_{\inv}(K,Q) \geq \int \log J^+\varphi_{1,u}(x)\, \rmd\hat{\mu}(u,x) - h_{\hat{\mu}}(\varphi,(\pi_{\UC})_*\hat{\mu}).%
\end{equation*}
\end{corollary}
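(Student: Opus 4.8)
The plan is to obtain this as a direct consequence of Theorem~\ref{thm_ie_pressure_bound} together with the preceding lemma on upper semicontinuity of the fiber-entropy functional. Theorem~\ref{thm_ie_pressure_bound} already yields, for every compact $K \subset Q$ of positive volume,
\[
  h_{\inv}(K,Q) \geq \inf_{\mu \in \MC(\Phi_{|L(Q)})} F(\mu), \qquad F(\mu) := \int \log J^+\varphi_{1,u}(x)\, \rmd\mu(u,x) - h_{\mu}(\varphi,(\pi_{\UC})_*\mu).
\]
Crucially, the right-hand side does not depend on $K$, so it suffices to show that the infimum of $F$ over $\MC(\Phi_{|L(Q)})$ is attained; then any minimizer $\hat{\mu}$ (not necessarily unique) gives the claim uniformly in $K$.

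First I would record that $\MC(\Phi_{|L(Q)})$, equipped with the weak$^*$ topology, is a nonempty compact metrizable space, since $L(Q)$ is a compact metric space and $\Phi_{|L(Q)}$ is a homeomorphism of it. Next I would decompose $F$ into its two summands. The map $\mu \mapsto \int \log J^+\varphi_{1,u}(x)\, \rmd\mu(u,x)$ is \emph{continuous}, because $(u,x)\mapsto\log J^+\varphi_{1,u}(x)$ is continuous on the compact set $L(Q)$, hence a bounded continuous test function for weak$^*$ convergence. The map $\mu \mapsto -h_{\mu}(\varphi,(\pi_{\UC})_*\mu)$ is \emph{lower semicontinuous}: this is precisely the content of the preceding lemma, stated there as upper semicontinuity of $\mu\mapsto h_{\mu}(\varphi,(\pi_{\UC})_*\mu)$. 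A sum of a continuous and a lower semicontinuous function is lower semicontinuous, so $F$ is lower semicontinuous on $\MC(\Phi_{|L(Q)})$.

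Since a lower semicontinuous real-valued function on a nonempty compact space attains its infimum, there is $\hat{\mu}\in\MC(\Phi_{|L(Q)})$ with $F(\hat{\mu}) = \inf_\mu F(\mu)$. Combined with Theorem~\ref{thm_ie_pressure_bound}, this gives
\[
  h_{\inv}(K,Q) \geq F(\hat\mu) = \int \log J^+\varphi_{1,u}(x)\, \rmd\hat{\mu}(u,x) - h_{\hat{\mu}}(\varphi,(\pi_{\UC})_*\hat{\mu})
\]
for every compact $K \subset Q$ of positive volume, which is the assertion. The only substantive input is the upper semicontinuity of the entropy functional, which has already been established using expansivity of $Q$ (to pass to a fixed finite generating partition) together with the conditional-entropy characterization of fiber entropy from \cite{Bog}; everything else here is a soft compactness argument, so I expect the actual proof to be only a few lines. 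If one did not wish to invoke that lemma as a black box, the main obstacle would be re-deriving the upper semicontinuity, which is genuinely delicate, since measure-theoretic entropy is in general only upper semicontinuous under additional hypotheses such as expansivity.
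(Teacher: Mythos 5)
Your proposal is correct and follows essentially the same route as the paper: Theorem \ref{thm_ie_pressure_bound} combined with the upper semicontinuity of $\mu \mapsto h_{\mu}(\varphi,(\pi_{\UC})_*\mu)$ (equivalently, lower semicontinuity of the functional being minimized), continuity of the unstable-Jacobian integral, and compactness of $\MC(\Phi_{|L(Q)})$ in the weak$^*$ topology to attain the infimum. The paper phrases this as the attainment of the maximum of the upper semicontinuous functional $\mu \mapsto h_{\mu}(\varphi,(\pi_{\UC})_*\mu) - \int \log J^+\varphi\, \rmd\mu$ on a compact space, which is the same argument.
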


An SRB measure of an RDS is an invariant probability measure whose conditional probabilities on the unstable manifolds are absolutely continuous with respect to the Lebesgue measure on these manifolds, see \cite{Yo2} or \cite[Def.~3.2.2]{KLi} for a precise definition. SRB measures $\mu$ can also be characterized by the equality $h_{\mu} = \lambda^+(\mu)$, where $\lambda^+(\mu)$ is a short-cut for the integral over the sum of the positive Lyapunov exponents, see \cite{BLi} or \cite[Thm.~3.2.4]{KLi}. In our case, this equality can be written as%
\begin{equation*}
  h_{\mu}(\varphi,P) = \int \log J^+\varphi\, \rmd \mu.%
\end{equation*}

This easily implies the following corollary.%

\begin{corollary}\label{cor_srb}
Assume that the control system $\Sigma$ is of regularity class $C^2$. Let $Q$ be a compact all-time controlled invariant set of $\Sigma$ satisfying (A1)--(A3). Then $h_{\inv}(K,Q) = 0$ for some compact set $K\subset Q$ of positive volume implies the existence of $P \in \MC(\theta)$ so that the associated random dynamical system $(\varphi,P)$ admits an SRB measure supported on $L(Q)$.%
\end{corollary}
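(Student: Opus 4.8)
The plan is to deduce the statement directly from Corollary \ref{cor_optimal_measure}, the Margulis--Ruelle inequality, and the entropy characterization of SRB measures recalled just above the corollary.

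First I would apply Corollary \ref{cor_optimal_measure} to obtain a measure $\hat{\mu} \in \MC(\Phi_{|L(Q)})$ for which
\[
  0 = h_{\inv}(K,Q) \geq \int \log J^+\varphi_{1,u}(x)\, \rmd\hat{\mu}(u,x) - h_{\hat{\mu}}(\varphi,(\pi_{\UC})_*\hat{\mu}),
\]
so that $h_{\hat{\mu}}(\varphi,(\pi_{\UC})_*\hat{\mu}) \geq \int \log J^+\varphi_{1,u}(x)\, \rmd\hat{\mu}(u,x)$. The Margulis--Ruelle inequality \cite{BBo} provides the reverse estimate, hence
\[
  h_{\hat{\mu}}(\varphi,(\pi_{\UC})_*\hat{\mu}) = \int \log J^+\varphi_{1,u}(x)\, \rmd\hat{\mu}(u,x).
\]
Setting $P := (\pi_{\UC})_*\hat{\mu} \in \MC(\theta)$, this is precisely the equality $h_{\hat{\mu}}(\varphi,P) = \int \log J^+\varphi\, \rmd\hat{\mu}$ which, by the characterization of SRB measures for random dynamical systems (\cite{BLi}, \cite[Thm.~3.2.4]{KLi}), exhibits $\hat{\mu}$ as an SRB measure of the RDS $(\varphi,P)$. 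Since $\hat{\mu} \in \MC(\Phi_{|L(Q)})$ is by definition supported on $L(Q)$, this would complete the argument.

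The argument is short and I do not expect a genuine obstacle once Corollary \ref{cor_optimal_measure} is in hand. The point that must be handled with some care is that the cited SRB-characterization theorem is formulated for the bundle RDS obtained by restricting $\Phi$ to $L(Q)$ over the base $(\UC,\BC(\UC),P,\theta)$, so one has to check that its standing hypotheses are met here: the fibers $Q(u)$ form a measurable family of subsets of the Polish manifold $M$, the cocycle is $C^2$ in the fiber variable (since $\Sigma$ is of regularity class $C^2$), and --- because $Q$ is uniformly hyperbolic --- the subbundle $E^+$ agrees $\hat{\mu}$-almost everywhere with the Oseledets unstable subspace, so that $\int \log J^+\varphi\, \rmd\hat{\mu}$ really is the integrated sum of the positive Lyapunov exponents (using additionally that $\log J^+\varphi$ is an additive cocycle over $(\Phi_{|L(Q)},L(Q))$, so that Kingman's theorem reduces to Birkhoff's). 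If $\hat{\mu}$ fails to be ergodic one passes to its ergodic decomposition, for which the defining equality $h = \lambda^+$ holds componentwise and hence survives integration. All of these are routine verifications, so the substance of the corollary lies entirely in the already-established estimate \eqref{eq_ie_pressure_bound} and the existence of an optimal measure.
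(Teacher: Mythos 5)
Your argument is exactly the paper's proof: Corollary \ref{cor_optimal_measure} combined with the Margulis--Ruelle inequality forces the Pesin-type equality $h_{\hat{\mu}}(\varphi,(\pi_{\UC})_*\hat{\mu}) = \int \log J^+\varphi\, \rmd\hat{\mu}$, which is the characterization of SRB measures for the RDS $(\varphi,(\pi_{\UC})_*\hat{\mu})$ from \cite{BLi}. Your closing remarks on the standing hypotheses of that characterization correspond to the paper's footnote verifying the integrability condition, so the proposal is correct and matches the paper's route.
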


\begin{proof}
By Corollary \ref{cor_optimal_measure} and the Margulis-Ruelle inequality \cite{BBo}, $h_{\inv}(K,Q)=0$ implies the identity%
\begin{equation*}
  h_{\hat{\mu}}(\varphi,(\pi_{\UC})_*\hat{\mu}) = \int \log J^+\varphi\, \rmd\hat{\mu}%
\end{equation*}
which is equivalent to $\hat{\mu}$ being an SRB measure for the random dynamical system $(\varphi,(\pi_{\UC})_*\hat{\mu})$ (see \cite[Thm.~2.6]{BLi}).\footnote{The obligatory integrability condition $\int (\log^+\|\varphi_{1,u}\|_{C^2} + \log^+\|\varphi_{-1,u}\|_{C^2})\, \rmd P(u) < \infty$ is trivially satisfied by compactness of $\UC$ and continuous dependence of the derivatives on $u$. Here we assume again without loss of generality that $M$ is compact.}%
\end{proof}

\subsection{A purely topological characterization}\label{subsec_lb_topological}

For certain purposes, it may be useful to have a purely topological characterization of the lower bound of Theorem \ref{thm_ie_pressure_bound}. To obtain such a characterization, we first recall the definition of \emph{topological pressure} of the bundle random dynamical system that is obtained by restricting $\Phi$ to $L(Q)$ and fixing a measure $P \in \MC(\theta)$. Let $\alpha:L(Q) \rightarrow \R$ be a continuous function. For $u \in \UC$, $\tau \in \Z_{>0}$ and $\ep>0$, we put%
\begin{equation*}
  \pi_{\alpha}(u,\tau,\ep) := \sup\Bigl\{ \sum_{x \in F} 2^{\sum_{s=0}^{\tau-1}\alpha(\Phi_s(u,x))} : F \subset Q(u) \mbox{ is } (u,\tau,\ep)\mbox{-separated} \Bigr\}.%
\end{equation*}
It can be shown that $\pi_{\alpha}(\cdot,\tau,\ep)$ is measurable for each $\ep>0$ and $\tau\in\Z_{>0}$ with respect to the completed Borel $\sigma$-algebra on $\UC$ (see \cite[Lem.~5.3]{Bog}). We then put%
\begin{align}\label{eq_defptop}
\begin{split}
  \pi_{\tp}(\varphi^Q,P,\ep;\alpha) &:= \limsup_{\tau \rightarrow \infty}\frac{1}{\tau}\int \log \pi_{\alpha}(u,\tau,\ep)\, \rmd P(u), \\
  \pi_{\tp}(\varphi^Q,P;\alpha) &:= \lim_{\ep \downarrow 0}\pi_{\tp}(\varphi^Q,P,\ep;\alpha),%
\end{split}
\end{align}
where $\varphi^Q$ denotes the bundle RDS given by the restriction of $\Phi$ to $L(Q)$. The variational principle (see \cite[Thm.~6.1]{Bog} or \cite[Ch.~5, Thm.~1.2.13]{KLi}) then implies%
\begin{equation*}
  \pi_{\tp}(\varphi^Q,P;\alpha) = \sup_{\mu \in \MC_P(\Phi;L(Q))}\pi_{\mu}(\varphi,P;\alpha).%
\end{equation*}
Hence, we can write our lower bound as%
\begin{equation}\label{eq_top_press_lb}
  h_{\inv}(K,Q) \geq -\sup_{P \in \MC(\theta)}\pi_{\tp}(\varphi^Q,P;-\log J^+\varphi).%
\end{equation}

Now we prove the main result of this subsection, which replaces the supremum over the measures $P$ with a supremum over control sequences.%

\begin{proposition}\label{prop_purelytop_lb}
It holds that%
\begin{equation*}
  \sup_{P \in \MC(\theta)}\pi_{\tp}(\varphi^Q,P;-\log J^+\varphi) = \sup_{u \in \UC} \lim_{\ep\downarrow0}\limsup_{\tau \rightarrow \infty} \frac{1}{\tau}\log \pi_{-\log J^+\varphi}(u,\tau,\ep).%
\end{equation*}
\end{proposition}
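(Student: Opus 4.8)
The identity I have to prove equates a supremum over $\theta$-invariant measures of the topological pressures $\pi_{\tp}(\varphi^Q,P;-\log J^+\varphi)$ with a supremum over individual control sequences $u$ of the ``pointwise'' pressure-type growth rates $\lim_{\ep\downarrow0}\limsup_{\tau\to\infty}\frac1\tau\log\pi_{-\log J^+\varphi}(u,\tau,\ep)$. The strategy is exactly parallel to the passage from $\sup_{u\in\UC}\liminf_\tau\frac1\tau v^\ep_\tau(u)$ to $\sup_{P\in\MC(\theta)}\liminf_\tau\frac1\tau\int v^\ep_\tau\,\rmd P$ carried out in Proposition~\ref{prop_randomescaperate}, so the core tool will again be the approximate subadditive cocycles. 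First I would introduce, for fixed small $\delta>0$, the functions $p^\delta_\tau(u):=\log\pi_{-\log J^+\varphi}(u,\tau,\delta)$ and verify that, up to the same kind of controlled errors as in Proposition~\ref{prop_sc_props}, they are sandwiched between honest subadditive cocycles over $(\UC,\theta)$; subadditivity of the $\log$ of a supremum of sums of products of the multiplicative cocycle $J^+\varphi$ over a refining family of separated sets is the same computation as in Subsection~\ref{subsec_approx_cocycles}, with separated sets in place of open covers. I would also record that $p^\delta_\tau(u)$ is continuous in $u$ for fixed $\tau,\delta$ (this is essentially the content of the proof that $\pi_\alpha(\cdot,\tau,\ep)$ is measurable, upgraded to continuity using uniform continuity of $\varphi$, $Q(\cdot)$ and $J^+\varphi$ on compacta exactly as in Lemma~\ref{lem_vcont}), and that $\tfrac1\tau p^\delta_\tau$ is uniformly bounded above and below as in Lemma~\ref{lem_sc_boundedness}.

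With these ingredients in hand, the inequality ``$\ge$'' is the easy direction: for any $u\in\UC$ form the empirical measures $P_t=\tfrac1t\sum_{s=0}^{t-1}\delta_{\theta^su}$, pass to a weak$^*$ limit point $P\in\MC(\theta)$, and use subadditivity together with the combinatorial averaging lemma (Lemma~\ref{lem_combinatorics}, as used in the proof of Proposition~\ref{prop_randomescaperate}) to bound the pointwise rate for $u$ by $\tfrac1r\int p^\delta_r\,\rmd P+$ error, then let $r\to\infty$ and compare with the variational expression for $\pi_{\tp}$; invoking the variational principle for pressure of RDS \cite[Thm.~6.1]{Bog} and relating $p^\delta$ back to $\pi_{-\log J^+\varphi}(u,\tau,\ep)$ through the $\ep\downarrow0$ limit gives $\lim_\ep\limsup_\tau\frac1\tau\log\pi_{-\log J^+\varphi}(u,\tau,\ep)\le\sup_{P}\pi_{\tp}(\varphi^Q,P;-\log J^+\varphi)$, and taking the supremum over $u$ finishes this direction.

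For the reverse inequality ``$\le$'' I would fix $P\in\MC(\theta)$ and $\mu\in\MC_P(\Phi;L(Q))$ with $\pi_\mu(\varphi,P;-\log J^+\varphi)$ close to $\pi_{\tp}(\varphi^Q,P;-\log J^+\varphi)$ (possible by the variational principle), and then produce a single control sequence $u$ whose pointwise rate is at least $\pi_\mu-\gamma$. The natural route is: approximate $\int p^\delta_r\,\rmd P$ from below using Birkhoff's ergodic theorem (after an ergodic decomposition of $P$, or directly for $P$ ergodic and then taking a sup over ergodic components), so that for $P$-a.e.\ $u$ one has $\liminf_\tau\frac1\tau p^\delta_\tau(u)\ge\tfrac1r\int p^\delta_r\,\rmd P-$error for every $r$; combined with the sandwich estimates and the variational principle this yields a $u$ with $\lim_\ep\limsup_\tau\frac1\tau\log\pi_{-\log J^+\varphi}(u,\tau,\ep)\ge\pi_\mu(\varphi,P;-\log J^+\varphi)-\gamma$. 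Taking $\sup$ over $u$, then over $\mu$ and $P$, and letting $\gamma\downarrow0$ gives ``$\le$''. The step I expect to be the main obstacle is precisely this last passage: controlling the interchange of the $\ep\downarrow0$ limit with the sup over $u$ on the right-hand side and with the sup over measures on the left, since the pointwise $\limsup$-rates need not be upper semicontinuous in $u$ and the separated-set pressures $\pi_\alpha(u,\tau,\ep)$ are only known to be measurable, not continuous, in $u$ a priori. This is where the approximating subadditive cocycles $p^\delta$ do the real work — they are genuinely subadditive and continuous in $u$, so all limit-interchange arguments are performed on them and only transferred back to $\pi_{-\log J^+\varphi}(u,\tau,\ep)$ at the very end via the two-sided estimates, mirroring how Proposition~\ref{prop_interchanged} and Proposition~\ref{prop_randomescaperate} circumvent the missing continuity of $v^\ep_\tau$.
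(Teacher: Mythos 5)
Your overall architecture (sandwich the quantity of interest between subadditive cocycles, perform all limit interchanges on the auxiliary objects, transfer back at the end) is indeed the philosophy of Subsections \ref{subsec_approx_cocycles}--\ref{subsec_random_er}, but your concrete route differs from the paper's and rests on one load-bearing claim that is not justified. The paper does not build new cocycles out of the separated-set pressures at all: it observes that the volume lemma gives a two-sided comparison $C_{\ep/2}^{-1}\,\pi_{-\log J^+\varphi}(u,\tau,\ep)\leq\vol(Q(u,\tau,\ep))\leq C\,\pi_{-\log J^+\varphi}(u,\tau,\delta)$ (the lower bound from disjointness of the Bowen-balls $B^{u,\tau}_{\ep/2}(x)$ over a separated set, the upper bound being \eqref{eq_volume_q}), and then reduces both inequalities of the proposition to facts already proved about $\vol(Q(u,\tau,\ep))$: the inequality ``$\geq$'' follows from \eqref{eq_firstest} (itself the concatenation of Lemma \ref{lem_first_lb}, Propositions \ref{prop_interchanged} and \ref{prop_randomescaperate}, Theorem \ref{thm_ie_pressure_bound} and the variational principle), and ``$\leq$'' follows from the definition \eqref{eq_defptop} of $\pi_{\tp}$ plus one more application of the interchange argument of Proposition \ref{prop_interchanged}. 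No empirical measures, no Birkhoff/Kingman theorem and no ergodic decomposition are needed.

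The gap in your version: you assert at the end that the functions $p^{\delta}_{\tau}(u)=\log\pi_{-\log J^+\varphi}(u,\tau,\delta)$ ``are genuinely subadditive and continuous in $u$'' and that ``all limit-interchange arguments are performed on them''. Neither property is available. Sums over separated sets are not subadditive under concatenation of time intervals -- this is precisely why the paper introduces the open-cover quantities $w^{\delta}_{\tau}$, which are subadditive, and proves a sandwich with error $\tau\gamma$ in Proposition \ref{prop_sc_props}; you state this correctly at the start and then contradict it. More seriously, continuity of $u\mapsto\pi_{-\log J^+\varphi}(u,\tau,\ep)$ cannot be obtained ``as in Lemma \ref{lem_vcont}'': that lemma concerns volumes, while a supremum over $\ep$-separated subsets of the varying fibers $Q(u)$ can jump when the separation constant is exactly attained; the paper only records measurability of $\pi_{\alpha}(\cdot,\tau,\ep)$ and explicitly states that it does not know whether even the $w^{\delta}_{\tau}$ are continuous. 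Since your weak$^*$-limit step (passing from $\int p^{\delta}_r\,\rmd P_t$ to $\int p^{\delta}_r\,\rmd P$) and your interchange of $\limsup_{\tau}$ with $\sup_{u}$ both require a continuous (or at least suitably semicontinuous) companion function, your plan is missing exactly the ingredient that the paper supplies by keeping $\log\vol(Q(\cdot,\tau,\ep))$ -- continuous by Lemma \ref{lem_vcont} -- in the picture. To repair your route you would need to prove, say, lower semicontinuity of $\pi_{\alpha}(\cdot,\tau,\ep)$ from assumption (A3); the paper's shortcut through the volumes avoids this entirely.
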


\begin{proof}
Let us write $\alpha := -\log J^+\varphi$. By the derivation of our lower bound, we know that%
\begin{equation}\label{eq_firstest}
  \lim_{\ep\downarrow0} \sup_{u \in \UC} \liminf_{\tau \rightarrow \infty} \frac{1}{\tau} \log \vol(Q(u,\tau,\ep)) \leq \sup_{P \in \MC(\theta)}\pi_{\mathrm{top}}(\varphi^Q,P;\alpha).%
\end{equation}
Now let $F_{u,\tau,\ep} \subset Q(u)$ be an arbitrary $(u,\tau,\ep)$-separated subset. If $y \in B^{u,\tau}_{\ep}(x)$ for some $x \in F_{u,\tau,\ep}$, then $d(\varphi(t,y,u),\varphi(t,x,u)) \leq \ep$ implying $\dist(\varphi(t,y,u),Q(\theta^tu)) \leq \ep$ for $0 \leq t < \tau$. Since the Bowen-balls $B^{u,\tau}_{\ep/2}(x)$, $x\in F_{u,\tau,\ep}$, are disjoint, it follows by the volume lemma that%
\begin{align*}
  \vol(Q(u,\tau,\ep)) &\geq \sum_{x \in F_{u,\tau,\ep}} \vol(B^{u,\tau}_{\ep/2}(x)) \geq C_{\ep/2}^{-1}  \sum_{x\in F_{u,\tau,\ep}} J^+\varphi_{\tau,u}(x)^{-1}\\
	&= C_{\ep/2}^{-1} \sum_{x\in F_{u,\tau,\ep}} 2^{\sum_{s=0}^{\tau-1}\alpha(\Phi_s(u,x))}.%
\end{align*}
Hence,%
\begin{equation}\label{eq_volpi_est}
  \log \vol(Q(u,\tau,\ep)) \geq \log C_{\ep/2}^{-1} + \log \sum_{x \in F_{u,\tau,\ep}} 2^{\sum_{s=0}^{\tau-1}\alpha(\Phi_s(u,x))}.%
\end{equation}
Since this holds true for every $(u,\tau,\ep)$-separated subset of $Q(u)$, we obtain%
\begin{equation*}
  \liminf_{\tau \rightarrow \infty}\frac{1}{\tau} \log \vol(Q(u,\tau,\ep)) \geq \liminf_{\tau\rightarrow\infty}\frac{1}{\tau} \log \pi_{\alpha}(u,\tau,\ep).%
\end{equation*}
By \cite[Ch.~5, Prop.~1.2.6]{KLi}, it does not matter if we replace $\liminf$ with $\limsup$ in the definition of topological pressure, and hence, in combination with \eqref{eq_firstest} it follows that%
\begin{equation*}
   \sup_{P \in \MC(\theta)}\pi_{\mathrm{top}}(\varphi^Q,P;\alpha) \geq \sup_{u\in\UC}\lim_{\ep\downarrow0}\limsup_{\tau \rightarrow \infty} \frac{1}{\tau}\log \pi_{\alpha}(u,\tau,\ep).%
\end{equation*}
Here we also use that the limit for $\ep\downarrow0$ can be written as the supremum over $\ep>0$, and two suprema can be interchanged.%

To prove the converse inequality, it suffices to show that for every $P \in \MC(\theta)$,%
\begin{equation*}
  \pi_{\mathrm{top}}(\varphi^Q,P;\alpha) \leq \sup_{u\in\UC}\lim_{\ep\downarrow0}\limsup_{\tau \rightarrow \infty} \frac{1}{\tau}\log \pi_{\alpha}(u,\tau,\ep).%
\end{equation*}
Using the definitions and \eqref{eq_volpi_est}, for the left-hand side we obtain%
\begin{align*}
  \pi_{\mathrm{top}}(\varphi^Q,P;\alpha) &= \lim_{\ep \downarrow 0}\limsup_{\tau \rightarrow \infty}\frac{1}{\tau}\int \log \pi_{\alpha}(u,\tau,\ep)\, \rmd P(u) \\
	&\leq \lim_{\ep \downarrow 0}\limsup_{\tau \rightarrow \infty}\frac{1}{\tau}\int \log \vol(Q(u,\tau,\ep))\, \rmd P(u) \\
	&\leq \lim_{\ep \downarrow 0}\limsup_{\tau \rightarrow \infty}\frac{1}{\tau} \sup_{u\in\UC} \log \vol(Q(u,\tau,\ep)).%
\end{align*}
Exactly as in the proof of Proposition \ref{prop_interchanged}, we can interchange the limit superior and the supremum, hence%
\begin{align*}
  \pi_{\mathrm{top}}(\varphi^Q,P;\alpha) \leq \lim_{\ep \downarrow 0}\sup_{u\in\UC}\limsup_{\tau \rightarrow \infty}\frac{1}{\tau} \log \vol(Q(u,\tau,\ep)).%
\end{align*}
As already shown in \eqref{eq_volume_q}, for a maximal $(u,\tau,\ep)$-separated set $F_{u,\tau,\ep} \subset Q(u)$ we have the inequality%
\begin{equation*}
  \vol(Q(u,\tau,\ep)) \leq C_{\beta + \ep} \sum_{x \in F_{u,\tau,\ep}} 2^{\sum_{s=0}^{\tau-1} \alpha(\Phi_s(u,x))} \leq C_{\beta + \ep}\pi_{\alpha}(u,\tau,\ep),%
\end{equation*}
implying%
\begin{align*}
  \pi_{\mathrm{top}}(\varphi^Q,P;\alpha) \leq \lim_{\ep \downarrow 0}\sup_{u\in\UC}\limsup_{\tau \rightarrow \infty}\frac{1}{\tau} \log \pi_{\alpha}(u,\tau,\ep).%
\end{align*}
Since the limit in $\ep$ is a supremum and two suprema can be interchanged, the result is proved.
\end{proof}

We close this subsection with a related result that is interesting for the evaluation of the lower bound in the case when $Q$ is a (very) small perturbation of a hyperbolic set of a diffeomorphism.%

\begin{proposition}\label{prop_pressure_continuity}
Let $Q$ be the hyperbolic set constructed in the small-perturbation setting of Theorem \ref{thm_smallpert_hypset}. Then the function%
\begin{equation*}
  u \mapsto \lim_{\ep\downarrow0}\limsup_{\tau \rightarrow \infty} \frac{1}{\tau}\log \pi_{-\log J^+\varphi}(u,\tau,\ep)%
\end{equation*}
is continuous at $u^0$ in the sup-metric $d_{\infty}$ on $\UC_0$.%
\end{proposition}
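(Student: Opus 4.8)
\emph{Proof strategy.} The plan is to transport the quantity in question, via the conjugating homeomorphisms $h_u$ of Proposition~\ref{prop_smallhs} and Theorem~\ref{thm_smallpert_hypset}, to a single number $P_0$ — the classical topological pressure of $f_{u^0}|_{\Lambda}$ with respect to $-\log J^+\varphi_{1,u^0}$, in the sense of separated sets — and then to show that the $u$-dependent expression converges to $P_0$ as $d_\infty(u,u^0)\to 0$. Recall from Theorem~\ref{thm_smallpert_hypset} and Proposition~\ref{prop_smallhs} that $Q(u)=\Lambda_u=h_u(\Lambda)$, that $\{h_u\}_{u\in\UC_0}$ and $\{h_u^{-1}\}_{u\in\UC_0}$ are equicontinuous, that $u\mapsto h_u$ is continuous into $C^0(\Lambda,M)$ with the uniform topology, that $h_{\theta u}\circ f_{u^0}=\varphi_{1,u}\circ h_u$ on $\Lambda$, and that $h_{u^0}=\mathrm{id}$ (for the constant sequence $u^0$ the defining shadowing property is satisfied by $x_{u^0}=x$). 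Consequently $\varphi(s,h_u(x'),u)=h_{\theta^su}(f_{u^0}^s x')$ for $x'\in\Lambda$, so that, by the cocycle property of $\log J^+\varphi$,
\[
  \sum_{s=0}^{\tau-1}\bigl(-\log J^+\varphi\bigr)\bigl(\Phi_s(u,h_u(x'))\bigr)=\sum_{s=0}^{\tau-1}g(\theta^su,f_{u^0}^s x'),\qquad g(v,x'):=-\log J^+\varphi_{1,v}(h_v(x')).
\]
The function $g$ is continuous on the compact set $\UC_0\tm\Lambda$ — the hyperbolic splitting, $\rmD\varphi_{1,v}$ and $v\mapsto h_v$ all depend continuously on their arguments, and $(v,x')\mapsto(v,h_v(x'))$ maps into $L(Q)$ — hence uniformly continuous, and $g(u^0,\cdot)=-\log J^+\varphi_{1,u^0}(\cdot)$ is the classical unstable log-Jacobian on $\Lambda$.

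The first key point is that the sup-metric controls all shifts simultaneously: since $\sum_{t\in\Z}2^{-|t|}=3$ and $d_\infty$ is shift-invariant, $d_{\UC}(\theta^su,u^0)\le 3\,d_\infty(u,u^0)$ for every $s\in\Z$. Hence, given $\eta>0$, uniform continuity of $g$ provides $\rho>0$ such that $d_\infty(u,u^0)<\rho$ forces $|g(\theta^su,x')-g(u^0,x')|<\eta$ for all $s\in\Z$ and $x'\in\Lambda$, and therefore, by the displayed identity,
\[
  \Bigl|\sum_{s=0}^{\tau-1}\bigl(-\log J^+\varphi\bigr)\bigl(\Phi_s(u,h_u(x'))\bigr)+\log J^+\varphi_{\tau,u^0}(x')\Bigr|\le\tau\eta\qquad\text{for all }\tau\in\Z_{>0},\ x'\in\Lambda.
\]
The second, routine point is that equicontinuity transfers separated sets: there is a function $\kappa\colon(0,\infty)\to(0,\infty)$ with $\kappa(s)\downarrow 0$ as $s\downarrow 0$, depending only on a common modulus of continuity of $\{h_u\}_{u\in\UC_0}\cup\{h_u^{-1}\}_{u\in\UC_0}$, so that $h_u$ sends every $(f_{u^0},\tau,s)$-separated subset of $\Lambda$ to a $(u,\tau,\kappa(s))$-separated subset of $Q(u)$, and $h_u^{-1}$ sends every $(u,\tau,s)$-separated subset of $Q(u)$ to a $(f_{u^0},\tau,\kappa(s))$-separated subset of $\Lambda$, uniformly in $u\in\UC_0$.

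Combining these two points with the classical Bowen pressure sums $Z(\tau,s):=\sup\{\sum_{x'\in F'}2^{-\log J^+\varphi_{\tau,u^0}(x')}:F'\subset\Lambda\ (f_{u^0},\tau,s)\text{-separated}\}$, and noting $\pi_{-\log J^+\varphi}(u^0,\tau,s)=Z(\tau,s)$ (because $Q(u^0)=\Lambda$ and $h_{u^0}=\mathrm{id}$), I would derive, for $d_\infty(u,u^0)<\rho$ and all $\tau\in\Z_{>0}$, $s>0$,
\[
  2^{-\tau\eta}Z(\tau,s)\le \pi_{-\log J^+\varphi}(u,\tau,\kappa(s)),\qquad \pi_{-\log J^+\varphi}(u,\tau,s)\le 2^{\tau\eta}Z(\tau,\kappa(s)).
\]
Taking $\tfrac1\tau\log$, then $\limsup_{\tau\to\infty}$, then $\lim_{s\downarrow 0}$ (the relevant limits in $s$ being monotone, hence existing, with $\lim_{s\downarrow0}\limsup_{\tau\to\infty}\tfrac1\tau\log Z(\tau,s)=P_0<\infty$), these inequalities sandwich the function of the proposition between $P_0-\eta$ and $P_0+\eta$; since its value at $u^0$ equals $P_0$ exactly, continuity at $u^0$ in $d_\infty$ follows. (One may alternatively run the comparison at the level of volumes, using that $\pi_{-\log J^+\varphi}(u,\tau,\ep)$ and $\vol(Q(u,\tau,\ep))$ agree up to a factor $C_\ep^{\pm1}$ by Theorem~\ref{thm_volume_lemma}, which washes out after dividing by $\tau$.) I expect the essential step to be the uniform-in-$s$ comparison displayed above: it is precisely here that working with $d_\infty$ rather than the product metric $d_{\UC}$ is indispensable, and it rests on the joint continuity and equicontinuity of the conjugacies $h_u$; the separated-set bookkeeping and the identification of $P_0$ with the classical pressure are then standard.
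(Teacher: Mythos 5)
Your proposal is correct and follows essentially the same route as the paper's proof: transfer separated sets between $\Lambda$ and $Q(u)$ via the equicontinuous conjugacies $h_u$, compare the Birkhoff sums of $-\log J^+\varphi$ termwise to get a multiplicative error $2^{\pm\tau\eta}$ uniform in $\tau$, and pass to the limits. The only cosmetic difference is that you obtain the termwise bound from uniform continuity of $g(v,x')=-\log J^+\varphi_{1,v}(h_v(x'))$ on $\UC_0\tm\Lambda$ together with $d_{\UC}(\theta^su,u^0)\le 3\,d_\infty(u,u^0)$, whereas the paper uses uniform continuity of $\log J^+\varphi_{1,\cdot}(\cdot)$ on $L(Q)$ plus the shadowing distance $\beta$, and the paper handles the reverse inequality by symmetry rather than by an explicit two-sided $\kappa$.
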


\begin{proof}
Let $u \in \UC_0$ and let $F \subset \Lambda$ be a $(u^0,\tau,\ep)$-separated set for some $\tau \in \Z_{>0}$ and $\ep>0$. Consider the set $\tilde{F} := h_u(F) \subset Q(u)$. Since the family $\{h_u^{-1}\}_{u\in\UC_0}$ is equicontinuous and $h_{\theta u} \circ f_{u^0} \equiv \varphi_{1,u} \circ h_u$ (see Proposition \ref{prop_smallhs}), we can choose $\delta = \delta(\ep) > 0$ (independent of $u$) so that $\tilde{F}$ is $(u,\tau,\delta)$-separated.%

Moreover, since $(u,x) \mapsto \log J^+\varphi_{1,u}(x)$ is uniformly continuous on $L(Q)$, by choosing $\beta$ in Proposition \ref{prop_smallhs}(b) small enough, we obtain for all $x \in \Lambda$, $u \in \UC_0$ sufficiently close to $u^0$ in the $d_{\infty}$-distance and $\tau \in \Z_{>0}$ that%
\begin{align*}
  \log \frac{J^+\varphi_{\tau,u}(h_u(x))}{J^+\varphi_{\tau,u^0}(x)} &= \sum_{s=0}^{\tau-1}\left( \log J^+\varphi_{1,\theta^su}(\varphi_{s,u}(h_u(x))) - \log J^+\varphi_{1,u^0}(f_{u^0}^s(x))\right) \\
	&\leq \sum_{s=0}^{\tau-1} \tilde{\beta} = \tau\tilde{\beta}%
\end{align*}
for some $\tilde{\beta}>0$ that becomes arbitrarily small as $\beta$ and $d_{\infty}(u,u^0)$ do. Hence, we can estimate%
\begin{equation*}
  \sum_{x \in F} J^+\varphi_{\tau,u^0}(x)^{-1} \leq 2^{\tau\tilde{\beta}} \sum_{y \in \tilde{F}} J^+\varphi_{\tau,u}(y)^{-1},%
\end{equation*}
which implies%
\begin{equation*}
  \pi_{-\log J^+\varphi}(u^0,\tau,\ep) \leq 2^{\tau\tilde{\beta}} \pi_{-\log J^+\varphi}(u,\tau,\delta).%
\end{equation*}
Since this holds for all $\tau > 0$, we have%
\begin{equation*}
  \limsup_{\tau \rightarrow \infty} \frac{1}{\tau} \log \pi_{-\log J^+\varphi}(u^0,\tau,\ep) \leq \tilde{\beta} + \limsup_{\tau \rightarrow \infty} \frac{1}{\tau} \log \pi_{-\log J^+\varphi}(u,\tau,\delta).%
\end{equation*}
In fact, $\tilde{\beta}$ was chosen independently of $\ep$ so that this inequality still holds if we send $\ep$ and $\delta$ to zero. Interchanging the roles of $u$ and $u^0$, we see that also the converse inequality holds. This completes the proof.%
\end{proof}

As a consequence of the above proposition, the lower bound obtained for $h_{\inv}(K,Q)$ converges to the topological pressure on $\Lambda$ (with respect to $-\log J^+f_{u^0}$) as the size of the neighborhood $U_0$ shrinks to zero.%

\subsection{Achievability}\label{subsec_achievability}

There are good reasons to expect that our lower bound for invariance entropy also becomes an upper bound under additional controllability assumptions, i.e., that average data rates arbitrarily close to the lower bound are achievable by proper coder-controller designs. In the following two extreme cases, this can be made very plausible:%
\begin{itemize}
\item Assume that the $u$-fibers of $Q$ are finite. As the main result of \cite{Ka1} shows, this is always the case for hyperbolic sets of continuous-time systems.\footnote{This may seem strange, but follows from the definition of uniform hyperbolicity without a one-dimensional center bundle.} In this framework, we have derived a formula for $h_{\inv}(K,Q)$ in \cite{DK4} which is analogous to our lower bound. To obtain this result, we needed to assume that the hyperbolic set is the closure of a maximal set of approximate controllability and that the Lie algebra rank condition (guaranteeing local accessibility) is satisfied on $Q$. Observe that in the case of finite $u$-fibers, the entropy term $h_{\mu}(\varphi,(\pi_{\UC})_*\mu)$ in our lower bound vanishes, because finite fibers cannot support positive entropy. Hence, in this case%
\begin{equation*}
  h_{\inv}(K,Q) \geq \inf_{\mu} \int \log J^+\varphi\, \rmd \mu.%
\end{equation*}
The theory of subadditive cocycles (see, e.g., \cite[App.~A]{Mor}) shows that this is equivalent to%
\begin{equation*}
  h_{\inv}(K,Q) \geq \inf_{(u,x) \in L(Q)} \limsup_{\tau \rightarrow \infty}\frac{1}{\tau}\log J^+\varphi_{\tau,u}(x).%
\end{equation*}
In the continuous-time case, an analogous upper bound is obtained by stabilizing the system around regular periodic trajectories in $\inner\, Q$. Via arguments originating from \cite{Nea}, this leads to upper bounds which approximate all growth rates of the form%
\begin{equation*}
  \limsup_{\tau \rightarrow \infty}\frac{1}{\tau}\log J^+\varphi_{\tau,u}(x),\quad (u,x) \in L(Q).%
\end{equation*}
It is more or less obvious that the same proof techniques also work in discrete time. However, since the genericity of universally regular control sequences is needed to carry out some details of the proof, similar assumptions as in Theorem \ref{thm_universal_controls} (analyticity and uniform forward accessibility, in particular) are necessary. The special case when $Q$ is constructed as in the small-perturbation setting of Theorem \ref{thm_smallpert_hypset} is handled by Theorem \ref{thm_inv_achievability1} below.%
\item The opposite extreme case is that the set $L(Q)$ supports an SRB measure for one of the random dynamical systems $(\varphi,P)$. In this case, as we have seen in Corollary \ref{cor_srb}, the lower bound vanishes. On the other hand, the existence of an SRB measure should imply the existence of some sort of attractor in $Q$. But if such an attractor exists, then appropriate controllability assumptions will guarantee that one can steer the system from any initial state in $K$ into the basin of attraction, by using only finitely many different control sequences. Once the system has entered the basin of attraction, no further control actions are necessary, which leads to $h_{\inv}(K,Q) = 0$. In the small-perturbation setting, this is shown by Theorem \ref{thm_inv_achievability2} below.%
\end{itemize}


For the general case, a concrete idea how to prove an achievability result is missing although it is clear that one has to consider coding and control strategies that stabilize the system at the $u$-fibers (possibly periodic $u$'s will do as in the case of finite fibers). However, stabilization around particular trajectories would lead to data rates that are too large to match the lower bound. Hence, appropriate coding and control strategies should keep the state $x_t$ close to $Q(\theta^tu)$ without following the same trajectory for every initial state (due to shadowing it cannot be avoided to follow \emph{some} trajectory, however).%

For completeness, we provide proofs for the two simplest cases of the achievability result. First, we handle the case when $Q$ is constructed by a small control-perturbation of a hyperbolic periodic orbit.%

The following lemma, taken from \cite[Prop.~9]{DK4}, will be used.%

\begin{lemma}\label{lem_shift_shadowing}
Consider the dynamical system $(U^{\Z},\theta)$. For every $\ep>0$, there exists $\delta>0$ such that every $\delta$-chain of $(U^{\Z},\theta)$ is $\ep$-shadowed by a real orbit. Moreover, if the $\delta$-chain is periodic, a periodic shadowing orbit with the same period exists.
\end{lemma}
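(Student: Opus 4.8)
The plan is to exploit the product structure of $\UC = U^{\Z}$ together with the fact that, in the metric $d_{\UC}(u,v) = \sum_{t\in\Z} 2^{-|t|}d_U(u_t,v_t)$, only finitely many ``central'' coordinates matter up to a prescribed error. Fix $\ep>0$, put $D := \diam(U,d_U) < \infty$, and choose $N \in \Z_{>0}$ with $\sum_{|t|>N} 2^{-|t|}D = 2^{1-N}D < \ep/2$; then choose $\delta>0$ so small that $3N2^N\delta < \ep/2$. The key observation is that a $\delta$-chain condition $d_{\UC}(\theta w^i, w^{i+1}) \leq \delta$ forces $d_U(w^i_{s+1},w^{i+1}_s) \leq 2^{|s|}\delta \leq 2^N\delta$ for every $|s| \leq N$, so consecutive members of the chain almost agree, after a shift, in each coordinate within the window $[-N;N]$. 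Hence any chain of at most $N$ one-step comparisons whose intermediate coordinates stay in $[-N;N]$ accumulates an error of at most $N2^N\delta$.

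Given a finite $\delta$-chain $w^0,\dots,w^n$, I would define $u \in \UC$ by $u_j := w^0_j$ for $j \leq 0$, $u_j := w^j_0$ for $0 \leq j \leq n$, and $u_j := w^n_{j-n}$ for $j \geq n$ (the clauses agree at $j=0$ and $j=n$). For each $0 \leq i \leq n$ and $|t| \leq N$, one reads off $u_{i+t}$ from $w^i_t$ through a chain of at most $N$ comparisons: splitting into the cases $i+t \leq 0$, $0 \leq i+t \leq n$, $i+t \geq n$, one checks that in each case the intermediate coordinates remain in $[-N;N]$ — using that $i+t<0$ (resp.\ $i+t>n$) forces $i$ to lie within $N$ of $0$ (resp.\ of $n$), so the chain is short. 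This gives $d_U(u_{i+t},w^i_t) \leq N2^N\delta$ for $|t| \leq N$, and summing over all coordinates together with the choice of $N$ yields $d_{\UC}(\theta^i u, w^i) < \ep$ for $i = 0,\dots,n$, i.e.\ the orbit of $u$ $\ep$-shadows the chain.

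For a periodic $\delta$-chain $w^0,\dots,w^n$ with $w^n = w^0$, I would extend it to a bi-infinite $n$-periodic pseudo-orbit $(\tilde w^j)_{j\in\Z}$ and set $u_j := \tilde w^j_0$ for all $j$. Then $u_{j+n} = \tilde w^{j+n}_0 = \tilde w^j_0 = u_j$, so $\theta^n u = u$ and $n$ is a period of the shadowing orbit; the shadowing estimate is the same chaining argument as before, now with no tails to handle since the pseudo-orbit is globally defined. The step I expect to be the main obstacle is purely the bookkeeping forced by the weighted metric: because the $\delta$-chain condition controls the $s$-th coordinate only up to the factor $2^{|s|}$, every chaining argument must be confined to the finite window $|s| \leq N$, and one must verify that the natural ``diagonal'' chains (reading $w^{i+t}_0$ off $w^i_t$, together with the tail, respectively wrap-around, corrections) never stray outside that window before terminating. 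Everything else is elementary.
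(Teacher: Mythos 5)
Your proof is correct. Note that the paper itself does not prove this lemma but imports it from \cite[Prop.~9]{DK4}, so there is no in-text argument to compare against; your construction — taking the diagonal sequence $u_j = w^j_0$ on $[0;n]$ with the two ends of the chain supplying the tails, and controlling $d_U(u_{i+t},w^i_t)$ for $|t|\le N$ by chaining at most $N$ one-step comparisons $d_U(w^k_{s+1},w^{k+1}_s)\le 2^{|s|}\delta$ with $|s|\le N$ — is the standard shadowing argument for full shifts, and your case analysis and choice of constants ($2^{1-N}D<\ep/2$ for the tail of the metric, $3N2^N\delta<\ep/2$ for the central window) close all the gaps, including the periodic case via the $n$-periodic extension.
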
 

\begin{theorem}\label{thm_inv_achievability1}
Consider the control system $\Sigma$ and let $Q \subset M$ be a hyperbolic set constructed as in the small-perturbation setting of Theorem \ref{thm_smallpert_hypset} for the control system $\Sigma^0$ given by \eqref{eq_sigma0}. Additionally, assume that the following conditions are satisfied:%
\begin{enumerate}
\item[(B1)] The topologically transitive and hyperbolic set $\Lambda$ of $f_{u^0}$ is a periodic orbit.%
\item[(B2)] The system $\Sigma^0$ is real-analytic and uniformly forward accessible.%
\item[(B3)] $L(Q)$ is a chain component of the control flow of $\Sigma^0$.\footnote{Recall that by Lemma \ref{lem_chains} $L(Q)$ is an internally chain transitive set. Here we are only adding the requirement that $L(Q)$ is maximal with this property. This is probably satisfied if $\Lambda$ is an Axiom A basic set.}%
\end{enumerate}
Then for any compact set $K \subset \core(Q)$ of positive volume, it holds that%
\begin{equation}\label{eq_hinv_eq}
  h_{\inv}(K,Q) = \inf_{(u,x) \in L(Q)} \limsup_{t \rightarrow \infty}\frac{1}{t}\log J^+\varphi_{t,u}(x).%
\end{equation}
\end{theorem}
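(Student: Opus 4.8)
The inequality ``$\geq$'' in \eqref{eq_hinv_eq} is already available: by Theorem \ref{thm_ie_pressure_bound} together with the fact that the fibers $Q(u)$ are finite (since $\Lambda$ is a periodic orbit, hence so is every fiber $Q(u) = \Lambda_u = h_u(\Lambda)$ by Proposition \ref{prop_smallhs}), the measure-theoretic entropy term $h_{\mu}(\varphi,(\pi_{\UC})_*\mu)$ vanishes for every $\mu \in \MC(\Phi_{|L(Q)})$, so $h_{\inv}(K,Q) \geq \inf_{\mu}\int \log J^+\varphi\,\rmd\mu$. The standard theory of subadditive (here additive) cocycles, e.g. \cite[App.~A]{Mor}, rewrites this infimum over invariant measures as $\inf_{(u,x)\in L(Q)}\limsup_{t\to\infty}\frac1t\log J^+\varphi_{t,u}(x)$. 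So the whole content of the theorem is the \emph{achievability} (upper bound) ``$\leq$''.

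For that, the plan is to adapt the construction from \cite[Thm.~14]{DK4} to discrete time. Fix $(u,x)\in L(Q)$ realizing a growth rate close to the infimum; since $L(Q)$ is a chain component (B3) and $\UC_0$ contains fixed points, one can approximate $(u,x)$ by a \emph{periodic} point of the control flow: using Lemma \ref{lem_shift_shadowing} to produce a periodic $\delta$-chain in $(U^{\Z},\theta)$ passing near $u$ and then shadowing (Theorem \ref{thm_shadowing}(c), using that $L(Q)$ is isolated invariant by Theorem \ref{thm_smallpert_hypset}(b)), one gets a periodic orbit $(\hat u,\hat x)$ of $\Phi$ in $L(Q)$ whose average unstable log-determinant is within $\gamma$ of the infimum. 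By analyticity and uniform forward accessibility (B2), apply Theorem \ref{thm_universal_controls} to obtain, along a long enough period, a universally regular control segment; as in the proof of Proposition \ref{prop_Q_nonempty_int}, this lets one perturb $\hat u$ within $(\inner U_0)^{\Z}$ so that the periodic trajectory through $\hat x$ becomes \emph{locally completely controllable in one period with uniformly small excursions} — every state in a $\delta_0$-ball around $\hat x$ can be steered back into that ball in one period by a control keeping the whole trajectory $\ep$-close to the periodic one, hence (by isolatedness of $L(Q)$) inside $Q$.

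The coder--controller is then the usual ``one unstable subspace, uniform grid'' scheme: quantize the position transverse to $Q(\hat u)$ along the $\dim E^+$ unstable directions; over one period the unstable volume inflates by roughly $2^{T\cdot(\text{growth rate})}$, the stable directions contract, and one needs about $2^{T(\inf + \gamma)}$ symbols to refine the grid back to its original size, after which the process repeats periodically. Making this precise requires: (i) a quantitative local controllability/implicit-function estimate (cf.\ \cite[Thm.~7]{Son} as already invoked in Proposition \ref{prop_Q_nonempty_int}) to bound the control effort needed to cancel the quantization error; (ii) covering the compact set $K\subset\core(Q)$ in finitely many steps by controllability on $\core(Q)$ (Corollary \ref{cor_controllability}, whose hypotheses follow from (B1)--(B2)) so that after a fixed finite time every initial state in $K$ has been driven into the $\delta_0$-neighborhood of the periodic orbit, contributing $0$ to the entropy rate; (iii) choosing the grid at the periodic fiber rather than following an individual trajectory, so the linearized expansion seen is governed by $J^+\varphi_{T,\hat u}(\hat x)$ and not by any larger uniform constant. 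Counting spanning control inputs then yields $h_{\inv}(K,Q)\le \frac1T\log J^+\varphi_{T,\hat u}(\hat x) + o(1) \le \inf_{(u,x)} \limsup_t \frac1t\log J^+\varphi_{t,u}(x) + \gamma$, and letting $\gamma\downarrow0$ finishes the proof.

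The main obstacle is step (i)--(iii) of the achievability construction: one must simultaneously (a) keep the controlled trajectories inside $Q$ — which here is guaranteed only indirectly, via staying in the isolating neighborhood $\UC_0\times N^{\circ}_{2\beta_0}(\Lambda)$ and invoking isolatedness, so all estimates must be uniform enough to respect that neighborhood — and (b) match the data rate to the \emph{pointwise} growth rate $\limsup_t\frac1t\log J^+\varphi_{t,u}(x)$ rather than a cruder uniform upper rate, which forces the quantization to be performed adaptively at the (periodic) fiber and the controllability estimates to be localized near the chosen periodic orbit. This is exactly the delicate part of the argument in \cite{DK4}, and transcribing it to the discrete-time, isolated-lift setting of Theorem \ref{thm_smallpert_hypset} — while only a matter of bookkeeping in principle — is where essentially all the work lies.
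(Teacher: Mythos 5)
Your plan follows essentially the same route as the paper's proof: the lower bound via Theorem \ref{thm_ie_pressure_bound} together with vanishing fiber entropy and the additive-cocycle theory, then reduction to periodic orbits of the control flow by chain transitivity and shadowing, then to \emph{regular} periodic orbits via universal regularity of controls, and finally local stabilization around such an orbit in the style of Nair et al.\ combined with steering $K$ into a neighborhood of the orbit by controllability on $\core(Q)$. The only step where the paper does more than you sketch is the reduction to periodic orbits: it invokes the Morse-spectrum formalism (San Martin--Seco, Kawan--Stender) to guarantee that periodic $\ep$-chains --- and hence their shadowing periodic orbits --- realize finite-time exponents arbitrarily close to $\gamma$, which is the precise justification for your assertion that the shadowing periodic orbit has average unstable log-determinant near the infimum; this is a matter of citing the right tool rather than a different argument.
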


\begin{proof}
Throughout the proof, we denote the right-hand side of \eqref{eq_hinv_eq} by $\gamma$. We also recall that%
\begin{equation*}
  \alpha_t(u,x) := \log J^+\varphi_{t,u}(x),\quad \alpha_t: L(Q) \rightarrow \R%
\end{equation*}
is a continuous additive cocycle over $\Phi_{|L(Q)}$. For notational reasons, we write $u^*$ instead of $u^0$ for the fixed constant control that leads to the hyperbolic periodic orbit $\Lambda$.%

\emph{Step 1}: We prove that the right-hand side of the inequality \eqref{eq_ie_pressure_bound} (our lower bound) equals $\gamma$ under Assumption (B1). In fact, for this conclusion we only need that the $u$-fibers $Q(u)$ are finite. First, we prove that the measure-theoretic entropy $h_{\mu}(\varphi,P)$ vanishes for every $P \in \MC(\theta)$ and every invariant measure $\mu$ of the RDS $(\varphi,P)$ with $\supp(\mu)\subset L(Q)$. The variational principle for bundle RDS (see \cite[Thm.~1.2.13]{KLi}) implies the inequality $h_{\mu}(\varphi,P) \leq h_{\tp}(\varphi,P)$, where the right-hand side is the topological entropy of the bundle RDS. Since $h_{\tp}(\varphi,P)$ is defined via the growth rates of maximal $(u,\tau,\ep)$-separated subsets of the $u$-fibers and these are finite, it vanishes. Hence, to complete the first step it remains to show that%
\begin{equation}\label{eq_ergodic_gamma}
  \inf_{\mu \in \MC(\Phi_{|L(Q)})} \int \log J^+\varphi\, \rmd\mu = \gamma.%
\end{equation}
This follows immediately from the theory of subadditive cocycles, applied to $\alpha$, see \cite[App.~A]{Mor}. As a consequence, Theorem \ref{thm_ie_pressure_bound} immediately yields%
\begin{equation}\label{eq_achievability_lb}
  h_{\inv}(K,Q) \geq \gamma.%
\end{equation}

\emph{Step 2}: We introduce the set%
\begin{equation*}
  L_{\per}(Q) := \{ (u,x) \in L(Q) : \Phi_{\tau}(u,x) = (u,x) \mbox{\ for some\ } \tau \in \Z_{>0} \}%
\end{equation*}
of periodic elements of $L(Q)$ and prove that%
\begin{equation}\label{eq_gamma_periodic_lyapunov}
  \gamma = \inf_{(u,x) \in L_{\per}(Q)} \lim_{t \rightarrow \infty} \frac{1}{t} \log J^+\varphi_{t,u}(x).%
\end{equation}
The proof of this identity uses the concept of the \emph{Morse spectrum} of an additive cocycle. Let us therefore first recall some definitions. Consider an \emph{$\ep$-chain} $\zeta$ given by points $(u^0,x_0),\ldots,(u^{\tau},x_{\tau})$ in $L(Q)$.\footnote{Recall that we use upper indexes for the $u$-components to avoid abuse of notation.} The \emph{finite-time Morse exponent} of the chain $\zeta$ is defined as%
\begin{equation*}
  \lambda(\zeta) := \frac{1}{\tau}\sum_{t=0}^{\tau-1} \alpha_1(u^t,x_t).%
\end{equation*}
The Morse spectrum of the cocycle $\alpha$ is the set%
\begin{equation*}
  \mathrm{S}_{\Mo}(\alpha) := \bigcap_{\ep>0} \cl\, \{\lambda(\zeta) : \zeta \mbox{ is an } \ep\mbox{-chain in } L(Q) \}.%
\end{equation*}
From Assumption (B3) and \cite[Thm.~3.2]{SMS}, we know that $\mathrm{S}_{\Mo}(\alpha)$ is a compact interval which equals%
\begin{equation*}
  \mathrm{S}_{\Mo}(\alpha) = \Bigl\{ \int \alpha_1\, \rmd \mu : \mu \in \MC(\Phi_{|L(Q)}) \Bigr\}.%
\end{equation*}
In particular, by \eqref{eq_ergodic_gamma} this shows that%
\begin{equation}\label{eq_gamma_inf_morse}
  \gamma = \inf \mathrm{S}_{\Mo}(\alpha).%
\end{equation}
By \cite[Lem.~8]{KSt}, it suffices to consider periodic $\ep$-chains in the definition of the Morse spectrum, i.e., such with $(u^0,x_0) = (u^{\tau},x_{\tau})$. Let%
\begin{equation*}
  \mathrm{S}_{\Mo,\Per}(\alpha) := \bigcap_{\ep>0}\cl\{ \lambda(\zeta) : \zeta \mbox{ is a periodic } \ep\mbox{-chain in } L(Q) \}.%
\end{equation*}
Then \eqref{eq_gamma_inf_morse} together with \cite[Lem.~8]{KSt} yields%
\begin{equation*}
  \gamma = \inf\mathrm{S}_{\Mo,\Per}(\alpha).%
\end{equation*}
Now consider a periodic $\ep$-chain $\zeta$ in $L(Q)$, given by $(u^0,x_0)$, $(u^1,x_1)$, $\ldots$, $(u^{\tau},x_{\tau}) = (u^0,x_0)$. We want to prove the existence of a $\tau$-periodic point $(u,x) \in L(Q)$ which shadows the $\ep$-chain $\zeta$ in the sense that%
\begin{equation}\label{eq_periodic_shadowing}
  d_{\UC \tm M}(\Phi_t(u,x),(u^t,x_t)) \leq \beta,\quad t = 0,1,\ldots,\tau-1,%
\end{equation}
where $\beta>0$ is given and $\ep = \ep(\beta)$ must be chosen sufficiently small. If we have found such a point $(u,x)$, it follows that%
\begin{align}\label{eq_morseexp_est}
\begin{split}
  \Bigl|\lambda(\zeta) - \lim_{t \rightarrow \infty}\frac{1}{t}\alpha_t(u,x)\Bigr| &= \Bigl| \frac{1}{\tau} \sum_{t=0}^{\tau-1} \alpha_1(u^t,x_t) - \frac{1}{\tau} \alpha_{\tau}(u,x) \Bigr| \\
	&= \frac{1}{\tau}\Bigl| \sum_{t=0}^{\tau-1} (\alpha_1(u^t,x_t) - \alpha_1(\Phi_t(u,x))) \Bigr| \\
	&\leq \max_{0 \leq t < \tau} |\alpha_1(u^t,x_t) - \alpha_1(\Phi_t(u,x))|,%
\end{split}
\end{align}
where we use that%
\begin{equation*}
  \lim_{t \rightarrow \infty}\frac{1}{t}\alpha_t(u,x) = \frac{1}{\tau}\alpha_{\tau}(u,x)%
\end{equation*}
by the $\tau$-periodicity of $(u,x)$. It is clear that the last expression in \eqref{eq_morseexp_est} can be made arbitrarily small if $\beta$ is chosen small enough due to \eqref{eq_periodic_shadowing} and the uniform continuity of $\alpha_1$ on the compact set $L(Q)$. Hence, we obtain \eqref{eq_gamma_periodic_lyapunov} as desired. To find the periodic point $(u,x)$, we proceed in two steps. First, we use the periodic shadowing property of the shift operator on $\UC_0$ described in Lemma \ref{lem_shift_shadowing}. This property yields a $\tau$-periodic control sequence $u \in \UC_0$ such that%
\begin{equation}\label{eq_u_shadowing}
  d_{\UC}(\theta^tu,u^t) \leq \delta \mbox{\quad for\ } t = 0,1,\ldots,\tau-1%
\end{equation}
for any fixed $\delta>0$ if $\ep = \ep(\delta)$ is chosen small enough. Now observe that the bi-infinite sequence $(\theta^tu,x_t)_{t\in\Z}$, where the finite sequence of $x_t$'s is continued $\tau$-periodically in both directions, is a pseudo-orbit, since%
\begin{align*}
  &d(\varphi(1,x_t,\theta^tu),x_{t+1}) \leq d(\varphi(1,x_t,\theta^tu),\varphi(1,x_t,u^t)) + d(\varphi(1,x_t,u^t),x_{t+1}) \\
	                                    &\leq d(\varphi(1,x_t,\theta^tu),\varphi(1,x_t,u^t)) + d_{\UC \tm M}(\Phi(u^t,x_t),(u^{t+1},x_{t+1})).%
\end{align*}
The first term can be made arbitrarily small by uniform continuity of $\varphi(1,\cdot,\cdot)$ on the compact set $Q \tm U_0$ and \eqref{eq_u_shadowing}. The second term is smaller than $\ep$ by assumption. Moreover, the pseudo-orbit $(\theta^tu,x_t)$ is close to $L(Q)$, because $\theta^tu$ is close to $u^t$ and $(u^t,x_t) \in L(Q)$. Hence, the shadowing lemma yields a true orbit of the form $(\theta^t u,\varphi(t,y,u))_{t\in\Z}$ with $(u,y) \in L(Q)$ which shadows $(\theta^tu,x_t)$, and thus $(u^t,x_t)$. By uniqueness of shadowing orbits and a shifting argument, it easily follows that this orbit is $\tau$-periodic. The proof of Step 2 is complete.%

\emph{Step 3:} We introduce the set%
\begin{align*}
  L_{\per,\reg}(Q) &:= \{ (u,x) \in L_{\per}(Q) : u_t \in \inner\, U_0 \ \forall t \in \Z,\ x \in \inner\, Q\\
	                 &\qquad\qquad\qquad\qquad\qquad\qquad\qquad\qquad (x,u) \mbox{  is regular} \},%
\end{align*}
where regularity is understood as controllability of the linearization on the time interval $[0;\tau]$ with $\tau>0$ denoting the minimal period of $(u,x)$. We prove that%
\begin{equation*}
  \gamma = \inf_{(u,x) \in L_{\per,\reg}(Q)} \lim_{t \rightarrow \infty} \frac{1}{t} \log J^+\varphi_{t,u}(x).%
\end{equation*}
To prove this identity, we exploit the genericity of universally regular control sequences as guaranteed by Theorem \ref{thm_universal_controls} and Assumption (B2). Pick an arbitrary point $(u,x) \in L_{\per}(Q)$ of period $\tau$. We claim that the point $(u,x)$ can be approximated by a sequence $(u^n,x_n) \in L_{\per,\reg}(Q)$. We may assume that $\tau$ is large enough so that $S(\tau)$, the set of universally regular control sequences in $(\inner\, U_0)^{\tau}$, is dense in $U_0^{\tau}$. Hence, we find a sequence $(u^n)_{n\in\Z_{>0}}$ in $S(\tau)$ such that $u^n \rightarrow u$. Here we also think of $u^n$ being extended $\tau$-periodically in both directions so that $u^n \in \UC_0$. Using the homeomorphisms $h_u$ introduced in Proposition \ref{prop_smallhs}, let $x = h_u(z)$ for some $z$ on the periodic orbit $\Lambda$. Let $|\Lambda| = \tau^*$ so that $\tau^*$ is the minimal period of $z$. Then%
\begin{align*}
  h_u(z) = x = \varphi_{\tau,u}(x) = \varphi_{\tau,u}(h_u(z)) = h_{\theta^{\tau}u}(f_{u^*}^{\tau}(z)) = h_u(f_{u^*}^{\tau}(z)).%
\end{align*}
Since $h_u$ is injective, it follows that $z = f_{u^*}^{\tau}(z)$. Hence, $\tau$ must be an integer multiple of $\tau^*$. Now put $x_n := h_{u^n}(z) \in Q(u^n)$. Then%
\begin{equation*}
  \varphi_{\tau,u^n}(x_n) = \varphi_{\tau,u^n}(h_{u^n}(z)) = h_{\theta^{\tau}u^n}(f_{u^*}^{\tau}(z)) = h_{u^n}(z) = x_n.%
\end{equation*}
As $u^n \rightarrow u$ and $v \mapsto h_v$ is continuous, we have $x_n = h_{u^n}(z) \rightarrow h_u(z) = x$. It remains to prove that $x_n \in \inner\, Q$ for each $n$. Fix $n$ and note that by universal regularity of $u^n$ the linearization along the controlled periodic orbit associated with $(u^n,x_n)$ is controllable on the time interval $[0;\tau]$. Assume to the contrary that $x_n \in \partial Q$. Then local controllability (implied by the controllability of the linearization) leads to periodic trajectories starting in $Q$, leaving $Q$ and then returning to $Q$, which stay arbitrarily close to the trajectory $\varphi(\cdot,x^n,u^n)$ at all times. For instance, one can first steer in time $\tau$ from $x^n$ to a point $y \notin Q$ close to $x^n$. Then one steers from $y$ back to $x^n$ in time $\tau$, which leads to a $2\tau$-periodic trajectory starting in $x^n$ which is not completely contained in $Q$. Moreover, this trajectory is controlled by a control sequence arbitrarily close to $u^n$. This contradicts the fact that $L(Q)$ is isolated invariant. Hence, the claim is proved. As $(u^n,x_n) \rightarrow (u,x)$, we also have $\frac{1}{\tau}\alpha_{\tau}(u^n,x_n) \rightarrow \frac{1}{\tau}\alpha_{\tau}(u,x)$, which completes the proof of Step 3.%

\emph{Step 4:} To complete the proof of the theorem, it suffices to show that%
\begin{equation*}
  h_{\inv}(K,Q) \leq \lim_{t\rightarrow\infty}\frac{1}{t}\alpha_t(u,x) \mbox{\quad for all\ } (u,x) \in L_{\per,\reg}(Q).%
\end{equation*}
This can be shown by the arguments in \cite[Thm.~3]{Nea} adapted to the case of a periodic orbit (instead of an equilibrium point). Also note that for the continuous-time case these arguments have already been adapted in \cite[Thm.~4.3]{Ka2}. Here, it is important that for a fixed $(u,x) \in L_{\per,\reg}(Q)$, one can steer from any initial state $x_0 \in K$ to an arbitrarily small neighborhood of $x$ in finite time without leaving $Q$. To prove this, we use Assumption (B3) again. By Corollary \ref{cor_controllability}, we know that we can steer from any $x_0 \in K$ arbitrarily close to $x$ via some trajectory $\varphi(\cdot,x_0,u)$, $u \in \UC_0$. For some $\tau>0$, we have $\varphi(\tau,x_0,u) \in Q$. Assume to the contrary that $\varphi(t,x_0,u) \notin Q$ for some $0 < t < \tau$. We prove that this contradicts that $L(Q)$ is a maximal chain transitive set. It is well-known that $Q$, as the projection of $L(Q)$ to $M$, is a maximal set of all-time controlled invariance and chain controllability (see \cite[Thm.~4.1.4]{CKl} for the continuous-time case). However, the set $Q \cup \{ \varphi(s,x_0,u) : s \in [0;\tau] \}$ also has these two properties as one can easily check. This contradicts maximality, and hence concludes the proof.
\end{proof}

In the following, we show how to handle the case when $Q$ is constructed as in the small-perturbation setting and one of the RDS $(\varphi,P)$ admits an SRB measure on $L(Q)$. To prove the corresponding result, we need some additional concepts and results from the hyperbolic theory.%

Consider the control system $\Sigma$. For any $(u,x) \in \UC \tm M$, the \emph{local unstable manifold of size $\ep>0$} is given by%
\begin{equation*}
  W^+_{u,\ep}(x) = \{ y \in M : d(\varphi(-t,x,u),\varphi(-t,y,u)) \leq \ep,\ \forall t \geq 0 \}.%
\end{equation*}
If $Q$ is a hyperbolic set of $\Sigma$, the \emph{stable manifold theorem} tells us that for all $(u,x) \in L(Q)$, $W^+_{u,\ep}(x)$ is an embedded submanifold of $M$ with%
\begin{equation*}
  T_x W^+_{u,\ep}(x) = E^+(u,x).%
\end{equation*}
In particular, all unstable manifolds have the same dimension $d^+$.%

We will further use the following notation:%
\begin{equation*}
  B(u,\tau,\delta) := \bigcup_{z \in Q(u)} B^{u,\tau}_{\delta}(z).%
\end{equation*}
That is, $B(u,\tau,\delta)$ is the union of all Bowen-balls centered in $Q(u)$ of order $\tau$ and radius $\delta$.%

By \cite[Eq.~(3.1)]{Liu}, we have the following lemma on the topological pressure of associated random dynamical systems, which is actually a simple consequence of the volume lemma in combination with the shadowing lemma.%

\begin{lemma}\label{lem_liu}
Let $P \in \MC(\theta)$. Then, for all sufficiently small $\delta>0$, we have%
\begin{equation*}
  \pi_{\tp}(\varphi^Q,P;-\log J^+\varphi) = \lim_{\tau \rightarrow \infty}\frac{1}{\tau}\int \log \vol(B(u,\tau,\delta)) \, \rmd P(u).%
\end{equation*}
\end{lemma}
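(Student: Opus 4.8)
The plan is to reduce the statement, via the Bowen--Ruelle volume lemma (Theorem~\ref{thm_volume_lemma}), to the formula \eqref{eq_defptop} for the topological pressure of the bundle RDS $\varphi^Q$. Since $\log J^+\varphi$ is an additive cocycle over $\Phi_{|L(Q)}$, the cocycle identity gives $\sum_{s=0}^{\tau-1}\bigl(-\log J^+\varphi\bigr)(\Phi_s(u,x)) = -\log J^+\varphi_{\tau,u}(x)$, so that
\begin{equation*}
  \pi_{-\log J^+\varphi}(u,\tau,\ep) = \sup\Bigl\{ \sum_{x\in F}J^+\varphi_{\tau,u}(x)^{-1} : F\subset Q(u) \mbox{ is } (u,\tau,\ep)\mbox{-separated} \Bigr\}.
\end{equation*}
It therefore suffices to sandwich $\vol(B(u,\tau,\delta))$ between constant multiples of this quantity.

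For all $(u,\tau)\in\UC\tm\Z_{>0}$ and all $\delta>0$ small enough for the volume lemma to apply at the radii $\delta/2$ and $2\delta$, I claim
\begin{equation*}
  C_{\delta/2}^{-1}\,\pi_{-\log J^+\varphi}(u,\tau,\delta) \le \vol(B(u,\tau,\delta)) \le C_{2\delta}\,\pi_{-\log J^+\varphi}(u,\tau,\delta),
\end{equation*}
with the constants from Theorem~\ref{thm_volume_lemma}. For the left inequality, given a $(u,\tau,\delta)$-separated set $F\subset Q(u)$, the balls $B^{u,\tau}_{\delta/2}(z)$, $z\in F$, are pairwise disjoint (a common point of two of them would give $d^{u,\tau}(z_1,z_2)\le\delta$, contradicting separation) and each is contained in $B(u,\tau,\delta)$; since the centers lie in $Q(u)$, the volume lemma yields $\vol(B(u,\tau,\delta))\ge\sum_{z\in F}\vol(B^{u,\tau}_{\delta/2}(z))\ge C_{\delta/2}^{-1}\sum_{z\in F}J^+\varphi_{\tau,u}(z)^{-1}$, and one takes the supremum over $F$. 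For the right inequality, let $F$ be a \emph{maximal} $(u,\tau,\delta)$-separated subset of $Q(u)$; it $(u,\tau,\delta)$-spans $Q(u)$, so $B(u,\tau,\delta)=\bigcup_{z\in Q(u)}B^{u,\tau}_\delta(z)\subset\bigcup_{y\in F}B^{u,\tau}_{2\delta}(y)$ by the triangle inequality for $d^{u,\tau}$, whence $\vol(B(u,\tau,\delta))\le\sum_{y\in F}\vol(B^{u,\tau}_{2\delta}(y))\le C_{2\delta}\sum_{y\in F}J^+\varphi_{\tau,u}(y)^{-1}\le C_{2\delta}\,\pi_{-\log J^+\varphi}(u,\tau,\delta)$.

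Taking logarithms (everything is strictly positive since $Q(u)\neq\emptyset$), dividing by $\tau$, integrating against $P$, and passing to $\limsup_{\tau\to\infty}$, the $\tfrac1\tau\log C$ terms disappear and I obtain
\begin{equation*}
  \limsup_{\tau\to\infty}\frac1\tau\int\log\vol(B(u,\tau,\delta))\,\rmd P(u) = \pi_{\tp}(\varphi^Q,P,\delta;-\log J^+\varphi);
\end{equation*}
here the $P$-measurability of $u\mapsto\vol(B(u,\tau,\delta))$ is seen as for $\pi_{-\log J^+\varphi}(\cdot,\tau,\delta)$ in \cite[Lem.~5.3]{Bog} (alternatively, from $B(u,\tau,\delta)\subset Q(u,\tau,\delta)$ and the arguments of Lemma~\ref{lem_vcont}), and the integrand is uniformly bounded thanks to the sandwich above. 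Finally I invoke two standard facts, both already used in this paper: the $\limsup$ over $\tau$ may be replaced by a genuine limit (by \cite[Ch.~5, Prop.~1.2.6]{KLi}, or via the approximating subadditive cocycles of Subsection~\ref{subsec_approx_cocycles}); and, since $Q$ is uniformly hyperbolic --- hence expansive (Theorem~\ref{thm_expansiveness}) and $L(Q)$ isolated --- the scale-$\delta$ pressure $\pi_{\tp}(\varphi^Q,P,\delta;\cdot)$ is independent of $\delta$ once $\delta$ is below an expansivity constant, so it coincides with $\pi_{\tp}(\varphi^Q,P;-\log J^+\varphi)=\lim_{\ep\downarrow0}\pi_{\tp}(\varphi^Q,P,\ep;\cdot)$.

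The volume-lemma sandwich is routine. The step that needs care is the last one, the ``small scale suffices'' property for the pressure of the bundle RDS. I expect to obtain it from expansivity by the same compactness argument that produced \eqref{eq_squeezing} --- namely, for every $\delta'>0$ there is $M\in\Z_{>0}$ such that $d(\varphi(t,x,u),\varphi(t,y,u))\le\delta'$ for $0\le t\le 2M$ with $(u,x)\in L(Q)$ forces $d(\varphi(M,x,u),\varphi(M,y,u))$ below the expansivity constant --- combined with the variational principle for $\pi_{\tp}(\varphi^Q,P;\cdot)$; it is in any case covered by the general theory in \cite[Ch.~5]{KLi}.
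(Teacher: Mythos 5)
Your proof is correct, and it supplies a complete argument where the paper itself offers none: the paper simply cites \cite[Eq.~(3.1)]{Liu} and remarks that the lemma is ``a simple consequence of the volume lemma in combination with the shadowing lemma.'' Your route matches that indication, with one simplification worth noting: because $B(u,\tau,\delta)$ is by definition a union of Bowen-balls already centered in $Q(u)$, no shadowing is needed to move centers onto the fiber (unlike in the derivation of \eqref{eq_volume_q}, where the covered set is $Q(u,\tau,\ep)$), so the two-sided volume-lemma sandwich $C_{\delta/2}^{-1}\,\pi_{-\log J^+\varphi}(u,\tau,\delta) \le \vol(B(u,\tau,\delta)) \le C_{2\delta}\,\pi_{-\log J^+\varphi}(u,\tau,\delta)$ goes through exactly as you wrote it. You also correctly isolate the only genuinely nontrivial step, namely that the scale-$\delta$ pressure $\pi_{\tp}(\varphi^Q,P,\delta;\cdot)$ already equals $\pi_{\tp}(\varphi^Q,P;\cdot)$ once $\delta$ is below an expansivity constant; your sketch is sound and can be made precise by the standard device: expansivity plus compactness give an $M$ such that any two points of $Q(u)$ that are $(u,\tau,\ep)$-separated become $\delta$-separated somewhere in the window $[-M;\tau+M)$, so $\varphi_{-M,u}$ maps a $(u,\tau,\ep)$-separated set to a $(\theta^{-M}u,\tau+2M,\delta)$-separated set at the cost of a factor $2^{2M\|\log J^+\varphi\|_\infty}$, which disappears after dividing by $\tau$ and using the $\theta$-invariance of $P$.
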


We also need the so-called \emph{second volume lemma} \cite[Lem.~A.1]{Liu}, which reads as follows.%

\begin{lemma}\label{lem_second_vol}
Let $\Sigma$ be of regularity class $C^2$ and assume that $Q$ is a hyperbolic set of $\Sigma$. Then, for $\ep,\delta>0$ small enough, there is a constant $C_{\ep,\delta} > 0$ such that%
\begin{equation*}
  C_{\ep,\delta}^{-1} \leq \frac{\vol(B^{u,\tau}_{\delta}(y))}{\vol(B^{u,\tau}_{\ep}(x))} \leq C_{\ep,\delta}%
\end{equation*}
whenever $(u,x) \in L(Q)$, $\tau \geq 0$ and $y \in B^{u,\tau}_{\ep}(x)$.
\end{lemma}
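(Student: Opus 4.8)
The plan is to reduce the whole statement to the Bowen--Ruelle volume lemma (Theorem~\ref{thm_volume_lemma}), which for small radii $r$ and centers $z$ with $(u,z)\in L(Q)$ identifies $\vol(B^{u,\tau}_{r}(z))$ with $J^+\varphi_{\tau,u}(z)^{-1}$ up to a multiplicative constant $C_{r}$ depending only on $r$ and on $\Sigma$. Two auxiliary ingredients make this reduction go through: (i) elementary inclusions between Bowen balls of comparable radii with shifted centers; and (ii) the fact that the unstable determinants $J^+\varphi_{\tau,u}(\cdot)$ evaluated along two orbit segments that stay uniformly close for $\tau$ steps differ only by a multiplicative constant that is independent of $\tau$. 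Ingredient (ii) is not obtained from a distortion/H\"{o}lder estimate but is again squeezed out of the volume lemma itself: if $(u,x),(u,z)\in L(Q)$ and the two orbit segments are $r$-close for $0\le t<\tau$, then $B^{u,\tau}_{r}(z)\subseteq B^{u,\tau}_{2r}(x)\subseteq B^{u,\tau}_{3r}(z)$, and sandwiching the volumes of these three balls via Theorem~\ref{thm_volume_lemma} yields $C^{-1}\le J^+\varphi_{\tau,u}(z)/J^+\varphi_{\tau,u}(x)\le C$ with $C=C(r)$ independent of $\tau$.

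For the upper bound, $y\in B^{u,\tau}_{\ep}(x)$ and the triangle inequality give $B^{u,\tau}_{\delta}(y)\subseteq B^{u,\tau}_{\ep+\delta}(x)$; since $(u,x)\in L(Q)$, applying Theorem~\ref{thm_volume_lemma} at $x$ for the radii $\ep+\delta$ and $\ep$ yields $\vol(B^{u,\tau}_{\delta}(y))\le\vol(B^{u,\tau}_{\ep+\delta}(x))\le C_{\ep+\delta}J^+\varphi_{\tau,u}(x)^{-1}\le C_{\ep+\delta}C_{\ep}\,\vol(B^{u,\tau}_{\ep}(x))$.

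For the lower bound, the idea is to locate a point $z$ with $(u,z)\in L(Q)$ whose orbit segment tracks that of $y$ within some $\beta<\delta$ over $0\le t<\tau$; then $B^{u,\tau}_{\delta-\beta}(z)\subseteq B^{u,\tau}_{\delta}(y)$, so $\vol(B^{u,\tau}_{\delta}(y))\ge\vol(B^{u,\tau}_{\delta-\beta}(z))\ge C_{\delta-\beta}^{-1}J^+\varphi_{\tau,u}(z)^{-1}$, and since the orbits of $z$ and $x$ are $O(\ep+\delta)$-close for $\tau$ steps, ingredient (ii) converts this into a lower bound by $J^+\varphi_{\tau,u}(x)^{-1}$, hence by $\vol(B^{u,\tau}_{\ep}(x))$, with all constants depending only on $\ep,\delta$. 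When $\ep\le\delta$ one may simply take $z=x$, because then $x\in B^{u,\tau}_{\delta}(y)$ and $B^{u,\tau}_{\delta-\ep}(x)\subseteq B^{u,\tau}_{\delta}(y)$. In general such a $z$ is produced by the shadowing lemma (Theorem~\ref{thm_shadowing}): one forms the $\ep$-pseudo-orbit that follows the orbit of $x$ for $t<0$, the orbit of $y$ for $0\le t\le\tau-2$, and the orbit of $x^{*}:=\varphi(\tau-1,x,u)\in Q(\theta^{\tau-1}u)$ for $t\ge\tau-1$ (all jumps are $\le\ep$, and the pseudo-orbit stays $\ep$-close to $L(Q)$), and shadows it by a true orbit inside $L(Q)$; here one uses that $L(Q)$ is isolated invariant, cf.~Theorem~\ref{thm_shadowing}(c). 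The resulting $z$ then tracks $y$ within a quantity $\beta\to0$ as $\ep\to0$.

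The delicate point — the only place where more than a black-box use of Theorem~\ref{thm_volume_lemma} is needed — is matching the smallness conditions: the radii $\ep+\delta,\ 2r,\ 3r,\ \delta-\beta$ must all be small enough for the volume lemma, and in the regime $\ep>\delta$ the shadowing constant $\beta$ must be pushed below $\delta$, which forces $\ep$ to be chosen small relative to $\delta$ through the shadowing function $\alpha=\alpha(\cdot)$. So "for $\ep,\delta>0$ small enough" should be read with the quantifier order "$\delta$ small, then $\ep$ small depending on $\delta$"; alternatively, one removes the restriction by noting that the construction underlying the proof of the volume lemma already gives $\vol(B^{u,\tau}_{r}(y))\asymp J^+\varphi_{\tau,u}(x)^{-1}$ for centers $y$ in a fixed neighborhood of $L(Q)$, not merely for $(u,y)\in L(Q)$. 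The uniformity in $\tau$ of every constant is precisely what uniform hyperbolicity provides.
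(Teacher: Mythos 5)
The paper itself offers no proof of this lemma: it is imported directly from \cite[Lem.~A.1]{Liu}, so there is no internal argument to compare against. Within your proposal, the upper bound is correct, and your ``ingredient (ii)'' --- extracting a $\tau$-uniform two-sided bound on $J^+\varphi_{\tau,u}(z)/J^+\varphi_{\tau,u}(x)$ for uniformly close orbit segments by sandwiching Bowen balls and applying Theorem~\ref{thm_volume_lemma} twice --- is a correct and genuinely nice observation.

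The gap is in the lower bound, and you have located it yourself without repairing it. Your shadowing point $z$ tracks the orbit of $y$ only within the shadowing tolerance $\beta$, and $\beta$ is tied to the jump size $\ep$ of your pseudo-orbit through the function $\alpha(\cdot)$ of Theorem~\ref{thm_shadowing}(a); in general $\beta\geq\ep$. Hence the inclusion $B^{u,\tau}_{\delta-\beta}(z)\subseteq B^{u,\tau}_{\delta}(y)$ is vacuous unless $\ep$ is small compared with $\delta$. But the lemma is used (Step 4 of the proof of Theorem~\ref{thm_inv_achievability2}) in exactly the opposite regime: the outer radius there is $3\gamma/2$ and the inner radius $\delta$ is chosen afterwards in $(0,\gamma/4)$, so $\delta\ll\ep$ and no $\beta<\delta$ is compatible with jumps of size $3\gamma/2$. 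Your proposed re-reading of the quantifiers (``first $\delta$, then $\ep$ small depending on $\delta$'') therefore proves a strictly weaker statement that does not cover the intended application, and is not what \cite[Lem.~A.1]{Liu} asserts. The standard proofs (Bowen's Lemma 4.2 in the deterministic case, Liu's Lemma A.1 here) do not treat the first volume lemma as a black box; they rerun its proof --- adapted coordinates along the orbit of $x$, graph transform, bounded distortion --- to show directly that $\vol(B^{u,\tau}_{\delta}(y))$ is comparable, with $\delta$-dependent constants, to $J^+\varphi_{\tau,u}(x)^{-1}$ for \emph{every} $y$ whose orbit segment stays in a fixed small neighborhood of that of $x$. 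That is precisely the strengthening you gesture at in your final sentence but do not establish; it is the missing idea. A secondary point: your shadowing step invokes Theorem~\ref{thm_shadowing}(c) and hence the isolated invariance of $L(Q)$, which is not among the hypotheses of the lemma as stated, although it is a standing assumption wherever the lemma is applied.
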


Now, we can formulate and prove our main result.%

\begin{theorem}\label{thm_inv_achievability2}
Consider the control system $\Sigma$ and assume that it is of regularity class $C^2$. Let $Q \subset M$ be a hyperbolic set constructed as in the small-perturbation setting of Theorem \ref{thm_smallpert_hypset} for the control system $\Sigma^0$ given by \eqref{eq_sigma0}. Additionally, assume that the following conditions are satisfied:%
\begin{enumerate}
\item[(C1)] The isolated invariant and hyperbolic set $\Lambda$ of $f_{u^0}$ is topologically transitive.%
\item[(C2)] For some $P \in \MC(\theta)$, the RDS $(\varphi,P)$ admits an invariant probability measure $\mu$, supported on $L(Q)$, which satisfies%
\begin{equation}\label{eq_srb_prop}
  h_{\mu}(\varphi,P) = \int \log J^+\varphi\, \rmd \mu.%
\end{equation}
\item[(C3)] $\Lambda$ is contained in $\inner\, Q$.\footnote{A sufficient condition is given in Theorem \ref{thm_local_stab}.}%
\item[(C4)] $L(Q)$ is a chain component of the control flow of $\Sigma^0$.%
\item[(C5)] $U \subset \R^m$ and $f$ is of class $C^1$.%
\end{enumerate}
Then, for any compact set $K \subset \mathrm{core}(Q)$, we have%
\begin{equation*}
  h_{\inv}(K,Q) = 0.%
\end{equation*}
\end{theorem}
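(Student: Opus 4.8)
The plan is to show that $r_{\inv}(\tau,K,Q)$ stays bounded as $\tau\to\infty$; since $h_{\inv}(K,Q)=\limsup_{\tau\to\infty}\tfrac1\tau\log r_{\inv}(\tau,K,Q)$ this immediately gives $h_{\inv}(K,Q)=0$. I reduce this to producing a \emph{trap}: an open set $W\subseteq\inner\,Q$ together with the constant control $u^0\in\UC_0$ with $f_{u^0}(\cl\,W)\subseteq W$ (so that $\varphi(\Z_+,y,u^0)\subseteq W\subseteq Q$ for all $y\in W$). Granting such a $W$, one may assume $\core(Q)\ne\emptyset$ (otherwise $K=\emptyset$ and nothing is to prove). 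Then Proposition~\ref{prop_complete_controllability} (its hypotheses hold: $\Lambda$ is topologically transitive by (C1), and $\inner\,Q\ne\emptyset$ by (C3)) together with Lemma~\ref{lem_core_open_dense} (using (C5)) shows that complete controllability holds on $\core(Q)$, an open dense subset of $Q$; in particular $\core(Q)\cap W\ne\emptyset$. Exactly as in Step~4 of the proof of Theorem~\ref{thm_inv_achievability1}, where (C4) is used to guarantee that such steering trajectories cannot leave $Q$, every $x\in K$ can then be steered into $W$ in finite time while staying in $Q$; by continuity of $\varphi$ and compactness of $K$, finitely many controls $u_{x_1},\dots,u_{x_N}$ and a common time $T$ accomplish this for all of $K$. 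Concatenating each $u_{x_i}$ with $u^0$ yields a $(\tau,K,Q)$-spanning set of cardinality at most $N$ for every $\tau$, whence $h_{\inv}(K,Q)=0$.

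\textbf{Building the trap from (C2).} This is where the SRB hypothesis enters. By the variational principle for the pressure of the bundle RDS $\varphi^Q$ and the Margulis--Ruelle inequality, \eqref{eq_srb_prop} forces $\pi_{\tp}(\varphi^Q,P;-\log J^+\varphi)=0$; equivalently, by Lemma~\ref{lem_liu}, the random escape rate $\lim_{\tau\to\infty}\tfrac1\tau\int\log\vol(B(u,\tau,\delta))\,\rmd P(u)$ vanishes. More directly, $\mu$ being an SRB measure of $(\varphi,P)$ means that its sample measures $\mu_u$ have absolutely continuous conditionals on local unstable manifolds (\cite{Yo2}, \cite[Def.~3.2.2]{KLi}), so $\supp\mu$ contains a relatively open piece $D$ of a local unstable manifold $W^+_{u_0,\ep}(x_0)$ for some $(u_0,x_0)\in L(Q)$; since $\supp\mu\subseteq L(Q)$, this gives $D\subseteq Q(u_0)$. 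Iterating forward, $\varphi_{t,u_0}(D)\subseteq\varphi_{t,u_0}(Q(u_0))=Q(\theta^tu_0)$ expands within the unstable leaf, so for $t$ large $Q(\theta^tu_0)$ contains the full local unstable manifold $W^+_{\theta^tu_0,\ep}(\varphi_{t,u_0}(x_0))$; thus \emph{some} point of $L(Q)$ has its local unstable manifold inside $L(Q)$. Now $L(Q)$ is topologically transitive: it is internally chain transitive by Lemma~\ref{lem_chains}, and on an isolated invariant uniformly hyperbolic set chain transitivity and topological transitivity coincide by the shadowing lemma (Theorem~\ref{thm_shadowing}). Hence $L(Q)$ has a point with dense forward orbit; every point of that orbit lies on the global unstable manifold of the distinguished point above, so has its local unstable manifold in $L(Q)$; and since local unstable manifolds vary continuously with the base point and $L(Q)$ is closed, \emph{every} point of $L(Q)$ has its local unstable manifold inside $L(Q)$, i.e.\ $L(Q)$ is unstable-saturated.

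\textbf{From saturation to the trap; the main obstacle.} An isolated invariant, uniformly hyperbolic, unstable-saturated set is an attractor: the hyperbolic splitting lets one enclose $L(Q)$ in a trapping neighbourhood $\NC'$, built as a bundle of small local stable disks over $L(Q)$, with $\Phi(\cl\,\NC')\subseteq\NC'$ (the control-flow analogue of classical facts for diffeomorphisms, cf.\ \cite{KH2}, \cite{Liu}, \cite{DK2}). Restricting to the constant control $u^0$ gives an open neighbourhood $N^0:=\{y\in M:(u^0,y)\in\NC'\}$ of $Q(u^0)=\Lambda$ with $f_{u^0}(\cl\,N^0)\subseteq N^0$, so $\Lambda=\bigcap_{n\ge0}f_{u^0}^n(N^0)$ is an attractor of $f_{u^0}$; by (C3), $\Lambda\subseteq\inner\,Q$, hence $f_{u^0}^n(\cl\,N^0)\subseteq\inner\,Q$ for $n$ large, and $W:=f_{u^0}^n(N^0)$ is the required trap. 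I expect the main difficulty to lie in the second paragraph together with the first sentence of this one: transferring the classical equivalences ``an SRB measure exists $\Leftrightarrow$ the set is unstable-saturated $\Leftrightarrow$ the set is an attractor'' from the diffeomorphism setting to the bundle-RDS/control-flow setting, and making the transitivity input (C1), (C4) carry the ``all-or-nothing'' saturation step. The reduction of the first paragraph and the squeezing of the trapping region into $\inner\,Q$ are then routine, given (C3) and (C5).
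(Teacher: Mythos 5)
Your overall architecture — reduce everything to showing that $\Lambda$ is an attractor of $f_{u^0}$, then steer $K$ into a trapping neighbourhood $V\subset\inner\,Q$ using complete controllability on $\core(Q)$, (C4) to keep the steering trajectories inside $Q$, compactness to get finitely many controls, and concatenation with $u^0$ to get a bounded spanning set — is exactly the paper's final step, and that part is fine. The genuine gap is in your second paragraph, at the very first substantive move: absolute continuity of the conditional measures of an SRB measure on local unstable plaques does \emph{not} imply that $\supp\mu$ contains a relatively open piece of a local unstable manifold. An absolutely continuous measure on a disk can have nowhere dense support (e.g.\ Lebesgue measure restricted to a fat Cantor set), so what (C2) gives you directly is only that $W^+_{u,\ep}(x)\cap Q(u)$ has \emph{positive leaf-Lebesgue measure} for $\mu$-typical $(u,x)$. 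Upgrading ``positive leaf measure'' to ``contains an open plaque'' is precisely the hard content of the classical ``SRB implies attractor'' theorem; it requires either a bounded-distortion leafwise measure dichotomy or a quantitative escape-rate argument, neither of which appears in your proposal. The paper supplies the latter: it shows that if \emph{no} $(u,x)\in L(Q)$ has $W^+_{u,\ep}(x)\subset Q(u)$, then by a compactness argument there is a \emph{uniform} $\gamma>0$ such that every unstable plaque reaches distance $\ge\gamma$ from the fibers, and then the second volume lemma yields $\vol(B(u,\tau+T,\gamma/2))\le(1-C)\,\vol(B(u,\tau,\gamma/2))$, hence $\pi_{\tp}(\varphi^Q,P;-\log J^+\varphi)\le\frac{1}{T}\log(1-C)<0$, contradicting the consequence $\pi_{\tp}=0$ of (C2). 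This volume estimate is not an optional alternative to your soft argument; it is what makes the implication true.

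Two secondary points. First, your saturation-propagation step relies on topological transitivity of $\Phi_{|L(Q)}$, which you infer from internal chain transitivity via ``chain transitivity equals topological transitivity on isolated hyperbolic sets''; but $L(Q)$ is not a hyperbolic set of a diffeomorphism (the base direction $\UC$ carries no hyperbolicity, and the shadowing lemma of the paper only shadows pseudo-orbits that are exact orbits in the $u$-component), so this transfer is unjustified as stated. Second, the assertion that an unstable-saturated isolated hyperbolic set of the control flow is an attractor in the bundle sense is again a nontrivial transfer. The paper avoids both issues economically: once a \emph{single} plaque $W^+_{u,\ep}(x)\subset Q(u)$ is found, it pulls it back to $\Lambda$ via $h_u^{-1}$, uses equicontinuity of $\{h_u^{-1}\}$ and the invariance-of-domain theorem to produce an open piece of a local unstable manifold of $f_{u^0}$ inside $\Lambda$, and then invokes Bowen's classical lemma (under (C1)) to conclude that $\Lambda$ is an attractor of the diffeomorphism $f_{u^0}$ — which is all that the final steering step needs. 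If you adopt the paper's Steps 1--4 for the middle portion, your first and last paragraphs complete the proof.
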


\begin{proof}
The proof proceeds in five steps. The first four of them will show that $\Lambda$ is an attractor. The last step then uses this fact to show that the invariance entropy vanishes.%

\emph{Step 1}: We prove the following auxiliary statement:%

\begin{framed}
If $W^+_{u,\ep}(x) \subset Q(u)$ for some $(u,x) \in L(Q)$, then $\Lambda$ is an attractor, i.e., there exists an arbitrarily small neighborhood $V$ of $\Lambda$ with $f_{u^0}(V) \subset V$.%
\end{framed}

To this end, consider the set%
\begin{equation*}
  A := h_u^{-1}(W^+_{u,\ep}(x)) = \{ h_u^{-1}(y) : d(\varphi(-t,x,u),\varphi(-t,y,u)) \leq \ep,\ \forall t \geq 0 \}%
\end{equation*}
which is a well-defined subset of $\Lambda$ by assumption. Using the uniform equicontinuity of the family $\{h_u^{-1}\}$ (see Proposition \ref{prop_smallhs}), we can see that $A \subset W^+_{u^0,\delta}(h_u^{-1}(x))$ if $\ep = \ep(\delta) > 0$ is small enough (taking a smaller $\ep$ does not hurt). Indeed, this follows from%
\begin{align*}
  d( f_{u^0}^{-t}(h_u^{-1}(y)), f_{u^0}^{-t}(h_u^{-1}(x)) ) &= d( h_{\theta^{-t}u}^{-1} (\varphi(-t,y,u)),h_{\theta^{-t}u}^{-1}(\varphi(-t,x,u))).%
\end{align*}
Since $h_u^{-1}$ is a homeomorphism, $A$ is a topological submanifold of $W^+_{u^0,\delta}(h_u^{-1}(x))$ of dimension $d^+$. By the invariance-of-domain theorem, then $A$ must be open in $W^+_{u^0,\delta}(h_u^{-1}(x))$.\footnote{We need to show that every $z \in A$ has a neighborhood in $A$ which is open in $W^+_{u^0,\delta}(x)$. To this end, let $V \subset A$ be a neighborhood of $z$ which is homeomorphic to $\R^{d^+}$ via a homeomorphism $\phi:V \rightarrow \R^{d^+}$. But $z$ also has a neighborhood $\tilde{V} \subset W^+_{u^0,\delta}(x)$, open relative to $W^+_{u^0,\delta}(x)$, which is Euclidean. Let $\tilde{\phi}:\tilde{V} \rightarrow \R^{d^+}$ be the associated homeomorphism. Without loss of generality, we can assume that $V \subset \tilde{V}$. Then we consider the map $\psi := \tilde{\phi}|_V \circ \phi^{-1}:\R^{d^+} \rightarrow \R^{d^+}$. This map is continuous and injective. Now the invariance-of-domain-theorem tells us that the image $U := \psi(\R^{d^+})$ is open in $\R^{d^+}$. Then we know that $\tilde{\phi}^{-1}(U)$ is open in $\tilde{V}$, implying that $\tilde{\phi}^{-1}(U)$ is open in $W^+_{u^0,\delta}(x)$. At the same time, $\tilde{\phi}^{-1}(U) = \phi^{-1}(\R^{d^+}) = V$. Hence, $V$ is the desired neighborhood.}  Consequently, we can find some $\eta>0$ small enough such that $W^+_{u^0,\eta}(h_u^{-1}(x)) \subset A \subset \Lambda$. Now, we invoke \cite[Lem.~4.9]{Bo2}, which shows that this implies that $\Lambda$ is an attractor (under Assumption (C1)).%

\emph{Step 2}: We prove another auxiliary result:%

\begin{framed}
If $\Lambda$ is not an attractor, then there exists a constant $\gamma>0$ such that for every $(u,x) \in L(Q)$ there is $y \in W^+_{u,\ep}(x)$ with $\dist(y,Q(u)) \geq \gamma$.
\end{framed}

To this end, fix $u \in \UC$ and consider for every $\beta > 0$ the set%
\begin{equation*}
  V_{\beta}(u) := \{ x\in Q(u) : \dist(y,Q(u)) > \beta \mbox{\ for some\ } y \in W^+_{u,\ep}(x) \}.%
\end{equation*}
This set is open in $Q(u)$, because $W^+_{u,\ep}(x)$ depends continuously on $x$ by the stable manifold theorem. When $\beta$ decreases, $V_{\beta}(u)$ increases. By Step 1, we have $W^+_{u,\ep}(x) \not\subset Q(u)$ for all $x \in Q(u)$. Hence, for every $x\in Q(u)$ there is $\beta(x)>0$ with $x \in V_{\beta(x)}(u)$, and thus $Q(u) = \bigcup_{\beta>0} V_{\beta}(u)$. By compactness, $V_{\beta}(u) = Q(u)$ for some $\beta > 0$. We choose $\beta(u)$ as the supremum over all such $\beta$.%

Now assume to the contrary that there is a sequence $(u^n)_{n\in\N}$ in $\UC$ such that $\beta(u^n) \rightarrow 0$ as $n \rightarrow \infty$. By compactness of $\UC$, we may assume that $u^n \rightarrow u^*$ for some $u^* \in \UC$. Then there are $x_n \in Q(u^n)$ with $W^+_{u^n,\ep}(x_n) \subset N_{(1/n)+\beta(u^n)}(Q(u^n))$. We can also assume that $x_n$ converges to some $x_* \in Q(u^*)$. As $u \mapsto Q(u)$ and $(u,x)\mapsto W^+_{u,\ep}(x)$ are continuous (the latter holds by the stable manifold theorem, see \cite{MZh}) and $\beta(u^n) \rightarrow 0$, it follows that $W^+_{u^*,\ep}(x_*) \subset Q(u^*)$, a contradiction. Hence, we can put $\gamma := \inf_{u\in\UC}\beta(u)$.%

\emph{Step 3}: We prove that Assumption (C2) implies%
\begin{equation}\label{eq_pressure_zero}
  \pi_{\tp}(\varphi^Q,P;-\log J^+\varphi) = 0.%
\end{equation}
The variational principle for the pressure of bundle RDS \cite{KLi} tells us that%
\begin{equation*}
  \pi_{\tp}(\varphi^Q,P;-\log J^+\varphi) = \sup_{\nu}\Bigl[ h_{\nu}(\varphi,P) - \int \log J^+\varphi\, \rmd\nu \Bigr],%
\end{equation*}
where the supremum is taken over all invariant probability measures $\nu$ of the RDS $(\varphi,P)$ which are supported on $L(Q)$. The Margulis-Ruelle inequality \cite{BBo} says that%
\begin{equation*}
  h_{\nu}(\varphi,P) \leq \int \log J^+\varphi\, \rmd\nu%
\end{equation*}
for every $\nu$, and hence $\pi_{\tp}(\varphi^Q,P;-\log J^+\varphi) \leq 0$. Thus, from \eqref{eq_srb_prop} the equality \eqref{eq_pressure_zero} immediately follows.%

\emph{Step 4}: We now prove by contradiction that $\Lambda$ is an attractor. We thus assume that $\Lambda$ is not an attractor and show that this leads to $\pi_{\tp}(\varphi^Q,P;-\log J^+\varphi) < 0$ in contradiction to \eqref{eq_pressure_zero}.%

Given a small $\ep>0$, choose $\gamma>0$ as in Step 2. Pick $T \in \N$ such that%
\begin{equation}\label{eq_incl}
  \varphi_{T,u}(W^+_{u,\gamma/4}(x)) \supset W^+_{\theta^Tu,\ep}(\varphi_{T,u}(x)) \mbox{\quad for all\ } (u,x) \in L(Q).%
\end{equation}
This is possible due to uniform contraction rates on unstable manifolds. Let $E \subset Q(u)$ be $(u,\tau,\gamma)$-separated for some $u \in \UC$. For $x\in E$, there is $y(x,\tau) \in B^{\tau}_{u,\gamma/4}(x)$ with%
\begin{equation*}
  \dist(\varphi_{\tau+T,u}(y(x,\tau)),Q(\theta^{\tau+T}u)) > \gamma,%
\end{equation*}
since $\varphi_{\tau,u}(B^{u,\tau}_{\gamma/4}(x)) \supset W^+_{\theta^{\tau}u,\gamma/4}(\varphi_{\tau,u}(x))$ (easy to see) and (by \eqref{eq_incl})%
\begin{equation*}
  \varphi_{T,\theta^{\tau}u}W^+_{\theta^{\tau}u,\gamma/4}(\varphi_{\tau,u}(x)) \supset W^+_{\Theta^{T+\tau}u,\ep}(\varphi_{T+\tau,u}(x)).%
\end{equation*}
Choose $\delta \in (0,\gamma/4)$ such that $d(\varphi_{T,u}(y),\varphi_{T,u}(z)) < \gamma/2$ whenever $u \in \UC$ and $d(y,z) < \delta$. Then%
\begin{align*}
 & B^{u,\tau}_{\delta}(y(x,\tau)) \subset B^{u,\tau}_{\gamma/2}(x), \\
&	\varphi_{\tau+T,u}(B^{u,\tau}_{\delta}(y(x,\tau))) \cap N_{\gamma/2}(Q(\theta^{\tau+T}u)) = \emptyset.%
\end{align*}
Hence,%
\begin{equation*}
  B^{u,\tau}_{\delta}(y(x,\tau)) \cap B(u,\tau+T,\gamma/2) = \emptyset.%
\end{equation*}
Using Lemma \ref{lem_second_vol}, this leads to%
\begin{align*}
  &\vol(B(u,\tau,\gamma/2)) - \vol(B(u,\tau+T,\gamma/2)) \geq \sum_{x \in E} \vol(B^{u,\tau}_{\delta}(y(x,\tau))) \\
	 &\geq C_{3\gamma/2,\delta} \sum_{x\in E} \vol(B^{u,\tau}_{3\gamma/2}(x)) \geq C_{3\gamma/2,\delta} \vol(B(u,\tau,\gamma/2)).%
\end{align*}
Therefore, setting $C := C_{3\gamma/2,\delta}$, we obtain%
\begin{equation*}
  \vol(B(u,\tau+T,\gamma/2)) \leq (1 - C) \cdot \vol(B(u,\tau,\gamma/2)),%
\end{equation*}
where we observe that $C \in (0,1)$ by our choice of $\delta$. By Lemma \ref{lem_liu}, this implies%
\begin{align*}
  \pi_{\tp}(\varphi^Q,P;-\log J^+\varphi) &= \lim_{\tau \rightarrow \infty}\frac{1}{\tau}\int \log \vol(B(u,\tau,\gamma/2)) \, \rmd P(u) \\
	&\leq \frac{1}{T} \log (1 - C) < 0%
\end{align*}
in contradiction to \eqref{eq_pressure_zero}. We have thus proven that $\Lambda$ is an attractor under (C1) and (C2).%

\emph{Step 5}: We prove that $h_{\inv}(K,Q) = 0$ for every $K \subset \mathrm{core}(Q)$. We know that $\Lambda$ is an attractor and by Assumption (C3) we have $\Lambda \subset \inner\, Q$. Hence, there exists an open neighborhood $V$ of $\Lambda$ with $f_{u^0}(V) \subset V \subset Q$. By complete controllability on $\core(Q)$ (guaranteed by Proposition \ref{prop_complete_controllability}) and since $\core(Q)$ is dense in $Q$ (guaranteed by Assumption (C5) and Lemma \ref{lem_core_open_dense}), for every $x \in K$ we find $u^x \in \UC_0$ and $\tau_x \in \Z_+$ so that $\varphi(\tau_x,x,u^x) \in V$. By continuity, we can choose a neighborhood $V_x$ of $x$ so that $\varphi(\tau_x,V_x \cap K,u^x) \subset V$. Moreover, by Assumption (C4), the involved trajectories do not leave $Q$ (using the same arguments as in the proof of Theorem \ref{thm_inv_achievability1}). Since $K$ is compact, we can choose a finite subcover of the cover $\{V_x\}_{x\in K}$, say $\{V_{x_1},\ldots,V_{x_r}\}$. Let $\tau \geq \max\{ \tau_{x_i} : i = 1,\ldots,r \}$. Then the set $\SC \subset \UC_0$ consisting of the control sequences%
\begin{equation*}
  u^i_t := \left\{ \begin{array}{cc}
	                    u^{x_i}_t & \mbox{ for } 0 \leq t \leq \tau_{x_i}, \\
											  u^0 & \mbox{ for } \tau_{x_i} < t \leq \tau%
										\end{array}\right.,\quad i = 1,\ldots,r%
\end{equation*}
is a $(\tau,K,Q)$-spanning set by construction. Hence, $r_{\inv}(\tau,K,Q) \leq r$ for all $\tau$ large enough, implying $h_{\inv}(K,Q) = 0$.%
\end{proof}

\section{Stabilization to a hyperbolic set}\label{sec_stabilization}

Consider again the control system $\Sigma$ and assume that $U \subset \R^m$ with $U = \cl\,\inner\, U$. Further assume that the right-hand side $f:M \tm U \rightarrow M$ is continuously differentiable.%

We fix a control value $u^0 \in \inner\, U$. As in Subsection \ref{subsec_structure}, we assume that the diffeomorphism $f_0 := f_{u^0}:M \rightarrow M$ has an isolated invariant hyperbolic set $\Lambda$. Instead of ``blowing up'' this set to a hyperbolic set $Q$ of $\Sigma$ and asking for invariance of $Q$, we now consider the related control objective of locally stabilizing $\Sigma$ to $\Lambda$.%

Given a discrete noiseless channel, we say that $\Sigma$ is \emph{locally uniformly stabilizable} to $\Lambda$ if for every $\ep>0$ there is a $\delta>0$ and a coder-controller operating over the given channel and achieving that%
\begin{equation*}
  \sup_{t \geq 0,\ x_0 \in N_{\delta}(\Lambda)}\dist(x_t,\Lambda) \leq \ep \mbox{\quad and \quad} \sup_{x_0 \in N_{\delta}(\Lambda)}d_{\infty}(u_t,u^0) \leq \ep.%
\end{equation*}
That is, whenever the initial state $x_0$ is close enough to $\Lambda$, the controller keeps $x_t$ within a distance of $\ep$ to $\Lambda$ for all times via a control sequence that is $\ep$-close to $u^0$ at all times.%

We borrow here the channel model considered in Nair et al.~\cite{Nea} of a \emph{discrete noiseless channel} with a time-varying coding alphabet $\AC_t$ -- at each time instant $t\in\Z_+$, one symbol from $\AC_t$ is transmitted without error or delay. The associated \emph{average data rate} is%
\begin{equation*}
  R_{\av} := \liminf_{\tau \rightarrow \infty}\frac{1}{\tau}\sum_{t=0}^{\tau-1}\log|\AC_t|.%
\end{equation*}
Both coder and decoder/controller may use past knowledge, but a detailed description of these components is not necessary for the proof of the following theorem. The only thing important is that at time $t$ the controller cannot use any other information than what has been sent through the channel until time $t$.%

\begin{theorem}\label{thm_local_stab}
Let the following assumptions be satisfied for the system $\Sigma$:%
\begin{enumerate}
\item[(i)] There is a $\tau\in\Z_{>0}$ so that for every $x \in \Lambda$ the pair $(x,u_0^{\tau})$ is regular, where $u_0^{\tau} = (u_0,u_0,\ldots,u_0) \in U^{\tau}$.%
\item[(ii)] $\Sigma$ is locally uniformly stabilizable to $\Lambda$ over a discrete noiseless channel. 
\end{enumerate}
Then the channel must support an average data rate satisfying%
\begin{equation*}
  R_{\av} \geq -P_{\tp}((f_0)_{|\Lambda},-\log J^+ f_0).%
\end{equation*}
\end{theorem}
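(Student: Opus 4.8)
The plan is to repeat the lower-bound argument of Subsections~\ref{subsec_first_lb}--\ref{subsec_lb_topological}, with the role of a minimal $(\tau,Q)$-spanning set taken over by the finite set of control words that a coder--controller can produce on a horizon of length $\tau$. First I would fix a small $\ep>0$ and apply assumption~(ii): there are $\delta>0$ and a coder--controller over the given channel, of average data rate $R_{\av}$, such that from any $x_0\in N_{\delta}(\Lambda)$ the closed-loop trajectory obeys $\dist(x_t,\Lambda)\le\ep$ for all $t\ge0$ while the generated control $u$ obeys $d_{\infty}(u,u^0)\le\ep$. Since $U=\cl\,\inner\,U$ is locally connected and $\Lambda$ is isolated invariant and hyperbolic, we are in the small-perturbation framework of Subsection~\ref{subsec_structure}: for $\ep$ small enough all these controls lie in $\UC_0=U_0^{\Z}$ with $U_0=N_{\ep}(u^0)$, and $\Sigma$ restricted to $\UC_0$ carries the hyperbolic set $Q=Q_{\ep}$ of Theorem~\ref{thm_smallpert_hypset}, with $L(Q_{\ep})$ isolated invariant, $Q_{\ep}(u)=\Lambda_u$ for all $u\in\UC_0$, continuous fiber map, and the volume lemma at our disposal (this is where assumption~(i) and, implicitly, $C^2$-regularity enter, as in the companion achievability theorem).

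The finite-horizon counting is as follows. On $[0;\tau)$ the control sequence issued by the controller is a deterministic function of the symbols received up to time $\tau-1$, so there are at most $\prod_{t=0}^{\tau-1}|\AC_t|$ distinct such sequences; denote this finite set by $\SC_{\tau}\subset\UC_0$. For every $x_0\in N_{\delta}(\Lambda)$ the closed-loop control lies in $\SC_{\tau}$ and keeps the trajectory $\ep$-close to $\Lambda$, so $x_0\in Q^{0}(u,\tau,\ep):=\{x\in M:\dist(\varphi(t,x,u),\Lambda)\le\ep,\ 0\le t<\tau\}$ for some $u\in\SC_{\tau}$. Hence $0<\vol(N_{\delta}(\Lambda))\le|\SC_{\tau}|\cdot\sup_{u\in\SC_{\tau}}\vol(Q^{0}(u,\tau,\ep))$. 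Now, exactly as in Subsection~\ref{subsec_central_est_lb}: because $L(Q_{\ep})$ is isolated invariant, the shadowing lemma (Theorem~\ref{thm_shadowing}) moves any $x\in Q^{0}(u,\tau,\ep)$ to a point of the fiber $Q_{\ep}(u)$ and thence, via a maximal $(u,\tau,r)$-separated subset $F_{u,\tau,r}\subset Q_{\ep}(u)$, into a Bowen-ball of order $\tau$ and a fixed small radius; the volume lemma (Theorem~\ref{thm_volume_lemma}) then gives, up to a $\tau$-independent constant, $\vol(Q^{0}(u,\tau,\ep))\le\sum_{y\in F_{u,\tau,r}}J^{+}\varphi_{\tau,u}(y)^{-1}\le\pi_{-\log J^{+}\varphi}(u,\tau,r)$.

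Combining these with the counting bound and isolating $\frac{1}{\tau}\log|\SC_{\tau}|$, one gets $\frac{1}{\tau}\log|\SC_{\tau}|\ge\frac{1}{\tau}\log\vol(N_{\delta}(\Lambda))-\frac{1}{\tau}\log C-\frac{1}{\tau}\log\sup_{u\in\UC_0}\pi_{-\log J^{+}\varphi}(u,\tau,r)$ for all $\tau$, with $C$ independent of $\tau$. Since $\frac{1}{\tau}\log|\SC_{\tau}|\le\frac{1}{\tau}\sum_{t=0}^{\tau-1}\log|\AC_t|$, taking $\liminf_{\tau\to\infty}$ yields $R_{\av}\ge-\limsup_{\tau\to\infty}\frac{1}{\tau}\log\sup_{u\in\UC_0}\pi_{-\log J^{+}\varphi}(u,\tau,r)$. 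Letting $r\downarrow0$ and interchanging the supremum over $u$ with the limits in $r$ and $\tau$ — legitimate by the subadditive-cocycle approximations used in Propositions~\ref{prop_interchanged} and~\ref{prop_purelytop_lb} — and then invoking Proposition~\ref{prop_purelytop_lb}, the right-hand side turns into $-\sup_{P\in\MC(\theta)}\pi_{\tp}(\varphi^{Q_{\ep}},P;-\log J^{+}\varphi)$, so $R_{\av}\ge-\sup_{P\in\MC(\theta)}\pi_{\tp}(\varphi^{Q_{\ep}},P;-\log J^{+}\varphi)$. Finally, since the channel, hence $R_{\av}$, is fixed while only the coder--controller and $\delta$ depended on $\ep$, I would let $\ep\downarrow0$: by Proposition~\ref{prop_pressure_continuity} and the remark following it, $\sup_{P\in\MC(\theta)}\pi_{\tp}(\varphi^{Q_{\ep}},P;-\log J^{+}\varphi)\to P_{\tp}((f_0)_{|\Lambda},-\log J^{+}f_0)$ as $U_0=N_{\ep}(u^0)$ shrinks to $\{u^0\}$, which gives $R_{\av}\ge-P_{\tp}((f_0)_{|\Lambda},-\log J^{+}f_0)$ as desired.

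I expect the main obstacle to be twofold. First, the finite-horizon counting must be made watertight: because of the feedback loop one has to argue carefully that the map from transmitted symbol words to control words on $[0;\tau)$ is well defined, so that indeed $|\SC_{\tau}|\le\prod_{t<\tau}|\AC_t|$ — this is precisely the gap alluded to in the introduction in connection with \cite{Nea}. Second, the three limiting operations ($r\downarrow0$, $\tau\to\infty$, $\ep\downarrow0$) must be organized so that the shadowing radius, the volume-lemma constants, and the distortion constant controlling $\log J^{+}$ under small control perturbations (as in the proof of Proposition~\ref{prop_pressure_continuity}) remain uniform; in particular, since Theorem~\ref{thm_volume_lemma} is stated for $C^{2}$ systems one either strengthens the hypothesis to $C^{2}$ or replaces the sharp volume lemma by a cruder bound still capturing the exponential growth rate.
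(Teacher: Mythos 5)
Your proposal is correct and follows the paper's own proof almost step for step: restrict the control range to a small ball around $u^0$, construct $Q^{\ep}$ via Theorem \ref{thm_smallpert_hypset}, bound the number of control words the controller can emit on $[0;\tau)$ by $\prod_t|\AC_t|$, estimate the volume of the corresponding tubes by shadowing plus the volume lemma, and pass to $-P_{\tp}((f_0)_{|\Lambda},-\log J^+f_0)$ via Propositions \ref{prop_purelytop_lb} and \ref{prop_pressure_continuity} as $\ep\downarrow 0$. The one point where you deviate, and where your parenthetical about hypothesis (i) is misplaced: (i) is \emph{not} needed to set up the small-perturbation framework (Theorem \ref{thm_smallpert_hypset} only needs $\Lambda$ hyperbolic and isolated invariant and $U$ locally connected at $u^0$). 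In the paper it is used to prove $\Lambda\subset\inner\,Q^{\ep}$ — by steering, via regularity of $(x,u_0^{\tau})$, any $y$ near $x\in\Lambda$ onto a full controlled orbit staying close to $L(Q^{\ep})$, hence inside it by isolatedness — because the paper routes the counting through the invariance-entropy machinery of Section \ref{sec_invariance_entropy} (exhibiting $\SC_{\tau}$ as a $(\tau,N_{\delta}(\Lambda),Q^{\ep})$-spanning set), and Lemma \ref{lem_first_lb} and its successors require the compact set $N_{\delta}(\Lambda)$ of positive volume to be \emph{contained in} $Q^{\ep}$. Your direct count $\vol(N_{\delta}(\Lambda))\le|\SC_{\tau}|\cdot\sup_u\vol(Q^0(u,\tau,\ep))$, with $Q^0$ defined by distance to $\Lambda$ rather than to the fibers, sidesteps that inclusion (the shadowing and volume-lemma estimates still apply since $d_H(\Lambda,Q^{\ep}(\theta^t u))\le\beta_0$), so your variant never actually uses (i); this is a harmless, indeed slightly cleaner, rearrangement rather than an error. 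Your two caveats — well-definedness of the map from symbol words to control words, and the $C^1$/$C^2$ mismatch with the volume lemma — are both legitimate and are equally present in the paper's proof.
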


\begin{proof}
We restrict the control range to the closed $\ep$-ball around $u^0$ in $U$, where $\ep>0$ is small enough so that a hyperbolic set $Q^{\ep}$ as in Theorem \ref{thm_smallpert_hypset} can be constructed for the associated control system $\Sigma^{\ep}$. In particular, we know that $\Lambda$ is the $u^0$-fiber of $Q^{\ep}$. We prove that assumption (i) implies $\Lambda \subset \inner\, Q^{\ep}$. To this end, pick an arbitrary $x \in \Lambda$. From the assumption, it follows that there exist $\tau>0$ and a neighborhood $B_{\delta_x}(x)$ so that every $y \in B_{\delta_x}(x)$ can be steered to $f_0^{\tau}(x)$ in $\tau$ steps of time via a controlled trajectory that is never further away from $L(Q^{\ep})$ than a given $\rho>0$. On the other hand, we can choose $\delta_x$ small enough so that every $y \in B_{\delta_x}(x)$ can be reached from $x' := f_0^{-\tau}(x)$ via a controlled trajectory with the same property. Hence, for every $y_0 \in B_{\delta_x}(x)$ we can construct a controlled trajectory $(u_t,y_t)_{t\in\Z}$ so that $d_U(u_t,u^0) \leq \rho$ and $d(y_t,f_0^t(x)) \leq \rho$ for all $t \in \Z$. Here the assumption that $u^0 \in \inner\, U$ guarantees that $u_t \in \cl\,B_{\ep}(u^0) \cap U$ for $\rho \leq \ep$. Since $L(Q^{\ep})$ is isolated invariant, this implies $y_0 \in Q^{\ep}$. Then $\bigcup_{x\in\Lambda}B_{\delta_x}(x)$ is an open neighborhood of $\Lambda$ contained in $Q^{\ep}$.%

Since $\Lambda \subset \inner\, Q^{\ep}$, we can choose $\ep'>0$ such that $N_{\ep'}(\Lambda) \subset Q^{\ep}$. If a coder-controller achieves that%
\begin{equation*}
  \sup_{t\geq0,\ x_0 \in N_{\delta}(\Lambda)}\dist(x_t,\Lambda) \leq \ep'%
\end{equation*}
for some $\delta \in (0,\ep')$ with controls taking values in $\cl\,B_{\ep}(u^0)$, then the set of different control sequences $\SC_{\tau}$ generated by the controller in the time interval $[0;\tau)$ is a $(\tau,N_{\delta}(\Lambda),Q^{\ep})$-spanning set for $\Sigma^{\ep}$. Since the number of control sequences the controller can generate is bounded by the amount of information it receives through the channel, the cardinality of $\SC_{\tau}$ satisfies%
\begin{equation*}
  |\SC_{\tau}| \leq \prod_{t=0}^{\tau-1}|\AC_t|%
\end{equation*}
so that the analysis of Section \ref{sec_invariance_entropy} shows that%
\begin{equation*}
  R_{\mathrm{av}} \geq -\sup_{u \in (\cl\,B_{\ep}(u^0))^{\Z}} \lim_{\rho\downarrow0}\limsup_{\tau \rightarrow \infty} \frac{1}{\tau}\log \pi_{-\log J^+\varphi}(u,\tau,\rho).%
\end{equation*}
Proposition \ref{prop_pressure_continuity} implies that the right-hand side of this inequality converges to $-P_{\tp}((f_0)_{|\Lambda},-\log J^+f_0)$ as $\ep$ tends to zero, which completes the proof.
\end{proof}

\begin{remark}
The preceding theorem contains as a special case the stabilization to a hyperbolic equilibrium point $x_0$ of $f_0$. In this case, assumption (i) reduces to the controllability of the linearization at $(x_0,u^0)$, and the lower bound reduces to%
\begin{equation*}
  \log J^+f_0(x_0) = \log |\det \rmD f_0(x_0)_{|E^+_{x_0}}| = \sum_{\lambda \in \spec(\rmD f_0(x_0))} \max\{0,d_{\lambda}\log |\lambda|\},%
\end{equation*}
where the sum is taken over the eigenvalues $\lambda$ of $\rmD f_0(x_0)$ with associated multiplicities $d_{\lambda}$. This lower bound was claimed in \cite[Thm.~3]{Nea} to hold (without the hyperbolicity assumption), but the proof presented there contains a gap.\footnote{Actually, \cite{Nea} studies asymptotic stabilization, but in the analysis of the minimal data rate there is no essential difference between stabilization and asymptotic stabilization.}
\end{remark}

\begin{remark}
The topological pressure $P_{\tp}((f_0)_{|\Lambda},-\log J^+ f_0)$ is a well-studied quantity in the theory of dynamical systems. In particular, it is equal to the escape rate from a small neighborhood $N$ of $\Lambda$, see \cite{Bo2,You}:%
\begin{equation*}
  P_{\tp}((f_0)_{|\Lambda},-\log J^+ f_0) = \lim_{\tau \rightarrow \infty}\frac{1}{\tau}\log \vol( \{ x : f_0^t(x) \in N,\ 0 \leq t < \tau \} ).%
\end{equation*}
\end{remark}

In the case when $\Lambda$ reduces to a hyperbolic periodic orbit, the techniques applied to prove \cite[Thm.~3]{Nea} and \cite[Thm.~4.3]{Ka2} together with Theorem \ref{thm_local_stab} almost immediately yield the following data-rate theorem.%

\begin{theorem}\label{thm_local_stab_achiev}
Consider the control system $\Sigma$ and let the following assumptions be satisfied:%
\begin{itemize}
\item $U \subset \R^m$ with $U = \cl\,\inner\, U$ and $f:M \tm U \rightarrow M$ is continuously differentiable.%
\item For some $u^0 \in \inner\, U$ and $f_0 := f_{u^0}$, there exists a hyperbolic periodic orbit $\Lambda = \{ x_0,f_0(x_0),\ldots,f^{\tau-1}(x_0) \}$.\footnote{Observe that a hyperbolic periodic orbit is always an isolated invariant set.}%
\item The linearization of $\Sigma$ along the orbit $\Lambda$ is controllable on the time interval $[0;\tau]$.%
\end{itemize}
Then the smallest average data rate $R_0$ above which $\Sigma$ is locally uniformly stabilizable to $\Lambda$ is given by%
\begin{equation}\label{eq_stab_min_rate}
  R_0 = \frac{1}{\tau} \sum_{\lambda \in \spec(\rmD f_0^{\tau}(x_0))} \max\{0,d_{\lambda}\log|\lambda|\},%
\end{equation}
where we sum over the different eigenvalues $\lambda$ of $\rmD f_0^{\tau}(x_0)$ with associated multiplicities $d_{\lambda}$.%
\end{theorem}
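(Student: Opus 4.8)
The plan is to prove the two inequalities $R_0 \geq R_*$ and $R_0 \leq R_*$ separately, where $R_* := \frac{1}{\tau}\sum_{\lambda\in\spec(\rmD f_0^{\tau}(x_0))}\max\{0,d_{\lambda}\log|\lambda|\}$ denotes the right-hand side of \eqref{eq_stab_min_rate}.

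\emph{The lower bound.} First I would verify the hypotheses of Theorem~\ref{thm_local_stab}. Its assumption~(ii) is precisely local uniform stabilizability, which holds for any admissible channel. For its assumption~(i) I would invoke a short fact about $\tau$-periodic discrete-time linear systems: if the linearization of $\Sigma$ along $\Lambda$ is controllable on $[0;\tau]$, then for every $x\in\Lambda$ and every $n\geq 2$ the reachable subspace over $[0;n\tau)$ of that linearization, started at $x$, is all of $T_xM$, since it already contains the reachable subspace over a shifted full period; hence $(x,u_0^{2\tau})$ is regular for each $x\in\Lambda$, i.e.\ assumption~(i) of Theorem~\ref{thm_local_stab} holds with $2\tau$ in place of $\tau$. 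Because $f_0^{\tau}(x_0)=x_0$, replacing $\tau$ by $2\tau$ squares $J^+f_0^{\tau}(x_0)$ and the eigenvalues, leaving $R_*$ unchanged. Theorem~\ref{thm_local_stab} then gives $R_{\av}\geq -P_{\tp}((f_0)_{|\Lambda},-\log J^+f_0)$ for every channel over which $\Sigma$ is stabilizable. It remains to identify this pressure: $\Lambda$ carries the unique $f_0$-invariant measure $\mu_{\Lambda}=\frac1\tau\sum_{t=0}^{\tau-1}\delta_{f_0^t(x_0)}$ with zero entropy, so the variational principle yields $P_{\tp}((f_0)_{|\Lambda},-\log J^+f_0)=-\frac1\tau\sum_{t=0}^{\tau-1}\log J^+f_0(f_0^t(x_0))$; by the cocycle identity for $J^+$ this sum telescopes to $\log|\det\rmD f_0^{\tau}(x_0)_{|E^+_{x_0}}|$, and by hyperbolicity $E^+_{x_0}$ is the sum of the generalized eigenspaces of $\rmD f_0^{\tau}(x_0)$ for eigenvalues of modulus $>1$, so $-P_{\tp}((f_0)_{|\Lambda},-\log J^+f_0)=\frac1\tau\sum_{\lambda}\max\{0,d_{\lambda}\log|\lambda|\}=R_*$. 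Hence $R_0\geq R_*$.

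\emph{Achievability.} For the converse I would adapt the coder--controller of \cite[Thm.~3]{Nea} (its continuous-time periodic-orbit version is \cite[Thm.~4.3]{Ka2}; the same adaptation already underlies Step~4 of the proof of Theorem~\ref{thm_inv_achievability1}) from an equilibrium to $\Lambda$. Fix $\eta>0$ and work with the time-$\tau$ map $g:=f_0^{\tau}$, for which $x_0$ is a hyperbolic fixed point. In local coordinates straightening $\Lambda$ near $x_0$, write $T_{x_0}M=E^-_{x_0}\oplus E^+_{x_0}$ so that $A:=\rmD g(x_0)$ is block-diagonal with $\|A_{|E^-_{x_0}}\|<1$, $A_{|E^+_{x_0}}$ expanding, and $g(\xi)=A\xi+N(\xi)$ with $N(0)=0$, $\rmD N(0)=0$. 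Controllability of the linearization of $\Sigma$ along $\Lambda$ on $[0;\tau]$ means the controller can, after each group of $\tau$ time steps, add a prescribed increment to the unstable component of the state deviation while keeping the whole controlled trajectory inside the $\ep$-tube around $\Lambda$. The coder keeps an estimate of the unstable component at the $k$-th period, partitions the current uncertainty cell — a box in $E^+_{x_0}$ — into $m$ sub-boxes (refined along the distinct expansion rates), transmits the index of the sub-box containing the state, and the controller applies the matching correction; after one period the unstable uncertainty is scaled by $|\det A_{|E^+_{x_0}}|/m$ plus a higher-order term from $N$. Choosing $m:=\lceil |\det A_{|E^+_{x_0}}|\,2^{\eta\tau}\rceil$ makes this factor $<1$, and coding a block of several consecutive periods at once makes the average data rate at most $\frac1\tau\log|\det A_{|E^+_{x_0}}|+\eta=R_*+\eta$ up to an arbitrarily small overhead.

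\emph{Closing the loop, and the main obstacle.} Since only local stabilization is required, I would pick $N_{\delta}(\Lambda)$ and all box sizes small enough that the remainder $N$ is dominated by the contraction it competes with: then the stable component stays bounded by a geometric series, the unstable uncertainty contracts at rate $2^{-\eta\tau}$ per period, and the controls remain in $\cl\,B_{\ep}(u^0)\cap U$, so $\sup_{t\geq0}\dist(x_t,\Lambda)\leq\ep$ and $\sup_t d_{\infty}(u_t,u^0)\leq\ep$ for all $x_0$ close enough to $\Lambda$; letting $\eta\downarrow0$ gives $R_0\leq R_*$ and hence \eqref{eq_stab_min_rate}. The main obstacle is not the linear bookkeeping, which produces the rate $R_*$ essentially for free, but the uniform control of the nonlinear remainder over a full period and over infinitely many iterations together with avoidance of quantizer overflow; this forces a simultaneous and careful choice of $\delta$, the box sizes and the margin $\eta$, exactly as in the equilibrium case of \cite{Nea}. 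A minor additional point is the periodic-linear-systems argument used above to reduce the hypotheses of Theorem~\ref{thm_local_stab} to controllability of the linearization along $\Lambda$.
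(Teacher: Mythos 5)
Your proposal is correct and follows essentially the same route as the paper: the lower bound via Theorem \ref{thm_local_stab} combined with the variational-principle computation of $P_{\tp}((f_0)_{|\Lambda},-\log J^+f_0)$ on the periodic orbit, and achievability by adapting the coder--controller of \cite{Nea} (resp.\ \cite[Thm.~4.3]{Ka2}) around the regular periodic orbit. Your explicit verification that controllability of the linearization along $\Lambda$ yields assumption (i) of Theorem \ref{thm_local_stab} (with $2\tau$ in place of $\tau$) is a detail the paper leaves implicit, but it does not change the argument.
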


\begin{proof}
By the variational principle for pressure, we have%
\begin{equation*}
  P_{\tp}((f_0)_{|\Lambda},-\log J^+f_0) = \sup_{\mu \in \MC((f_0)_{|\Lambda})}\Bigl[h_{\mu}(f_0) - \int \log J^+f_0\, \rmd\mu\Bigr].%
\end{equation*}
Note that the measure-theoretic entropy $h_{\mu}(f_0)$ vanishes, because $\Lambda$ is finite, and the only invariant probability measure on the periodic orbit is the one which puts equal mass to all points of $\Lambda$. Hence,%
\begin{align*}
  &P_{\tp}((f_0)_{|\Lambda},-\log J^+f_0) = - \frac{1}{\tau} \sum_{t=0}^{\tau-1} \log J^+f_0( f_0^t(x_0)) \\
	 &\qquad = \frac{1}{\tau} \log J^+ f_0^{\tau}(x_0) = \frac{1}{\tau}\sum_{\lambda \in \spec(\rmD f_0^{\tau}(x_0))}\max\{0,d_{\lambda}\log|\lambda|\},%
\end{align*}
where we use that the unstable subspace at $(u^0,x_0)$ is the sum of the generalized eigenspaces of the linear operator $\rmD f_0^{\tau}(x_0):T_{x_0}M \rightarrow T_{x_0}M$ corresponding to unstable eigenvalues. Consequently, Theorem \ref{thm_local_stab} yields%
\begin{equation*}
  R_0 \geq \frac{1}{\tau} \sum_{\lambda \in \spec(\rmD f_0^{\tau}(x_0))} \max\{0,d_{\lambda}\log |\lambda|\}.%
\end{equation*}
For the converse inequality, we observe that the $\delta$-neighborhood of $\Lambda$ reduces to the union of the $\delta$-balls around the points $x_0,f_0(x_0),\ldots,f_0^{\tau-1}(x_0)$. Then the techniques used in the proof of \cite[Thm.~4.3]{Ka2} (adapted to the discrete-time setting) show how one can use the regularity assumption to keep the state within an $\ep$-neighborhood of the periodic orbit for all times with an average data rate arbitrarily close to the right-hand side of \eqref{eq_stab_min_rate} if $\ep$ is chosen small enough (which actually works also without the hyperbolicity assumption).%
\end{proof}

\section{An example built on the H\'enon horseshoe}\label{sec_henon}

Consider the map%
\begin{equation*}
  f(x,y) := (5 - 0.3y - x^2,x),\quad f:\R^2 \rightarrow \R^2,%
\end{equation*}
which is a member of the H\'enon family \cite{Hen}, one of the most-studied classes of dynamical systems that exhibit chaotic behavior.%

Obviously, $f$ is a polynomial, hence real-analytic diffeomorphism of $\R^2$. We extend $f$ to a control system with additive control:%
\begin{equation*}
  \Sigma:\quad \left(\begin{array}{c} x_{t+1} \\ y_{t+1} \end{array}\right) = \left(\begin{array}{c} 5 - 0.3 y_t - x_t^2 + u_t \\ x_t + v_t \end{array}\right),\quad u_t^1 + v_t^2 \leq 1.%
\end{equation*}
According to \cite[Thm.~4.2]{Rob}, the nonwandering set of $f$ is a topologically transitive hyperbolic set $\Lambda$, contained in the square centered at the origin with side length%
\begin{equation*}
  R := 1.3 + \sqrt{(1.3)^2 + 20}.%
\end{equation*}
It is also known that in this case there exists an isolating neighborhood of $\Lambda$ (cf.~\cite[Thm.~3.9]{Bo2}). The construction in Subsection \ref{subsec_structure} yields an $\ep>0$ so that the system%
\begin{equation*}
  \Sigma^{\ep}:\quad \left(\begin{array}{c} x_{t+1} \\ y_{t+1} \end{array}\right) = \left(\begin{array}{c} 5 - 0.3 y_t - x_t^2 + u_t \\ x_t + v_t \end{array}\right),\quad u_t^2 + v_t^2 \leq \ep^2%
\end{equation*}
admits a hyperbolic set $Q^{\ep}$ which contains $\Lambda$ as its $0$-fiber such that $L(Q^{\ep})$ is isolated invariant and the fiber map is continuous.%

It can also be shown that $Q^{\ep}$ has nonempty interior by applying Theorem \ref{thm_universal_controls}, since one can clearly reach a set of nonempty interior from every $(x,y) \in Q^{\ep}$ in only one step of time. Hence, uniform forward accessibility holds. It thus follows that $\inner\, Q^{\ep} \neq \emptyset$ by Proposition \ref{prop_Q_nonempty_int} and also that complete controllability holds on an open and dense subset of $Q^{\ep}$ by Corollary \ref{cor_controllability}. Figure \ref{fig2} shows a numerical approximation of the set $Q^{\ep}$ for $\ep = 0.08$ computed with the software tool \texttt{SCOTS} \cite{RZa}.%




\begin{figure}[h]
\begin{center}
\includegraphics[height=8.0cm,width=8.0cm]{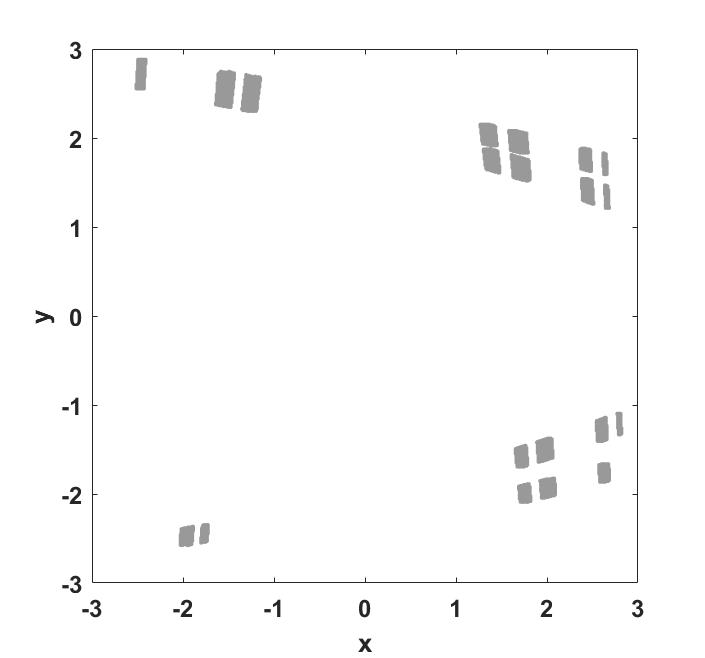}
\caption{The set $Q^{\ep}$ for $\ep = 0.08.$\label{fig2}}
\end{center}
\end{figure}

Hence, Theorem \ref{thm_ie_pressure_bound} is applicable to $Q^{\ep}$ and we know that%
\begin{equation*}
  h_{\inv}(Q^{\ep}) \geq \inf_{\mu \in \MC(\Phi_{|L(Q^{\ep})})}\Bigl[ \int \log J^+\varphi\, \rmd\mu - h_{\mu}(\varphi,(\pi_{\UC})_*\mu)\Bigr].%
\end{equation*}
As we have seen before, for $\ep \rightarrow 0$, the right-hand side converges to $-P_{\tp}(f_{|\Lambda};-\log J^+ f)$. Numerical studies from Froyland \cite[Table 2]{Fro}, based on Ulam's method, suggest that%
\begin{equation*}
  P_{\tp}(f_{|\Lambda};-\log J^+ f) \approx -0.696.%
\end{equation*}
Hence, according to our considerations in Subsection \ref{subsec_achievability}, we expect that $h_{\inv}(Q^{\ep}) \approx 0.696$ for all sufficiently small $\ep$ (and the same estimate should hold for the minimal average data rate for local stabilization to $\Lambda$).%

It is also possible to work with a scalar control and consider the system%
\begin{equation*}
  \Sigma':\quad \left(\begin{array}{c} x_{t+1} \\ y_{t+1} \end{array}\right) = \left(\begin{array}{c} 5 - 0.3 y_t - x_t^2 + u_t \\ x_t \end{array}\right),\quad |u_t| \leq 1.%
\end{equation*}
In this case, some work is needed to check uniform forward accessibility. According to \cite[Thm.~3]{JSo}, we need to check that $\dim L^+(x,y) = 2$ for all $(x,y) \in \R^2$, where%
\begin{equation*}
  L^+ = \mbox{Lie}\{ \Ad_0^k X_u^+ : k \geq 0,\ u \in (-1,1) \},%
\end{equation*}
the Lie algebra generated by the vector fields $\Ad_0^kX_u^+$, defined by%
\begin{equation*}
  \Ad_0^k X_u^+(x,y) = \frac{\partial}{\partial v}\Bigl|_{v=0} f_0^{-k} \circ f_u^{-1} \circ f_{u+v} \circ f_0^k(x,y).%
\end{equation*}
A simple computation yields%
\begin{equation*}
  f_u^{-1}(x,y) = (y,\frac{1}{0.3}(5 - y^2 - x + u)).%
\end{equation*}
Hence, we can compute%
\begin{align*}
  \Ad_0^0 X_u^+(x,y) &= \frac{\partial}{\partial v}\Bigl|_{v=0} f_u^{-1} \circ f_{u+v}(x,y) \\
	                   &= \frac{\partial}{\partial v}\Bigl|_{v=0} f_u^{-1}(5 - 0.3y - x^2 + u + v,x) \\
										 &= \frac{\partial}{\partial v}\Bigl|_{v=0} (x, \frac{1}{0.3}(0.3y - v)) = (0,-\frac{1}{0.3}).%
\end{align*}
In particular,%
\begin{equation*}
  f_u^{-1}(f_{u+v}(x,y)) = (x,y - \frac{1}{0.3}v).%
\end{equation*}
This can be used to compute%
\begin{align*}
  \Ad_0^1 X_u^+(x,y) &= \frac{\partial}{\partial v}\Bigl|_{v=0} f_0^{-1} \circ f_u^{-1} \circ f_{u+v} \circ f_0(x,y) \\
	                   &= \frac{\partial}{\partial v}\Bigl|_{v=0} f_0^{-1} \circ f_u^{-1} \circ f_{u+v}(5 - 0.3y - x^2,x) \\
					           &= \frac{\partial}{\partial v}\Bigl|_{v=0} f_0^{-1}( 5 - 0.3y - x^2,x - \frac{1}{0.3}v ) \\
										 &= \frac{\partial}{\partial v}\Bigl|_{v=0} (x - \frac{1}{0.3}v,\frac{1}{0.3}(5 - (x - \frac{1}{0.3}v)^2 - 5 + 0.3y + x^2) \\
									   &= (-\frac{1}{0.3},\frac{2}{(0.3)^2}x).%
\end{align*}
Since $L_+$ contains all linear combinations of the vector fields $\Ad_0^0 X_u^+$ and $\Ad_0^1 X_u^+$, we see that $L^+(x,y) = \R^2$ for all $(x,y) \in \R^2$. Hence, forward accessibility holds (which can easily seen to be uniform) and our statements about $\Sigma$ also hold for $\Sigma'$.%

\section{Open questions}\label{sec_openq}

The results presented in this paper leave many questions open. In the author's opinion, the most important ones are the following:%
\begin{itemize}
\item Are there non-trivial hyperbolic sets of control systems which do not arise by the small-perturbation construction?%
\item Is the fiber map $u \mapsto Q(u)$ lower semicontinuous for a general hyperbolic set with isolated invariant lift? Is it at least lower semicontinuous in the special case when the fibers are finite (but not singletons)?%
\item Can the results of Subsection \ref{subsec_structure} about the controllability properties on the set $Q$ obtained by the small-perturbation construction be generalized to $C^{\infty}$ (instead of analytic) systems?\footnote{In the continuous-time case, results on genericity of universally regular controls exist for $C^{\infty}$-systems \cite{Cor} and have been used in the continuous-time analysis of invariance entropy, see \cite{DK4}.}%
\item Are the hyperbolic sets constructed from small perturbations \emph{control sets} under the assumptions of Corollary \ref{cor_controllability}? That is, are they maximal with the property of complete approximate controllability? (This is most probably equivalent to $L(Q)$ being a maximal chain transitive set.)%
\item How can we prove a general achievability result?%
\item Can our results about local stabilization be generalized to the continuous-time case, when $\Lambda$ is a hyperbolic set (with one-dimensional center bundle) of a system given by an ordinary differential equation?%
\end{itemize}

\appendix

\section{Some auxiliary results}\label{sec_appa}%

The proof of the following lemma was provided by Niels J.\ Diepeveen.\footnote{See https://mathoverflow.net/questions/332191/}

\begin{lemma}\label{lem_connectedness}
Let $(X,d)$ be a compact metric space and let $X^{\Z}$ be equipped with the sup-metric%
\begin{equation*}
  d_{\infty}(x,y) := \sup_{n \in \Z}d(x_n,y_n) \mbox{\quad for all\ } x = (x_n), y = (y_n).%
\end{equation*}
Then $(X^{\Z},d_{\infty})$ is connected if and only if $(X,d)$ is connected.
\end{lemma}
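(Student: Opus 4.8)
### Proof Proposal

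The plan is to prove both implications, the backward one being the substantive direction. For the forward implication, suppose $(X^{\Z},d_{\infty})$ is connected. The coordinate projection $\pi_0 : X^{\Z} \to X$, $x \mapsto x_0$, is continuous (indeed $1$-Lipschitz) and surjective, and the continuous image of a connected space is connected; hence $(X,d)$ is connected. This direction requires no compactness.

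For the backward implication, assume $(X,d)$ is connected and compact. The key idea is to show that any two points of $X^{\Z}$ lie in a common connected subset, which forces $X^{\Z}$ to be connected. Fix $x = (x_n)$ and $y = (y_n)$ in $X^{\Z}$. Since $X$ is a compact connected metric space, it is a continuum, and such spaces are \emph{well-chained}: for every $\delta > 0$ and every pair of points $a,b \in X$ there is a finite $\delta$-chain from $a$ to $b$ inside $X$. First I would use this together with a diagonal/enumeration argument to produce, for a fixed null sequence $\delta_k \downarrow 0$, a sequence of points $z^{(k)} \in X^{\Z}$ with $z^{(0)} = x$, $d_{\infty}(z^{(k)}, z^{(k+1)}) \le \delta_k$ for all $k$, and $z^{(k)} \to y$ coordinatewise; the point here is only to reduce to joining "nearby" points. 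The real work is a \emph{local connectedness} statement for $(X^{\Z},d_{\infty})$: for any $p \in X^{\Z}$ and any $\eta > 0$, the ball $B_\eta(p)$ is contained in a connected subset of $B_{2\eta}(p)$ (or some fixed multiple). I would prove this by showing that for any $q$ with $d_{\infty}(p,q) \le \eta$, the "coordinatewise path" obtained as follows joins $p$ to $q$ inside a ball of controlled radius: for each coordinate $n$, pick (using connectedness of $X$) a continuous path $\gamma_n : [0,1] \to X$ from $p_n$ to $q_n$ — here is exactly where I must be careful, since a general continuum need not be path-connected, so a literal path need not exist.

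Because of that obstruction, the cleaner route — and the one I expect to be the main obstacle to get exactly right — is to avoid paths entirely and argue with chains and closures directly. I would define, for $\eta>0$ and $p\in X^{\Z}$, the "$\eta$-component" $C_\eta(p)$ of $p$: the set of all $q\in X^{\Z}$ reachable from $p$ by a finite $\eta$-chain in $X^{\Z}$. Using that $X$ is well-chained, one shows $C_\eta(p)$ is \emph{all} of $X^{\Z}$: given arbitrary $q$, build in each coordinate $n$ a finite $\eta$-chain $p_n = w_n^0, w_n^1, \dots, w_n^{m} = q_n$ in $X$; the lengths $m$ can be taken uniform in $n$ by padding (repeating the last point), using compactness of $X$ to bound the required chain length uniformly — this uniform-length padding is the step that genuinely uses compactness. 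Then $w^j := (w_n^j)_{n\in\Z}$ is a finite $\eta$-chain from $p$ to $q$ in $X^{\Z}$. Thus $X^{\Z}$ is \emph{$\eta$-chained} for every $\eta>0$, i.e.\ "well-chained." Finally I invoke the standard fact that a compact (or even just complete, or metric) space which is $\eta$-chained for every $\eta>0$ and is \emph{not} necessarily connected — wait: well-chained does not imply connected in general (e.g.\ $\mathbb{Q}$), so I must instead combine chaining with a closure argument: show that for the component $K$ of $p$ in $X^{\Z}$ one has $K = X^{\Z}$ by proving $K$ is both closed and open. Openness is what the local statement above gives once phrased correctly: every point of $X^{\Z}$ has arbitrarily small \emph{connected} neighborhoods, obtained as coordinatewise products/iterated unions of small connected subsets of $X$; concretely, the set $\{q : d_\infty(p,q)\le\eta\}$ lies in the union $\bigcup$ of connected sets each containing $p$, hence in one connected set. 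Establishing that "every point has arbitrarily small connected neighborhoods" rigorously — by taking, in each coordinate, a connected subset of $X$ of diameter $\le 2\eta$ containing both $p_n$ and the relevant nearby value, which exists because in a connected metric space the component of any point in a closed $\varepsilon$-ball meets the boundary and in particular has diameter at least comparable to $\varepsilon$, while also being contained in a slightly larger ball — is the delicate point and the one I would write out in full detail. With openness and closedness of $K$ in hand, connectedness of $X^{\Z}$ follows immediately, completing the proof.
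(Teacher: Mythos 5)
Your forward implication (continuity and surjectivity of the coordinate projection) is correct and coincides with the paper's argument. The backward implication, however, has genuine gaps precisely at the steps you flag as delicate, and they cannot be closed along the lines you propose. First, the claim that every point of $X^{\Z}$ has arbitrarily small connected neighborhoods is \emph{false} for general compact connected $X$: take $X$ to be the closed topologist's sine curve and $p_n=(0,0)$, $q_n=(\delta,\sin(1/\delta))$ with $\sin(1/\delta)=0$. These two points are at distance $\delta$, yet every connected subset of $X$ containing both must project onto all of $[0,\delta]$ in the first coordinate and hence contain the oscillating arc over $(0,\delta]$, so it has diameter at least $2$. Thus your coordinatewise sets $K_n$ of diameter $\le 2\eta$ containing both $p_n$ and $q_n$ need not exist; the boundary-bumping fact you invoke yields a connected set through $p_n$ reaching the sphere, not one containing the prescribed nearby point $q_n$. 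Second, even where such $K_n$ exist, you would still need that the coordinatewise product $\prod_n K_n$ is connected in the sup-metric. An infinite product of connected sets is in general \emph{not} connected in the uniform topology (that is the entire difficulty of the lemma), and for varying compact connected factors this claim is at least as hard as the lemma itself; you assert it without proof, which makes the argument circular. Third, as you yourself observe, well-chainedness of $X^{\Z}$ (which you do establish correctly, using the uniform chain-length bound provided by compactness of $X$) does not imply connectedness; the combination ``clopen components plus well-chained'' also fails in non-compact metric spaces unless the connected neighborhoods have a \emph{uniform} radius --- which is exactly the step left unproven.

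For comparison, the paper's proof sidesteps all of this with a density argument: the set $F$ of sequences assuming only finitely many values is dense in $(X^{\Z},d_{\infty})$ because $X$ is totally bounded, and any two elements of $F$ are constant on the cells of a common finite partition $P$ of $\Z$, hence lie in the image of an isometric embedding of the finite power $X^{P}$ into $X^{\Z}$; finite powers are connected because the uniform and product topologies agree on finite products. Therefore $F$ is connected, and so is its closure $X^{\Z}$. I recommend you replace your local-connectedness strategy with this density argument; your chain construction, while correct as far as it goes, cannot be completed into a proof.
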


\begin{proof}
The projection $(x_n)_{n\in\Z} \mapsto x_0$ from $X^{\Z}$ to $X$ is continuous and surjective. Hence, connectedness of $X^{\Z}$ implies connectedness of $X$. To see that the converse holds, assume that $X$ is connected and let $F \subset X^{\Z}$ be the subset of all sequences that assume only finitely many values. Since $(X,d)$ is totally bounded, $F$ is dense in $(X^{\Z},d_{\infty})$. Hence, it suffices to prove that $F$ is connected. To this end, we fix arbitrary $a,b \in F$ and construct a connected subset of $F$ that contains $a$ and $b$. Let $P$ be a finite partition of $\Z$ into subsets on which both $a$ and $b$ are constant and consider the map $i:X^P \rightarrow F$ given by $i(x)_n := x([n]_P)$, where $[n]_P$ denotes the unique element of $P$ containing $n$. The map $i$ is an isometric embedding of $X^P$ (equipped with the sup-metric) into $F$ and both $a$ and $b$ are contained in its image. Since $X^P$ is a finite product of copies of $X$, connectedness of $X$ implies connectedness of $X^P$ (using the fact that the product topology coincides with the uniform topology on finite products). Hence, $i(X^P)$ is the desired subset.%
\end{proof}

\begin{lemma}\label{lem_volzero}
Let $M$ be a Riemannian manifold and $K \subset M$ a nonempty compact subset. Then for every $\ep>0$, the boundary of $N_{\ep}(K)$ has volume zero.%
\end{lemma}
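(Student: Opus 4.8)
The plan is to reduce the statement to a claim about a level set of the distance function and then conclude via the coarea formula and the eikonal property of distance functions. Throughout, set $g := \dist(\cdot,K)$; by the triangle inequality $g$ is $1$-Lipschitz, hence continuous, and since $K \neq \emptyset$ it is real-valued. First I would note that $N_\ep(K) = g^{-1}([0,\ep])$ is closed while $g^{-1}([0,\ep))$ is open and contained in it, so $g^{-1}([0,\ep)) \subseteq \inner N_\ep(K)$, whence
\begin{equation*}
  \partial N_\ep(K) \;=\; N_\ep(K)\setminus\inner N_\ep(K) \;\subseteq\; g^{-1}([0,\ep])\setminus g^{-1}([0,\ep)) \;=\; g^{-1}(\ep).
\end{equation*}
Thus it suffices to prove $\vol(g^{-1}(\ep)) = 0$, and because $\ep>0$ we have $g^{-1}(\ep)\subseteq M\setminus K$.

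Next I would invoke two classical facts, both of which are local and so reduce to the Euclidean case via charts. By Rademacher's theorem the Lipschitz function $g$ is differentiable $\vol$-almost everywhere, so $\nabla g$ is defined a.e. Moreover, $g$ satisfies the eikonal identity $|\nabla g(x)| = 1$ at every $x \in M\setminus K$ where it is differentiable: the bound $|\nabla g|\le 1$ is immediate from the Lipschitz estimate, and for $|\nabla g(x)|\ge 1$ one uses that, $K$ being compact, some $k\in K$ realizes $d(x,k) = g(x)$; taking paths from $x$ to $k$ of length decreasing to $g(x)$ and choosing, for each small step size $s$, a path of length within $s^2$ of $g(x)$, one obtains points $y_s\to x$ with $\big(g(x)-g(y_s)\big)/d(x,y_s)\to 1$, and differentiability of $g$ at $x$ then forces $|\nabla g(x)|\ge 1$. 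Consequently $|\nabla g| = 1$ for $\vol$-almost every point of $g^{-1}(\ep)$.

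Then I would apply the coarea formula for Lipschitz functions on $M$ with $A := g^{-1}(\ep)$:
\begin{equation*}
  \int_A |\nabla g|\,\rmd\vol \;=\; \int_{\R}\mathcal{H}^{d-1}\big(A\cap g^{-1}(t)\big)\,\rmd t .
\end{equation*}
On the right, $A\cap g^{-1}(t) = \emptyset$ for $t\neq\ep$ while $A\cap g^{-1}(\ep) = A$, so the integrand vanishes for almost every $t$ and the right-hand side is $0$. On the left, $|\nabla g| = 1$ $\vol$-a.e.\ on $A$ by the previous step, so the left-hand side equals $\vol(A)$. Hence $\vol(g^{-1}(\ep)) = \vol(A) = 0$, and therefore $\vol(\partial N_\ep(K)) = 0$ as well.

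The one ingredient that is not elementary, and the point requiring care, is the eikonal identity away from the non-differentiability set of $g$: without assuming $M$ complete, $x$ need not be joined to a nearest point of $K$ by a minimizing geodesic, so the lower bound $|\nabla g(x)|\ge 1$ cannot simply be read off by differentiating along a geodesic and must instead be produced from almost-minimizing paths and the diagonal choice described above. The reduction $\partial N_\ep(K)\subseteq g^{-1}(\ep)$, Rademacher's theorem, and the coarea formula for Lipschitz maps are all standard.
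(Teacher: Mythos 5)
Your argument is correct, but it follows a genuinely different route from the paper's. You reduce $\partial N_{\ep}(K)$ to the level set $g^{-1}(\ep)$ of the $1$-Lipschitz function $g=\dist(\cdot,K)$ (this inclusion is right: $g^{-1}([0,\ep))$ is open and contained in $N_{\ep}(K)$, hence in $\inner N_{\ep}(K)$), establish the eikonal identity $|\nabla g|=1$ a.e.\ off $K$ via Rademacher and almost-minimizing paths, and then kill the level set with the coarea formula. The paper instead argues pointwise with the Lebesgue density theorem: for $x\in\partial N_{\ep}(K)$ and a nearest point $y\in K$, the open ball $B_{\ep}(y)$ lies in $N_{\ep}(K)$ and misses $\partial N_{\ep}(K)$, and an explicit ball-in-ball computation shows $B_r(x)\cap B_{\ep}(y)$ contains a ball of radius $r/2$, so the density of $\partial N_{\ep}(K)$ at $x$ is at most $1-2^{-n}<1$, which forces measure zero. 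The paper's proof is more elementary and self-contained (no Rademacher, no coarea formula, and no need for the eikonal lower bound, which, as you correctly flag, is the delicate point absent completeness); your proof invokes heavier but standard machinery and yields the slightly more structural conclusion that the entire distance sphere $g^{-1}(\ep)$ is null, a statement that extends verbatim to level sets of any Lipschitz function whose gradient is a.e.\ nonvanishing there. One small point to be explicit about if you write this up: the coarea formula is applied chart by chart, where the Riemannian volume is mutually absolutely continuous with Lebesgue measure and the Riemannian and Euclidean gradients vanish on the same null-complement, so the local conclusions patch together over a countable atlas.
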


\begin{proof}
We give the proof for $M = \R^n$ with the Euclidean metric induced by the Euclidean norm $\|\cdot\|$. The general case can be proved by replacing straight lines with geodesics. Hence, let $K \subset \R^n$ be a nonempty compact set and $\ep>0$. Take $x \in \partial N_{\ep}(K)$ and fix a point $y\in K$ such that $\dist(x,K) = \|x-y\| = \ep$. We claim that the open ball $B_{\ep}(y)$ is contained in $N_{\ep}(K)$ and does not contain any point from $\partial N_{\ep}(K)$. Indeed, if $z \in B_{\ep}(y)$, then $\dist(z,K) \leq \|z-y\| < \ep$ and all points $w \in \partial N_{\ep}(K)$ satisfy $\dist(w,K) = \ep$ implying $\|w-y\|\geq\ep$. Let $r \in (0,\ep)$. Then the intersection $B_r(x) \cap B_{\ep}(y)$ contains the ball $B_{r/2}(t x + (1-t)y)$ with $t := 1 -r/(2\ep)$. Indeed, if $w \in B_{r/2}(tx+(1-t)y)$, then%
\begin{align*}
  \|w - x\| &\leq \|w - tx - (1-t)y\| + \|tx + (1-t)y - x\|\\
	&< \frac{r}{2} + (1-t) \|x - y\| =\frac{r}{2} +  \frac{r}{2\ep}\ep = r,\\
  \|w - y\| &\leq \|w - tx - (1-t)y\| + \|tx + (1-t)y - y\|\\
	&< \frac{r}{2} + t\|x - y\| = \frac{r}{2} + \left(\ep - \frac{r}{2}\right) = \ep.%
\end{align*}
Hence, for all $r\in(0,\ep)$ we have%
\begin{equation*}
  \frac{\vol(B_r(x) \cap \partial N_{\ep}(K))}{\vol(B_r(x))} \leq \frac{ c r^n - c (r/2)^n }{ c r^n } = 1 - 2^{-n} < 1.%
\end{equation*}
This proves that the density $d(x) = \lim_{r\downarrow0}\vol(B_r(x)\cap\partial N_{\ep}(K))/\vol(B_r(x))$ is less than one wherever it exists on $\partial N_{\ep}(K)$. Lebesgue's density theorem asserts that $d(x) = 1$ at almost every point of $\partial N_{\ep}(K)$. This can only be the case if $\vol(\partial N_{\ep}(K)) = 0$.%
\end{proof}

The next lemma is essentially taken from \cite[Lem.~2.4]{Gru}.%

\begin{lemma}\label{lem_subadd}
Let $f:X \rightarrow X$ be a map on some set $X$ and $v:\Z_+ \tm X \rightarrow \R$ a subadditive cocycle over $f$, i.e.,%
\begin{equation*}
  v_{n+m}(x) \leq v_n(x) + v_m(f^n(x)) \mbox{\quad for all\ } x\in X,\ n,m\in\Z_+.%
\end{equation*}
Additionally suppose that%
\begin{equation}\label{eq_def_omega}
  \omega := \sup_{(n,x) \in \Z_{>0}\tm X}\frac{1}{n}|v_n(x)| < \infty.%
\end{equation}
Then for every $x\in X$, $n\in\Z_{>0}$ and $\ep \in (0,2\omega)$ there is a time $0\leq n_1 < n$ with%
\begin{equation*}
  \frac{1}{k}v_k(f^{n_1}(x)) > \frac{1}{n}v_n(x) - \ep \mbox{\ \ for all\ } 0 < k \leq n-n_1.%
\end{equation*}
Moreover, $n - n_1 \geq (\ep n)/(2\omega) \rightarrow \infty$ for $n \rightarrow \infty$.%
\end{lemma}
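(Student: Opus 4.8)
The plan is to bypass any ``greedy covering'' of the interval $[0;n)$ and instead produce the required time $n_1$ directly, as the largest minimizer of an auxiliary telescoping function; this gives both parts of the lemma at once. First I would normalize $v_0 \equiv 0$: replacing the values $v_0(\cdot)$ by $0$ keeps $v$ subadditive (subadditivity only imposes $v_0\ge 0$ on the zeroth level), and it alters neither $\omega$ nor the assertion, which concerns only levels $k\ge 1$. Next, fix $x\in X$ and $n\in\Z_{>0}$, put $\alpha := \frac{1}{n}v_n(x)$, and define
\begin{equation*}
  h(m) := v_m(x) - m(\alpha - \ep), \qquad m = 0,1,\dots,n,
\end{equation*}
so that $h(0)=0$ and $h(n) = v_n(x) - n(\alpha-\ep) = n\ep > 0$. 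Let $n_1$ be the \emph{largest} index in $\{0,1,\dots,n\}$ at which $h$ attains its minimum over $\{0,\dots,n\}$. Since $h(n)>h(0)=0\ge\min h$, the endpoint $n$ is not a minimizer, hence $0\le n_1\le n-1$, as the statement requires.

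For the inequality on the averages I would use subadditivity in the form $v_k(f^{n_1}(x))\ge v_{n_1+k}(x)-v_{n_1}(x)$ together with the identity $v_m(x)=h(m)+m(\alpha-\ep)$, which gives, for $0<k\le n-n_1$,
\begin{equation*}
  v_k(f^{n_1}(x)) \ \ge\ h(n_1+k) - h(n_1) + k(\alpha-\ep) \ >\ k(\alpha-\ep).
\end{equation*}
Here $h(n_1+k)-h(n_1)>0$ because $n_1<n_1+k\le n$ and $n_1$ was chosen as the largest minimizer of $h$. Dividing by $k$ yields $\frac{1}{k} v_k(f^{n_1}(x)) > \alpha - \ep = \frac{1}{n} v_n(x) - \ep$, which is the first claim.

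For the ``moreover'' part the point is to get a usable bound on $\alpha$. If $n_1=0$ then $n-n_1=n\ge \ep n/(2\omega)$ because $\ep<2\omega$, and we are done; so assume $n_1\ge 1$. From $h(n_1)\le h(0)=0$ we get $v_{n_1}(x)\le n_1(\alpha-\ep)$; comparing with $v_{n_1}(x)\ge -n_1\omega$ gives $\alpha\ge\ep-\omega$, hence $\omega-\alpha\le 2\omega-\ep$ --- it is this sharpened estimate, rather than the cruder $\omega-\alpha\le 2\omega$, that produces the constant $2\omega$ in the denominator, and it is really the only slightly delicate step. Now subadditivity once more, $v_n(x)\le v_{n_1}(x)+v_{n-n_1}(f^{n_1}(x))\le n_1(\alpha-\ep)+(n-n_1)\omega$, rearranged using $v_n(x)=n\alpha$, gives $n_1\ep\le (n-n_1)(\omega-\alpha)\le (n-n_1)(2\omega-\ep)$, which collapses to $n\ep\le 2\omega(n-n_1)$, i.e.\ $n-n_1\ge \ep n/(2\omega)$; the limit $n-n_1\to\infty$ for fixed $\ep$ is then immediate (note $\omega>0$ since $\ep\in(0,2\omega)$). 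Beyond the bound $\omega-\alpha\le 2\omega-\ep$ and the normalization $v_0\equiv 0$, everything is elementary, so I do not expect a genuine obstacle.
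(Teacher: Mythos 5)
Your argument is correct, and it selects $n_1$ by a genuinely different (though related in spirit) device than the paper. The paper takes $n_1$ to be the largest $k \in (0;n)$ with $\frac{1}{k}v_k(x) \leq \sigma - \ep$, where $\sigma = \frac{1}{n}v_n(x)$ (and $n_1 = 0$ if there is no such $k$); the first claim then follows because every later average $\frac{1}{m}v_m(x)$, $n_1 < m \leq n$, exceeds $\sigma - \ep$, and the quantitative bound comes from comparing $\frac{1}{n}v_n(x) - \frac{1}{n_1}v_{n_1}(x) \geq \ep$ against the crude upper bound $2\omega\frac{n-n_1}{n}$. You instead take the largest minimizer of the tilted sums $h(m) = v_m(x) - m(\sigma - \ep)$ --- the standard selection in proofs of Kingman-type theorems --- which is in general a different index, and you obtain the bound on $n - n_1$ from the sharpened estimate $\omega - \sigma \leq 2\omega - \ep$ followed by the telescoping $n_1\ep \leq (n-n_1)(2\omega - \ep)$, which collapses to $n\ep \leq 2\omega(n - n_1)$. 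Both routes are elementary and of comparable length; yours buys two small things. First, the strict inequality in the conclusion falls out automatically from the largest-minimizer property, whereas in the paper's degenerate case $\gamma = \sigma - \ep$ the choice $n_1 = 0$ only yields ``$\geq$'' and needs a one-line repair. Second, you isolate explicitly the only place where the zeroth level of the cocycle matters, via the (correctly justified) normalization $v_0 \equiv 0$, which is needed both for $h(0) = 0$ and for $h(n_1) \leq h(0)$ in the ``moreover'' step. All individual steps check out, including the case split at $n_1 = 0$, where $n - n_1 = n \geq \ep n/(2\omega)$ follows from $\ep < 2\omega$.
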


\begin{proof}
We write $\sigma := v_n(x)/n$ and define%
\begin{equation*}
  \gamma := \min_{0 < k \leq n}\frac{1}{k}v_k(x).%
\end{equation*}
If $\gamma \geq \sigma - \ep$, the assertion follows with $n_1 = 0$. For $\gamma < \sigma - \ep$, observing that the minimum cannot be attained at $k = n$, let%
\begin{equation*}
  n_1 := \max\Bigl\{ k \in (0,n) \cap \Z \ :\ \frac{1}{k}v_k(x) \leq \sigma - \ep \Bigr\},%
\end{equation*}
implying $v_{n_1}(x)/n_1 \leq \sigma - \ep$. We obtain%
\begin{align*}
  \ep &\leq \frac{1}{n}v_n(x) - \frac{1}{n_1}v_{n_1}(x) = \frac{1}{n}v_{n_1 + (n - n_1)}(x) - \frac{1}{n_1}v_{n_1}(x)\\
			&\leq \frac{1}{n}\left(v_{n_1}(x) + v_{n-n_1}(f^{n_1}(x))\right) - \frac{1}{n_1}v_{n_1}(x)\\
			   &= \frac{1}{n}\left(-\frac{n-n_1}{n_1}v_{n_1}(x) + \frac{n-n_1}{n-n_1}v_{n-n_1}(f^{n_1}(x))\right)\\
				 &= \frac{n-n_1}{n}\left(\frac{1}{n-n_1}v_{n-n_1}(f^{n_1}(x)) - \frac{1}{n_1}v_{n_1}(x)\right) \leq 2\omega \frac{n-n_1}{n}.%
\end{align*}
This implies%
\begin{equation*}
  n - n_1 \geq \frac{\ep n}{2\omega} \rightarrow \infty \mbox{\quad for\ } n \rightarrow \infty.%
\end{equation*}
For $0 < k \leq n-n_1$ we have $v_{k+n_1}(x)/(k+n_1) > \sigma - \ep$ and this yields%
\begin{align*}
  \frac{1}{k}v_k(f^{n_1}(x)) &\geq \frac{1}{k}\left(v_{k+n_1}(x) - v_{n_1}(x)\right)\\
	                              &> \frac{1}{k}\left((k+n_1)(\sigma - \ep) - n_1(\sigma - \ep)\right) = \sigma - \ep,%
\end{align*}
completing the proof.%
\end{proof}

\begin{lemma}\label{lem_combinatorics}
Let $n>m$ be positive integers. For each $i$ in the range $0 \leq i < m$ choose integers $q_i,r_i$ such that $n = i + q_im + r_i$ with $q_i \geq 0$ and $0 \leq r_i < m$. Then%
\begin{equation*}
  \{ 0,1,\ldots,n-m \} = \{ i + jm : 0 \leq i < m,\ 0 \leq j < q_i \},%
\end{equation*}
and all integers in the set on the right-hand side are uniquely parametrized by $i$ and $j$.%
\end{lemma}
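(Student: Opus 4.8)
The plan is to reduce the statement to the division algorithm in $\Z$. Fix $n>m$. For each $i$ with $0\le i<m$ the integers $q_i,r_i$ are uniquely determined by the requirements $n-i=q_im+r_i$, $q_i\ge0$, $0\le r_i<m$; explicitly $q_i=\lfloor (n-i)/m\rfloor$ and $r_i=(n-i)\bmod m$. The key point is that every integer $k\ge0$ has a \emph{unique} representation $k=i+jm$ with $0\le i<m$ and $j\ge0$, namely $i=k\bmod m$, $j=\lfloor k/m\rfloor$. In particular the map $(i,j)\mapsto i+jm$ is injective on $\{(i,j):0\le i<m,\ j\ge0\}$, which already yields the asserted unique parametrization of the right-hand set. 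It then remains only to verify that imposing the extra constraint $0\le j<q_i$ makes $i+jm$ run precisely through $\{0,1,\dots,n-m\}$.

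For the inclusion ``$\subseteq$'' I would take a pair $(i,j)$ with $0\le i<m$ and $0\le j\le q_i-1$ and estimate
\[
  i+jm\le i+(q_i-1)m=i+q_im-m=(n-r_i)-m\le n-m ,
\]
using $n=i+q_im+r_i$ and $r_i\ge0$; combined with $i+jm\ge0$ this shows $i+jm\in\{0,\dots,n-m\}$.

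For the reverse inclusion ``$\supseteq$'' I would take $k$ with $0\le k\le n-m$, write $k=i+jm$ by the division algorithm (so $0\le i<m$, $j\ge0$), and then estimate
\[
  (j+1)m=(k-i)+m\le (n-m-i)+m=n-i ,
\]
hence $j+1\le (n-i)/m$; since $j+1$ is an integer this forces $j+1\le\lfloor (n-i)/m\rfloor=q_i$, i.e.\ $0\le j<q_i$. Thus $k$ belongs to the right-hand set with parameters $(i,j)$. Together with the injectivity observed above, the two inclusions give the lemma.

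I do not expect a genuine obstacle here: this is pure bookkeeping about residues modulo $m$, and the only step that calls for a moment's attention is the passage from the real inequality $j+1\le(n-i)/m$ to the integer inequality $j+1\le q_i$, which is valid precisely because $q_i=\lfloor(n-i)/m\rfloor$.
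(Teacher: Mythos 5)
Your proposal is correct and follows essentially the same route as the paper: injectivity from the uniqueness of the division-algorithm representation $k=i+jm$ with $0\le i<m$, the inclusion $\subseteq$ from $i+(q_i-1)m=n-r_i-m\le n-m$, and the inclusion $\supseteq$ by showing $j<q_i$ for the base-$m$ decomposition of any $k\le n-m$. The paper phrases the last step as $k<n-(m-1)\le n-r_i$ rather than via the floor function, but this is only a cosmetic difference.
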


\begin{proof}
It is clear that $(i_1,j_1) \neq (i_2,j_2)$ implies $i_1 + j_1m \neq i_2 + j_2m$, since $0 \leq i_1,i_2 < m$. Hence, it suffices to show that the two sets are equal. To this end, we first show that $i + jm \leq n - m$, whenever $0 \leq i < m$ and $0 \leq j < q_i$. Since $j < q_i$, we have $(j+1)m \leq q_i m + r_i$. Adding $i$ on both sides yields $(i + jm) + m \leq n$, or equivalently $i+jm \leq n - m$.%

Conversely, let us show that every number $l$ between $0$ and $n-m$ can be written as $i + jm$ with $0 \leq i < m$ and $0 \leq j < q_i$. To this end, let $i,j$ be the unique nonnegative integers so that $l = i + jm$ with $0 \leq i < m$. We need to show that $j < q_i$. This is equivalent to%
\begin{equation*}
  l = i + jm < i + q_im = n - r_i.%
\end{equation*}
This inequality holds, because $l < n - (m - 1) \leq n - r_i$ using that $0 \leq r_i < m$.%
\end{proof}

\begin{lemma}\label{lem_partition_construction}
Let $(X,d)$ be a compact metric space and let $\mu$ be a Borel probability measure on $X$. Then, for any $\delta>0$ there exists a finite measurable partition $\xi=\{C_1,\ldots,C_k\}$ of $X$ with $\diam(C_i)<\delta$ and $\mu(\partial C_i)=0$ for $i=1,\ldots,k$.%
\end{lemma}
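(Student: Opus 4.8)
The plan is to construct $\xi$ from a finite cover of $X$ by small balls whose topological boundaries are $\mu$-null, and then to turn this cover into a partition by taking successive set differences, controlling the boundaries of the resulting atoms through elementary point-set identities.

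First I would fix $\delta>0$ and, for each $x\in X$, consider the spheres $S_r(x):=\{y\in X:d(x,y)=r\}$ for $r\in(\delta/4,\delta/2)$. These are pairwise disjoint for distinct $r$, so since $\mu$ is a finite measure only countably many of them can have positive $\mu$-measure. As $y\mapsto d(x,y)$ is continuous, one has $\partial B_r(x)\subseteq S_r(x)$, and hence I may choose a radius $r_x\in(\delta/4,\delta/2)$ with $\mu(\partial B_{r_x}(x))=0$. The open balls $\{B_{r_x}(x):x\in X\}$ cover the compact space $X$, so there is a finite subcover $B_{r_{x_1}}(x_1),\dots,B_{r_{x_n}}(x_n)$.

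Next I would set $C_1:=B_{r_{x_1}}(x_1)$ and $C_j:=B_{r_{x_j}}(x_j)\setminus\bigcup_{i<j}B_{r_{x_i}}(x_i)$ for $2\le j\le n$, and discard those $C_j$ that are empty. By construction the nonempty $C_j$ form a finite Borel partition of $X$. Each $C_j$ is contained in a ball of radius less than $\delta/2$, so $\diam(C_j)<\delta$. For the measure-of-boundary condition I would use the standard containments $\partial(A\setminus B)\subseteq\partial A\cup\partial B$ and $\partial(A\cup B)\subseteq\partial A\cup\partial B$, applied iteratively, to obtain $\partial C_j\subseteq\bigcup_{i=1}^n\partial B_{r_{x_i}}(x_i)$; the right-hand side is a finite union of $\mu$-null sets, hence $\mu(\partial C_j)=0$.

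There is no genuine obstacle here: the argument is routine point-set topology and measure theory. The only step requiring a little care is verifying the boundary containment $\partial C_j\subseteq\bigcup_i\partial B_{r_{x_i}}(x_i)$ from the elementary identities for boundaries of finite unions and differences, and recalling that a finite union of sets of measure zero still has measure zero.
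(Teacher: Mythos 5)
Your proposal is correct and follows essentially the same route as the paper: find, for each point, a radius in a prescribed interval whose ball has $\mu$-null boundary (using that uncountably many disjoint spheres cannot all have positive measure), extract a finite subcover by compactness, and form the partition by successive set differences, with $\partial C_j\subseteq\bigcup_i\partial B_i$ giving the null-boundary property. Your choice of radii in $(\delta/4,\delta/2)$ even handles the diameter bound slightly more carefully than the paper's $(0,\delta)$.
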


\begin{proof}
For each $x\in X$, let us consider the disjoint uncountable union $\bigcup_{\ep\in(0,\delta)}\partial B_{\ep}(x)$, which has finite measure. We assume to the contrary that $\mu(\partial B_{\ep}(x))$ is positive for every $\ep\in(0,\delta)$. Then $(0,\delta)$ is the (countable) union of the sets $I_n := \{\ep\in(0,\delta) : \mu(\partial B_{\ep}(x)) > 1/n\}$, $n\in\Z_{>0}$. Hence, one of these sets must be uncountable, which is a contradiction. Thus, for each $x\in X$ there is $\ep=\ep(x)\in(0,\delta)$ with $\mu(\partial B_{\ep}(x))=0$. By compactness, there exists a cover of $X$ consisting of finitely many of such balls, say $B_1,\ldots,B_k$. From this cover we can construct the desired partition by $C_1 := \cl\, B_1$, $C_i := \cl\, B_i \backslash \bigcup_{j=1}^{i-1}\cl\, B_j$ for $i>1$. Then $\xi:=\{C_1,\ldots,C_k\}$ satisfies $\bigcup_{i=1}^k \partial C_i \subset \bigcup_{i=1}^k \partial B_i$, and hence $\mu(\bigcup_{i=1}^k \partial C_i)=0$.%
\end{proof}

\section{Elementary properties of hyperbolic sets}\label{sec_appb}

The following proposition answers some questions that immediately arise from the definition of a hyperbolic set (Definition \ref{def_uhs}).%

\begin{proposition}\label{prop_hypset_props}
The following statements hold:%
\begin{enumerate}
\item[(i)] The definition of a hyperbolic set is independent of the choice of the Riemannian metric on $M$. In fact, only the constant $c$ depends on the choice of the metric.%
\item[(ii)] The inequality for tangent vectors $v \in E^+(u,x)$ expressed in (H2) is equivalent to:%
\begin{equation*}
  |\rmD\varphi_{t,u}(x)v| \geq c^{-1}\lambda^{-t}|v| \mbox{\quad for all\ } (u,x) \in L(Q),\ v \in E^+(u,x),\ t\in\Z_+.%
\end{equation*}
\item[(iii)] The subspaces $E^{\pm}(u,x)$ depend continuously on $(u,x) \in L(Q)$, meaning that the projections%
\begin{equation*}
  \pi^{\pm}_{u,x}:T_xM \rightarrow E^{\pm}(u,x)%
\end{equation*}
along the respective complementary subspace depend continuously on $(u,x)$.%
\end{enumerate}
\end{proposition}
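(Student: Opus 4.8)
The three parts are of quite different character; I would dispatch (i) and (ii) quickly and concentrate on (iii). For (i), fix two Riemannian metrics on $M$, inducing norms $|\cdot|_1$ and $|\cdot|_2$ on tangent spaces. On the unit sphere bundle over the compact set $Q$ the continuous function $v \mapsto |v|_2/|v|_1$ attains a positive minimum and a finite maximum, so there is $K \geq 1$ with $K^{-1}|v|_1 \leq |v|_2 \leq K|v|_1$ for all $v \in T_xM$, $x \in Q$. Since (H1) and (H3) do not refer to the metric, and inserting this two-sided bound into the inequalities of (H2) turns $c$ into $K^2c$ while leaving $\lambda$ unchanged, $Q$ is hyperbolic for one metric iff for the other, only $c$ being affected. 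For (ii), use (H1) together with the cocycle identity $\varphi_{t,u}^{-1} = \varphi_{-t,\theta^tu}$, so that $\rmD\varphi_{t,u}(x)^{-1} = \rmD\varphi_{-t,\Phi_t(u,x)}$. Given $v \in E^+(u,x)$ and $t \in \Z_+$, put $w := \rmD\varphi_{t,u}(x)v \in E^+(\Phi_t(u,x))$; applying the second inequality of (H2) at the point $\Phi_t(u,x) \in L(Q)$ to $w$ gives $|v| = |\rmD\varphi_{-t,\Phi_t(u,x)}(w)| \leq c\lambda^t|w| = c\lambda^t|\rmD\varphi_{t,u}(x)v|$, i.e.\ $|\rmD\varphi_{t,u}(x)v| \geq c^{-1}\lambda^{-t}|v|$; the converse is the same computation read in reverse. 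The identical argument shows the first inequality of (H2) is equivalent to $|\rmD\varphi_{-t,u}(x)v| \geq c^{-1}\lambda^{-t}|v|$ for $v \in E^-(u,x)$, which I use below.

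For (iii), the first step is a uniform transversality estimate: there is $\alpha_0 \in (0,\pi/2]$ with $\angle\bigl(E^-(u,x),E^+(u,x)\bigr) \geq \alpha_0$ for all $(u,x) \in L(Q)$. If not, there are $(u_k,x_k) \in L(Q)$ and unit vectors $a_k \in E^-(u_k,x_k)$, $b_k \in E^+(u_k,x_k)$ with $|a_k - b_k| \rightarrow 0$. Fix $t \in \Z_+$ so large that $c^{-1}\lambda^{-t} - c\lambda^t \geq 1$. Then (H2) and part (ii) give $|\rmD\varphi_{t,u_k}(x_k)a_k| \leq c\lambda^t$ and $|\rmD\varphi_{t,u_k}(x_k)b_k| \geq c^{-1}\lambda^{-t}$, hence $|\rmD\varphi_{t,u_k}(x_k)(a_k - b_k)| \geq 1$; but $\|\rmD\varphi_{t,u}(x)\|$ is bounded over the compact set $L(Q)$ by Proposition \ref{prop_basics}(e), contradicting $|a_k - b_k| \rightarrow 0$.

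The second step is a dynamical characterization of the unstable subspace: for $(u,x) \in L(Q)$ one has $v \in E^+(u,x)$ iff $\sup_{t \in \Z_+}|\rmD\varphi_{-t,u}(x)v| < \infty$. One direction is immediate from (H2); for the other, write $v = v^- + v^+$ along the splitting, so that $|\rmD\varphi_{-t,u}(x)v^+| \leq c\lambda^t|v^+| \rightarrow 0$ while $|\rmD\varphi_{-t,u}(x)v^-| \geq c^{-1}\lambda^{-t}|v^-|$ by part (ii); since $\rmD\varphi_{-t,u}(x)v^-$ and $\rmD\varphi_{-t,u}(x)v^+$ lie in $E^-(\Phi_{-t}(u,x))$ and $E^+(\Phi_{-t}(u,x))$, whose angle is at least $\alpha_0$, one gets $|\rmD\varphi_{-t,u}(x)v| \geq (\sin\alpha_0)\,c^{-1}\lambda^{-t}|v^-|$, bounded only if $v^- = 0$. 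Now let $(u_k,x_k) \rightarrow (u,x)$ in $L(Q)$. By (H3) every $E^+(u_k,x_k)$ has dimension $d^+$; passing to a subsequence, assume these subspaces converge in the Grassmannian of $d^+$-planes to $F$. Each $v \in F$ is a limit of $v_k \in E^+(u_k,x_k)$, and (H2) gives $|\rmD\varphi_{-t,u_k}(x_k)v_k| \leq c\lambda^t|v_k|$ for each $t$; letting $k \rightarrow \infty$ and using continuity of $(u,x) \mapsto \rmD\varphi_{-t,u}(x)$ (Proposition \ref{prop_basics}(e)) yields $\sup_t|\rmD\varphi_{-t,u}(x)v| < \infty$, so $v \in E^+(u,x)$ by the characterization. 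Thus $F \subseteq E^+(u,x)$, and equality of dimensions forces $F = E^+(u,x)$; as every convergent subsequence has the same limit, $E^+(u_k,x_k) \rightarrow E^+(u,x)$, and symmetrically $E^-(u_k,x_k) \rightarrow E^-(u,x)$. Since the projection onto one summand along a complementary summand is a continuous function of the transversal pair of subspaces, $\pi^{\pm}_{u_k,x_k} \rightarrow \pi^{\pm}_{u,x}$, which is the asserted continuity.

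I expect the uniform transversality estimate and the dynamical characterization of $E^+$ in part (iii) to be the only non-mechanical ingredients; parts (i), (ii) and the concluding compactness argument are bookkeeping with the cocycle property and (H1)--(H2). A small point to be careful about is that the transversality bound is needed both to run the characterization argument at every iterate $\Phi_{-t}(u,x)$ and to guarantee that the limiting projections do not degenerate; both uses are covered by the single uniform bound $\alpha_0$ obtained on the compact set $L(Q)$.
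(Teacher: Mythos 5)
Your proposal is correct and follows essentially the same route as the paper: metric equivalence on the compact set $Q$ for (i), the cocycle identity $\rmD\varphi_{-t,\theta^tu}(\varphi_{t,u}(x))\rmD\varphi_{t,u}(x)=\id$ for (ii), and for (iii) a compactness argument in which limits of (bases of) the subspaces are shown to satisfy the defining contraction/boundedness inequalities and are then identified via the dynamical characterization of $E^{\pm}$. The only difference is that you explicitly prove the uniform transversality bound and the characterization of $E^+$ by bounded backward orbits, whereas the paper simply asserts that $E^-(u,x)$ ``is characterized uniquely by these inequalities''; your extra lemmas are correct (though the triangle inequality alone would already yield the characterization, without the angle estimate).
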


\begin{proof}
(i) This follows from the fact that any two Riemannian metrics are equivalent on the compact set $Q$ which is shown as follows. Let $g$ and $h$ be two Riemannian metrics on $M$. Let $S_hQ$ denote the unit tangent bundle over $Q$ with respect to $h$, i.e., the closed subspace of the tangent bundle that consists of all tangent vectors $v \in T_xM$ satisfying $x \in Q$ and $h(v,v) = 1$. Observe that $S_hQ$ is compact. Since $g$ is continuous, there are constants $0 < \alpha \leq \beta < \infty$ with $\alpha \leq g(v,v) \leq \beta$ for all $v \in S_hQ$. Then, for any $0 \neq v \in T_xM$, $x\in Q$, we have%
\begin{equation*}
  g(v,v) = h(v,v) \cdot g\Bigl(\frac{v}{\sqrt{h(v,v)}},\frac{v}{\sqrt{h(v,v)}}\Bigr),%
\end{equation*}
which in turn implies%
\begin{equation*}
  \alpha h(v,v) \leq g(v,v) \leq \beta h(v,v).%
\end{equation*}
Hence, writing $|\cdot|_h$ and $|\cdot|_g$ for the norms associated with $g$ and $h$, respectively, the inequality $|\rmD\varphi_{t,u}(x)v|_g \leq c\lambda^t |v|_g$ implies%
\begin{align*}
  |\rmD\varphi_{t,u}(x)v|_h &= h(\rmD\varphi_{t,u}(x)v,\rmD\varphi_{t,u}(x)v)^{1/2} \\
	&\leq \frac{1}{\sqrt{\alpha}}|\rmD\varphi_{t,u}(x)v|_g \leq \frac{1}{\sqrt{\alpha}}c\lambda^t|v|_g \leq \sqrt{\frac{\beta}{\alpha}}\lambda^t |v|_h.%
\end{align*}
This implies the statement.%

(ii) Given $v \in E^+(u,x)$, by (H1), we have $\rmD\varphi_{t,u}(x)v \in E^+(\Phi_t(u,x))$. Hence, (H2) implies%
\begin{equation*}
  |\rmD\varphi_{-t,\theta^tu}\rmD\varphi_{t,u}(x)v| \leq c\lambda^t|\rmD\varphi_{t,u}(x)v|%
\end{equation*}
for every $t\in\Z_+$. From the cocycle property of $\varphi$ it follows that%
\begin{equation*}
  \rmD\varphi_{-t,\theta^tu}(\varphi_{t,u}(x))\rmD\varphi_{t,u}(x)v = \rmD(\varphi_{-t,\theta^tu} \circ \varphi_{t,u})(x)v = \rmD(\id)v = v,%
\end{equation*}
implying $|\rmD\varphi_{t,u}(x)v| \geq c^{-1}\lambda^{-t}|v|$. Going backwards through these inequalities, the other direction of the equivalence follows.%

(iii) Let $d^-$ denote the common dimension of the stable subspaces and let $(u_k,x_k)_{k\in\Z_{\geq0}}$ be a sequence in $L(Q)$, converging to some $(u,x) \in L(Q)$. We choose an orthonormal basis $(v_k^{(1)},\ldots,v_k^{(d^-)})$ of each $E^-(u_k,x_k)$. By compactness of the unit tangent bundle over $Q$, passing to a subsequence if necessary\footnote{Observe: proving the convergence for a subsequence is enough. Indeed, if continuity at $(u,x)$ would not hold, then there would exist a sequence $(u_k,x_k) \rightarrow (u,x)$ so that for \emph{no} subsequence the convergence $E^-(u_k,x_k) \rightarrow E^-(u,x)$ would be true.} yields the convergence $v_k^{(i)} \rightarrow v^{(i)}$ for some $v^{(i)} \in T_xM$, $i = 1,\ldots,d^-$. For each $i$ and $n$, the inequality $|\rmD\varphi_{t,u_k}(x_k)v_k^{(i)}| \leq c\lambda^t |v_k^{(i)}|$ carries over to the limit for $k \rightarrow \infty$, since $(u,x,v) \mapsto \rmD\varphi_{t,u}(x)v$ is a continuous map by our assumptions on the system. Hence, $|\rmD\varphi_{t,u}(x)v^{(i)}| \leq c\lambda^t|v^{(i)}|$ holds for all $t\geq0$. Since the subspace $E^-(u,x)$ is characterized uniquely by these inequalities, it follows that $v^{(i)} \in E^-(u,x)$. Hence, $(v^{(1)},\ldots,v^{(d^-)})$ is an orthonormal basis of $E^-(u,x)$, which implies the assertion (similarly for $E^+(u,x)$).%
\end{proof}

\begin{remark}
Item (ii) in the above proposition shows that the ``contraction in backward time'' property of $E^+$ can equivalently be expressed as ``expansion in forward time''. Hence, one might ask why we should not use this expansion property to define a hyperbolic set (as it is more intuitive and we are mainly interested in the behavior of the system in forward time). The answer to this question is that the expansion property does not uniquely characterize the unstable subspaces. Expansion also happens outside of the unstable subspaces, while contraction in backward time does not, as we have used in the proof of item (iii).%
\end{remark}

\section*{Acknowledgements}

Particular thanks go to Serdar Y\"{u}ksel and Nicol\'as Garcia for their interest in my work and their feedback on the manuscript. Moreover, I want to thank Adriano Da Silva with whom I have developed, in several collaborations, many of the ideas that went into this paper. Special thanks also go to Niels J.\ Diepeveen who contributed the proof of Lemma \ref{lem_connectedness}, to Thomas Barthelm\'e who helped me to confirm an argument in the proof of Theorem \ref{thm_inv_achievability2}, and to Mahendra Singh Tomar who created Figure \ref{fig2}. Finally, I want to mention Fritz Colonius and Matthias Gundlach for comments leading to an improvement of the paper.%


\begin{thebibliography}{99}
\bibitem{AKM} R.~L.~Adler, A.~G.~Konheim, M.~H.~McAndrew. \emph{Topological entropy}. Trans.\ Am.\ Math.\ Soc.\ 114 (1965), 309--319.%
\bibitem{AMF} B.~R.~Andrievskii, A.~S.~Matveev, A.~L.~Fradkov. \emph{Control and estimation under information constraints: toward a unified theory of control, computation, and communications}. (Russian) Avtomat.\ i Telemekh.\ 2010, no.\ 4, 34--99; translation in Autom.\ Remote Control 71 (2010), no.\ 4, 572--633.%
\bibitem{ASo} F.~Albertini, E.~D.~Sontag. \emph{Discrete-time transitivity and accessibility: analytic systems}. SIAM J.\ Control Optim.\ 31 (1993), no.~6, 1599--1622.%
\bibitem{BBo} J.~Bahnm\"{u}ller, T.~Bogensch\"{u}tz. \emph{A Margulis-Ruelle inequality for random dynamical systems}. Arch.\ Math.~(Basel) 64 (1995), no.~3, 246--253.%
\bibitem{BLi} J.~Bahnm\"{u}ller, P.--D.~Liu. \emph{Characterization of measures satisfying the Pesin entropy formula for random dynamical systems}. J.\ Dynam.\ Differential Equations 10 (1998), no.~3, 425--448.%
\bibitem{BJu} G.~O.~Berger, R.~M.~Jungers. \emph{Formal methods for computing hyperbolic invariant sets for nonlinear systems}. IEEE Control Syst.\ Lett.\ 4 (2019), no.~1, 235--240.%
\bibitem{Bog} T.~Bogensch\"{u}tz. \emph{Entropy, pressure, and a variational principle for random dynamical systems}. Random Comput.\ Dynam.\ 1 (1992/93), no.\ 1, 99--116.%
\bibitem{Bo1} R.~Bowen. \emph{Entropy for group endomorphisms and homogeneous spaces}. Trans.\ Am.\ Math.\ Soc.\ 153 (1971), 401--414.%
\bibitem{Bo2} R.~Bowen. \emph{Equilibrium States and the Ergodic Theory of Anosov Diffeomorphisms}. Second revised edition. Lecture Notes in Math.\ 470, Springer, Berlin, 2008.%
\bibitem{BRu} R.~Bowen, D.~Ruelle. \emph{The ergodic theory of Axiom A flows}. Invent.\ Math.\ 29 (1975), no.~3, 181--202.%
\bibitem{CLi} M.~L.~Cartwright, J.~E.~Littlewood. \emph{On non-linear differential equations of the second order: I}. J.\ Lond.\ Math.\ Soc.\ 20 (1945), 180--189.%
\bibitem{Col} F.~Colonius. \emph{Metric invariance entropy and conditionally invariant measures}. Ergodic Theory Dynam.\ Systems 38 (2018), no.~3, 921--939.%
\bibitem{CSC} F.~Colonius, A.~J.~Santana, J.~Cossich. \emph{Invariance pressure for control systems}. J.\ Dynam.\ Differential Equations 31 (2019), no.~1, 1--23.%
\bibitem{CDu} F.~Colonius, W.~Du. \emph{Hyperbolic control sets and chain control sets}. J.\ Dynam.\ Control Systems 7 (2001), no.~1, 49--59.%
\bibitem{CKa} F.~Colonius, C.~Kawan. \emph{Invariance entropy for control systems}. SIAM J.\ Control Optim.\ 48 (2009), no.~3, 1701--1721.%
\bibitem{CKN} F.~Colonius, C.~Kawan, G.~N.~Nair. \emph{A note on topological feedback entropy and invariance entropy}. Systems Control Lett.\ 62 (2013), no.\ 5, 377--381.%
\bibitem{CKl} F.~Colonius, W.~Kliemann. \emph{The Dynamics of Control}. Birkh\"{a}user, Boston, 2000.%
\bibitem{CK2} F.~Colonius, W.~Kliemann. \emph{Dynamical Systems and Linear Algebra}. Graduate Studies in Mathematics 158, AMS, Providence, RI, 2014.%
\bibitem{Cor} J.-M.~Coron. \emph{Linearized control systems and applications to smooth stabilization}. SIAM J.\ Control Optim.\ 32 (1994), no.~2, 358--386.%
\bibitem{DS1} A.~Da Silva. \emph{Invariance entropy for random control systems}. Math.\ Control Signals Systems 25 (2013), no.~4, 491--516.%
\bibitem{DS2} A.~Da Silva. \emph{Outer invariance entropy for linear systems on Lie groups}. SIAM J.\ Control Optim.\ 52 (2014), no.~6, 3917--3934.%
\bibitem{DK1} A.~Da Silva, C.~Kawan. \emph{Invariance entropy for a class of partially hyperbolic sets}. Math.\ Control Signals Systems 30 (2018), no.\ 4, 40 pp.%
\bibitem{DK2} A.~Da Silva, C.~Kawan. \emph{Robustness of critical bit rates for practical stabilization of networked control systems}. Automatica J.\ IFAC 93 (2018), 397--406.%
\bibitem{DK3} A.~Da Silva. C.~Kawan. \emph{Hyperbolic chain control sets on flag manifolds}. J.\ Dyn.\ Control Syst.\ 22 (2016), no.\ 4, 725--745.%
\bibitem{DK4} A.~Da Silva, C.~Kawan. \emph{Invariance entropy of hyperbolic control sets}. Discrete Contin.\ Dyn.\ Syst.\ 36 (2016), no.\ 1, 97--136.%
\bibitem{DK5} A.~Da Silva, C.~Kawan. \emph{Lyapunov exponents and partial hyperbolicity of chain control sets on flag manifolds}. Israel J.\ Math.\ 232 (2019), no.~2, 947--1000.%
\bibitem{Dow} T.~Downarowicz. \emph{Entropy in Dynamical Systems}. New Mathematical Monographs 18, Cambridge University Press, Cambridge, 2011.%
\bibitem{FMi} M.~Franceschetti, P.~Minero. \emph{Elements of information theory for networked control systems}. In Information and Control in Networks (pp.\ 3--37). Springer International Publishing, 2014.%
\bibitem{Fea} S.~Fang, J.~Chen, H.~Ishii. \emph{Towards Integrating Control and Information Theories: from Information-Theoretic Measures to Control Performance Limitations}. Vol.~465, Springer, 2016.%
\bibitem{Fro} G.~Froyland. \emph{Using Ulam’s method to calculate entropy and other dynamical invariants}. Nonlinearity 12 (1999), no.~1, 79--
101.%
\bibitem{Gru} L.~Gr\"{u}ne. \emph{A uniform exponential spectrum for linear flows on vector bundles}. J.\ Dynam.\ Differential Equations
12 (2000), no.~2, 435--448.%
\bibitem{GKi} V.~M.~Gundlach, Y.~Kifer. \emph{Random hyperbolic systems}. Stochastic dynamics (Bremen, 1997), 117--145, Springer, New York, 1999.%
\bibitem{Has} B.~Hasselblatt. \emph{Hyperbolic dynamical systems}. Handbook of dynamical systems, Vol.~1A, 239--319, North-Holland, Amsterdam, 2002.%
\bibitem{HPe} B.~Hasselblatt, Y.~Pesin. \emph{Partially hyperbolic dynamical systems}. Handbook of dynamical systems. Vol.~1B, 1--55, Elsevier, Amsterdam, 2006.%
\bibitem{Hen} M.~H{\'e}non. \emph{A two-dimensional mapping with a strange attractor}. In: The Theory of Chaotic Attractors, 94--102. Springer, 1976.%
\bibitem{HNX} J.~Hespanha, P.~Naghshtabrizi, Y.~Xu. \emph{A survey of recent results in networked control systems}. IEEE Proc.~95 (2007), 138--162.%
\bibitem{HZh} Y.~Huang, X.~Zhong. \emph{Carath{\'e}odory--Pesin structures associated with control systems}. Systems Control Lett.\ 112 (2018), 36--41.%
\bibitem{JSo} B.~Jakubczyk, E.~D.~Sontag. \emph{Controllability of nonlinear discrete-time systems: A Lie-algebraic approach}. SIAM J.\ Control Optim.\ 28 (1990), no.~1, 1--33.%
\bibitem{KH2} A.~Katok, B.~Hasselblatt. \emph{Introduction to the Modern Theory of Dynamical Systems}. Encyclopedia of Mathematics and its Applications, 54. Cambridge University Press, Cambridge, 1995.%
\bibitem{Ka0} C.~Kawan. \emph{Lower bounds for the strict invariance entropy}. Nonlinearity 24 (2011), no.\ 7, 1909--1935.%
\bibitem{Ka1} C.~Kawan. \emph{On the structure of uniformly hyperbolic chain control sets}. Systems Control Lett.\ 90 (2016), 71--75.%
\bibitem{Ka2} C.~Kawan. \emph{Invariance entropy of control sets}. SIAM J.\ Control Optim.\ 49 (2011), no.\ 2, 732--751.%
\bibitem{Ka3} C.~Kawan. \emph{Invariance Entropy for Deterministic Control Systems. An Introduction}. Lecture Notes in Math.\ 2089. Springer, Cham, 2013.%
\bibitem{KDe} C.~Kawan, J.--Ch.~Delvenne. \emph{Network entropy and data rates required for networked control}. IEEE Trans.\ Control Netw.\ Syst.\ 3 (2015), no.~1, 57--66.%
\bibitem{KSt} C.~Kawan, T.~Stender. \emph{Growth rates for semiflows on Hausdorff spaces}. J.\ Dynam.\ Differential Equations 24 (2012), 369--390.%
\bibitem{KLi} Y.~Kifer, P.--D.~Liu. \emph{Random dynamics}. Handbook of dynamical systems, Vol.~1B, 379--499, Elsevier B.~V., Amsterdam, 2006.%
\bibitem{Lev} N.~Levinson. \emph{A second order differential equation with singular solutions}. Ann.\ of Math.\ (2) 50 (1949), 127--153.%
\bibitem{Liu} P.--D.~Liu. \emph{Random perturbations of Axiom A basic sets}. J.\ Statist.\ Phys. 90 (1998), no.\ 1-2, 467--490.%
\bibitem{Lun} J.~Lunze (Ed.) \emph{Control Theory of Digitally Networked Dynamic Systems}. Vol.~1. Springer, 2014.%
\bibitem{MSa} A.~S.~Matveev, A.~V.~Savkin. \emph{Estimation and Control over Communication Networks}. Birkh\"auser, Boston, 2008.%
\bibitem{MZh} K.~R.~Meyer, X.~Zhang. \emph{Stability of skew dynamical systems}. J.\ Differential Equations 132 (1996), no.\ 1, 66--86.%
\bibitem{Mor} I.~Morris. \emph{Mather sets for sequences of matrices and applications to the study of joint spectral radii}. Proc.\ Lond.\ Math.\ Soc. (3) 107 (2013), no.\ 1, 121--150.%
\bibitem{Nea} G.~N.~Nair, R.~J.~Evans, I.~M.~Y.~Mareels, and W.~Moran. \emph{Topological feedback entropy and nonlinear stabilization}. IEEE Trans.\ Automat.\ Control 49 (2004), no.\ 9, 1585--1597.%
\bibitem{Ne2} G.~N.~Nair, F.~Fagnani, S.~Zampieri, S., R.~J.~Evans. \emph{Feedback control under data rate constraints: An overview}. Proceedings of the IEEE 95 (2007), no.~1, 108--137.%
\bibitem{OGY} E.~Ott, C.~Grebogi, J.~A.~Yorke. \emph{Controlling chaos}. Phys.\ Rev.\ Lett.\ 64 (1990), no.\ 11, 1196--1199.%
\bibitem{Poi} H.~Poincar{\'e}. \emph{Sur le probl{\`e}me des trois corps et les {\'e}quations de la dynamique}. Acta Math.\ 13 (1890), no.~1, A3--A270.%
\bibitem{Rob} C.~Robinson. \emph{Dynamical Systems. Stability, Symbolic Dynamics, and Chaos}. CRC Press, Boca Raton, FL, 1995.%
\bibitem{SMS} L.~A.~B.~San Martin, L.~Seco. \emph{Morse and Lyapunov spectra and dynamics on flag bundles}. Ergodic Theory Dynam.\ Systems 30 (2010), no.~3, 893--922.%
\bibitem{RZa} M.~Rungger, M.~Zamani. \emph{SCOTS: A tool for the synthesis of symbolic controllers}. In Proceedings of the 19th Int.\ Conf.\
on Hybrid Systems: Computation and control, 2016, 99--104 [Online]. Available: https://www.hyconsys.com/software/scots/%
\bibitem{SSK} J.~Sliz, A.~S\"{u}li, T.~Kovacs. \emph{Control of chaos in the vicinity of the Earth-Moon L5 Lagrangian point to keep a spacecraft in orbit}. Astron.\ Nachrichten 336 (2015), no.~1, 23-31.%
\bibitem{Sma} S.~Smale. \emph{Differentiable dynamical systems}. Bull.\ Amer.\ Math.\ Soc.~73 (1967), no.\ 6, 747--817.%
\bibitem{Son} E.~D.~Sontag. \emph{Mathematical Control Theory. Deterministic Finite-Dimensional Systems}. 2nd Edition. Texts in Applied Mathematics 6. Springer, New York, 1998.%
\bibitem{SWi} E.~D.~Sontag, F.~R.~Wirth. \emph{Remarks on universal nonsingular controls for discrete-time systems}. Systems
Control Lett.\ 33 (1998), no.~2, 81--88.%
\bibitem{TZa} M.~S.~Tomar, M.~Zamani. \emph{Compositional quantification of invariance feedback entropy for networks of uncertain control systems}. IEEE Control Syst.\ Lett.\ 4 (2020), no.~4, 827--832.%
\bibitem{WHS} T.~Wang, Y.~Huang, H.-W.~Sun. \emph{Measure-theoretic invariance entropy for control systems}. SIAM J.\ Control Optim.\ 57 (2019), no.~1, 310--333.%
\bibitem{Wir} F.~R.~Wirth. \emph{Dynamics and controllability of nonlinear discrete-time control systems}. IFAC Proceedings Volumes 31 (1998), no.~17, 267--272.%
\bibitem{You} L.--S.~Young. \emph{Large deviations in dynamical systems}. Trans.\ Amer.\ Math.\ Soc.\ 318 (1990), no.~2, 525--543.%
\bibitem{Yo2} L.--S.~Young. \emph{What are SRB measures, and which dynamical systems have them?}. J.\ Statistical Physics 108 (2002), nos.~5--6, 733--754.%
\bibitem{YBa} S.~Y\"uksel, T.~Ba\c{s}ar. \emph{Stochastic Networked Control Systems: Stabilization and Optimization under Information Constraints}. New York, NY: Springer-Birkh\"auser, 2013.%
\end{thebibliography}
\end{document}